\newtheorem{thm}{Theorem}
\numberwithin{thm}{section}
\newtheorem{prp}[thm]{Proposition}
\newtheorem{cor}[thm]{Corollary}
\newtheorem{lem}[thm]{Lemma}
\theoremstyle{definition}
\newtheorem{dfn}[thm]{Definition}
\theoremstyle{remark}
\newtheorem{remark}[thm]{Remark}
\newtheorem{exm}[thm]{Example}
\numberwithin{equation}{section}
\numberwithin{figure}{section}
\newcommand{\R}{\mathbb{R}}
\newcommand{\Z}{\mathbb{Z}}
\newcommand{\N}{\mathbb{N}}
\newcommand{\M}{\mathbb{M}}
\newcommand{\F}{\mathbb{F}}
\newcommand{\op}[1]{\operatorname{#1}}
\newcommand{\Com}{\mathrm{C}^+}
\newcommand{\Der}{\mathrm{D}^+}
\newcommand{\Hom}{\mathrm{Hom}}
\newcommand{\Ext}{\mathrm{Ext}}
\newcommand{\VectF}{\mathrm{Vect}_{\F}}
\newcommand{\vectF}{\mathrm{vect}_{\F}}
\DeclareMathOperator{\cHom}{\mathscr{H}\text{\kern -3pt {\calligra\large om}}\,}
\begin{document}

\title{Relative Interlevel Set Cohomology Categorifies Extended Persistence Diagrams}

\author{Ulrich Bauer \and Benedikt Fluhr}

\maketitle

\begin{abstract}
  The \emph{extended persistence diagram} introduced by
  Cohen-Steiner, Edelsbrunner, and Harer
  is an invariant of real-valued continuous functions,
  which are \emph{$\mathbb{F}$-tame} in the sense that all open interlevel sets
  have degree-wise finite-dimensional cohomology
  with coefficients in a fixed field $\mathbb{F}$.
  We show that
  \emph{relative interlevel set cohomology (RISC)},
  which is based on the \emph{Mayer--Vietoris pyramid}
  by Carlsson, de Silva, and Morozov,
  categorifies this invariant.
  More specifically,
  we define
  an abelian Frobenius category $\mathrm{pres}(\mathcal{J})$ of presheaves,
  which are presentable in a certain sense,
  such that the RISC $h(f)$
  of an $\mathbb{F}$-tame function
  $f \colon X \rightarrow \mathbb{R}$
  is an object of $\mathrm{pres}(\mathcal{J})$,
  and moreover
  the extended persistence diagram
  of $f$ uniquely determines
  -- and is determined by --
  the corresponding element
  $[h(f)] \in K_0 (\mathrm{pres}(\mathcal{J}))$ in the Grothendieck group
  $K_0 (\mathrm{pres}(\mathcal{J}))$
  of
  the abelian category $\mathrm{pres}(\mathcal{J})$.
  As an intermediate step we show that $\mathrm{pres}(\mathcal{J})$
  is the abelianization
  of the (localized) category of complexes
  of $\mathbb{F}$-linear sheaves on $\mathbb{R}$,
  which are \emph{tame} in the sense that sheaf cohomology
  of any open interval is finite-dimensional in each degree.
  This yields a close link between derived level set persistence
  by Curry, Kashiwara, and Schapira
  and the categorification of extended persistence diagrams.
\end{abstract}

\section{Introduction}
\label{sec:intro}

Based on prior findings
by \cite{Carlsson:2009:ZPH:1542362.1542408,MR3031814}
we introduced the notion of
\emph{relative interlevel set cohomology (RISC)} over a fixed field $\F$
in \cite{2021arXiv210809298B}
as an invariant of real-valued continuous functions,
which are \emph{$\F$-tame}
in the sense that all open interlevel sets
have degree-wise finite-dimensional cohomology with coefficients in $\F$.
More specifically,
for an $\F$-tame function
$f \colon X \rightarrow \R$
its RISC
is a functor
$h(f) \colon \M^{\circ} \rightarrow \VectF$
defined on the opposite poset $\M^{\circ}$
of a sublattice $\M \subset \R^{\circ} \times \R$
in the shape of an infinite strip
depicted in \cref{fig:contraBlock}.
\begin{figure}[t]
  \centering
\begingroup%
  \makeatletter%
  \providecommand\color[2][]{%
    \errmessage{(Inkscape) Color is used for the text in Inkscape, but the package 'color.sty' is not loaded}%
    \renewcommand\color[2][]{}%
  }%
  \providecommand\transparent[1]{%
    \errmessage{(Inkscape) Transparency is used (non-zero) for the text in Inkscape, but the package 'transparent.sty' is not loaded}%
    \renewcommand\transparent[1]{}%
  }%
  \providecommand\rotatebox[2]{#2}%
  \newcommand*\fsize{\dimexpr\f@size pt\relax}%
  \newcommand*\lineheight[1]{\fontsize{\fsize}{#1\fsize}\selectfont}%
  \ifx\svgwidth\undefined%
    \setlength{\unitlength}{275.57849121bp}%
    \ifx\svgscale\undefined%
      \relax%
    \else%
      \setlength{\unitlength}{\unitlength * \real{\svgscale}}%
    \fi%
  \else%
    \setlength{\unitlength}{\svgwidth}%
  \fi%
  \global\let\svgwidth\undefined%
  \global\let\svgscale\undefined%
  \makeatother%
  \begin{picture}(1,0.48987858)%
    \lineheight{1}%
    \setlength\tabcolsep{0pt}%
    \put(0,0){\includegraphics[width=\unitlength,page=1]{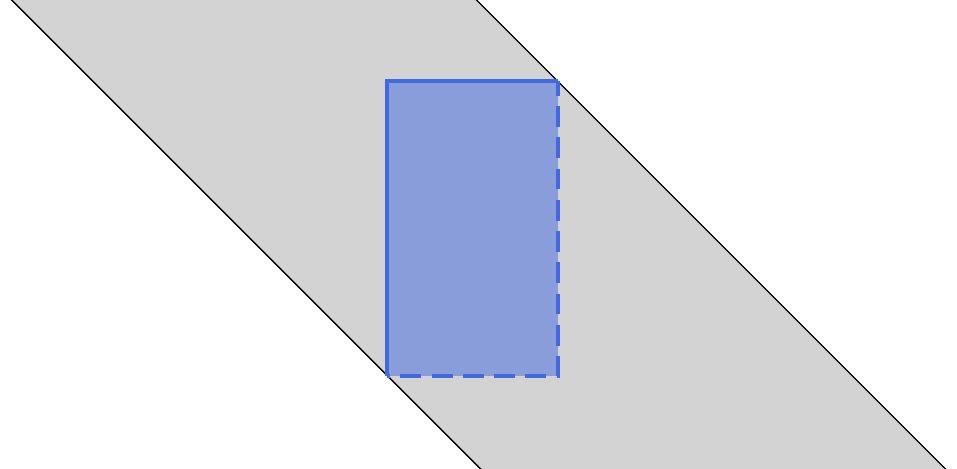}}%
    \put(0.29554647,0.34008106){\makebox(0,0)[t]{\lineheight{1.25}\smash{\begin{tabular}[t]{c}$\{0\}$\end{tabular}}}}%
    \put(0.70040499,0.14574895){\makebox(0,0)[t]{\lineheight{1.25}\smash{\begin{tabular}[t]{c}$\{0\}$\end{tabular}}}}%
    \put(0.49392715,0.23886643){\makebox(0,0)[t]{\lineheight{1.25}\smash{\begin{tabular}[t]{c}$\F$\end{tabular}}}}%
    \put(0.38866395,0.41700397){\makebox(0,0)[t]{\lineheight{1.25}\smash{\begin{tabular}[t]{c}$v$\end{tabular}}}}%
    \put(0.60728751,0.06477746){\makebox(0,0)[t]{\lineheight{1.25}\smash{\begin{tabular}[t]{c}$T^{-1}(v)$\end{tabular}}}}%
    \put(0,0){\includegraphics[width=\unitlength,page=2]{contravariant-block.pdf}}%
    \put(0.75107177,0.26917113){\makebox(0,0)[t]{\lineheight{1.25}\smash{\begin{tabular}[t]{c}$l_1$\end{tabular}}}}%
    \put(0.28850147,0.17303682){\makebox(0,0)[t]{\lineheight{1.25}\smash{\begin{tabular}[t]{c}$l_0$\end{tabular}}}}%
  \end{picture}%
\endgroup%

  \caption{
    The sublattice $\M \subset \R^{\circ} \times \R$
    is shaded in gray and the support of
    the indecomposable cohomological functor
    $B_v \colon \M^{\circ} \rightarrow \mathrm{Vect}_{\F}$
    is shaded in blue.
  }
  \label{fig:contraBlock}
\end{figure}
Here $\R^{\circ} \times \R$ is
the lattice which is given as the product of the reals
$\R^{\circ}$ with the opposite order
and the reals $\R$ with the ordinary \mbox{$\leq$-order}.
The values of
${h(f) \colon \M^{\circ} \rightarrow \VectF}$
are all relative cohomology groups of the form
${H^n(f^{-1}(I, C); \F)}$
for some ${n \in \Z}$,
some open interval ${I \subseteq \R}$
and some subset ${C \subseteq I}$,
that is the complement of a closed interval in $\R$.
Now taking the difference
\begin{equation*}
  (I, C) \mapsto I \setminus C
\end{equation*}
yields a bijection
between the set of all non-empty intervals in $\R$
and the set of all such pairs $(I, C)$ with $I \neq C$.
Moreover, for any pair of open subspaces $(U, V)$ of $\R$
with $U \setminus V = I \setminus C$
we have \[{H^n(f^{-1}(U, V); \F) \cong H^n(f^{-1}(I, C); \F)}\] by excision.
Furthermore,
given any pair of open subspaces $(U, V)$ of $\R$
such that any connected component of $U$ contains finitely many
connected components of $V$
we have
\begin{equation*}
  H^n(f^{-1}(U, V); \F) \cong
  \prod_{(I, C)} H^n(f^{-1}(I, C); \F)
  ,
\end{equation*}
where the product ranges over all pairs $(I, C)$ as above
with $I \setminus C$ a connected component of $U \setminus V$.
Thus,
the RISC ${h(f) \colon \M^{\circ} \rightarrow \VectF}$
contains enough information to obtain the cohomology
$H^n(f^{-1}(U, V); \F)$ for any such pair $(U, V)$.
As it turns out,
the support of
${h(f) \colon \M^{\circ} \rightarrow \VectF}$
is bounded above by the diagonal ${\{(t, t) \mid t \in \R\} \subset \R^2}$.
Moreover, we show in \mbox{\cite[Section 2]{2021arXiv210809298B}} that
${h(f) \colon \M^{\circ} \rightarrow \VectF}$
is point-wise finite-dimensional (pfd),
cohomological (\cref{dfn:cohomological}),
and sequentially continuous (\cref{dfn:cont}).
Throughout this document
-- excluding the appendix --
we denote the full subcategory of pfd functors
${\M^{\circ} \rightarrow \VectF}$
that are cohomological, sequentially continuous,
and have bounded above support by $\mathcal{J}$.
In \cite[Theorem 3.5]{2021arXiv210809298B} we show in particular
that any functor in $\mathcal{J}$ decomposes into a (potentially infinite)
direct sum of indecomposables,
whose supports each have the shape of a maximal axis-aligned rectangle in $\M$
as shown in \cref{fig:contraBlock}.
We denote such an indecomposable by
${B_v \colon \M^{\circ} \rightarrow \VectF}$,
where ${v \in \op{int} \M}$ inside the interior of $\M$
is the upper left corner
of the rectangular support of $B_v$,
see also \cref{dfn:contraBlock}.
In \cite[Section 3.2.2]{2021arXiv210809298B}
we also show
that for an $\F$-tame function ${f \colon X \rightarrow \R}$
the multiplicities of indecomposables
${B_v \colon \M^{\circ} \rightarrow \VectF}$ 
in ${h(f) \colon \M^{\circ} \rightarrow \VectF}$
determine the \emph{extended persistence diagram} of $f$
due to \cite{MR2472288}
and vice versa.
Thus, the extended persistence diagram
provides a complete isomorphism invariant
for the class of functors obtained as RISC.
Now the full subcategory $\mathcal{J}$
is not an abelian subcategory of the category of all functors
${\M^{\circ} \rightarrow \VectF}$.
This raises the question
about the smallest full abelian subcategory of $\VectF^{\M^{\circ}}$
containing $\mathcal{J}$.
Now such a subcategory has to contain in particular all cokernels
of natural transformations in $\mathcal{J}$.
This leads us to considering the closure of $\mathcal{J}$ by cokernels,
consisting of all functors
${F \colon \M^{\circ} \rightarrow \VectF}$
that fit into an exact sequence
\begin{equation}
  \label{eq:presFn}
  H \rightarrow G \rightarrow F \rightarrow 0
\end{equation}
with ${H, G}$ functors in $\mathcal{J}$.
As in \cref{dfn:presentable} we refer to the exact sequence \eqref{eq:presFn}
as a \emph{presentation of ${F}$}
and we say that $F$ is \emph{$\mathcal{J}$-presentable}.
The full subcategory of all $\mathcal{J}$-presentable functors
is denoted by $\mathrm{pres}(\mathcal{J})$.
By the following theorem,
which we prove in \cref{sec:abelianizationSheaves},
the subcategory of all $\mathcal{J}$-presentable functors
is an abelian subcategory.
To state the theorem, we recall that an abelian category is \emph{Frobenius}
if it has enough projectives and enough injectives
sand if both coincide.

\begin{thm}
  \label{thm:frobenius}
  The category of $\mathcal{J}$-presentable functors
  $\mathrm{pres}(\mathcal{J})$
  is an abelian subcategory of $\VectF^{\M^{\circ}}$,
  which is also Frobenius with $\mathcal{J}$ being the subcategory of projective-injectives.
\end{thm}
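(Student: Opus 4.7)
The strategy is to leverage the indecomposable block decomposition from [2021arXiv210809298B, Theorem 3.5], by which every object of $\mathcal{J}$ is a direct sum of the explicitly described functors $B_v$. Three coordinated steps are needed: (i) each $B_v$ is both projective and injective relative to $\mathrm{pres}(\mathcal{J})$; (ii) $\mathrm{pres}(\mathcal{J})$ is closed under kernels in $\VectF^{\M^{\circ}}$, hence abelian; and (iii) $\mathrm{pres}(\mathcal{J})$ has enough projectives and enough injectives, both of which coincide with $\mathcal{J}$.

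Step (i) is a pointwise computation. Because $B_v$ takes the value $\F$ on its rectangular support with identity transition maps and is zero elsewhere, a morphism $B_v \to F$ is determined by an element of $F(v)$ subject to vanishing constraints at the other three corners of the rectangle; these constraints can be made explicit using cohomologicality and sequential continuity of $F$. The lifting property against an epimorphism $F \twoheadrightarrow F''$ in $\mathrm{pres}(\mathcal{J})$ and, dually, the extension property along a monomorphism $F \hookrightarrow F''$ then reduce to ordinary exactness of finite-dimensional vector-space diagrams at the corners. Combined with summand-closure of $\mathcal{J}$ (from the uniqueness part of the decomposition theorem), this shows every object of $\mathcal{J}$ is projective-injective in $\mathrm{pres}(\mathcal{J})$.

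Step (ii) is the heart of the argument. It reduces, via the standard pullback-of-presentations recipe from the theory of finitely presented modules over a coherent ring, to showing that for any morphism $\varphi \colon G_1 \to G_2$ in $\mathcal{J}$ both $\ker \varphi$ and $\op{im} \varphi$ computed in $\VectF^{\M^{\circ}}$ again lie in $\mathcal{J}$ -- equivalently, that cohomologicality, sequential continuity, point-wise finite-dimensionality, and bounded support are preserved under these constructions. For step (iii), enough projectives is immediate from the definition of $\mathcal{J}$-presentability. For enough injectives, I would argue that with abelianness in hand, every $F$ admits a co-presentation $0 \to F \to G' \to H'$ with $G', H' \in \mathcal{J}$, constructed either by building a $\mathcal{J}$-injective hull blockwise via the socle of $F$ and iterating, or via a pointwise-duality argument exploiting the pfd-ness of $F$. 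The identification of projective-injectives with $\mathcal{J}$ then follows because any such object is a direct summand of an object in $\mathcal{J}$, which is summand-closed.

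The principal obstacle is step (ii). Cohomologicality and sequential continuity are expressed in terms of exactness against certain (co)limits in the strip $\M$, and these conditions do not formally commute with the formation of kernels in a functor category. Moreover, the block decompositions of $G_1$ and $G_2$ are not respected by an arbitrary $\varphi$, so a direct blockwise analysis is unavailable. I anticipate the argument will proceed by a hands-on Mayer--Vietoris-style calculation exploiting the one-dimensional geometry of $\M$ that underlies the RISC construction itself, after which all three claims of the theorem follow by the standard machinery sketched above.
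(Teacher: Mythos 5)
Your high-level outline (abelianness, enough projectives, enough injectives, projectives $=$ injectives $=$ $\mathcal{J}$) is the right target, but the two steps you flag as the substance of the proof do not go through as described, and the paper reaches the theorem by an entirely different mechanism. The decisive error is in your step (ii): you reduce closure under kernels to the claim that for $\varphi \colon G_1 \to G_2$ in $\mathcal{J}$ the kernel and image again lie in $\mathcal{J}$. That claim is false -- indeed the paper stresses from the outset that $\mathcal{J}$ is \emph{not} an abelian subcategory of $\VectF^{\M^{\circ}}$. Concretely, the kernel of a nonzero map $B_u \to B_v$ between blocks with partially overlapping rectangular supports is supported on a set difference of rectangles and fails to be cohomological, hence is not in $\mathcal{J}$. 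What is actually needed (\cref{lem:presAbelian}) is only the much weaker statement that $\ker\varphi$ admits an epimorphism from an object of $\mathcal{J}$, i.e.\ that $\mathcal{J}$ has \emph{weak} kernels. The paper manufactures these not by any Mayer--Vietoris computation in the functor category but by first proving the equivalence $h_{\gamma,0,t}\colon D^+_t(q_{\gamma},\partial q)\to\mathcal{J}$ (\cref{thm:equiv}, which is the bulk of Sections 2--5) and then rotating distinguished triangles: given $\psi\colon Q\to P$ in $\mathcal{J}$, lift it to the triangulated category, complete it to a triangle, and apply the cohomological functor to get an exact sequence $R\to Q\to P$ with $R\in\mathcal{J}$ (\cref{prp:abelianization}). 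The "hands-on calculation exploiting the one-dimensional geometry of $\M$" that you anticipate would, if carried out honestly, run into exactly the failure above.

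The same structural point undercuts your steps (i) and (iii) on the injective side. Projectivity of the blocks in $\mathcal{C}$ is indeed a Yoneda computation ($\op{Nat}(B_v,G)\cong G(v)$), but there is no dual formula computing $\op{Nat}(G,B_v)$, and $B_v$ is certainly not injective in $\mathcal{C}$; its injectivity is only relative to $\mathrm{pres}(\mathcal{J})$ and is genuinely nontrivial. Your proposed socle/iterated-hull and pointwise-duality constructions are not carried out, and the socle route is obstructed by the fact that the simples $S_v$ are not $\mathcal{J}$-presentable. In the paper, the Frobenius property (hence enough injectives and the coincidence of projectives with injectives) is not proved by hand at all: it follows from the general fact that the abelianization of a triangulated category is Frobenius \cite[Section 4.2]{MR2355771}, where the self-duality exchanging projectives and injectives is inherited from the shift functor. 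The only ingredient proved directly on the functor side is the projective-cover construction of \cref{sec:projCover}, which yields \cref{cor:JiffProj} identifying the projectives of $\mathrm{pres}(\mathcal{J})$ with $\mathcal{J}$. So the missing idea in your proposal is precisely the detour through the triangulated category: without it, both the weak kernels and the Frobenius duality are left unsupplied.
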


Now for any point $v \in \M$,
there is an \emph{associated simple functor}
\begin{equation}
  \label{eq:dfnSimpleFunc}
  S_v \colon \M^{\circ} \rightarrow \mathrm{Vect}_{\F},\,
  u \mapsto
  \begin{cases}
    \F & u = v
    \\
    \{0\} & \text{otherwise}
  \end{cases}
\end{equation}
with all internal maps necessarily trivial.
Suppose
${F \colon \M^{\circ} \rightarrow \VectF}$
is some $\mathcal{J}$-presentable functor
and let ${v \in \op{int} \M}$
be a point inside the interior of $\M$.
Even though
${S_v \colon \M^{\circ} \rightarrow \mathrm{Vect}_{\F}}$
is not $\mathcal{J}$-presentable,
both $F$ and $S_v$ are functors vanishing on $\partial \M$.
Now let $\mathcal{C}$ be the category of functors
${\M^{\circ} \rightarrow \mathrm{Vect}_{\F}}$
vanishing on $\partial \M$.
Then $\mathcal{C}$
is an abelian subcategory of $\VectF^{\M^{\circ}}$.
As any functor in $\mathcal{J}$
is projective in $\mathcal{C}$
by \mbox{\cite[Corollary 3.6]{2021arXiv210809298B}},
any projective resolution of
${F \colon \M^{\circ} \rightarrow \VectF}$
by $\mathcal{J}$-presentable functors
also is a projective resolution of $F$
as a functor in $\mathcal{C}$
by \cref{thm:frobenius}.
Thus,
we may use a projective resolution
of ${F \colon \M^{\circ} \rightarrow \VectF}$ in $\mathrm{pres}(\mathcal{J})$
to compute the Ext vector spaces
${\Ext_{\mathcal{C}}^n (F, S_v)}$
with respect to $\mathcal{C}$
for all ${n \in \N_0}$.
(Here it is crucial,
that we work in $\mathcal{C}$
and not all of $\VectF^{\M^{\circ}}$,
as this category has other projectives.)
Moreover,
as all $\mathcal{J}$-presentable functors are pfd,
the Ext vector space ${\Ext_{\mathcal{C}}^n (F, S_v)}$
is finite-dimensional for each ${n \in \N_0}$.

\begin{dfn}
  \label{dfn:betti}
  For any $\mathcal{J}$-presentable functor
  ${F \colon \M^{\circ} \rightarrow \VectF}$
  we define its \emph{$n$-th Betti function}
  to be
  \begin{equation*}
    \beta^n (F) \colon \op{int} \M \rightarrow \N_0, \,
    u \mapsto \dim_{\F} \mathrm{Ext}_{\mathcal{C}}^n (F, S_u)
  \end{equation*}
  for any $n \in \N_0$.
\end{dfn}

In \cref{sec:abelianizationSheaves} we show the following proposition,
which illustrates how above defined Betti functions
are closely related to
the bigraded Betti numbers of \cite[Defintion 2.1]{2019arXiv190205708L}.
(They are not quite the same, because of our condition
on all functors to vanish on $\partial \M$.)

\begin{prp}
  \label{prp:minProjRes}
  Any $\mathcal{J}$-presentable functor
  $F \colon \M^{\circ} \rightarrow \VectF$
  admits a
  (potentially infinite)
  minimal projective resolution
  \begin{equation*}
    \dots \rightarrow
    P_n \rightarrow
    \dots \rightarrow
    P_2 \rightarrow
    P_1 \rightarrow
    P_0 \rightarrow
    F \rightarrow 0
  \end{equation*}
  by functors $P_n \colon \M^{\circ} \rightarrow \VectF$
  in $\mathcal{J}$.
  Moreover,
  the multiplicity of
  $B_u \colon \M^{\circ} \rightarrow \VectF$
  in $P_n$
  is equal to $\beta^n(F)(u) = \dim \Ext_{\mathcal{C}}^n (F, S_u)$
  for each $u \in \op{int} \M$ and $n \in \N_0$.
\end{prp}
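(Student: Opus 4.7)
The plan is to construct the minimal resolution stage by stage as direct sums of the indecomposables $B_v$, and then identify the multiplicities at each stage with the prescribed $\Ext$ dimensions via the long exact sequence.

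The key preliminary computation is the Kronecker-delta identity
\[
\Hom_{\mathcal{C}}(B_v, S_u) \cong
\begin{cases} \F & \text{if } v = u, \\ 0 & \text{otherwise,} \end{cases}
\]
valid for $v, u \in \op{int}\M$. Indeed, $v$ is the maximum of the rectangular support of $B_v$ in the order of $\M$, and the internal restriction maps of $B_v$ from $v$ to any other point of the support are nontrivial; since all internal maps of $S_u$ vanish, naturality forces a component $\tau_u \colon B_v(u) \to S_u(u)$ to be zero whenever $u \neq v$. As an immediate consequence, for any object $P = \bigoplus_v B_v^{\oplus m_v}$ of $\mathcal{J}$ one recovers the multiplicities as $m_v = \dim_{\F}\Hom_{\mathcal{C}}(P, S_v)$.

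Next I would construct the resolution inductively. Setting $\beta^0(F)(v) := \dim_{\F}\Hom_{\mathcal{C}}(F, S_v)$ and $P_0 := \bigoplus_v B_v^{\oplus \beta^0(F)(v)}$, I would define a map $P_0 \to F$ by lifting, for each $v$, a chosen basis of $\Hom_{\mathcal{C}}(F, S_v)$ through the projectivity of $B_v$ in $\mathcal{C}$ (\cite[Corollary 3.6]{2021arXiv210809298B}). The Kronecker-delta identity then forces the induced map $\Hom_{\mathcal{C}}(F, S_v) \to \Hom_{\mathcal{C}}(P_0, S_v)$ to be an isomorphism for every $v$. Combined with a presentation of $F$, this yields the surjectivity of $P_0 \to F$. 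Its kernel $K_0$ is again $\mathcal{J}$-presentable by \cref{thm:frobenius}, and the long exact sequence for $\Ext_{\mathcal{C}}^\bullet(-, S_v)$ obtained from $0 \to K_0 \to P_0 \to F \to 0$ yields $\Hom_{\mathcal{C}}(K_0, S_v) \cong \Ext_{\mathcal{C}}^1(F, S_v)$ as well as $\Ext_{\mathcal{C}}^{n+1}(F, S_v) \cong \Ext_{\mathcal{C}}^n(K_0, S_v)$ for every $n \geq 1$. Iterating the construction on $K_0$ and inducting on $n$ then simultaneously produces the minimal projective resolution and verifies the multiplicity formula $\beta^n(F)(v) = \dim_{\F}\Ext_{\mathcal{C}}^n(F, S_v)$.

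The main technical obstacle is justifying surjectivity of the candidate cover $P_0 \to F$. Its defining property -- inducing an isomorphism on $\Hom_{\mathcal{C}}(-, S_v)$ for every $v$ -- is a local condition at each point of $\M$, whereas surjectivity is a global one. Bridging this gap will require a presentation $H \to G \to F \to 0$ together with the Krull--Schmidt-type decomposition of objects in $\mathcal{J}$ from \cite[Theorem 3.5]{2021arXiv210809298B}, so that any generator of $F$ arising from an indecomposable summand $B_w$ of $G$ can be matched with a nonzero element of $\Hom_{\mathcal{C}}(F, S_w)$ and hence lies in the image of $P_0$. A parallel point is verifying that the infinite direct sums $P_n$ remain well-behaved and that each kernel $K_n$ stays within $\mathrm{pres}(\mathcal{J})$ across the induction, where \cref{thm:frobenius} does the heavy lifting.
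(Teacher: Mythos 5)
The overall architecture of your proof is sound and follows the paper's strategy closely: build $P_0 = \bigoplus_v B_v^{\oplus\beta^0(F)(v)}$, show the evaluation map to $F$ is a projective cover, pass to the kernel, and induct using the long exact sequence for $\Ext_{\mathcal{C}}^\bullet(-,S_u)$. Your Kronecker-delta identity is correct (it is the Yoneda \cref{lem:yoneda} applied to $S_u$), and your use of \cref{thm:frobenius} to keep the kernels in $\mathrm{pres}(\mathcal{J})$ matches the paper.

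The genuine gap is in your proposed bridge for surjectivity of $\varphi\colon P_0\to F$. You claim that for a presentation $H\to G\to F\to 0$ with $G=\bigoplus_w B_w^{\oplus m_w}$, any summand $B_w$ of $G$ that contributes nontrivially to $F$ can be matched with a nonzero element of $\Hom_{\mathcal{C}}(F,S_w)$. This is false. Take $F=B_v$, let $w\prec v$ lie in the support of $B_v$, and take the presentation $B_w\xrightarrow{(-\eta,\,1)}B_v\oplus B_w\xrightarrow{(1,\,\eta)}B_v\to 0$, where $\eta\colon B_w\to B_v$ is the canonical nonzero map. The summand $B_w$ of the middle term has nonzero image in $F$, yet $\Hom_{\mathcal{C}}(F,S_w)=\Hom_{\mathcal{C}}(B_v,S_w)=0$ because $w\neq v$. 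So an arbitrary presentation need not be minimal, and its indecomposable summands need not be detected by the simples $S_w$; using such a presentation to argue surjectivity is circular, since minimality of $P_0$ is exactly what you are trying to establish.

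What actually closes the gap in the paper is the essentiality of the epimorphism $\theta\colon F\to\bigoplus_i S_i$ (\cref{cor:essential}), which rests on \cref{lem:maxSub}: any proper subfunctor of $F$ is killed by a nonzero map $F\to S_w$ for some $w$. That lemma is where the real work happens, and it uses the pointwise finite-dimensionality and sequential continuity of $F$ in an essential way (via sequential cocontinuity of $F^*$ and the fact that linear forms must ``die hard'' along horizontal and vertical line segments before reaching $\partial\M$). Equivalently, what you need is: if $\Hom_{\mathcal{C}}(C,S_u)=0$ for all $u\in\operatorname{int}\M$ then $C=0$ — this is not formal, and your Krull--Schmidt argument does not substitute for it. Once $\theta$ is known to be essential, $\varphi$ is an epimorphism because it factors $\theta$ through a projective, and the rest of your induction, including \cref{lem:bettiComparison}, goes through as you describe.
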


Furthermore, we show with \cref{cor:admissibleBetti}
that all Betti functions satisfy a certain finiteness constraint.
We refer to all functions
$\op{int} \M \rightarrow \N_0$
satisfying this constraint
as \emph{admissible Betti functions}
and we denote the commutative monoid of all admissible Betti functions
by $\mathbb{B}$;
this is \cref{dfn:admissibleBetti}.
Then we show in \cref{sec:categorification}
that the alternating sum of functions
- we call it the \emph{Euler function} in \cref{dfn:eulerFn} -
\begin{equation*}
  \chi(F) := \sum_{n \in \N_0} (-1)^n \beta^n(F) \colon
  \op{int} \M \rightarrow \Z, \,
  u \mapsto \sum_{n \in \N_0} (-1)^n \dim \Ext_{\mathcal{C}}^n(F, S_u)
\end{equation*}
is well-defined and moreover,
that the point-wise absolute value
\begin{equation*}
  |\chi(F)| \colon \op{int} \M \rightarrow \N_0,\,
  u \mapsto |\chi(F)(u)|
\end{equation*}
is an admissible Betti function as well.
In \cref{dfn:admissibleEuler}
we refer to such functions as \emph{admissible Euler functions}
and we denote the abelian group of all admissible Euler functions
by $G(\mathbb{B})$
as it is the Grothendieck group
of the commutative monoid $\mathbb{B}$ up to unique isomorphism.
Then we show with \cref{prp:eulerAdditive} that
$\chi \colon \mathrm{Ob}(\mathrm{pres}(\mathcal{J})) \rightarrow G(\mathbb{B})$
is an additive invariant,
hence there is a homomorphism
\begin{equation*}
  [\chi] \colon K_0(\mathrm{pres}(\mathcal{J})) \rightarrow G(\mathbb{B}), \,
  [F] \mapsto \chi(F)
\end{equation*}
of abelian groups.
Finally we obtain the following result.
\begin{thm}
  \label{thm:eulerIso}
  The group homomorphism
  $[\chi] \colon K_0(\mathrm{pres}(\mathcal{J})) \rightarrow G(\mathbb{B})$
  is an isomorphism.
\end{thm}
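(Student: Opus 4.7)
The plan is to construct an explicit two-sided inverse
$\psi \colon G(\mathbb{B}) \to K_0(\mathrm{pres}(\mathcal{J}))$
to $[\chi]$. For each admissible Betti function
$\beta \in \mathbb{B}$, the decomposition theorem
\mbox{\cite[Theorem 3.5]{2021arXiv210809298B}} supplies an object
$P(\beta) := \bigoplus_{v \in \op{int} \M} B_v^{\beta(v)}$
of $\mathcal{J}$; indeed, \cref{dfn:admissibleBetti} is arranged
precisely so that any such direct sum lies in $\mathcal{J}$.
Setting $\psi(\beta) := [P(\beta)]$ gives a monoid homomorphism
$\mathbb{B} \to K_0(\mathrm{pres}(\mathcal{J}))$ which extends
uniquely to $\psi \colon G(\mathbb{B}) \to K_0(\mathrm{pres}(\mathcal{J}))$
by the universal property of the Grothendieck group.

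For $[\chi] \circ \psi = \mathrm{id}$, I would first compute
$\chi(B_v) = \delta_v$: since $B_v \in \mathcal{J}$ is projective,
its minimal projective resolution from \cref{prp:minProjRes} is
concentrated in degree zero with $P_0 = B_v$, so
$\beta^0(B_v) = \delta_v$ while $\beta^n(B_v) = 0$ for all
$n \geq 1$. Additivity of $\dim_\mathbb{F} \Ext_\mathcal{C}^n(-, S_u)$
at each fixed $n$ on admissible direct sums then yields
$\chi(P(\beta)) = \beta$ and hence $[\chi](\psi(\beta)) = \beta$.

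For the converse identity $\psi \circ [\chi] = \mathrm{id}$, I would
fix $F \in \mathrm{pres}(\mathcal{J})$ with minimal projective resolution
$\dots \to P_2 \to P_1 \to P_0 \to F \to 0$ from \cref{prp:minProjRes},
denote the $n$-th syzygy by $\Omega^n F$ (with $\Omega^0 F := F$),
and set $P^+ := \bigoplus_{n \geq 0} P_{2n}$ together with
$P^- := \bigoplus_{n \geq 0} P_{2n+1}$. The key claim is that
$P^+, P^- \in \mathcal{J}$, i.e.\ that the grouped Betti functions
$u \mapsto \sum_{n \text{ even}} \beta^n(F)(u)$ and
$u \mapsto \sum_{n \text{ odd}} \beta^n(F)(u)$ satisfy
\cref{dfn:admissibleBetti}. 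Granting this, the direct sums of the
short exact sequences $0 \to \Omega^{n+1} F \to P_n \to \Omega^n F \to 0$
over even and over odd $n$ give two short exact sequences in
$\mathrm{pres}(\mathcal{J})$ whose $K_0$-identities, after a telescoping
cancellation in which the syzygy classes annihilate pairwise, combine to
$[P^+] - [P^-] = [F]$. Since $\psi(\chi(F)) = [P^+] - [P^-]$ by
construction, this finishes the argument.

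The main obstacle is the admissibility of the grouped Betti
functions above: one must show that summing $\beta^n(F)$ over all even
(respectively odd) $n$ produces an admissible function in $\mathbb{B}$
whenever $F \in \mathrm{pres}(\mathcal{J})$. This is a pointwise and
local finiteness question tied to the precise form of
\cref{dfn:admissibleBetti}, and I expect the paper to establish it via
a preliminary lemma about the supports of syzygies in minimal resolutions.
Should this globalization fail directly, the natural fallback is to run
the Eilenberg-swindle step on an exhaustion of $\op{int} \M$ by subsets
over which only finitely many of the $P_n$ have non-trivial restriction,
and then pass to the limit in $K_0(\mathrm{pres}(\mathcal{J}))$ using the
sequential continuity already built into $\mathcal{J}$.
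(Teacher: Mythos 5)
Your overall plan — build an explicit inverse $\psi$ via the direct-sum construction $P(\beta) = \bigoplus_v B_v^{\beta(v)}$, check $[\chi]\circ\psi=\mathrm{id}$ using $\chi(B_v)=\delta_v$, and check $\psi\circ[\chi]=\mathrm{id}$ by splitting a projective resolution into its even and odd parts and telescoping — is substantially the same strategy as the paper's, reorganized slightly. The paper factors $[\chi]$ as $K_0(\mathcal{J}) \to K_0(\mathrm{pres}(\mathcal{J})) \to G(\mathbb{B})$, shows the composite $[\beta^0]$ is an isomorphism using the decomposition theorem (your $[\chi]\circ\psi = \mathrm{id}$ step), and shows the first map is an isomorphism via the same even/odd alternating-sum identity you describe (your $\psi\circ[\chi]=\mathrm{id}$ step).

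You correctly flag the genuine technical difficulty: showing that $P^+ = \bigoplus_{n\geq 0} P_{2n}$ and $P^- = \bigoplus_{n\geq 0} P_{2n+1}$ actually lie in $\mathcal{J}$, i.e.\ that the even- and odd-indexed sums of Betti functions are admissible. But you leave it as an open obstacle and gesture at ``a preliminary lemma about the supports of syzygies.'' The paper's resolution is sharper and worth knowing: it does not work with the minimal projective resolution directly, but instead constructs an \emph{equivariant} projective resolution (Lemma~\ref{lem:equivProjRes}) satisfying $P_{n+3} = P_n\circ T$ and $\delta_{n+3} = -\delta_n\circ T$, obtained by repeatedly rotating a distinguished triangle in $D^+_t(q_\gamma,\partial q)$ and applying $h_\gamma$. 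Since $T^{-1}$ shifts supports strictly downward and $F$ has bounded above support, this periodicity immediately gives both the bounded-above and local-finiteness conditions of Definition~\ref{dfn:admissibleBetti} for the even and odd sums. (Equivalently, Corollary~\ref{cor:bettiEquiv}, $\beta^{n+3}(F)=\beta^n(F)\circ T$ for $n\geq 1$, makes the minimal resolution usable too — but that corollary is itself a consequence of the equivariant resolution, so the construction cannot be bypassed.) Your proposed fallback — exhausting $\op{int}\M$ and ``passing to the limit in $K_0(\mathrm{pres}(\mathcal{J}))$'' — would not work as stated: $K_0$ does not commute with such limits, and the sequential continuity of objects of $\mathcal{J}$ gives you no control over the Grothendieck group itself. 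The gap is thus real but fillable; the missing ingredient is precisely the equivariant resolution lemma, which encodes the triangle rotation of the underlying derived category into the projective resolution.
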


\sloppy
Now suppose that 
$f \colon X \rightarrow \R$
is an $\F$-tame function.
Then $h(f) \colon \M^{\circ} \rightarrow \VectF$
is a functor in $\mathcal{J}$,
hence $h(f)$ is its own minimal projective resolution.
So by \cref{prp:minProjRes} the function
$\chi(h(f)) = \beta^0(h(f)) \colon \op{int} \M \rightarrow \Z$
counts for each point $u \in \op{int} \M$ inside the interior of $\M$
the multiplicity of
$B_u \colon \M^{\circ} \rightarrow \VectF$.
As noted above,
the multiplicities of functors
$B_u \colon \M^{\circ} \rightarrow \VectF$ for $u \in \op{int} \M$
determine the \emph{extended persistence diagram}
of $f \colon X \rightarrow \R$
by \cite{MR2472288}
and vice versa.
Thus, we may think of the function
\[\chi(h(f)) \colon \op{int} \M \rightarrow \Z\]
as the extended persistence diagram
of ${f \colon X \rightarrow \R}$
and of ${G(\mathbb{B}) \cong K_0(\mathrm{pres}(\mathcal{J}))}$
as \enquote{the abelian group of extended persistence diagrams}
in quotes since not every function in $G(\mathbb{B})$
can be obtained as the extended persistence diagram
of some real-valued continuous function.
Now \mbox{\cite[Definition 1.8]{MR2918217}} provides three notions of
a categorification for an abelian group $G$.
In some sense the strongest of these three notions is that of
a pair of an abelian category $\mathcal{A}$ and a group isomorphism
${G \xrightarrow{\cong} K_0(\mathcal{A})}$
from $G$ to the Grothendieck group $K_0(\mathcal{A})$.
As we provide the functor $h$ from the category of $\F$-tame functions
to the abelian category $\mathrm{pres}(\mathcal{J})$
such that ${[h(f)] \in K_0(\mathrm{pres}(\mathcal{J}))}$
is a faithful representation of the extended peristence diagram of
${f \colon X \rightarrow \R}$,
we think of $\mathrm{pres}(\mathcal{J})$
as the categorification of extended persistence diagrams.
This is also in close analogy to the following categorification
of the Euler characteristic:
Given a topological space $X$ the Euler characteristic $\chi(X) \in \Z$
uniquely determines $[\Delta_{\bullet} (X)] \in K_0 (\mathrm{Ab})$,
which is the element in the Grothendieck group $K_0 (\mathrm{Ab})$
of the category of abelian groups $\mathrm{Ab}$
corresponding to the singular chain complex $\Delta_{\bullet} (X)$.

Throughout this work we make extensive use
of homological algebra and sheaf theory.
For most results needed from these two areas
we will draw on references to chapters 1 and 2
of \cite{Kashiwara1990} respectively.

\subsection{Equivalence of RISC and Derived Level Set Persistence}
\label{sec:introEquiv}

Before we show \cref{thm:frobenius},
we need to provide an intermediate result,
which is interesting in its own right.
In \cref{sec:derivedLevelSet} we recall derived level set persistence
by \cite{MR3259939,MR3873181}
as a functor
\begin{equation*}
  R (-)_* \F_{(-)} \colon
  (\mathrm{Top} / \R)^{\circ} \rightarrow D^+(\R),\,
  (f \colon X \rightarrow \R) \mapsto R f_* \F_X
\end{equation*}
from the opposite category of
the category of topological spaces over the reals $\mathrm{Top} / \R$
to the bounded below derived category $D^+(\R)$
of $\F$-linear sheaves on the real numbers.
Then we describe a cohomological functor
${h_{\R} \colon D^+ (\R) \rightarrow \VectF^{\M^{\circ}}}$
from the derived category $D^+ (\R)$
to the category of functors ${\M^{\circ} \rightarrow \VectF}$.
With \cref{prp:h0Iso} we show
that RISC factors through $h_{\R}$ and
derived level set persistence
up to natural isomorphism $\zeta$:
\begin{equation}
  \label{eq:introTriaZeta}
  \begin{tikzcd}[row sep=12ex, column sep=7ex]
    (\mathrm{lcContr} / \R)^{\circ}
    \arrow[d , "R (-)_* \F_{(-)}"']
    \arrow[dr, "h"{name=h0}]
    \\
    D^+ (\R)
    \arrow[r, "h_{\R}"']
    \arrow[to=h0, Leftarrow, "\zeta", shorten >=1.5ex, shorten <=1.5ex]
    &
    \VectF^{\M^{\circ}}
    .
  \end{tikzcd}
\end{equation}
Here $(\mathrm{lcContr} / \R)^{\circ}$
denotes the full subcategory of all functions
on locally contractible spaces.
Then we consider the full triangulated subcategory $D^+_t(\R)$
of the derived category $D^+(\R)$
consisting of all complexes of sheaves $F$,
which are \emph{tame} in the sense
that all open intervals $I \subseteq \R$
have degree-wise finite-dimensional sheaf cohomology
$H^n (I; F)$ with coefficients in $F$.
With \cref{cor:equivBdClosed} from \cref{sec:equiv}
we provide a statement,
which immediately implies the equivalence
of categories $D^+_t (\R)$ and $\mathcal{J}$:

\begin{thm}
  \label{thm:hIRequiv}
  The functor $h_{\R}$
  (defined in \cref{sec:derivedLevelSet})
  restricts to an equivalence of categories
  ${h_{\R,t} \colon D^+_t (\R) \rightarrow \mathcal{J}}$.
\end{thm}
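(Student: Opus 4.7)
The plan is to prove \cref{thm:hIRequiv} by matching indecomposable objects on both sides of $h_{\R}$ and then invoking a Krull--Schmidt-style argument. The guiding principle is that $\mathcal{J}$ decomposes as a direct sum of the blocks $B_v$ by \cite[Theorem 3.5]{2021arXiv210809298B}, and that $D^+_t(\R)$ should admit a parallel description in terms of ``interval sheaf complexes'' familiar from derived level set persistence.

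First I would verify that the restriction $h_{\R, t}$ is well-defined, i.e.~that $h_{\R}(F) \in \mathcal{J}$ whenever $F \in D^+_t(\R)$. Being a cohomological functor into $\VectF^{\M^{\circ}}$, $h_{\R}$ automatically sends exact triangles to long exact sequences, which yields the cohomological property. Pointwise finite-dimensionality reduces via the long exact sequence of a pair to sheaf cohomology of open intervals, which is finite-dimensional by tameness. Sequential continuity of $h_{\R}(F)$ should follow from a Milnor $\lim^1$ argument applied to nested sequences of open subsets of $\R$, combined with the fact that sheaf cohomology on $\R$ commutes with filtered colimits of such unions. Finally, the bound on the support along the diagonal is immediate because $h_{\R}(F)$ vanishes on any pair $(U, V)$ of open subsets with $U = V$.

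For essential surjectivity I would construct, for each $v \in \op{int} \M$, an interval sheaf complex $I_v \in D^+_t(\R)$ such that $h_{\R}(I_v) \cong B_v$. These are built from shifts of constant sheaves on suitable half-open or closed intervals determined by the two coordinates of $v$; this interval decomposition is standard in the sheaf-theoretic approach to level set persistence of \cite{MR3259939,MR3873181}. Given an arbitrary $G \in \mathcal{J}$, the decomposition $G \cong \bigoplus_v B_v^{\oplus n_v}$ supplies multiplicities $n_v$, and the analogous direct sum $\bigoplus_v I_v^{\oplus n_v}$, which exists in $D^+(\R)$ and remains tame by the bounded-above support condition on $\mathcal{J}$, provides a preimage up to isomorphism.

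For fully faithfulness, since both categories are additive and their objects decompose into indecomposables of matching type, it suffices to verify that the canonical map
\begin{equation*}
  \Hom_{D^+_t(\R)}(I_v, I_w[k]) \longrightarrow \Hom_{\mathcal{J}}(B_v, B_w[k])
\end{equation*}
induced by $h_{\R}$ is a bijection for all $v, w \in \op{int} \M$ and $k \in \Z$. Both sides admit explicit descriptions: the left via injective or soft resolutions of $I_w$ and the Kashiwara--Schapira machinery of \cite{Kashiwara1990}, the right by the interval-overlap conditions that dictate when a natural transformation $B_v \to B_w$ can be non-zero. I expect the main obstacle to be precisely this \Ext-computation: one must track carefully which morphisms between interval sheaves survive in the derived category and match them with the overlap pattern controlling $\Hom_{\mathcal{J}}$. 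This bookkeeping is presumably the content of the more general \cref{cor:equivBdClosed} mentioned in the excerpt, from which \cref{thm:hIRequiv} follows by specialization to the tame case.
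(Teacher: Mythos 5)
Your high-level picture is right: the paper does match the indecomposable $B_v \in \mathcal{J}$ with an ``interval sheaf complex'' --- in the paper's notation $\iota(v)$, which is a shift of a sheaf $\F_{Z(v)}$ supported on a locally closed interval --- and the equivalence is established by matching direct sums of these. The isomorphism $h_{\gamma}(\iota(v)) \cong B_v$ is \cref{cor:iota}, which rests on the Hom-computation in \cref{prp:iota}, and this is exactly the $\op{Ext}$-bookkeeping you anticipate. So the ingredients you name are present. However, there are two genuine gaps in the way you propose to assemble them.

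First, your essential surjectivity and full faithfulness arguments both tacitly assume that $h_{\R}$ commutes with the (typically infinite) direct sums $\bigoplus_v I_v^{\oplus n_v}$. But $h_{\R}$ is a corepresentable functor $\Hom_{D^+}(\iota(-), -)$, so it commutes with \emph{products}, not coproducts. The paper devotes \cref{sec:partFaithful,sec:dirSumProd} to proving \cref{prp:dirSumProd}, which says precisely that the infinite direct sum $\bigoplus_{i \in I}(\iota \circ p)(i)$ satisfies the universal property of the product in $D^+(q_\gamma, \partial q)$ whenever the multiplicities form an admissible Betti function. Without this, you cannot conclude $h_{\R}\bigl(\bigoplus_v I_v^{\oplus n_v}\bigr) \cong \bigoplus_v B_v^{\oplus n_v}$, nor can you reduce $\Hom(\bigoplus I_{v_i}, \bigoplus I_{w_j})$ to the pairwise Homs. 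This is the central technical obstacle, and your proposal does not address it.

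Second, for full faithfulness you say ``since both categories \ldots decompose into indecomposables of matching type, it suffices to verify the canonical map on indecomposables.'' But you have not established that objects of $D^+_t(\R)$ decompose as direct sums of the $I_v$. This decomposition is a \emph{consequence} of the argument, not a prerequisite: in the paper it is \cref{cor:decompDerCat}, proved by constructing a candidate map $\psi \colon \bigoplus I_{v_i} \to F$, showing $h_\gamma(\psi)$ is an isomorphism (\cref{lem:inducedIso}), and then invoking conservativity of $h_{\gamma,0}$ (\cref{prp:conservative}) --- itself a nontrivial stalk-by-stalk argument. Taking the decomposition on the derived side for granted makes the Krull--Schmidt-style reduction circular. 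You would need to supply either a direct proof of this decomposition (which classical $A_n$-type decomposition theorems for derived categories of $\R$-sheaves do not immediately give in the unbounded, non-constructible setting), or, as the paper does, route through conservativity and the product property of direct sums.
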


In particular the diagram \eqref{eq:introTriaZeta}
restricts to the diagram
\begin{equation*}
  \begin{tikzcd}[row sep=12ex, column sep=7ex]
    (\mathrm{lcContr} / \R)^{\circ}_t
    \arrow[d , "R (-)_* \F_{(-)}"']
    \arrow[dr, "h"{name=h0}]
    \\
    D^+_t (\R)
    \arrow[r, "h_{\R,t}"']
    \arrow[to=h0, Leftarrow, "\zeta", shorten >=1.5ex, shorten <=1.5ex]
    &
    \mathcal{J}
  \end{tikzcd}
\end{equation*}
with the lower horizontal functor an equivalence of categories,
where $(\mathrm{lcContr} / \R)^{\circ}_t$
denotes the full subcategory of all $\F$-tame functions
on locally contractible spaces,
see also \cref{dfn:FtameFunctions}.
Now $D^+_t (\R)$ is a full triangulated subcategory
of the derived category $D^+ (\R)$,
while $\mathcal{J}$ is a full subcategory of the abelian category
of $\mathcal{J}$-presentable functors.
We may put this into perspective
with \cref{prp:abelianization},
which is a slight generalization
of a result by \mbox{\cite[Section 4.2]{MR2355771}}.
Using \cref{thm:hIRequiv}
(or rather a generalization thereof)
and \cref{prp:abelianization}
we show in \cref{sec:abelianizationSheaves} that the composition of functors
\begin{equation}
  \label{eq:compAbelianization}
  D^+_t (\R) \xrightarrow{h_{\R,t}} \mathcal{J} \hookrightarrow
  \mathrm{pres}(\mathcal{J})
\end{equation}
is the abelianization of $D^+_t (\R)$,
i.e. \eqref{eq:compAbelianization}
is the universal or initial cohomological functor
on the triangulated category $D^+_t (\R)$;
this is \cref{cor:abelianizationReals}.
Thus, we obtain a close link between derived level set persistence
and the categorification of extended persistence diagrams.

\subsection{Related Work}
\label{sec:relatedWork}

Originally extended persistence diagrams
have been defined by \cite{MR2472288}
in terms of ranks of internal maps
of corresponding \emph{(extended) persistence modules}
\cite{MR2121296}.
Moreover,
these ranks of internal maps form an invariant in their own right,
the \emph{rank invariant}
of multi-dimensional persistence modules by \cite{MR2506738}.
The correspondence between the rank invariant and persistence diagrams
has been generalized by \cite{MR3975559}.
Now persistence modules form an abelian category themselves
and under suitable finiteness assumptions
any $1$-dimensional extended persistence module
decomposes into a direct sum of indecomposables.
As it turns out,
these indecomposables are in a one-to-one correspondence
with the vertices of the corresponding extended persistence diagram.
In particular,
the extended persistence diagram determines the isomorphism class
of the corresponding extended persistence modules and vice versa.
Now the Grothendieck group of the abelian category
of extended persistence modules (with suitable finiteness assumptions)
is generated by the isomorphism classes.
Thus,
the corresponding element in the Grothendieck group
is determined by the extended persistence diagram as well.
However, the converse is not true,
as the category of extended persistence modules
has too many short exact sequences.
For example,
if we consider a persistence module $M$
provided as a representation of an $A_n$-quiver,
then the corresponding element $[M]$ in the Grothendieck group
determines the dimension vector of $M$
and vice versa.
One way to eliminate this mismatch
between the Grothendieck group and extended persistence diagrams
is to \enquote{omit} some of the exact sequences
when forming the Grothendieck group.
This leads to the notion of a Quillen exact category \cite{MR0338129},
which can be defined as an abelian category with a certain class
of \emph{distinguished short exact sequences},
and the corresponding generalized notion of an
associated Grothendieck group \cite{MR0435185}.
With this notion this mismatch between the Grothendieck group
and extended persistence diagrams
can be eliminated by considering the Quillen exact category
with respect to the class of split short exact sequences
of extended persistence modules.
A similar approach has been used by \cite{2021arXiv210706800B}
in the multi-dimensional setting.
There the authors introduce their notion of
\emph{rank-exact} short exact sequences,
which coincide with split short exact sequences
in the $1$-dimensional setting (under suitable finiteness assumptions).
Further invariants of multi-dimensional peristence modules
defined in terms of Grothendieck groups
of Quillen exact categories
were introduced by \cite{2021arXiv211207632B}.
In the present work we propose another approach
to eliminate the mismatch between the Grothendieck group
and (extended) persistence diagrams.
Instead of constraining ourselves to short exact sequences
of extended persistence modules (or of sheaves on the reals)
that split,
we define the cohomological functor \eqref{eq:compAbelianization}
from the full subcategory $D^+_t(\R)$
of the derived category $D^+(\R)$
to $\mathcal{J}$-presentable functors
and then we consider the bona fide Grothendieck group
of $\mathrm{pres}(\mathcal{J})$.
(We note that we have to constrain ourselves to some subcategory
of tame objects since otherwise,
the Grothendieck group would be trivial by the Eilenberg swindle.)
In future work we intend to explore similar techniques
to obtain invariants of multi-dimensional persistence modules
in terms Grothendieck groups of abelian categories as well,
whereas in the present work we focus on $\F$-tame functions,
which we consider to be one of several sweet spots
within the realm of persistence theory.

Originally persistence diagrams were introduced as multisets
by \cite{MR2279866},
which we may think of as functions taking values in the natural numbers $\N_0$
or non-negative functions to the integers $\Z$.
This notion of a persistence diagram has been generalized
by \cite{MR3975559}
to functions taking values in an abelian group.
One particular generalization of a persistence diagram by \cite{MR3975559}
is defined for persistence modules
taking values in an abelian category $\mathcal{A}$.
For such a persistence module
its associated persistence diagram
is a function taking values in the Grothendieck group
$K_0(\mathcal{A})$.
In the present work we consider extended persistence diagrams themselves
as elements of the Grothendieck group $K_0(\mathrm{pres}(\mathcal{J}))$.

Now in order to show
that ${h_{\R, t} \colon D^+_t (\R) \rightarrow \mathcal{J}}$
is an equivalence of categories,
we employ similar techniques as \cite{MR935124}
used to describe derived categories of Dynkin quivers.
More specifically, we may think of the category
of $\F$-linear sheaves on the reals
as a continuous counterpart
to the category of representations of an $A_n$-quiver
with alternating orientations.
Now \cite{MR935124} associates to any quiver
an $\F$-linear \emph{mesh category} in such a way
that any two quivers of type $A_n$ have isomorphic mesh categories.
As a result of this theory,
the derived category of a Dynkin quiver is equivalent
to the category of
projective $\F$-linear pfd presheaves on the mesh category
with finite support.
In particular the derived categories
of any two quivers of type $A_n$ are equivalent.
In the present work we use the poset ${\M \subset \R^{\circ} \times \R}$
in place of the mesh category.
To be more precise, the quotient category $\F \M / \partial \M$
of the linearization $\F \M$ modulo $\partial \M$
can be seen as a continuous counterpart
of the mesh category of an $A_n$-quiver.
Moreover,
the category of $\F$-linear presheaves on $\F \M / \partial \M$
is equivalent to the category $\mathcal{C}$
of all functors
$\M^{\circ} \rightarrow \VectF$ vanishing on $\partial \M$,
which we work with throughout.
Furthermore,
in the same way that \cite{MR935124}
abstracts over the orientation of arrows of say an $A_n$-quiver,
we abstract over the specific topology on the real numbers.
Of course,
we cannot use any topology,
but there is a whole family of topologies
we can use in place of the Euclidean topology.
This is also closely related to the way in which
\cite{Carlsson:2009:ZPH:1542362.1542408,MR3031814}
abstract over the orientations of zigzags.
Moreover,
for finitely indexed persistence modules as in
\cite{Carlsson:2009:ZPH:1542362.1542408},
the results by \cite{MR935124} imply
that the derived category of ordinary persistence modules
and the derived category of zigzag persistence modules
(with corresponding vertex sets)
are equivalent.
This functorial extension of the correspondence of barcodes
(i.e. isomorphism classes)
by \cite{Carlsson:2009:ZPH:1542362.1542408}
through the equivalence by \cite{MR935124}
has been investigated by \cite{2020arXiv200606924H}.
In the present work we make use of sheaf theory
to extend this equivalence
(on the level of $1$-categories)
to the continuously indexed setting.

We also note that \cite{2019arXiv190709759B}
obtained a closely related invariant of continuous functions
and of sheaves on the reals,
namely \emph{Mayer--Vietoris systems}.
\begin{figure}[t]
  \centering
\begingroup%
  \makeatletter%
  \providecommand\color[2][]{%
    \errmessage{(Inkscape) Color is used for the text in Inkscape, but the package 'color.sty' is not loaded}%
    \renewcommand\color[2][]{}%
  }%
  \providecommand\transparent[1]{%
    \errmessage{(Inkscape) Transparency is used (non-zero) for the text in Inkscape, but the package 'transparent.sty' is not loaded}%
    \renewcommand\transparent[1]{}%
  }%
  \providecommand\rotatebox[2]{#2}%
  \newcommand*\fsize{\dimexpr\f@size pt\relax}%
  \newcommand*\lineheight[1]{\fontsize{\fsize}{#1\fsize}\selectfont}%
  \ifx\svgwidth\undefined%
    \setlength{\unitlength}{194.4375bp}%
    \ifx\svgscale\undefined%
      \relax%
    \else%
      \setlength{\unitlength}{\unitlength * \real{\svgscale}}%
    \fi%
  \else%
    \setlength{\unitlength}{\svgwidth}%
  \fi%
  \global\let\svgwidth\undefined%
  \global\let\svgscale\undefined%
  \makeatother%
  \begin{picture}(1,0.6557377)%
    \lineheight{1}%
    \setlength\tabcolsep{0pt}%
    \put(0,0){\includegraphics[width=\unitlength,page=1]{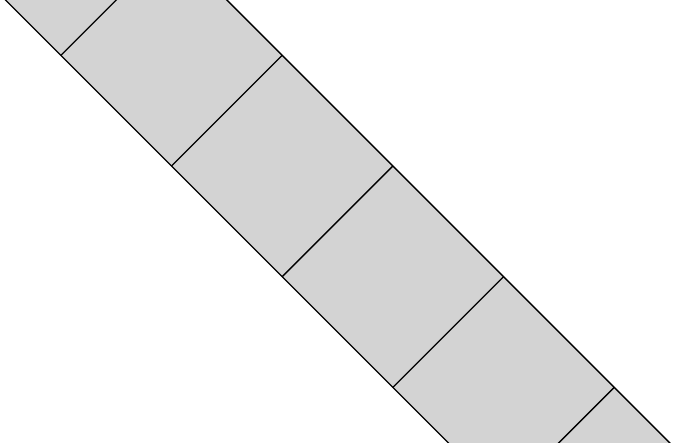}}%
    \put(0.74590164,0.08196721){\makebox(0,0)[t]{\lineheight{1.25}\smash{\begin{tabular}[t]{c}$T^{-2}(D)$\end{tabular}}}}%
    \put(0.58196721,0.24590164){\makebox(0,0)[t]{\lineheight{1.25}\smash{\begin{tabular}[t]{c}$T^{-1}(D)$\end{tabular}}}}%
    \put(0.25409836,0.57377049){\makebox(0,0)[t]{\lineheight{1.25}\smash{\begin{tabular}[t]{c}$T(D)$\end{tabular}}}}%
    \put(0.41803279,0.40983607){\makebox(0,0)[t]{\lineheight{1.25}\smash{\begin{tabular}[t]{c}$D$\end{tabular}}}}%
    \put(0.6789188,0.35222874){\makebox(0,0)[t]{\lineheight{1.25}\smash{\begin{tabular}[t]{c}$l_1$\end{tabular}}}}%
    \put(0.3084351,0.306319){\makebox(0,0)[t]{\lineheight{1.25}\smash{\begin{tabular}[t]{c}$l_0$\end{tabular}}}}%
  \end{picture}%
\endgroup%

  \caption{The tessellation of $\M$ induced by $T$ and $D$.}
  \label{fig:tessellationReals}
\end{figure}
If we consider the tessellation of $\M$
shown in \cref{fig:tessellationReals},
then we may think of the restriction of
$h(f) \colon \M^{\circ} \rightarrow \VectF$
for some function $f \colon X \rightarrow \R$
to each tile,
as a layer of the \emph{Mayer--Vietoris pyramid}
introduced by \cite{Carlsson:2009:ZPH:1542362.1542408}.
Following \cite{Carlsson:2009:ZPH:1542362.1542408}
we may further subdivide each pyramid
into their north, south, west, and east faces.
Roughly speaking,
the Mayer--Vietoris system associated to $f \colon X \rightarrow \R$
can be obtained as the pointwise dual of the restriction of
$h(f) \colon \M^{\circ} \rightarrow \VectF$
to the subposet $P \subset \M$
that is the union of all south faces -- one from each pyramid.
Furthermore,
the authors of \cite{2019arXiv190709759B}
provide a functor
$\Psi \colon D^+(\R) \rightarrow \text{M--V}(\R)$,
where $\text{M--V}(\R)$ is the category of Mayer--Vietoris systems.
In a similar way, provided an object $F$ of $D^+(\R)$
we may think of $\Psi(F) \colon P \rightarrow \VectF$
as the pointwise dual of ${h_{\R}(F) |_P \colon P^{\circ} \rightarrow \VectF}$.
Now thinking of Mayer--Vietoris systems as a subcategory
of $\VectF^P$,
we may consider the cohomological functor
\begin{equation}
  \label{eq:MVSysCoho}
  D^+(\R) \xrightarrow{\Psi}
  \text{M--V}(\R) \hookrightarrow
  \VectF^P
  .
\end{equation}
Considering that
\begin{equation*}
  D^+_t (\R) \xrightarrow{h_{\R,t}} \mathcal{J} \hookrightarrow
  \mathrm{pres}(\mathcal{J})
\end{equation*}
is the universal or initial cohomological functor
on the triangulated category $D^+_t (\R)$
by \cref{cor:abelianizationReals},
the restriction of \eqref{eq:MVSysCoho}
to $D^+_t(\R)$
has to factor through the latter universal cohomological functor.
This factorization in turn is provided by
restriction to $P \subset \M$ and pointwise dualization as described above.
We also note that while
${h_{\R,t} \colon D^+_t (\R) \rightarrow \mathcal{J}}$
is an equivalence of categories by \cref{thm:hIRequiv},
the functor $\Psi \colon D^+(\R) \rightarrow \text{M--V}(\R)$ is not.
An explicit example of ours showing that $\Psi$ is not faithful
is provided in \cite[Remark 4.9]{2019arXiv190709759B}.
In \cref{exm:hood} we show that this particular example
may appear in nature as well.

\subsection{Outline}

In \cref{sec:happel}
we describe a family of topological spaces,
which can be seen as a continuous counterpart
to the family of $A_n$-quivers.
Then we consider the associated categories of $\F$-linear sheaves
on these topological spaces
as continuously indexed counterparts
to representations of $A_n$-quivers.
Moreover, we provide a sheaf-theoretical counterpart $\iota$
to Happel's embedding \mbox{\cite[Section I.5.6]{MR935124}}
of the mesh category of a quiver into its derived category,
see also \mbox{\cite[Theorem 2.2]{MR3483626}}.
Then with \cref{prp:iota} we provide the property of the functor $\iota$,
that is the most fundamental to our work.

In \cref{sec:cohoFunctors} we describe how $\iota$ can be used
to obtain cohomological functors $\M^{\circ} \rightarrow \VectF$.
In \cref{sec:tamingCohoFunctors} we introduce a \emph{tameness} assumption
for objects in each of the categories we considered up to this point
and we show that tameness is an invariant under most of the functors
we consider between any two of these categories.
In Sections \ref{sec:altConstr} and \ref{sec:derivedLevelSet}
we connect our developments up to this point
with derived level set persistence by \cite{MR3259939,MR3873181}.

In \cref{sec:projCover} we show that any $\mathcal{J}$-presentable functor
admits a projective cover by a functor in $\mathcal{J}$
drawing upon theory and techniques of \cite[Section 3]{MR3431480}.
Later this result is strengthened to \cref{prp:minProjRes}.

In \cref{sec:equiv} we provide an equivalence of categories,
which eventually implies the equivalence of categories from \cref{thm:hIRequiv}.
To this end, we show that certain infinite coproducts of sheaves
satisfy the universal property of the product
in Sections \ref{sec:partFaithful} and \ref{sec:dirSumProd}.

In \cref{sec:abelianizationSheaves} we show that the composition of functors
\begin{equation*}
  D^+_t (\R) \xrightarrow{h_{\R,t}} \mathcal{J} \hookrightarrow
  \mathrm{pres}(\mathcal{J})
\end{equation*}
is the abelianization of $D^+_t (\R)$.
This result and its proof are inspired by \mbox{\cite[Section 4.2]{MR2355771}}.
Then we use this result to show
\cref{thm:frobenius,prp:minProjRes}.

Finally, in \cref{sec:categorification} we first show
that the Euler function
$\chi(F) \colon \op{int} \M \rightarrow \Z$
is an additive invariant
and then we prove \cref{thm:eulerIso}.
This concludes our paper and supports the idea
that RISC is a categorification of extended persistence diagrams.

\section{A Sheaf-Theoretical Happel Functor}
\label{sec:happel}

Let $\R$ and $\R^{\circ}$ denote the posets given by
the orders $\leq$ and $\geq$ on $\R$, respectively.
Then we may form the product poset $\R^{\circ} \times \R$
whose underlying set is the Euclidean plane.
Let $l_0$ and $l_1$ be two lines of slope $-1$
in $\R^{\circ} \times \R$
with $l_1$ sitting above $l_0$
as shown in \cref{fig:incidenceT}.
Moreover, let $\M$ be the convex hull of $l_0$ and $l_1$,
then $\M$ is a sublattice of $\R^{\circ} \times \R$.
The
\href{
  https://en.wikipedia.org/wiki/Center_(group_theory)
}{central} automorphism $T \colon \M \rightarrow \M$
with the following defining property
will be essential to many of our constructions
(also see \cref{fig:incidenceT}):
\begin{quote}
  Let $u \in \M$,
  $h$ be the horizontal line through $u$,
  let $g_0$ be the vertical line through $u$,
  let $h_1$ be the horizontal line through $T(u)$, and let
  $g_1$ be the vertical line through $T(u)$.
  Then the lines $l_0$, $h$, and $g_1$ intersect in a common point, and
  the same is true for the lines $l_1$, $g_0$, and $h_1$.  
\end{quote}
We also note that $T$ is a glide reflection along
the bisecting line between $l_0$ and $l_1$,
and the amount of translation is the distance
of $l_0$ and $l_1$.
Moreover, as a space, $\M / \langle T \rangle$ is a Möbius strip;
see also \cite{Carlsson:2009:ZPH:1542362.1542408}.

\begin{figure}[t]
  \centering
\begingroup%
  \makeatletter%
  \providecommand\color[2][]{%
    \errmessage{(Inkscape) Color is used for the text in Inkscape, but the package 'color.sty' is not loaded}%
    \renewcommand\color[2][]{}%
  }%
  \providecommand\transparent[1]{%
    \errmessage{(Inkscape) Transparency is used (non-zero) for the text in Inkscape, but the package 'transparent.sty' is not loaded}%
    \renewcommand\transparent[1]{}%
  }%
  \providecommand\rotatebox[2]{#2}%
  \newcommand*\fsize{\dimexpr\f@size pt\relax}%
  \newcommand*\lineheight[1]{\fontsize{\fsize}{#1\fsize}\selectfont}%
  \ifx\svgwidth\undefined%
    \setlength{\unitlength}{306bp}%
    \ifx\svgscale\undefined%
      \relax%
    \else%
      \setlength{\unitlength}{\unitlength * \real{\svgscale}}%
    \fi%
  \else%
    \setlength{\unitlength}{\svgwidth}%
  \fi%
  \global\let\svgwidth\undefined%
  \global\let\svgscale\undefined%
  \makeatother%
  \begin{picture}(1,0.41666667)%
    \lineheight{1}%
    \setlength\tabcolsep{0pt}%
    \put(0,0){\includegraphics[width=\unitlength,page=1]{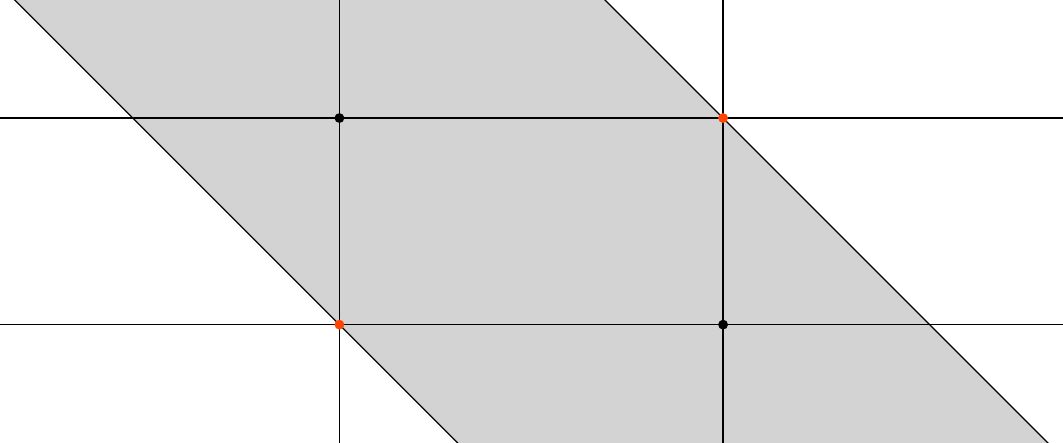}}%
    \put(0.34490735,0.22685196){\makebox(0,0)[t]{\lineheight{1.25}\smash{\begin{tabular}[t]{c}$g_1$\end{tabular}}}}%
    \put(0.4722223,0.31944436){\makebox(0,0)[t]{\lineheight{1.25}\smash{\begin{tabular}[t]{c}$h_1$\end{tabular}}}}%
    \put(0.65740735,0.18055564){\makebox(0,0)[t]{\lineheight{1.25}\smash{\begin{tabular}[t]{c}$g_0$\end{tabular}}}}%
    \put(0.51851863,0.07870368){\makebox(0,0)[t]{\lineheight{1.25}\smash{\begin{tabular}[t]{c}$h_0$\end{tabular}}}}%
    \put(0.2777777,0.31944436){\makebox(0,0)[t]{\lineheight{1.25}\smash{\begin{tabular}[t]{c}$T(u)$\end{tabular}}}}%
    \put(0.70138897,0.08101863){\makebox(0,0)[t]{\lineheight{1.25}\smash{\begin{tabular}[t]{c}$u$\end{tabular}}}}%
    \put(0.81454216,0.20860588){\makebox(0,0)[t]{\lineheight{1.25}\smash{\begin{tabular}[t]{c}$l_1$\end{tabular}}}}%
    \put(0.19392279,0.19033652){\makebox(0,0)[t]{\lineheight{1.25}\smash{\begin{tabular}[t]{c}$l_0$\end{tabular}}}}%
  \end{picture}%
\endgroup%

  \caption{Incidences defining $T$.}
  \label{fig:incidenceT}
\end{figure}

\begin{figure}[t]
  \centering
\begingroup%
  \makeatletter%
  \providecommand\color[2][]{%
    \errmessage{(Inkscape) Color is used for the text in Inkscape, but the package 'color.sty' is not loaded}%
    \renewcommand\color[2][]{}%
  }%
  \providecommand\transparent[1]{%
    \errmessage{(Inkscape) Transparency is used (non-zero) for the text in Inkscape, but the package 'transparent.sty' is not loaded}%
    \renewcommand\transparent[1]{}%
  }%
  \providecommand\rotatebox[2]{#2}%
  \newcommand*\fsize{\dimexpr\f@size pt\relax}%
  \newcommand*\lineheight[1]{\fontsize{\fsize}{#1\fsize}\selectfont}%
  \ifx\svgwidth\undefined%
    \setlength{\unitlength}{267.1875bp}%
    \ifx\svgscale\undefined%
      \relax%
    \else%
      \setlength{\unitlength}{\unitlength * \real{\svgscale}}%
    \fi%
  \else%
    \setlength{\unitlength}{\svgwidth}%
  \fi%
  \global\let\svgwidth\undefined%
  \global\let\svgscale\undefined%
  \makeatother%
  \begin{picture}(1,0.53333333)%
    \lineheight{1}%
    \setlength\tabcolsep{0pt}%
    \put(0,0){\includegraphics[width=\unitlength,page=1]{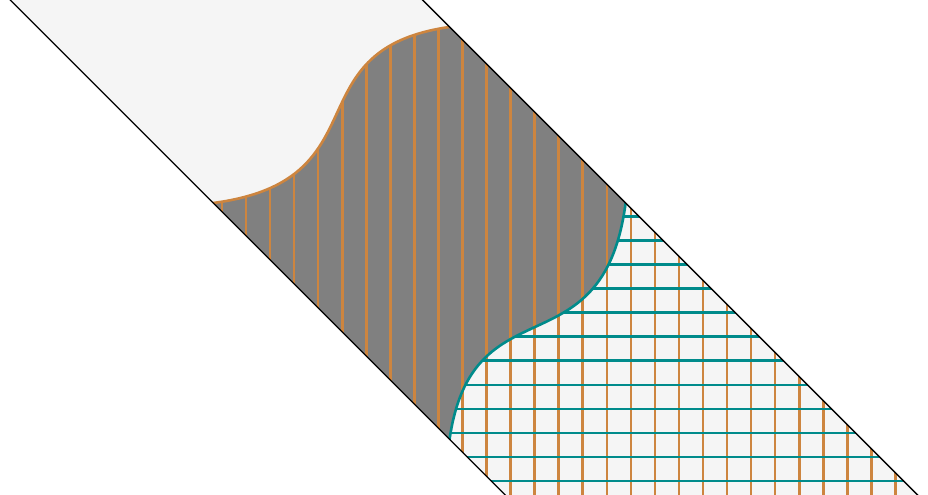}}%
    \put(0.76253053,0.26191382){\makebox(0,0)[t]{\lineheight{1.25}\smash{\begin{tabular}[t]{c}$l_1$\end{tabular}}}}%
    \put(0.28031972,0.21968028){\makebox(0,0)[t]{\lineheight{1.25}\smash{\begin{tabular}[t]{c}$l_0$\end{tabular}}}}%
  \end{picture}%
\endgroup%

  \caption{
    The fundamental domain
    $\textcolor{DimGrey}{D} :=
    \textcolor{Peru}{Q} \setminus
    \textcolor{DarkCyan}{T^{-1}(Q)}$.
  }
  \label{fig:defFundamentalDomain}
\end{figure}

\begin{figure}[t]
  \centering
\begingroup%
  \makeatletter%
  \providecommand\color[2][]{%
    \errmessage{(Inkscape) Color is used for the text in Inkscape, but the package 'color.sty' is not loaded}%
    \renewcommand\color[2][]{}%
  }%
  \providecommand\transparent[1]{%
    \errmessage{(Inkscape) Transparency is used (non-zero) for the text in Inkscape, but the package 'transparent.sty' is not loaded}%
    \renewcommand\transparent[1]{}%
  }%
  \providecommand\rotatebox[2]{#2}%
  \newcommand*\fsize{\dimexpr\f@size pt\relax}%
  \newcommand*\lineheight[1]{\fontsize{\fsize}{#1\fsize}\selectfont}%
  \ifx\svgwidth\undefined%
    \setlength{\unitlength}{194.4375bp}%
    \ifx\svgscale\undefined%
      \relax%
    \else%
      \setlength{\unitlength}{\unitlength * \real{\svgscale}}%
    \fi%
  \else%
    \setlength{\unitlength}{\svgwidth}%
  \fi%
  \global\let\svgwidth\undefined%
  \global\let\svgscale\undefined%
  \makeatother%
  \begin{picture}(1,0.6557377)%
    \lineheight{1}%
    \setlength\tabcolsep{0pt}%
    \put(0,0){\includegraphics[width=\unitlength,page=1]{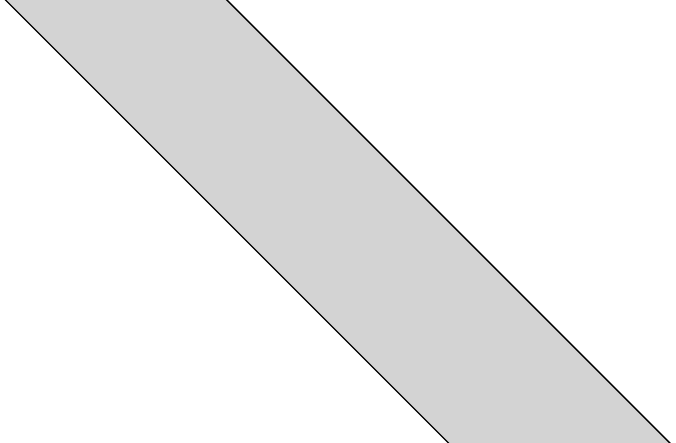}}%
    \put(0.77322392,0.06557377){\makebox(0,0)[t]{\lineheight{1.25}\smash{\begin{tabular}[t]{c}$T^{-2}(D)$\end{tabular}}}}%
    \put(0.59836066,0.21857936){\makebox(0,0)[t]{\lineheight{1.25}\smash{\begin{tabular}[t]{c}$T^{-1}(D)$\end{tabular}}}}%
    \put(0.2704918,0.54644822){\makebox(0,0)[t]{\lineheight{1.25}\smash{\begin{tabular}[t]{c}$T(D)$\end{tabular}}}}%
    \put(0.44535506,0.39344262){\makebox(0,0)[t]{\lineheight{1.25}\smash{\begin{tabular}[t]{c}$D$\end{tabular}}}}%
    \put(0,0){\includegraphics[width=\unitlength,page=2]{general-tessellation.pdf}}%
    \put(0.64823722,0.38728177){\makebox(0,0)[t]{\lineheight{1.25}\smash{\begin{tabular}[t]{c}$l_1$\end{tabular}}}}%
    \put(0.27283587,0.336454){\makebox(0,0)[t]{\lineheight{1.25}\smash{\begin{tabular}[t]{c}$l_0$\end{tabular}}}}%
  \end{picture}%
\endgroup%

  \caption{The tessellation of $\M$ induced by $T$ and $D$.}
  \label{fig:tessellation}
\end{figure}

Now let $Q \subset \M$ be a closed proper downset
and let $q := \partial Q$ be the boundary of $Q$ in $\M$.
Then $D := Q \setminus T^{-1}(Q)$
is a fundamental domain of $\M$
with respect to the action of $\langle T \rangle$,
see \cref{fig:defFundamentalDomain} for an example.
Moreover, $T$ and $D$ induce the tessellation of $\M$
shown in \cref{fig:tessellation}.
Now let
$\gamma := [0, \infty) \times (-\infty, 0] \subset \R^{\circ} \times \R$
be the downset of the origin.
As in \cite[Section 3.5]{Kashiwara1990} let $\R^2_{\gamma}$
be the plane endowed with the $\gamma$-topology
\cite[Definition 3.5.1]{Kashiwara1990}.
Moreover, let $q_{\gamma}$ be the subspace of $\R^2_{\gamma}$,
whose underlying set is $q$,
and let
\[\phi_{\gamma} \colon q \rightarrow q_{\gamma},\, t \mapsto t\]
be the natural continuous map.
Fixing a field $\F$ throughout this document,
we consider the category $\mathrm{Sh} (q_{\gamma})$
of $\F$-linear sheaves on $q_{\gamma}$.
Let $\partial q := q \cap \partial \M$
and let $j \colon \partial q \hookrightarrow q$
be the inclusion.
Moreover, let
$\mathrm{Sh}(q_{\gamma}, \partial q)$
be the full subcategory of $\F$-linear sheaves
in $\mathrm{Sh}(q_{\gamma})$ vanishing on $\partial q$.
Now considering the unit $\eta^j \colon \op{id} \to j_* \circ j^{-1}$ of the adjunction $j^{-1} \dashv j_*$ (viewed as a functor from $\mathrm{Sh} (q_{\gamma})$ to homomorphisms in $\mathrm{Sh} (q_{\gamma})$) and taking the kernel
yields a
\href{https://ncatlab.org/nlab/show/coreflective+subcategory}{coreflection}
$\flat = \op{ker} \circ \eta^j \colon \mathrm{Sh} (q_{\gamma}) \rightarrow
\mathrm{Sh}(q_{\gamma}, \partial q)$:

\begin{equation}
  \label{eq:vanishingBdCorefl}
  \begin{tikzcd}
      \mathrm{Sh}(q_{\gamma}, \partial q)
      \arrow[r, ""{name=I}, hook, bend right]
      &
      \mathrm{Sh}(q_{\gamma})
      \arrow[l, "\flat"'{name=b}, bend right]
      \arrow[phantom, from=I, to=b, "\dashv" rotate=90]
      .
  \end{tikzcd}
\end{equation}

\begin{lem}
  \label{lem:unitAtFlabbySheaf}
  If $F$ is a flabby sheaf on $q_{\gamma}$,
  then the unit
  ${\eta^j_F \colon F \rightarrow j_* j^{-1} F}$
  is an epimorphism.
\end{lem}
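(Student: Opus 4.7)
The plan is to show that $\eta^j_F$ is an epimorphism by verifying that the induced map on stalks is surjective at every point, which suffices because epimorphisms of sheaves can be tested stalk-wise.

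\emph{Setup.} The $\gamma$-topology on $\R^2$ has opens exactly the downsets with respect to the $\R^\circ \times \R$-order, so the subspace topologies on $q_\gamma$ and on $\partial q$ are Alexandrov in the induced order; every $x \in q$ has $\downarrow_q x := q \cap \downarrow_{\R^\circ \times \R} x$ as its smallest open neighborhood, and in particular $F_x = F(\downarrow_q x)$. Because $q = \partial Q$ is the boundary of a closed downset inside the strip $\M$, one checks that $q$ is a chain with minimum $\min \in q \cap l_0$ and maximum $\max \in q \cap l_1$, and $\partial q = \{\min, \max\}$.

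\emph{Boundary stalks.} For $x \in \partial q$, the counit $j^{-1} j_* \Rightarrow \op{id}$ of the adjunction $j^{-1} \dashv j_*$ is an isomorphism (as is standard for subspace inclusions), so applying $j^{-1}$ to $\eta^j_F$ yields an isomorphism, and in particular $(\eta^j_F)_x$ is an isomorphism at every $x \in \partial q$.

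\emph{Interior stalks.} For $x \in q \setminus \partial q$ the chain structure gives $V := \downarrow_q x \cap \partial q = \{\min\}$, which is a singleton downset and hence open in $q_\gamma$. Consequently
\[
(j^{-1} F)(\{\min\}) = F(\{\min\}) = F_{\min},
\]
so $(j_* j^{-1} F)_x = (j_* j^{-1} F)(\downarrow_q x) = (j^{-1} F)(\{\min\}) = F(\{\min\})$, and under these identifications the stalk map $(\eta^j_F)_x$ becomes the ordinary sheaf restriction $F(\downarrow_q x) \to F(\{\min\})$ along the inclusion $\{\min\} \subseteq \downarrow_q x$ of open subsets of $q_\gamma$. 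Flabbiness of $F$ then makes this restriction surjective.

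The main subtlety is that $j \colon \partial q \hookrightarrow q$ is neither an open nor a closed immersion ($\{\min\}$ is open but not closed and $\{\max\}$ is closed but not open in $q_\gamma$), so the usual formulas for $j_*$ on open or closed embeddings do not apply verbatim. The Alexandrov nature of the $\gamma$-topology is what makes the stalks computable explicitly and lets flabbiness do its work; note that this argument does \emph{not} prove surjectivity of $\eta^j_F$ on sections, only stalk-wise, which is all that is needed.
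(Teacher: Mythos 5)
Your overall strategy---testing the epimorphism on stalks---is legitimate and close in spirit to the paper's proof, which instead verifies surjectivity of $\eta^j_{F,U}$ for every open $U$. The problem is that your stalk computations rest on a false description of the topology. By \cite[Definition 3.5.1]{Kashiwara1990} the $\gamma$-open subsets of $\R^2$ are the \emph{Euclidean-open} sets $U$ with $U + \gamma = U$, i.e.\ the Euclidean-open downsets of $\R^{\circ} \times \R$, not all downsets. So $q_{\gamma}$ is not an Alexandrov space: $\downarrow_q x$ is Euclidean-closed rather than open, a point has no smallest open neighbourhood (a neighbourhood basis is given by sets of the form $q \cap \op{int}(\downarrow v)$, which is exactly how the paper chooses bases later on), and $F_x \neq F(\downarrow_q x)$ in general. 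Likewise $\{q_0\}$ with $q_0 \in q \cap l_0$ is never open in $q_{\gamma}$ when $q$ is a nondegenerate curve through $q_0$, because every Euclidean-open set containing $q_0$ meets the rest of the curve; what can be true, and what the paper actually uses, is that $\{q_0\}$ is open in the subspace $\partial q$. Finally, $q$ is not a chain: in the paper's main instance $Q = \downarrow \op{Im} \blacktriangle$ the curve $q$ is the image of $t \mapsto (\arctan t, \arctan t)$, whose points are pairwise incomparable in $\R^{\circ} \times \R$; and for $Q = \downarrow v$ the boundary $q$ is the union of a vertical and a horizontal segment, i.e.\ two chains glued at their common maximum $v$, with two mutually incomparable minimal points, one on each of $l_0$ and $l_1$. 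Hence the identity $\downarrow_q x \cap \partial q = \{\min\}$ and everything built on it collapses.

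A corrected stalkwise argument has to make the same case distinction the paper makes on sections. If $x \in \dot{q}$ has an open neighbourhood disjoint from $\partial q$, then $(j_* j^{-1} F)_x = 0$ and there is nothing to prove. Otherwise every open neighbourhood of $x$ meets $\partial q$ in one or two points, and in the two-point case one must produce, from independently prescribed germs at $q_0$ and at $q_1$, a single section near $x$ hitting both. (This case really occurs: for $Q = \downarrow v$ the only $\gamma$-open neighbourhood of the corner $v$ in $q_{\gamma}$ is $q_{\gamma}$ itself, since every open downset containing $v$ contains $\downarrow v \supseteq q$, so the stalk map at $v$ is $F(q_{\gamma}) \rightarrow F_{q_0} \oplus F_{q_1}$.) That is precisely the paper's step of choosing disjoint opens $V_0, V_1$ around the two boundary points, gluing by the sheaf axiom, and then extending by flabbiness; your ``interior stalks'' paragraph silently skips it, and your appeal to flabbiness is applied to a restriction map $F(\downarrow_q x) \rightarrow F(\{\min\})$ between sets that are not open. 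The ``boundary stalks'' paragraph is correct (the counit of $j^{-1} \dashv j_*$ is an isomorphism for any subspace inclusion), but it uses nothing about flabbiness and does not address the substance of the lemma.
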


\begin{proof}
  Let $U \subseteq q_{\gamma}$ be an open subset.
  If $U \cap \partial q$ contains just a single point $t$,
  then we have the commutative diagram
  \begin{equation*}
    \begin{tikzcd}[column sep=8ex, row sep=5ex]
      F(U)
      \arrow[r, "\eta^j_{F,U}"]
      \arrow[rd, two heads]
      &
      (j_* j^{-1} F)(U) = (j^{-1} F)(\{t\})
      \arrow[d, equal]
      \\
      &
      F_t
      ,
    \end{tikzcd}
  \end{equation*}
  hence $\eta^j_F$ is an epimorphism at $U$.
  Now suppose $\partial q \subset U$.
  If $\partial q$ is discrete,
  then we can find disjoint open subsets $V_0, V_1 \subset U$
  with $\partial q \subset V_0 \cup V_1$
  and each containing a single point of $\partial q$.
  Moreover,
  we have the commutative diagram
  \begin{equation*}
    \begin{tikzcd}[column sep=12ex]
      F(U)
      \arrow[r, "\eta^j_{F,U}"]
      \arrow[d, two heads]
      &
      (j_* j^{-1} F)(U) = (j^{-1} F)(\partial q)
      \arrow[d, equal]
      \\
      F(V_0 \cup V_1)
      \arrow[d, "\cong"']
      \arrow[r, "\eta^j_{F, V_0 \cup V_1}"]
      &
      (j_* j^{-1} F)(V_0 \cup V_1) 
      \arrow[d, "\cong"]
      \\
      F(V_0) \oplus F(V_1)
      \arrow[r, "\eta^j_{F,V_0} \oplus \eta^j_{F,V_1}"', two heads]
      &
      (j_* j^{-1} F)(V_0) \oplus (j_* j^{-1} F)(V_1)
    \end{tikzcd}
  \end{equation*}
  with the lower horizontal map an epimorphism
  by our reasoning above.
  If $\partial q$ is not discrete,
  then $U = q_{\gamma}$ and moreover,
  ${\eta^j_{F,q_{\gamma}} \colon F(q_{\gamma}) \rightarrow
    (j_* j^{-1} F)(q_{\gamma}) = (j^{-1} F)(\partial q)}$
  is even an isomorphism.
\end{proof}

\begin{lem}
  The induced adjunction on the level of derived categories
  \begin{equation*}
    \begin{tikzcd}
      D^+ (\mathrm{Sh}(q_{\gamma}, \partial q))
      \arrow[r, ""{name=I}, bend right]
      &
      D^+ (\mathrm{Sh}(q_{\gamma}))
      \arrow[l, "R \flat"'{name=b}, bend right]
      \arrow[phantom, from=I, to=b, "\dashv" rotate=90]
    \end{tikzcd}
  \end{equation*}
  is coreflective as well.
\end{lem}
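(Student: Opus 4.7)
The plan is first to construct the derived adjunction and then to verify that its unit is an isomorphism. Since the full subcategory $\mathrm{Sh}(q_\gamma, \partial q)$ is cut out by the exact condition of vanishing on $\partial q$, the inclusion $i$ is an exact functor and passes directly to a triangulated functor of bounded below derived categories. The coreflector $\flat$ is left exact (being a right adjoint), and $\mathrm{Sh}(q_\gamma)$ has enough injectives; hence the right derived functor $R\flat$ exists on $D^+(\mathrm{Sh}(q_\gamma))$. Standard derived-adjunction machinery then yields $i \dashv R\flat$.

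To see that this adjunction is coreflective, I would show that for every $F \in D^+(\mathrm{Sh}(q_\gamma, \partial q))$ the derived unit $F \to R\flat(iF)$ is a quasi-isomorphism. A standard truncation/hypercohomology argument reduces this to the case where $F$ is a single sheaf in $\mathrm{Sh}(q_\gamma, \partial q)$ placed in degree zero; equivalently, it suffices to prove that $iF$ is $\flat$-acyclic for such $F$. Choose an injective resolution $iF \to I^\bullet$ in $\mathrm{Sh}(q_\gamma)$; each $I^n$ is injective and therefore flabby, so by the preceding lemma the components fit into short exact sequences which assemble into a short exact sequence of complexes
\[
0 \longrightarrow \flat I^\bullet \longrightarrow I^\bullet \longrightarrow j_* j^{-1} I^\bullet \longrightarrow 0.
\]
The middle term is quasi-isomorphic to $iF$, so it suffices to show that the right-hand complex is acyclic; granted this, the associated long exact sequence forces $\flat I^\bullet \simeq iF$, and the induced derived unit $F \to R\flat(iF)$ is identified with the abelian isomorphism $F \xrightarrow{\sim} \flat(iF)$.

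For the acyclicity of $j_* j^{-1} I^\bullet$: since $iF$ vanishes on $\partial q$ and $j^{-1}$ is exact, $j^{-1} I^\bullet$ is an exact resolution of $j^{-1}(iF) = 0$. It then remains to verify that $j \colon \partial q \hookrightarrow q_\gamma$ is a closed inclusion in the $\gamma$-topology, so that $j_*$ is itself exact (equivalently, $R^k j_* = 0$ for $k > 0$); once this is in hand, exactness of $j_*$ immediately transfers the acyclicity from $j^{-1} I^\bullet$ to $j_* j^{-1} I^\bullet$. I expect this last verification — unpacking the $\gamma$-topology on the broken line $q$ and the way its boundary $\partial q = q \cap \partial \M$ sits inside $q_\gamma$ — to be the main technical obstacle, though it should reduce to a direct inspection of the fundamental-domain picture together with the definitions recalled at the start of the section.
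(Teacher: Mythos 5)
Your overall strategy is sound and runs parallel to the paper's: both arguments reduce, via the appendix lemma on derived units, to showing that a single sheaf $G$ vanishing on $\partial q$ is $\flat$-acyclic, and both hinge on \cref{lem:unitAtFlabbySheaf} to get the short exact sequence $0 \to \flat I \to I \to j_* j^{-1} I \to 0$ for flabby $I$. However, the step you defer to as ``the main technical obstacle'' --- verifying that $j \colon \partial q \hookrightarrow q_{\gamma}$ is a \emph{closed} inclusion --- is a genuine gap: this is false in general. The paper treats closedness of $\partial q$ in $q_{\gamma}$ as an extra hypothesis that may or may not hold (see the ``If $\partial q$ is closed in $q_{\gamma}$'' clauses in \cref{lem:qAdjEquivProperSupp,lem:adjDirImProperSupp} and the remark following \cref{prp:dirSumProd}, which explicitly contemplates the non-closed case), whereas the lemma you are proving carries no such hypothesis. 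In the $\gamma$-topology the open sets of $q_\gamma$ are traces of open downsets, and the complement $\dot q = q \setminus \{q_0, q_1\}$ need not be such a trace, so your proposed verification cannot be carried out.

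The conclusion you need --- exactness of $j_*$, hence acyclicity of $j_* j^{-1} I^{\bullet}$ --- is nevertheless true, but for a more elementary reason: $\partial q = \{q_0, q_1\}$ has at most two points, and on any space with at most two points (discrete, Sierpi\'nski, or indiscrete) every section functor $\Gamma(V; -)$ is exact; since stalks of $j_* G$ are filtered colimits of such sections, $j_*$ is exact regardless of whether $\partial q$ is closed. Substituting this observation for the closedness claim repairs your proof. For comparison, the paper avoids the explicit injective resolution altogether: it factors $R\flat \cong R\ker \circ \Der(\eta^j)$ using that flabby sheaves are $\eta^j$-injective and that epimorphisms in the arrow category are $\ker$-injective, and then notes that for $G$ vanishing on $\partial q$ the unit $\eta^j_G \colon G \to j_* j^{-1} G \cong 0$ is already an epimorphism, so $R\flat(G) \cong \ker(\eta^j_G) = G$. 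That route still implicitly uses exactness of $j_* \circ j^{-1}$ (to view $\eta^j$ as an exact functor into the arrow category), so the two-point observation above is doing quiet work in either version.
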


\begin{proof}
  By \cref{lem:coreflection} it suffices to show
  that all sheaves vanishing on $\partial q$
  are $\flat$-acyclic.
  To this end, we again view $\eta^j$ as an exact functor
  from $\mathrm{Sh} (q_{\gamma})$ to homomorphisms
  in $\mathrm{Sh} (q_{\gamma})$.
  In particular the subcategory of flabby sheaves is $\eta^j$-injective
  \cite[Definition 1.8.2]{Kashiwara1990}.
  Suppose that $F$ is a flabby sheaf on $q_\gamma$.
  Then $\eta^j(F) = \eta^j_F$ is an epimorphism by \cref{lem:unitAtFlabbySheaf}.
  Moreover, as $\mathrm{Sh} (q_{\gamma})$ is additive,
  the full subcategory of all epimorphisms
  in the arrow category
  is injective with respect to taking kernels
  \cite[Definition 1.8.2]{Kashiwara1990}.
  From this we obtain
  \[R \flat \cong R \ker \circ R \eta^j \cong
  R \ker \circ \, \Der (\eta^j)\]
  by \cite[Proposition 1.8.7]{Kashiwara1990}
  and by exactness of $\eta^j$.
  
  Now suppose that $G$ is an arbitrary sheaf vanishing on $\partial q$.
  Then $j^{-1} G = 0$, and so $\eta^j_G \colon G \to j_* j^{-1} G \cong 0$ is an epimorphism.
  We thus get 
  \[R \flat (G) \cong
    R \ker (\Der (\eta^j) (G)) =
    R \ker ( \eta^j_G ) \cong
    \ker (\eta^j_G) = G.\]
  In particular $G$ is $\flat$-acyclic.
  (Here the second isomorphism of the previous equation
  follows from the fact that
  $\eta^j_G \colon G \to j_* j^{-1} G$
  is a $\ker$-injective object of the arrow category.)
\end{proof}

From this point onwards we write
$D^+ (q_{\gamma})$ for the derived category
$D^+ (\mathrm{Sh}(q_{\gamma}))$
and we write
$D^+ (q_{\gamma}, \partial q)$
for the full subcategory of complexes of sheaves in $D^+ (q_{\gamma})$,
whose cohomology sheaves vanish on $\partial q$.
Then we obtain the following in conjunction with \cref{lem:cohomChar}.

\begin{cor}
  \label{cor:vanishingCoreflector}
  The category
  $D^+ (q_{\gamma}, \partial q)$
  is a triangulated coreflective subcategory of
  $D^+ (q_{\gamma})$
  with coreflector
  $R \flat \colon D^+ (q_{\gamma}) \rightarrow D^+ (q_{\gamma}, \partial q)$.
\end{cor}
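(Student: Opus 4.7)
The preceding lemma already supplies the essential content, namely a coreflective adjunction $R\flat \dashv \iota$ between $D^+(\mathrm{Sh}(q_\gamma, \partial q))$ and $D^+(q_\gamma)$. The corollary therefore amounts to (i) identifying the source $D^+(\mathrm{Sh}(q_\gamma, \partial q))$ with the subcategory $D^+(q_\gamma, \partial q) \subseteq D^+(q_\gamma)$ defined by vanishing of cohomology sheaves on $\partial q$, and (ii) checking that this subcategory carries the expected triangulated structure inherited from $D^+(q_\gamma)$.

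For step (i) I would appeal directly to \cref{lem:cohomChar}. The subcategory $\mathrm{Sh}(q_\gamma, \partial q) \subseteq \mathrm{Sh}(q_\gamma)$ is a Serre (thick) subcategory, being closed under kernels, cokernels, and extensions; indeed a sheaf vanishes on $\partial q$ iff each of its stalks at points of $\partial q$ is zero, a condition plainly preserved under these operations. Under such hypotheses \cref{lem:cohomChar} provides a fully faithful functor $D^+(\mathrm{Sh}(q_\gamma, \partial q)) \hookrightarrow D^+(q_\gamma)$ whose essential image is exactly those complexes all of whose cohomology sheaves lie in $\mathrm{Sh}(q_\gamma, \partial q)$, i.e. $D^+(q_\gamma, \partial q)$ as defined in the text just before the corollary.

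For step (ii) I would verify triangulatedness by the usual two-of-three argument. The shift functor permutes cohomology sheaves up to degree shift, so $D^+(q_\gamma, \partial q)$ is closed under shifts. Given a distinguished triangle in $D^+(q_\gamma)$ with two of its vertices in $D^+(q_\gamma, \partial q)$, the associated long exact sequence of cohomology sheaves combined with closure of $\mathrm{Sh}(q_\gamma, \partial q)$ under kernels, cokernels, and extensions forces the cohomology sheaves of the third vertex to lie in $\mathrm{Sh}(q_\gamma, \partial q)$ as well. Hence $D^+(q_\gamma, \partial q)$ is a triangulated subcategory of $D^+(q_\gamma)$.

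Combining (i) and (ii) with the preceding lemma yields the corollary: after identifying the two categories, $R\flat$ is precisely the coreflector onto $D^+(q_\gamma, \partial q)$. The only genuinely nontrivial point is step (i), and this is absorbed into the cited \cref{lem:cohomChar}; everything else is formal manipulation of long exact sequences.
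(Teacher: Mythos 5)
Your proposal matches the paper's proof, which is exactly the combination of the preceding lemma (coreflectivity of the derived adjunction) with \cref{lem:cohomChar}; note only that the hypothesis of \cref{lem:cohomChar} is the coreflectivity itself rather than the Serre-subcategory property you cite, though you do have that coreflectivity in hand. Your step (ii) merely re-derives the ``in particular'' clause of \cref{lem:cohomChar}, so it is redundant but harmless.
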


Now let $\dot{q} := q_{\gamma} \setminus \partial q$
and let $i \colon \dot{q} \hookrightarrow q_{\gamma}$
be the corresponding subspace inclusion.
Then we have
\begin{equation}
  \label{eq:dirImProperSuppAsFlat}
  i_! = \flat \circ i_* \colon
  \mathrm{Sh}(\dot{q}) \rightarrow \mathrm{Sh}(q_{\gamma}, \partial q)
  .
\end{equation}
Moreover, let $\ddot{q} \subseteq q_{\gamma}$
be the smallest open subset of $q_{\gamma}$
containing $\dot{q}$
and let
$i_1 \colon \dot{q} \hookrightarrow \ddot{q}$
and
$i_2 \colon \ddot{q} \hookrightarrow q_{\gamma}$
be the corresponding inclusions.

\begin{lem}
  \label{lem:dotsAdjEquiv}
  The adjunction
  \begin{equation*}
    \begin{tikzcd}
      \mathrm{Sh}(\ddot{q})
      \arrow[r, "i_1^{-1}"'{name=la}, bend right]
      &
      \mathrm{Sh}(\dot{q})
      \arrow[l, "i_{1 *}"'{name=ra}, bend right]
      \arrow[phantom, from=la, to=ra, "\dashv" rotate=90]
    \end{tikzcd}    
  \end{equation*}
  is an exact adjoint equivalence.
\end{lem}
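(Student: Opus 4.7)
The plan is to verify directly that the unit and counit of the adjunction $i_1^{-1} \dashv i_{1*}$ are natural isomorphisms. This upgrades the adjunction to an adjoint equivalence, and the claimed exactness of both functors then follows automatically, since any equivalence of abelian categories is exact in both directions.

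The first step is to describe the frames of open subsets of $\ddot{q}$ and $\dot{q}$ explicitly. A $\gamma$-open subset of $q_\gamma$ is a Euclidean-open subset of $q$ closed under moving in the $\gamma$-direction. Because $q$ is the boundary of a closed proper downset $Q \subset \M$, the $+\gamma$-direction along $q$ is well-defined and converges to the unique point $p_0 := q \cap l_0$ of $\partial q$ lying on the lower boundary line; write $p_1 := q \cap l_1$ for the other boundary point. I would then argue that every non-empty $\gamma$-open of $q_\gamma$ meeting $\dot{q}$ must contain $p_0$ by forward-closure, whereas the set $\dot{q} \cup \{p_0\} = q \setminus \{p_1\}$ is itself $\gamma$-open, since $\{p_1\}$ is closed in the Euclidean topology of $q$ and the set is manifestly forward-closed. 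By minimality this identifies $\ddot{q} = \dot{q} \cup \{p_0\}$.

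Next, I would show that the restriction map
\begin{equation*}
  \Phi \colon \mathcal{O}(\ddot{q}) \rightarrow \mathcal{O}(\dot{q}), \, U \mapsto U \cap \dot{q}
\end{equation*}
is an isomorphism of frames. Surjectivity is immediate: any open of $\dot{q}$ lifts to an open of $q_\gamma$ intersected with $\ddot{q}$. For injectivity, every non-empty open of $\ddot{q}$ contains $p_0$ by the previous paragraph, so it is uniquely recovered from its intersection with $\dot{q}$ by adjoining $\{p_0\}$; it remains to rule out that $\{p_0\}$ itself is open in $\ddot{q}$, which fails because $\{p_0\}$ is not Euclidean-open in $q$.

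Given the frame isomorphism $\Phi$, the equivalence of sheaf categories is formal, and a direct computation shows that it is implemented by the stated adjunction: one has $(i_{1*} G)(U) = G(U \cap \dot{q})$ for any sheaf $G$ on $\dot{q}$, and $(i_1^{-1} F)(V) = F(V \cup \{p_0\})$ for any non-empty open $V$ of $\dot{q}$, so that both the unit $F \to i_{1*} i_1^{-1} F$ and the counit $i_1^{-1} i_{1*} G \to G$ reduce to identity maps under these identifications. The principal step requiring care is the analysis of the $\gamma$-topology restricted to the curve $q$, particularly verifying that forward-closure on $q$ behaves as claimed and that $p_0$ is the unique accumulation point in the forward direction; once these geometric facts are in hand, the adjoint-equivalence properties are essentially formal consequences of the frame isomorphism $\Phi$.
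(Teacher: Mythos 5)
Your overall strategy---showing that restriction induces an isomorphism of frames $\mathcal{O}(\ddot{q}) \rightarrow \mathcal{O}(\dot{q})$ that preserves covers, and deducing the adjoint equivalence formally---is exactly the paper's proof, which consists of precisely this assertion. The gap lies in the geometric analysis you build it on, which is the very step you flag as the one requiring care. Since $\gamma = [0,\infty)\times(-\infty,0]$ is the downset of the origin in $\R^{\circ}\times\R$, a $\gamma$-open set is closed under moving right \emph{and down}, so every open subset of $q_{\gamma}$ contains $q \cap (\downarrow t)$ for each of its points $t$. Because $Q$ is a downset, $q = \partial Q$ is a monotone nondecreasing staircase from $p_0 = q\cap l_0$ (its Euclidean lower-left end) to $p_1 = q \cap l_1$ (its upper-right end), and $q\cap(\downarrow t)$ consists of the part of the maximal horizontal segment of $q$ through $t$ lying to the \emph{right} of $t$ (pointing toward $p_1$) together with the part of the maximal vertical segment through $t$ lying \emph{below} $t$ (pointing toward $p_0$). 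Hence forward-closure does not single out a direction converging to $p_0$, and your three concrete claims all fail in general: (i) not every nonempty open meeting $\dot q$ contains $p_0$ --- for the diagonal $q = \op{Im}\blacktriangle$ of Section 3.3 the $\gamma$-topology on $q$ is the ordinary arc topology and small subintervals avoid $p_0$; (ii) $\dot q\cup\{p_0\}$ need not be open --- if $q$ has a terminal horizontal segment ending at $p_1$, then $p_1 \in \downarrow t$ for every $t$ on that segment, so no open set containing such $t$ omits $p_1$; (iii) consequently $\ddot q$ is not always $\dot q\cup\{p_0\}$: it equals $\dot q$, $\dot q\cup\{p_0\}$, $\dot q\cup\{p_1\}$, or all of $q_{\gamma}$, according as $q$ does or does not have an initial vertical segment at $p_0$ and a terminal horizontal segment at $p_1$. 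Your proposed formula $(i_1^{-1}F)(V) = F(V\cup\{p_0\})$ inherits these errors.

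The repair keeps your architecture but replaces the incorrect identification of $\ddot q$ by the following observation, which is what actually makes $\Phi\colon V \mapsto V\cap\dot q$ injective and cover-preserving. A point $p \in \partial q$ lies in $\ddot q$ precisely when there is a punctured segment $\sigma_p \subseteq \dot q$ abutting $p$ with $p \in \downarrow t$ for every $t \in \sigma_p$ (the initial vertical segment above $p_0$, respectively the terminal horizontal segment to the left of $p_1$). For such $p$ and any open $V \subseteq \ddot q$ one then has $p \in V$ if and only if $V \cap \sigma_p \neq \emptyset$: the ``if'' direction because $V$ is a downset of $q$, the ``only if'' direction because $V$ is relatively Euclidean-open and $\sigma_p$ accumulates at $p$. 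This determines $V$ from $V\cap\dot q$ and shows that any cover of $V\cap\dot q$ by sets $V_i\cap\dot q$ lifts to a cover of $V$; the unit and counit computations are then formal, as you say.
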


\begin{proof}
  The inclusion
  $i_1 \colon \dot{q} \hookrightarrow \ddot{q}$
  induces a lattice isomorphism between the topologies
  of $\dot{q}$ and $\ddot{q}$.
  Moreover,
  this isomorphism preserves all covers.
\end{proof}

\begin{lem}
  \label{lem:flatUnitIso}
  For any sheaf $F$ on $q_{\gamma}$
  the naturally induced map
  \begin{equation*}
    (\flat \circ \eta^i)_F \colon
    \flat F \rightarrow
    \flat i_* i^{-1} F = i_! i^{-1} F
  \end{equation*}
  is an isomorphism.
\end{lem}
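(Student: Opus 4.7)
The map in question is $\flat$ applied to the unit $\eta^i_F \colon F \to i_* i^{-1} F$, landing in $\flat i_* i^{-1} F = i_! i^{-1} F$ via \eqref{eq:dirImProperSuppAsFlat}. My plan is to build its inverse explicitly, exploiting that $\flat F$ is already supported away from $\partial q$ by construction. First, I would observe that $\flat F = \ker(\eta^j_F)$ vanishes on $\partial q$, while $j_* j^{-1} F$ has trivial stalks on $\dot q$; together these say that the inclusion $\flat F \hookrightarrow F$ becomes an isomorphism upon applying $i^{-1}$, so that
\[ i^{-1} \flat F \xrightarrow{\ \cong\ } i^{-1} F \qquad \text{and hence} \qquad i_! i^{-1} \flat F \xrightarrow{\ \cong\ } i_! i^{-1} F. \]

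Second, I would argue that for every $G$ in $\mathrm{Sh}(q_\gamma, \partial q)$ the counit $\epsilon^i_G \colon i_! i^{-1} G \to G$ is an isomorphism. Both $i_! i^{-1} G$ and $G$ already vanish on $\partial q$ (the former by \eqref{eq:dirImProperSuppAsFlat}, the latter by assumption), and they agree on $\dot q$ after applying $i^{-1}$, so it suffices to know that $i^{-1} \colon \mathrm{Sh}(q_\gamma, \partial q) \to \mathrm{Sh}(\dot q)$ is faithful enough to detect isomorphisms. This is where \cref{lem:dotsAdjEquiv} enters: since $i$ factors as $i_2 \circ i_1$ with $i_1 \colon \dot q \hookrightarrow \ddot q$ inducing an equivalence and $i_2 \colon \ddot q \hookrightarrow q_\gamma$ a genuine open inclusion, the question reduces to the standard open--closed case in which $i_{2!}$ is honest extension-by-zero. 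Taking $G := \flat F$ and combining with the previous display yields
\[ i_! i^{-1} F \xleftarrow{\ \cong\ } i_! i^{-1} \flat F \xrightarrow{\ \epsilon^i_{\flat F}\ } \flat F, \]
and by the triangle identities for $i_! \dashv i^{-1}$ this composite is the inverse of $(\flat \circ \eta^i)_F$.

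The principal technical obstacle is that $\dot q$ need not itself be open in the $\gamma$-topology, so the familiar six-functor formalism for an open--closed pair does not apply out of the box. \cref{lem:dotsAdjEquiv} is precisely the device designed to sidestep this: it lets one import all the desired formal properties of $i_!$ from the genuinely open inclusion $i_2$. With this reduction in hand, the remainder of the argument amounts to a routine unpacking of the definitions and of the naturality of the adjunction units.
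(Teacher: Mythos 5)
Your plan hinges on the assertion that ``$j_* j^{-1} F$ has trivial stalks on $\dot q$,'' from which you deduce that $i^{-1}(\flat F \hookrightarrow F)$ is an isomorphism. This is false in general. Since $i^{-1}$ preserves stalks, $(i^{-1}\flat F)_x = (\flat F)_x = \ker\bigl(F_x \to (j_* j^{-1}F)_x\bigr)$, and $(j_* j^{-1}F)_x = 0$ for a given $x \in \dot q$ exactly when $x$ admits a neighbourhood disjoint from $\partial q$. Thus the claim holds for \emph{all} $x \in \dot q$ precisely when $\overline{\partial q} = \partial q$, i.e.\ when $\partial q$ is closed in $q_\gamma$ --- but the lemma carries no such hypothesis, and it is needed in settings where it fails. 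A concrete instance: take $Q = \downarrow w \cap \M$ for $w$ in the interior of $\M$, so that $q$ is an L-shaped staircase with corner $w$. Every $\gamma$-open downset containing $w$ contains all of $q$, hence $q_\gamma$ is the only open neighbourhood of $w$, and $(j_*j^{-1}F)_w = (j^{-1}F)(\partial q)$, which is not zero for a typical $F$; for a point $x$ on the interior of the vertical arm one gets $(j_*j^{-1}F)_x \cong F_{q_0}$, again nonzero. So $i^{-1}\flat F \to i^{-1}F$ genuinely fails to be an isomorphism here, and the first half of your inverse-construction collapses. You explicitly flag that $\dot q$ may fail to be open, but the remedy you propose --- replacing $\dot q$ by $\ddot q$ via \cref{lem:dotsAdjEquiv} --- only helps relate $i_!$ to $i_{2!}$; it does nothing to repair the stalk computation, which is a statement about points of $\dot q$ lying in $\overline{\partial q}$.

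There is also a circularity lurking in the second and third steps. The map $\epsilon^i_G \colon i_! i^{-1} G \to G$ is, in the paper's notation, $\varepsilon^!_G$; it is \emph{defined} immediately after \cref{lem:flatUnitIso}, using the very isomorphism $(\flat \circ \eta^i)^{-1}$ that the lemma supplies, and additionally under the hypothesis that $\partial q$ is closed (cf.\ \cref{lem:adjDirImProperSupp}). The only canonical map available beforehand is the unit $\flat\eta^i_G \colon G = \flat G \to \flat i_* i^{-1}G = i_! i^{-1}G$ of the adjunction \eqref{eq:qAdjProperSupp}; showing \emph{that} to be an isomorphism for $G \in \mathrm{Sh}(q_\gamma,\partial q)$ is precisely \cref{lem:flatUnitIso} in the special case $F = G$. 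Likewise, invoking the triangle identities for $i_! \dashv i^{-1}$ at the end presupposes \cref{lem:adjDirImProperSupp}, whose proof cites the lemma you are trying to establish. The paper avoids all of this by never leaving the source side of $\flat$: it reduces via \cref{lem:dotsAdjEquiv} to the genuinely open inclusion $i_2 \colon \ddot q \hookrightarrow q_\gamma$ (which is legitimate, since $\ddot q \cup \partial q = q_\gamma$) and then applies \cref{lem:kernelIso}, a purely diagrammatic comparison of the two kernels $\ker \eta^j_F$ and $\ker \eta^j_{i_{2*}i_2^{-1}F}$ that makes no closedness assumption on $\partial q$ and no appeal to stalks of $j_*j^{-1}F$. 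That is the step your argument is missing.
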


\begin{proof}
  By \cref{lem:dotsAdjEquiv} it suffices to check that
  \begin{equation*}
    (\flat \circ \eta^{i_2})_F \colon
    \flat F \rightarrow
    \flat i_{2 *} i_2^{-1} F
  \end{equation*}
  is an isomorphism.
  This in turn follows from \cref{lem:kernelIso}.
\end{proof}

Composing the adjunctions
\begin{equation*}
  \begin{tikzcd}
    \mathrm{Sh}(q_{\gamma}, \partial q)
    \arrow[r, ""{name=I}, hook, bend right]
    &
    \mathrm{Sh}(q_{\gamma})
    \arrow[l, "\flat"'{name=b}, bend right]
    \arrow[phantom, from=I, to=b, "\dashv" rotate=90]
    \arrow[r, "i^{-1}"'{name=la}, bend right]
    &
    \mathrm{Sh}(\dot{q})
    \arrow[l, "i_*"'{name=ra}, bend right]
    \arrow[phantom, from=la, to=ra, "\dashv" rotate=90]
  \end{tikzcd}  
\end{equation*}
we obtain the adjunction
\begin{equation}
  \label{eq:qAdjProperSupp}
  \begin{tikzcd}
    \mathrm{Sh}(q_{\gamma}, \partial q)
    \arrow[r, "i^{-1}"'{name=la}, bend right]
    &
    \mathrm{Sh}(\dot{q})
    .
    \arrow[l, "i_!"'{name=ra}, bend right]
    \arrow[phantom, from=la, to=ra, "\dashv" rotate=90]
  \end{tikzcd}    
\end{equation}

\begin{lem}
  \label{lem:qAdjEquivProperSupp}
  If $\partial q$ is closed in $q_{\gamma}$,
  then \eqref{eq:qAdjProperSupp} is an exact adjoint equivalence.
\end{lem}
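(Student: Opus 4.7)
The plan is to verify directly that the unit and counit of the composed adjunction \eqref{eq:qAdjProperSupp} are isomorphisms, and then deduce exactness as a formal consequence. Under the hypothesis that $\partial q$ is closed in $q_{\gamma}$, the complement $\dot q = q_{\gamma} \setminus \partial q$ is open, so $i \colon \dot q \hookrightarrow q_{\gamma}$ is an open immersion while $j \colon \partial q \hookrightarrow q_{\gamma}$ is a closed immersion. This is the only place where the hypothesis on $\partial q$ will enter.

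For the unit at $F \in \mathrm{Sh}(q_{\gamma}, \partial q)$, I would first observe that $F$ vanishing on $\partial q$ forces $j^{-1} F = 0$ and hence $\flat F = \ker(F \to j_* j^{-1} F) \cong F$ canonically. The unit of \eqref{eq:qAdjProperSupp} then factors as the composite $F \cong \flat F \to \flat i_* i^{-1} F = i_! i^{-1} F$ whose second arrow is precisely the map of \cref{lem:flatUnitIso}, hence an isomorphism. For the counit at $F \in \mathrm{Sh}(\dot q)$, I would apply the exact functor $i^{-1}$ to the defining left exact sequence $0 \to \flat i_* F \to i_* F \to j_* j^{-1} i_* F$ of $\flat$. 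Because $j$ is now a closed immersion, $j_*$-pushforwards are stalkwise zero off $\partial q$, so $i^{-1} j_* j^{-1} i_* F = 0$; exactness of $i^{-1}$ then yields $i^{-1} \flat i_* F \cong i^{-1} i_* F$, and this last sheaf identifies with $F$ via the counit of $(i^{-1} \dashv i_*)$, which is an isomorphism for the open immersion $i$.

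With unit and counit both isomorphisms, \eqref{eq:qAdjProperSupp} becomes an adjoint equivalence, and exactness of both functors follows formally: $i^{-1}$ is always exact for any continuous map, and its quasi-inverse $i_!$ inherits exactness from the fact that every equivalence of abelian categories is exact. The main subtle step is the counit computation, where the closedness of $\partial q$ is precisely what kills the obstruction $i^{-1} j_* j^{-1} i_* F$; without it, the natural comparison $\flat i_* F \to i_* F$ need not restrict to an isomorphism over $\dot q$, and the counit could fail to be invertible.
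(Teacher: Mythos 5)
Your proof is correct, and it genuinely unpacks the argument rather than citing it. The paper's proof is extremely terse: it observes that when $\partial q$ is closed, $\dot q$ is open and the coreflector $\flat$ coincides with the Kashiwara--Schapira functor $(-)_{\dot q}$, and then invokes \cref{lem:adjEquivProperSupp} from the appendix, which is itself stated without proof as a standard fact about open immersions. You instead verify directly that the unit and counit of \eqref{eq:qAdjProperSupp} are isomorphisms, which is precisely the content hidden behind that appendix lemma. Your unit computation ($j^{-1}F = 0$ forces $\flat F \cong F$, then $(\flat \circ \eta^i)_F$ is an isomorphism by \cref{lem:flatUnitIso}) and your counit computation (exactness of $i^{-1}$ applied to the defining sequence of $\flat$, plus the stalkwise vanishing of $i^{-1} j_* j^{-1} i_* F$ for $j$ a closed immersion) are both correct, and you correctly isolate the closedness hypothesis as what makes $i$ an open and $j$ a closed immersion simultaneously. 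The formal deduction of exactness is also fine, since any equivalence of abelian categories is exact. The tradeoff: the paper's route is shorter and emphasizes that this is nothing but the familiar $i_! \cong (-)_U \circ i_*$ story from sheaf theory, while yours is more self-contained and makes visible exactly where the obstruction $i^{-1} j_* j^{-1} i_*(-)$ would otherwise survive, which is useful for understanding why the hypothesis cannot be dropped.
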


\begin{proof}
  By definition \mbox{\cite[Page 93]{Kashiwara1990}} of the functor
  ${(-)_{\dot{q}} \colon
    \mathrm{Sh}(q_{\gamma}) \rightarrow \mathrm{Sh}(q_{\gamma}, \partial q),\,
    F \mapsto F_{\dot{q}}}$
  we have $\flat = (-)_{\dot{q}}$.
  With this the result follows from
  \cref{lem:adjEquivProperSupp}.
\end{proof}

Now whenever $\partial q$ is closed in $q_{\gamma}$,
we define the composition
\begin{equation*}
  \varepsilon^!_F \colon
  i_! i^{-1} F \xrightarrow{(\flat \circ \eta^i)_F^{-1}}
  \flat F \xrightarrow{\varepsilon_F}
  F
\end{equation*}
for any sheaf $F$ on $q_{\gamma}$
harnessing \cref{lem:flatUnitIso},
where $\varepsilon_F \colon \flat F \rightarrow F$
is the counit of the adjunction \eqref{eq:vanishingBdCorefl}.

\begin{lem}
  \label{lem:adjDirImProperSupp}
  If $\partial q$ is closed in $q_{\gamma}$,
  then we have the $\F$-linear adjunction
  \begin{equation*}
    \begin{tikzcd}
      \mathrm{Sh}(q_{\gamma})
      \arrow[r, "i^{-1}"'{name=ra}, bend right]
      &
      \mathrm{Sh}(\dot{q})
      \arrow[l, "i_!"'{name=la}, bend right]
      \arrow[phantom, from=la, to=ra, "\dashv" rotate=-90]
    \end{tikzcd}
  \end{equation*}
  with counit
  $\varepsilon^! \colon i_! \circ i^{-1} \rightarrow \op{id}$.
\end{lem}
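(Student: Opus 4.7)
The plan is to obtain this adjunction by composing two adjunctions already in hand. From \eqref{eq:vanishingBdCorefl} we have the coreflection $\iota \dashv \flat$, where $\iota \colon \mathrm{Sh}(q_\gamma, \partial q) \hookrightarrow \mathrm{Sh}(q_\gamma)$ is the full inclusion; from \cref{lem:qAdjEquivProperSupp} and the hypothesis that $\partial q$ is closed we have the adjoint equivalence between $\mathrm{Sh}(q_\gamma, \partial q)$ and $\mathrm{Sh}(\dot q)$, from which I take the direction $i_! \dashv i^{-1}|_{\mathrm{Sh}(q_\gamma, \partial q)}$. Composing these adjunctions yields $\iota \circ i_! \dashv \bigl(i^{-1}|_{\mathrm{Sh}(q_\gamma, \partial q)}\bigr) \circ \flat$, and absorbing the fully faithful inclusion $\iota$ into the notation produces the adjunction claimed in the statement, provided the composite right adjoint is naturally isomorphic to $i^{-1}$ on all of $\mathrm{Sh}(q_\gamma)$.

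For the required natural isomorphism $\bigl(i^{-1}|_{\mathrm{Sh}(q_\gamma, \partial q)}\bigr) \circ \flat \cong i^{-1}$, I observe that, since $\partial q$ is closed, the coreflection counit $\varepsilon_F \colon \flat F \to F$ is a stalkwise isomorphism on $\dot q$: this uses $\flat F = \op{ker} \eta^j_F$ together with the fact that $j_* j^{-1} F$ has vanishing stalks off $\partial q$. Applying $i^{-1}$ then yields the desired natural isomorphism. For the counit identification, I would invoke the standard formula that the counit of the composite adjunction at $F$ is $\varepsilon_F$ composed with $\iota$ applied to the counit of the adjoint equivalence at $\flat F$; the latter counit is the inverse of $(\flat \circ \eta^i)_{\flat F} \colon \flat F \to i_! i^{-1} \flat F$ by \cref{lem:flatUnitIso} and \eqref{eq:dirImProperSuppAsFlat}. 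A short diagram chase using naturality of $\flat \circ \eta^i$ at the morphism $\varepsilon_F$, combined with the coreflection triangle identity $\flat(\varepsilon_F) = \op{id}_{\flat F}$, then rewrites this composite, after converting its domain from $i_! i^{-1} \flat F$ to $i_! i^{-1} F$ via the canonical isomorphism $i_! i^{-1}(\varepsilon_F)$, into $\varepsilon_F \circ (\flat \circ \eta^i)_F^{-1} = \varepsilon^!_F$, exactly the map defined in the paragraph preceding the lemma.

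Finally, $\F$-linearity of the composite adjunction is inherited from both component adjunctions being $\F$-linear. The principal difficulty is purely bookkeeping: the generic composition-of-adjunctions recipe produces a counit whose a priori domain is $\iota i_! i^{-1} \flat F$, whereas $\varepsilon^!_F$ is prescribed on $i_! i^{-1} F$, and the naturality-plus-triangle-identity argument above is what is needed to bridge the two through the canonical iso $i_! i^{-1}(\varepsilon_F)$.
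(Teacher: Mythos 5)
Your argument is correct and reaches the stated adjunction by a genuinely different route from the paper. The paper verifies the universal property of the counit directly: given any $\varphi \colon i_! F \rightarrow G$, it lifts $\varphi$ to a unique $\varphi^{\#} \colon i_! F \rightarrow \flat G$ using the coreflection \eqref{eq:vanishingBdCorefl}, identifies $\flat G \cong i_! i^{-1} G$ via \cref{lem:flatUnitIso}, and then invokes full faithfulness of $i_!$ (from \cref{lem:qAdjEquivProperSupp}) to extract a unique $\varphi^{\# !} \colon F \rightarrow i^{-1} G$; the definition of $\varepsilon^!$ is literally baked into the diagram being solved. You instead assemble the adjunction abstractly by composing $\iota \dashv \flat$ with the adjoint equivalence of \cref{lem:qAdjEquivProperSupp} taken in the direction $i_! \dashv i^{-1}|_{\mathrm{Sh}(q_{\gamma}, \partial q)}$, then correct the right adjoint from $i^{-1} \circ \flat$ back to $i^{-1}$. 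The two proofs draw on the same supporting lemmas (\cref{lem:flatUnitIso}, \cref{lem:qAdjEquivProperSupp}) and both use closedness of $\partial q$ at the same point; what changes is the shape of the argument. Your route gets existence of the adjunction almost for free but then has to do real work to recognize its counit as the specific map $\varepsilon^!$ defined in the text — the naturality-of-$(\flat \circ \eta^i)$-plus-triangle-identity chase you sketch, which transports the composite counit along $i_! i^{-1}(\varepsilon_G)$, is exactly what is needed and does go through. (For the record, naturality of $\flat \circ \eta^i$ at $\varepsilon_G$ gives $i_! i^{-1}(\varepsilon_G) \circ (\flat \circ \eta^i)_{\flat G} = (\flat \circ \eta^i)_G \circ \flat(\varepsilon_G)$, and $\flat(\varepsilon_G) = \op{id}_{\flat G}$ by the coreflection triangle identity and idempotence of $\flat$, from which your rewriting follows.) The paper's route avoids this counit bookkeeping by construction, at the cost of being a hands-on element-free diagram chase rather than a statement about composing adjunctions. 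One small remark: your stalkwise identification $i^{-1} \circ \flat \cong i^{-1}$ could equivalently be read off from the observation in the proof of \cref{lem:qAdjEquivProperSupp} that $\flat = (-)_{\dot q}$, which makes the restriction to $\dot q$ an isomorphism immediately, but your direct argument via $\flat F = \ker \eta^j_F$ and the vanishing of $(j_* j^{-1} F)_x$ for $x \notin \partial q$ when $\partial q$ is closed is equally valid and makes the use of the hypothesis transparent.
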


\begin{proof}
  Let $F$ be a sheaf on $\dot{q}$,
  let $G$ be a sheaf on $q_{\gamma}$,
  and let $\varphi \colon i_! F \rightarrow G$
  be a sheaf homomorphism.
  We have to show there is a unique sheaf homomorphism
  $\varphi^{\# !} \colon F \rightarrow i^{-1} G$
  such that the diagram
  \begin{equation}
    \label{eq:adjDirImProperSupp}
    \begin{tikzcd}[column sep=8ex, row sep=6ex]
      i_! F
      \arrow[r, "\varphi"]
      \arrow[rd, "\varphi^{\#}", dashed]
      \arrow[d, "i_! \varphi^{\# !}"']
      &
      G
      \\
      i_! i^{-1} G
      &
      \flat G
      \arrow[u, "\varepsilon_G"']
      \arrow[l, "(\flat \circ \eta^i)_G"]
    \end{tikzcd}
  \end{equation}
  commutes.
  Now the adjunction \eqref{eq:vanishingBdCorefl}
  yields a unique sheaf homomorphism
  $\varphi^{\#} \colon i_! F \rightarrow \flat G$
  as indicated in \eqref{eq:adjDirImProperSupp}.
  Moreover,
  the homomorphism
  $(\flat \circ \eta^i)_G \colon \flat G \rightarrow i_! i^{-1} G$
  is an isomorphism by \cref{lem:flatUnitIso},
  hence the vertical arrow on the left hand side
  of \eqref{eq:adjDirImProperSupp}
  exists uniquely as indicated.
  Furthermore,
  $i_! \colon \mathrm{Sh}(\dot{q}) \rightarrow \mathrm{Sh}(q_{\gamma})$
  is fully faithful by \cref{lem:qAdjEquivProperSupp}
  and this implies the claim.
\end{proof}

We now construct a functor
\[\iota_0 \colon D \rightarrow \mathrm{Sh}(q_{\gamma}, \partial q)\]
from $D$ to the category of $\F$-linear sheaves
on $q_{\gamma}$ vanishing on $\partial q$.
This will be our first step towards the construction of a functor
\[\iota \colon \M \rightarrow D^+(q_{\gamma}, \partial q),\]
which assigns an indecomposable object
$\iota(u)$ of $D^+(q_{\gamma}, \partial q)$
to each $u \in \op{int} \M$.
This functor $\iota$ is closely related to a construction
due to \mbox{\cite[Section I.5.6]{MR935124}},
see also \mbox{\cite[Theorem 2.2]{MR3483626}}.
\begin{figure}[t]
  \centering
\begingroup%
  \makeatletter%
  \providecommand\color[2][]{%
    \errmessage{(Inkscape) Color is used for the text in Inkscape, but the package 'color.sty' is not loaded}%
    \renewcommand\color[2][]{}%
  }%
  \providecommand\transparent[1]{%
    \errmessage{(Inkscape) Transparency is used (non-zero) for the text in Inkscape, but the package 'transparent.sty' is not loaded}%
    \renewcommand\transparent[1]{}%
  }%
  \providecommand\rotatebox[2]{#2}%
  \newcommand*\fsize{\dimexpr\f@size pt\relax}%
  \newcommand*\lineheight[1]{\fontsize{\fsize}{#1\fsize}\selectfont}%
  \ifx\svgwidth\undefined%
    \setlength{\unitlength}{262.08045959bp}%
    \ifx\svgscale\undefined%
      \relax%
    \else%
      \setlength{\unitlength}{\unitlength * \real{\svgscale}}%
    \fi%
  \else%
    \setlength{\unitlength}{\svgwidth}%
  \fi%
  \global\let\svgwidth\undefined%
  \global\let\svgscale\undefined%
  \makeatother%
  \begin{picture}(1,0.54372615)%
    \lineheight{1}%
    \setlength\tabcolsep{0pt}%
    \put(0,0){\includegraphics[width=\unitlength,page=1]{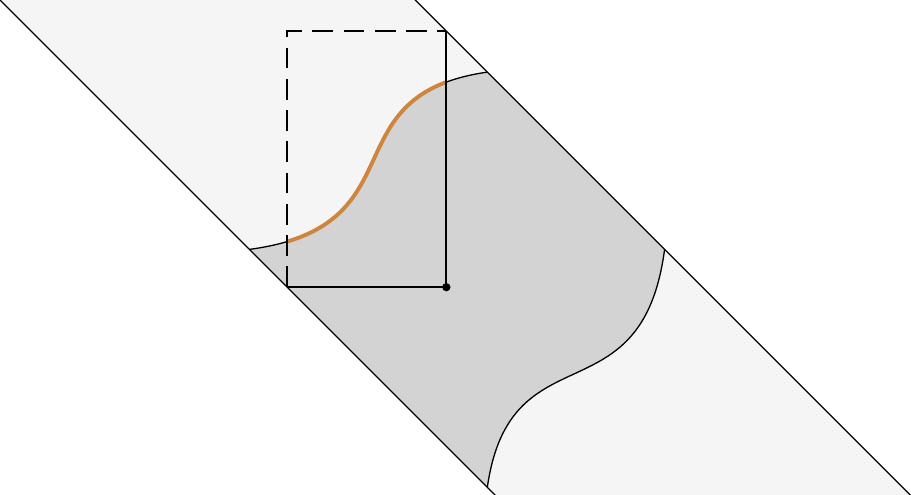}}%
    \put(0.37119755,0.37309888){\makebox(0,0)[t]{\lineheight{1.25}\smash{\begin{tabular}[t]{c}\textcolor{Peru}{$Z(u)$}\end{tabular}}}}%
    \put(0.49999998,0.2053233){\makebox(0,0)[lt]{\lineheight{1.25}\smash{\begin{tabular}[t]{l}$u$\end{tabular}}}}%
    \put(0.82084878,0.21565295){\makebox(0,0)[t]{\lineheight{1.25}\smash{\begin{tabular}[t]{c}$l_1$\end{tabular}}}}%
    \put(0.3795758,0.11852314){\makebox(0,0)[t]{\lineheight{1.25}\smash{\begin{tabular}[t]{c}$l_0$\end{tabular}}}}%
  \end{picture}%
\endgroup%

  \caption{
    The locally closed subset
    $\textcolor{Peru}{Z(u)}
    := q \cap (\uparrow u) \cap \op{int} (\downarrow T(u)).$
  }
  \label{fig:iotaSupport}
\end{figure}
For $u \in D$ we define the locally closed subset
\[Z(u) := q \cap (\uparrow u) \cap \op{int} (\downarrow T(u)).\]
Here $\uparrow u$ denotes the upset of $u$ and
$\op{int}(\downarrow T(u))$ denotes the interior of the downset of $T(u)$.
With this we may use the construction from \cite[Page 93]{Kashiwara1990}
to define
\[\iota_0(u) := \F_{Z(u)}.\]
For $u \preceq v \in D$ the homomorphism
\[\iota_0 (u \preceq v) \colon \F_{Z(u)} \rightarrow \F_{Z(v)}\]
is uniquely determined by the property that it induces
the identity on stalks whenever the corresponding stalks
are isomorphic to $\F$.
We note the following two properties of $\iota_0$, which we will need later.

\begin{lem}
  \label{lem:iota0}
  For $u, v \in D$ we have
  \begin{equation*}
    \Hom_{\mathrm{Sh}(q_{\gamma})} (\iota_0 (u), \iota_0 (v)) =
    \langle \iota_0 (u \preceq v) \rangle \cong 
    \begin{cases}
      \F
      &
      v \in (\uparrow u) \cap \op{int} (\downarrow T(u))
      \\
      \{0\}
      &
      \text{otherwise}
      .
    \end{cases}
  \end{equation*}
\end{lem}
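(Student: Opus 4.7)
The plan is to compute $\Hom_{\mathrm{Sh}(q_\gamma)}(\F_{Z(u)}, \F_{Z(v)})$ by reducing it to a computation of global sections of a restricted sheaf, using the standard adjunctions associated with the extension-by-zero construction $\F_Z$ for locally closed $Z$.

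First, I would observe that in the $\gamma$-topology upsets such as $\uparrow u$ are closed while downsets such as $\op{int}(\downarrow T(u))$ are open, so $Z(u) = q \cap (\uparrow u) \cap \op{int}(\downarrow T(u))$ is locally closed in $q_\gamma$. Writing $Z(u) = S(u) \cap C(u)$ with $S(u) := q \cap \op{int}(\downarrow T(u))$ open and $C(u) := q \cap \uparrow u$ closed in $q_\gamma$, we have $\F_{Z(u)} \cong j_{S(u)!} \, (i_{Z(u)})_* \, \F$, where $j_{S(u)}$ is the open inclusion of $S(u)$ into $q_\gamma$ and $i_{Z(u)}$ is the closed inclusion of $Z(u)$ into $S(u)$. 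Chaining the adjunctions $j_{S(u)!} \dashv j_{S(u)}^{-1}$ and $i_{Z(u)}^{-1} \dashv (i_{Z(u)})_*$ yields
\[
  \Hom_{\mathrm{Sh}(q_\gamma)}(\F_{Z(u)}, \F_{Z(v)}) \;\cong\; \Gamma\bigl(Z(u);\, \F_{Z(v)}|_{Z(u)}\bigr) ,
\]
so it suffices to analyze the global sections of the restricted sheaf $\F_{Z(v)}|_{Z(u)}$, a sheaf on $Z(u)$ whose stalk at $p$ is $\F$ if $p \in Z(u) \cap Z(v)$ and $0$ otherwise.

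Next, I would split into two cases according to whether $v \in (\uparrow u) \cap \op{int}(\downarrow T(u))$. In the affirmative case, $u \leq v < T(u)$, and so $Z(u) \cap Z(v)$ coincides with $\uparrow v \cap Z(u)$, a $\gamma$-closed subset of $Z(u)$; I would identify $\F_{Z(v)}|_{Z(u)}$ with the pushforward of the constant sheaf $\F$ along this closed inclusion, whose global sections form a one-dimensional $\F$-vector space (using $\gamma$-connectedness of the overlap). Under the stalkwise identification, the generator of this space corresponds precisely to $\iota_0(u \preceq v)$. In the negative case, either $Z(u) \cap Z(v) = \emptyset$, so that the hom space is immediately zero, or the overlap is nonempty but the boundary geometry obstructs the extension of any nonzero scalar assignment on $Z(u) \cap Z(v)$ to a sheaf morphism; this would be shown by exhibiting a point of $Z(u) \setminus Z(v)$ that lies in the $\gamma$-closure of $Z(u) \cap Z(v)$ within $Z(u)$, forcing the candidate section to both equal a nonzero scalar (by extension from the overlap) and vanish (by being outside $Z(v)$).

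The main obstacle is the careful execution of the negative case. The delicate configurations arise when $v \in \uparrow u$ but $v$ sits on $\partial(\downarrow T(u))$ (where the strict interior condition in the definition of $Z(u)$ becomes essential), or when $v$ is incomparable to $u$ so that $Z(u) \cap Z(v)$ is a proper sub-region of each of $Z(u)$ and $Z(v)$. In each sub-case one must verify that every open $\gamma$-neighborhood of an appropriately chosen point of $Z(u) \cap Z(v)$ meets $Z(u) \setminus Z(v)$, forcing any nonzero scalar section to be inconsistent. Navigating this analysis across the handful of geometric configurations that $u, v \in D$ can assume, with appropriate attention to the behavior near $\partial q$, constitutes the technical heart of the proof.
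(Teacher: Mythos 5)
Note first that the paper states this lemma without proof; the following result \cref{lem:iotaLocalSections}, which is proved, is the technical tool (it gives $\Hom(\iota_0(u), F) \cong \Gamma_{q\cap(\uparrow u)}(q\cap\op{int}(\downarrow T(u)); F)$ for any sheaf $F$), and the lemma is meant to follow from it by specializing $F = \iota_0(v)$ plus a geometric analysis. Your overall plan — reduce the Hom-space to sheaf sections, then examine the geometry of $Z(u)$ and $Z(v)$ — is therefore the right one and matches the paper's implicit strategy. But your adjunction step has a concrete error. After applying $j_{S(u)!} \dashv j_{S(u)}^{-1}$ you arrive at $\Hom\bigl((i_{Z(u)})_*\F,\, j_{S(u)}^{-1}\F_{Z(v)}\bigr)$, with the pushforward in the \emph{first} Hom-slot. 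The adjunction $i^{-1}\dashv i_*$ you invoke strips an $i_*$ from the \emph{second} slot only; what you need is a \emph{right} adjoint to $i_*$, which for a closed embedding is the exceptional inverse image $i^!$ (sections supported on $Z(u)$), not the ordinary restriction $i^{-1}$. The correct reduction is thus
\[
  \Hom_{\mathrm{Sh}(q_\gamma)}(\F_{Z(u)}, \F_{Z(v)}) \;\cong\; \Gamma_{q\cap(\uparrow u)}\bigl(q\cap\op{int}(\downarrow T(u));\, \F_{Z(v)}\bigr),
\]
the sections of $\F_{Z(v)}$ over $S(u)$ \emph{with support in} $Z(u)$ — precisely the statement of \cref{lem:iotaLocalSections} — and not the unconstrained $\Gamma(Z(u); \F_{Z(v)}|_{Z(u)})$, which is in general strictly larger.

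In the affirmative case $v \in (\uparrow u)\cap\op{int}(\downarrow T(u))$ the discrepancy is invisible, because then $Z(v)\cap S(u) \subseteq Z(u)$ so $j_{S(u)}^{-1}\F_{Z(v)}$ is already supported in $Z(u)$ and $\iota^! = \iota^{-1}$ on it; your connectedness argument for $Z(u)\cap Z(v)$ then yields $\F$ correctly. In the negative case, however, $j_{S(u)}^{-1}\F_{Z(v)}$ may have stalks outside $Z(u)$, and your formula answers a different question: the "boundary point" argument you outline is aimed at showing $\Gamma(Z(u); \iota^{-1}\F_{Z(v)}) = 0$, whereas what vanishes (and is easier to see to vanish) is the \emph{support-constrained} space $\Gamma_{Z(u)}(S(u); \F_{Z(v)})$. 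Since you explicitly defer the case-by-case geometric analysis as "the technical heart," and the formula guiding that analysis is the wrong one, the proof cannot be completed as written. Repair it by replacing $\iota^{-1}$ with $\iota^!$ (equivalently, by invoking \cref{lem:iotaLocalSections} directly), after which the geometric analysis becomes: a nonzero section of $\F_{Z(v)}$ over $S(u)$ is supported on all of the (connected) set $Z(v)\cap S(u)$, so it lies in $\Gamma_{Z(u)}(S(u);\F_{Z(v)})$ if and only if $Z(v)\cap S(u)\subseteq Z(u)$ and this set is nonempty and $\gamma$-closed in $S(u)$, which unwinds to exactly $v \in (\uparrow u)\cap\op{int}(\downarrow T(u))$.
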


\begin{lem}
  \label{lem:iotaLocalSections}
  Let $F$ be an $\F$-linear sheaf on $q_{\gamma}$ and let $u \in D$.
  Then we have a natural $\F$-linear isomorphism
  \begin{equation*}
    \Hom_{\mathrm{Sh}(q_{\gamma})} (\iota_0(u), F)
    \cong
    \Gamma_{q \cap (\uparrow u)} (q \cap \op{int}(\downarrow T(u)); F)
    .
  \end{equation*}
\end{lem}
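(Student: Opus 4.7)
The plan is to deduce the isomorphism from the standard characterization of the constant sheaf $\F_Z$ associated to a locally closed subset. First I would set
\[
  U := q \cap \op{int}(\downarrow T(u)) \quad \text{and} \quad C := q \cap (\uparrow u),
\]
so that $Z(u) = U \cap C$ and $U \setminus Z(u) = U \setminus C$. The key verification is that $U$ is open in $q_{\gamma}$ (definitional, since the interior is taken in $q_{\gamma}$) and that $C$ is closed in $q_{\gamma}$; the latter follows because the complement of $\uparrow u$ in $\R^{\circ} \times \R$ is a standard-open downset, hence $\gamma$-open. In particular, $Z(u)$ is closed in the open subspace $U$, so it is locally closed in $q_{\gamma}$.

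Next, by the construction of $\F_{Z(u)}$ from \cite[Page 93]{Kashiwara1990}, the closed inclusion $Z(u) \hookrightarrow U$ together with the open inclusion $U \hookrightarrow q_{\gamma}$ yields the short exact sequence
\[
  0 \to \F_{U \setminus Z(u)} \to \F_U \to \F_{Z(u)} \to 0
\]
of sheaves on $q_{\gamma}$. Applying $\Hom_{\mathrm{Sh}(q_{\gamma})}(-, F)$ and using the natural identification $\Hom_{\mathrm{Sh}(q_{\gamma})}(\F_V, F) \cong \Gamma(V; F)$ for open $V \subseteq q_{\gamma}$, I obtain the left-exact sequence
\[
  0 \to \Hom_{\mathrm{Sh}(q_{\gamma})}(\F_{Z(u)}, F) \to \Gamma(U; F) \xrightarrow{\mathrm{res}} \Gamma(U \setminus C; F),
\]
where the rightmost arrow is restriction. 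The kernel of this restriction is, by definition, the subspace $\Gamma_{C}(U; F)$ of sections of $F|_U$ supported on $C$. This identifies $\Hom_{\mathrm{Sh}(q_{\gamma})}(\iota_0(u), F)$ with $\Gamma_{q \cap (\uparrow u)}(q \cap \op{int}(\downarrow T(u)); F)$, and the isomorphism is manifestly natural in $F$.

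The only genuinely non-formal step is the $\gamma$-topological bookkeeping that $U$ is $\gamma$-open and $C$ is $\gamma$-closed in $q_{\gamma}$; once that is in place, the remainder is a direct application of the general principle that $\Hom(\F_Z, -)$ computes sections supported on $Z$. I do not anticipate any further obstacle.
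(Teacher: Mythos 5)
Your proof is correct, and it takes a somewhat more hands-on route than the paper. The paper computes $\Hom_{\mathrm{Sh}(q_\gamma)}(\F_{Z(u)}, F)$ by chaining the Kashiwara--Schapira formulas $\Hom = \Gamma \circ \cHom$, $\cHom(\F_Z, -) \cong \Gamma_Z(-)$, and $\Gamma_Z = \Gamma_{q\cap\op{int}(\downarrow T(u))} \circ \Gamma_{q\cap(\uparrow u)}$, citing (2.2.6), (2.3.16), and Proposition 2.3.9. You instead invoke the short exact sequence $0 \to \F_{U\setminus Z(u)} \to \F_U \to \F_{Z(u)} \to 0$ for $Z(u)$ closed in the open subset $U$, apply the left-exact contravariant $\Hom(-,F)$, and identify the resulting kernel directly as $\Gamma_{q\cap(\uparrow u)}(q\cap\op{int}(\downarrow T(u)); F)$. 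This essentially rederives the needed case of (2.3.16) from first principles, which is fine. The one place where you should be a bit more careful is the $\gamma$-topological bookkeeping: the interior $\op{int}(\downarrow T(u))$ is taken in $\M$ (Euclidean), not in $q_\gamma$; what you actually need is that the Euclidean interior of a downset is again a downset (so Euclidean-open and stable under $+\gamma$, hence $\gamma$-open), and that $\uparrow u$ is an upset that is also Euclidean-closed, hence $\gamma$-closed. Both hold, so your conclusion that $Z(u)$ is closed in the $\gamma$-open $U$ is correct, but the reasons you sketch are slightly misattributed.
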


\begin{proof}
  We have
  \begin{align*}
    \Hom_{\mathrm{Sh}(q_{\gamma})} (\iota_0(u), F)
    & =
      \Hom_{\mathrm{Sh}(q_{\gamma})} (\F_{Z(u)}, F)
    \\
    & =
      \Gamma(q_{\gamma}; \cHom(\F_{Z(u)}, F))
    \\
    & \cong
      \Gamma(q_{\gamma}; \Gamma_{Z(u)} (F))
    \\
    & =
      \Gamma(
      q_{\gamma};
      \Gamma_{q \cap \op{int}(\downarrow T(u))} (\Gamma_{q \cap (\uparrow u)} (F))
      )
    \\
    & =
      \Gamma_{q \cap (\uparrow u)} (q \cap \op{int}(\downarrow T(u)); F)
      .
  \end{align*}
  Here $\cHom$ denotes the internal homomorphism functor of sheaves
  as defined in \cite[Definition 2.2.7]{Kashiwara1990}.
  The second equality follows from \cite[(2.2.6)]{Kashiwara1990}.
  The isomorphism follows from \cite[(2.3.16)]{Kashiwara1990}.
  The fourth equality follows from \cite[Proposition 2.3.9.(ii)]{Kashiwara1990}
  and the last equality from
  \cite[Proposition 2.3.9.(iii) and (2.3.12)]{Kashiwara1990}.
\end{proof}

Now let $u \in D$.
As $\phi_\gamma \colon q \rightarrow q_{\gamma}$ is continuous,
the subset $Z(u) = \phi_\gamma^{-1} (Z(u)) \subseteq q$
is locally closed in $q$ as well.
Moreover, we have the pair of adjoint functors
\begin{equation*}
  \begin{tikzcd}
    \mathrm{Sh}(q_{\gamma})
    \arrow[r, "\phi_{\gamma}^{-1}"'{name=F}, bend right]
    &
    \mathrm{Sh}(q)
    \arrow[l, "\phi_{\gamma *}"'{name=G}, bend right]
    \arrow[phantom, from=F, to=G, "\dashv" rotate=90]
  \end{tikzcd}
\end{equation*}
between categories of $\F$-linear sheaves on $q$ and $q_\gamma$.
Thus, we have
\[
  (\phi_\gamma^{-1} \circ \iota_0)(u) =
  \phi_\gamma^{-1} \F_{Z(u)} \cong \F_{Z(u)}.
\]
Furthermore, the adjunction unit
\begin{equation}
  \label{eq:phiUnit}
  \left(\eta \circ \iota_0\right)_u \colon
  \F_{Z(u)} \rightarrow \phi_{\gamma *} \phi_\gamma^{-1} \F_{Z(u)}
  \cong \phi_{\gamma *} \F_{Z(u)}
\end{equation}
is an isomorphism,
hence $\phi_\gamma^{-1}$ is fully faithful
when restricted to the essential image of $\iota_0$.
It will be convenient
to have a description of ${\phi_\gamma^{-1} \circ \iota_0}$
in terms of complexes of sheaves that are $\Gamma(I; -)$-acyclic \cite[Exercise I.19]{Kashiwara1990}
for any connected open subset $I \subseteq q$.
More specifically,
we resolve each sheaf $\F_{Z(u)}$ in $\mathrm{Sh} (q)$ for $u \in D$
by sheaves of the form $\F_A$,
where $A \subseteq q$ is closed
with at most two connected components
as opposed to being merely locally closed.
Such a sheaf $\F_A$ is $\Gamma(I; -)$-acyclic
as $\F_A |_I$ can be written as the pushforward of the locally constant sheaf
on $A \cap I$ along the inclusion $A \cap I \hookrightarrow I$
and by the homotopy invariance
of sheaf cohomology \cite[Proposition 2.7.5]{Kashiwara1990}.
To this end, we have the following generalization
of \cite[Proposition 2.1.1]{2020arXiv200701834B}.

\begin{prp}
  \label{prp:rho}
  Let $\mathcal{P}$ denote the set of pairs of closed subspaces
  of $q$.
  Then there is a unique order-reversing map
  \begin{equation*}
    \rho \colon D^{\circ} \rightarrow \mathcal{P}
  \end{equation*}
  with the following three properties:
  \begin{enumerate}[(1)]
  \item
    For any $t \in q$ we have
    $\rho(t) = (q \cap (\uparrow t), \partial q \cap (\uparrow t))$,
    where $\partial q := q \cap \partial \M$
    and $\partial \M := l_0 \cup l_1$.
  \item
    For any $u \in D \cap \partial \M$
    the two components of $\rho(u)$ are identical.
  \item
    For any axis-aligned rectangle contained in $D^{\circ}$
    the corresponding joins and meets are preserved by $\rho$.
    (A join in $D^{\circ}$ is a meet in $D$ and vice versa.)
  \end{enumerate}  
\end{prp}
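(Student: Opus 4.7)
The plan is to establish uniqueness first by propagating values from $q$ using properties (1) and (3), and then to prove existence by giving an explicit formula.

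For uniqueness, property (1) fixes $\rho$ on the subset $q \subseteq D$. To extend to an arbitrary point $u \in D$, I would exhibit axis-aligned rectangles inside $D^{\circ}$ with $u$ as one corner and other corners on $q$ (where $\rho$ is already known) or on $D \cap \partial \M$ (where property (2) supplements the information). Each such rectangle, via property (3), yields an equation in $\mathcal{P}$ that forces the value of $\rho(u)$ from the already-known values at the remaining corners. One must verify that sufficiently many rectangles exist in $D^{\circ}$ to reach every point of $D \setminus q$; this follows from the fact that every $u \in D^{\circ}$ lies on a horizontal and a vertical line meeting $q$ (since $q = \partial Q$ is the upper boundary of the closed downset $Q \subset \M$), so that $u$ is linked by a rectangle to a pair of points of $q$ situated on those two lines.

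For existence, I propose to give an explicit formula $\rho(u) := (q \cap R(u),\, \partial q \cap R(u))$ for a suitable closed region $R(u) \subseteq \M$ determined by the geometry of $u$, in the spirit of \cite[Proposition 2.1.1]{2020arXiv200701834B}. The region $R(u)$ should coincide with $\uparrow u$ when $u \in q$ so that (1) holds by construction, and should be reflectively symmetric along $\partial \M$ so that the two components of $\rho(u)$ agree for $u \in D \cap \partial \M$, giving (2). The order-reversing condition then corresponds to the inclusion-reversal $u \leq u' \Rightarrow R(u) \supseteq R(u')$, which the definition should enforce.

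The main obstacle I expect is the verification of (3) together with the well-definedness of the explicit formula. Concretely, one must show that for every axis-aligned rectangle $\{p_1, p_2, p_3, p_4\}$ contained in $D^{\circ}$, the associated closed sets $q \cap R(p_i)$ and $\partial q \cap R(p_i)$ satisfy the unions and intersections dictated by the meet/join preservation in $\mathcal{P}$. This verification proceeds by case analysis on the position of the rectangle relative to the strata $q$, $T^{-1}(q)$, $\partial \M$, and the open interior of $D$; the subtle point is ensuring that the reflective symmetry encoding (2) along $\partial \M$ is compatible with the rectangle-wise consistency required by (3). Once (3) is established for the explicit $\rho$, the uniqueness argument from the first paragraph guarantees that this $\rho$ is the unique map with the three required properties.
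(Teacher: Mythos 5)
Your overall architecture (uniqueness by propagation along rectangles, existence by an explicit formula) is the right shape, and indeed the paper offers nothing beyond the explicit formula \eqref{eq:rhoExplicit}. However, both halves of your plan contain concrete errors. The existence Ansatz $\rho(u) = (q \cap R(u),\, \partial q \cap R(u))$ cannot work, because it confines the second component to $\partial q = q \cap (l_0 \cup l_1)$, which is generically a two-point set $\{q_0, q_1\}$. The map that the proposition asserts to be the \emph{unique} solution is \eqref{eq:rhoExplicit}, whose second component is $\rho_1(u) = q \setminus \op{int}(\downarrow T(u))$; for $u$ strictly below $q$ and sufficiently close to $\partial\M$ this is a union of one or two \emph{nondegenerate arcs} of $q$, not a subset of $\partial q$. (Take $q$ to be the diagonal segment of \cref{sec:derivedLevelSet} and $u \in l_0 \cap D$ away from $q_0$: property (2) forces $\rho_1(u) = \rho_0(u) = q \cap (\uparrow u)$, an arc containing interior points of $q$.) The ingredient your formula is missing is the glide reflection $T$: the second component must be cut out by $\op{int}(\downarrow T(u))$, which is exactly what makes $Z(u) = \rho_0(u) \setminus \rho_1(u)$ in \eqref{eq:Zrho} the intended locally closed support. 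No formula that never mentions $T$ can produce this.

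The uniqueness argument also fails as stated. If $v_1$ and $v_2$ are the points where the vertical and horizontal lines through $u$ meet $q$, the fourth corner of your rectangle is the join $v_1 \vee v_2$ in $\R^{\circ} \times \R$; since $q = \partial Q$ is (away from horizontal or vertical segments) an antichain and $Q$ is a downset, this join lies outside $Q$, hence outside $D$ — for the diagonal $q$, the join of $(x_u, x_u)$ and $(y_u, y_u)$ is $(y_u, x_u)$ with $x_u > y_u$, which is strictly above the diagonal. So the rectangle is not contained in $D^{\circ}$ and property (3) says nothing about it. Moreover, the claim that every $u \in D$ lies on a horizontal and a vertical line meeting $q$ is false: for $u \in D$ near $l_0$ (resp.\ $l_1$) but away from $q$, the horizontal (resp.\ vertical) ray from $u$ leaves $\M$ through $\partial\M$ before reaching $q$ (e.g.\ $u = (0, -3/4)$ for the diagonal $q$ in the normalization where $l_0, l_1$ are $x + y = \mp 1$). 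A correct propagation has to work with rectangles whose poset-join lies on $q$ and whose single unknown corner is a middle corner (the lattice equations $\rho(u) = \rho(v_1) \sqcup \rho(v_2)$, $\rho(w) = \rho(v_1) \sqcap \rho(v_2)$ do determine one unknown from the other three), combined with property (2) at $D \cap \partial\M$; it is precisely in these boundary cases that $T$ enters, and your sketch does not reach them.
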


More concretely, the map $\rho$ can be described by the formula
\begin{equation}
  \label{eq:rhoExplicit}
  \rho(u) = (\rho_0(u), \rho_1(u)),
  \quad \text{where} ~~
  \begin{cases}
    \rho_0 (u) = q \cap (\uparrow u)
    \qquad \text{and}
    \\
    \rho_1 (u) = q \setminus \op{int} (\downarrow T(u))
    ,
  \end{cases}
\end{equation}
for any $u \in D^{\circ}$.
Moreover,
we have
\begin{equation}
  \label{eq:Zrho}
  Z(u) = \rho_0 (u) \setminus \rho_1 (u)
  .
\end{equation}
Now for $u \in D$ let $\kappa_0 (u)$
be the complex of sheaves with
\begin{equation*}
  \left(\kappa_0 (u)\right)^n =
  \begin{cases}
    \F_{\rho_0 (u)} & n = 0 \\
    \F_{\rho_1 (u)} & n = 1 \\
    0            & \text{otherwise}
  \end{cases}
\end{equation*}
and the differential
\[\delta^0_u \colon \F_{\rho_0 (u)} \rightarrow \F_{\rho_1 (u)}\]
being the homomorphism
which induces the identity on all stalks isomorphic to $\F$.
Similarly, we may extend $\kappa_0$ to a functor
\begin{equation*}
  \kappa_0 \colon D \rightarrow \Com (q) := \Com (\mathrm{Sh}(q))
\end{equation*}
from $D$ to the category of complexes
of $\F$-linear sheaves on $q$.
In conjunction with \eqref{eq:Zrho} we obtain
\begin{equation}
  \label{eq:acyclicRes}
  H^0 \circ \kappa_0 \cong \phi_\gamma^{-1} \circ \iota_0
  \quad \text{and} \quad
  H^1 \circ \kappa_0 \cong 0
  .
\end{equation}
This equation yields the following two lemmas.

\begin{lem}
  \label{lem:exact}
  For any axis-aligned rectangle $u \preceq v_1, v_2 \preceq w \in D$
  as shown in \cref{fig:constrBoundary} the sequence
  \begin{equation}
    \label{eq:exact}
    0 \rightarrow
    \iota_0 (u) \xrightarrow{\begin{pmatrix} 1 \\ 1 \end{pmatrix}}
    \iota_0 (v_1) \oplus \iota_0 (v_2)
    \xrightarrow{\begin{pmatrix} 1 & -1 \end{pmatrix}}
    \iota_0 (w) \rightarrow
    0
  \end{equation}
  is exact.
  Here we use $1$ as a shorthand for the corresponding induced map,
  e.g. $\iota_0 (u \preceq v_1)$.
\end{lem}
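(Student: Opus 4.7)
My plan is to realize the sequence as the top row of a commutative $3 \times 3$ bicomplex in $\mathrm{Sh}(q)$ whose three columns are the two-term acyclic resolutions provided by $\kappa_0$ and whose middle and bottom rows are Mayer--Vietoris sequences, and then to invoke the $3\times 3$ (nine) lemma. Since the stalk of $\iota_0(x) = \F_{Z(x)}$ at a point $t$ agrees on $q$ and $q_\gamma$ (both equal $\F$ precisely when $t \in Z(x)$), and sheaf exactness is stalkwise, establishing exactness after applying $\phi_\gamma^{-1}$ is enough.

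For the columns, \cref{eq:acyclicRes} identifies $H^0 \kappa_0(x) \cong \F_{Z(x)}$ and $H^1 \kappa_0(x) \cong 0$ for each corner $x \in \{u, v_1, v_2, w\}$; for a two-term complex this is equivalent to the short exact sequence
\[
  0 \rightarrow \F_{Z(x)} \rightarrow \F_{\rho_0(x)} \xrightarrow{\delta^0_x} \F_{\rho_1(x)} \rightarrow 0
\]
in $\mathrm{Sh}(q)$, which I take as the three nontrivial columns. For the middle ($i=0$) and bottom ($i=1$) rows, property (3) of \cref{prp:rho} applied to the axis-aligned rectangle $u \preceq v_1, v_2 \preceq w$ yields
\[
  \rho_i(u) = \rho_i(v_1) \cup \rho_i(v_2), \qquad \rho_i(w) = \rho_i(v_1) \cap \rho_i(v_2),
\]
after which the standard Mayer--Vietoris short exact sequence for constant sheaves on a pair of closed subsets (immediate from a stalkwise check) produces
\[
  0 \rightarrow \F_{\rho_i(u)} \rightarrow \F_{\rho_i(v_1)} \oplus \F_{\rho_i(v_2)} \rightarrow \F_{\rho_i(w)} \rightarrow 0
\]
with maps $(1,1)^{\mathsf{T}}$ and $(1,-1)$. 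Naturality of $\kappa_0$ as a functor $D \to \Com(q)$ assembles these columns and rows into a commutative bicomplex whose top row is (the pullback along $\phi_\gamma$ of) the desired sequence.

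With all columns and two of three rows exact, the $3\times 3$ lemma forces the top row to be exact in $\mathrm{Sh}(q)$, and the stalk comparison above transfers this to $\mathrm{Sh}(q_\gamma)$. The main technical obstacle I anticipate is bookkeeping: verifying that the column maps induced by $\kappa_0$ restrict precisely to $(1,1)^{\mathsf{T}}$ and $(1,-1)$ at the $Z$-row. This should reduce to observing that $\kappa_0(x \preceq y)$ acts componentwise by the canonical restriction maps $\F_{\rho_i(x)} \to \F_{\rho_i(y)}$ induced by the inclusions $\rho_i(y) \subseteq \rho_i(x)$, which are precisely the restriction maps appearing in the Mayer--Vietoris sequences.
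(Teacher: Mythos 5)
Your proposal is correct and takes essentially the same approach as the paper: the paper applies the Snake Lemma to the short exact sequence of two-term complexes $0 \to \kappa_0(u) \to \kappa_0(v_1)\oplus\kappa_0(v_2) \to \kappa_0(w) \to 0$ (furnished by \cref{prp:rho}.(3)) and then truncates the resulting long exact sequence using $H^1\circ\kappa_0 \cong 0$, which is precisely the nine-lemma computation you spell out after unfolding the two-term complexes into short exact sequences via \eqref{eq:acyclicRes}. Both arguments then transfer exactness from $\mathrm{Sh}(q)$ to $\mathrm{Sh}(q_\gamma)$ by comparing stalks.
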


\begin{proof}
  By \cref{prp:rho}.(3) the sequence of complexes
  \begin{equation*}
    0 \rightarrow
    \kappa_0 (u) \xrightarrow{\begin{pmatrix} 1 \\ 1 \end{pmatrix}}
    \kappa_0 (v_1) \oplus \kappa_0 (v_2)
    \xrightarrow{\begin{pmatrix} 1 & -1 \end{pmatrix}}
    \kappa_0 (w) \rightarrow
    0
  \end{equation*}
  is exact.
  In conjunction with \eqref{eq:acyclicRes} and the
  \href{
    https://en.wikipedia.org/wiki/Snake_lemma
  }{Snake Lemma}
  we obtain the exactness of
  \begin{equation*}
    0 \rightarrow
    \phi_\gamma^{-1} \iota_0 (u)
    \xrightarrow{\begin{pmatrix} 1 \\ 1 \end{pmatrix}}
    \phi_\gamma^{-1} \iota_0 (v_1) \oplus \phi_\gamma^{-1} \iota_0 (v_2)
    \xrightarrow{\begin{pmatrix} 1 & -1 \end{pmatrix}}
    \phi_\gamma^{-1} \iota_0 (w) \rightarrow
    0
    .
  \end{equation*}
  As we can check the exactness of \eqref{eq:exact}
  on the level of stalks,
  this is sufficient.
\end{proof}

\begin{lem}
  \label{lem:derUnit}
  For any $u \in D$ the derived unit
  \begin{equation*}
    \left(\eta^{\Der (q_\gamma)} \circ \iota_0\right)_u \colon
    \F_{Z(u)} \rightarrow R \phi_{\gamma *} \phi_\gamma^{-1} \F_{Z(u)}
    \cong R \phi_{\gamma *} \F_{Z(u)}
  \end{equation*}
  is an isomorphism of the derived category
  $\Der (q_\gamma)$,
  where we view $\F_{Z(u)}$ as a complex concentrated in degree $0$.
\end{lem}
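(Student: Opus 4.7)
The plan is to exploit the explicit two-term resolution $\kappa_0(u) = \bigl(\F_{\rho_0(u)} \to \F_{\rho_1(u)}\bigr)$ on $q$, supplied by \eqref{eq:acyclicRes}, which witnesses $\phi_\gamma^{-1}\F_{Z(u)}$ as a resolvable sheaf, and then to compute $R\phi_{\gamma*}$ through it.

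First I would establish that each term $\F_{\rho_j(u)}$ is $\phi_{\gamma*}$-acyclic. Unwinding definitions, this reduces to the vanishing of $H^i(V; \F_{\rho_j(u)}) \cong H^i(V \cap \rho_j(u); \F)$ for $i>0$ and $V$ ranging over a cofinal family of $\gamma$-open subsets of $q$. Since $\rho_j(u)$ is a closed curve in $q$ and since the trace of the $\gamma$-topology on it is relatively easy to describe, the intersections $V \cap \rho_j(u)$ decompose as disjoint unions of open intervals; the vanishing therefore follows from the homotopy invariance of sheaf cohomology \cite[Proposition 2.7.5]{Kashiwara1990}, exactly in the spirit of the remark preceding \cref{prp:rho}.

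Next I would observe that $\phi_{\gamma*}\F_{\rho_j(u)}$ is naturally isomorphic to $\F_{\rho_j(u)}$ viewed as a sheaf on $q_\gamma$. Indeed, both $\rho_0(u) = q \cap (\uparrow u)$ and $\rho_1(u) = q \setminus \op{int}(\downarrow T(u))$ are closed not only in the Euclidean topology but also in the coarser $\gamma$-topology, being upsets in the order induced by $\gamma$. Hence the same reasoning that made the unit \eqref{eq:phiUnit} an isomorphism for $\F_{Z(u)}$ applies verbatim to the sheaves $\F_{\rho_j(u)}$. Combining the two observations yields a chain of isomorphisms
\[
  R\phi_{\gamma*}\phi_\gamma^{-1}\F_{Z(u)} \;\cong\; \phi_{\gamma*}\kappa_0(u) \;\cong\; \bigl(\F_{\rho_0(u)} \to \F_{\rho_1(u)}\bigr)
\]
in $\Der(q_\gamma)$, whose cohomology is concentrated in degree $0$ and equals $\F_{Z(u)}$ by the $q_\gamma$-analog of \eqref{eq:acyclicRes}. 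Naturality of the derived unit, compared with the sheaf-theoretic unit \eqref{eq:phiUnit}, then identifies this isomorphism with the derived unit at $\F_{Z(u)}$.

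The main obstacle is the acyclicity verification in the first step: although the homotopy invariance argument is standard for the Euclidean topology, the $\gamma$-topology is coarser and its open subsets are more rigidly shaped, so one must carefully describe the connected components of $V \cap \rho_j(u)$ for $\gamma$-open $V$ and confirm they are all open intervals on the curve $\rho_j(u)$. Once this geometric lemma is in hand, the remaining steps reduce to bookkeeping in the derived category.
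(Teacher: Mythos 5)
Your proposal takes a genuinely different route from the paper, but it has a gap at exactly the point where the paper does real work.

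The paper's proof reduces, via \cref{lem:coreflection}, to showing that $\F_{Z(u)} \in \mathrm{Sh}(q)$ is $\phi_{\gamma*}$-acyclic, and then verifies $\Gamma(U;-)$-acyclicity only on the restricted basis $\mathcal{B} = \{q \cap \op{int}(\downarrow v) \mid T(u) \not\preceq v\}$ of $q_\gamma$. The crux is the final sentence of that proof: for those particular $U$, the map $\Gamma(U;\delta^0_u)$ is surjective. The restriction $T(u)\not\preceq v$ is not decoration; for basic opens violating it, and for $U = q$ itself, the cokernel $\op{coker}\Gamma(U;\delta^0_u)$ is generally nonzero (this is precisely the quantity the paper analyzes later in \cref{lem:les} and \cref{lem:auxIsoCoker}, where it is sometimes nonzero). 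Acyclicity holds only because a basis avoiding the bad sets suffices.

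Your proposal instead pushes the entire resolution $\kappa_0(u)$ through $\phi_{\gamma*}$ term by term and then asserts $\phi_{\gamma*}\F_{\rho_j(u)} \cong \F_{\rho_j(u)}^{(q_\gamma)}$, arguing that $\rho_j(u)$ is closed in both topologies so ``the same reasoning as for \eqref{eq:phiUnit} applies verbatim.'' This is where the argument breaks. Closedness in $q_\gamma$ is not enough: the unit at $\F_A^{(q_\gamma)}$ is an isomorphism only if Euclidean-connected components of $V\cap A$ agree with $\gamma$-connected components for $\gamma$-open $V$. For $Z(u)$ this holds because $Z(u)$ is a connected arc, but $\rho_1(u)$ (and often $\rho_0(u)$) has two Euclidean components — one meeting $l_0$, one meeting $l_1$ — and the $\gamma$-opens $q\cap\op{int}(\downarrow v)$, being shaped as single arcs with coupled endpoints, cannot separate them. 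Consequently $\Gamma(q_\gamma;\phi_{\gamma*}\F_{\rho_1(u)}) \cong \F^2$ while $\Gamma(q_\gamma;\F_{\rho_1(u)}^{(q_\gamma)}) \cong \F$, so the claimed isomorphism fails. Without it, the identification of $\phi_{\gamma*}\kappa_0(u)$ with the intrinsic complex $\bigl(\F_{\rho_0(u)}\to\F_{\rho_1(u)}\bigr)$ on $q_\gamma$, and hence the conclusion that its $H^1$ vanishes, is unjustified — and indeed the presheaf cokernel of $\phi_{\gamma*}\delta^0_u$ is nonzero on large opens, so vanishing of $H^1$ requires a sheafification argument that is logically equivalent to the paper's surjectivity check on the restricted basis. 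In short: your Step~2 is false in general, and what is genuinely needed in its place is precisely the observation that $\Gamma(U;\delta^0_u)$ is surjective for $U$ in the basis with $T(u)\not\preceq v$, which your proposal never supplies.
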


\begin{proof}
  \sloppy
  As $\left(\eta \circ \iota_0\right)_u$
  is an isomorphism
  it suffices to show that
  $\phi_\gamma^{-1} \F_{Z(u)} \cong \F_{Z(u)}$
  is \mbox{$\phi_{\gamma *}$-acyclic}
  by \cref{lem:coreflection} from \cref{sec:homologicalAlg}.
  To this end, it suffices to check
  that ${\F_{Z(u)} \in \mathrm{Sh} (q)}$ is $\Gamma(U; -)$-acyclic
  for each open subset $U$ of some basis $\mathcal{B}$ of $q_{\gamma}$
  by \cref{lem:compose}.
  Now let
  $\mathcal{B} := \{q \cap \op{int}(\downarrow v) \mid T(u) \not\preceq v\}$
  be our choice of a basis and let
  $U = q \cap \op{int}(\downarrow v) \in \mathcal{B}$.
  As $U$ is connected, $\kappa_0 (u)$ is a
  $\Gamma(U; -)$-acyclic resolution of $\F_{Z(u)}$.
  Moreover, as $T(u) \not\preceq v$, the differential
  $\Gamma(U; \delta^0_u)$ is surjective,
  hence $\F_{Z(u)}$ is \mbox{$\Gamma(U; -)$-acyclic}.
\end{proof}

\begin{figure}[t]
  \centering
\begingroup%
  \makeatletter%
  \providecommand\color[2][]{%
    \errmessage{(Inkscape) Color is used for the text in Inkscape, but the package 'color.sty' is not loaded}%
    \renewcommand\color[2][]{}%
  }%
  \providecommand\transparent[1]{%
    \errmessage{(Inkscape) Transparency is used (non-zero) for the text in Inkscape, but the package 'transparent.sty' is not loaded}%
    \renewcommand\transparent[1]{}%
  }%
  \providecommand\rotatebox[2]{#2}%
  \newcommand*\fsize{\dimexpr\f@size pt\relax}%
  \newcommand*\lineheight[1]{\fontsize{\fsize}{#1\fsize}\selectfont}%
  \ifx\svgwidth\undefined%
    \setlength{\unitlength}{307.5bp}%
    \ifx\svgscale\undefined%
      \relax%
    \else%
      \setlength{\unitlength}{\unitlength * \real{\svgscale}}%
    \fi%
  \else%
    \setlength{\unitlength}{\svgwidth}%
  \fi%
  \global\let\svgwidth\undefined%
  \global\let\svgscale\undefined%
  \makeatother%
  \begin{picture}(1,0.48780488)%
    \lineheight{1}%
    \setlength\tabcolsep{0pt}%
    \put(0,0){\includegraphics[width=\unitlength,page=1]{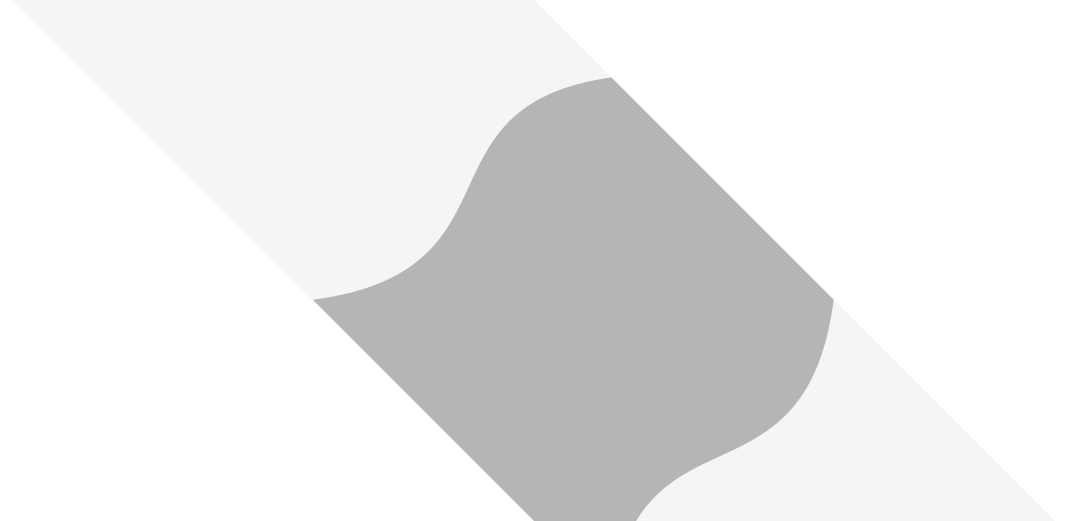}}%
    \put(0.36585366,0.34959341){\makebox(0,0)[t]{\lineheight{1.25}\smash{\begin{tabular}[t]{c}$\hat{u}$\end{tabular}}}}%
    \put(0.27439024,0.40243902){\makebox(0,0)[t]{\lineheight{1.25}\smash{\begin{tabular}[t]{c}$T(D)$\end{tabular}}}}%
    \put(0.51219512,0.2195122){\makebox(0,0)[t]{\lineheight{1.25}\smash{\begin{tabular}[t]{c}$w$\end{tabular}}}}%
    \put(0.67886171,0.24390244){\makebox(0,0)[t]{\lineheight{1.25}\smash{\begin{tabular}[t]{c}$v_2$\end{tabular}}}}%
    \put(0.49187,0.08943098){\makebox(0,0)[t]{\lineheight{1.25}\smash{\begin{tabular}[t]{c}$v_1$\end{tabular}}}}%
    \put(0.66666659,0.09349585){\makebox(0,0)[t]{\lineheight{1.25}\smash{\begin{tabular}[t]{c}$u$\end{tabular}}}}%
    \put(0,0){\includegraphics[width=\unitlength,page=2]{general-constr-boundary.pdf}}%
    \put(0.77409512,0.25273415){\makebox(0,0)[t]{\lineheight{1.25}\smash{\begin{tabular}[t]{c}$l_1$\end{tabular}}}}%
    \put(0.23334146,0.21787805){\makebox(0,0)[t]{\lineheight{1.25}\smash{\begin{tabular}[t]{c}$l_0$\end{tabular}}}}%
  \end{picture}%
\endgroup%

  \caption{
    An axis-aligned rectangle $u \preceq v_1, v_2 \preceq w \in D$
    and the corresponding pair $(w, \hat{u}) \in R_D$.
  }
  \label{fig:constrBoundary}
\end{figure}

\sloppy
The short exact sequence \eqref{eq:exact}
yields a particular distinguished triangle
\begin{equation}
  \label{eq:triangle}
  \iota_0 (u) \xrightarrow{\begin{pmatrix} 1 \\ 1 \end{pmatrix}}
  \iota_0 (v_1) \oplus \iota_0 (v_2)
  \xrightarrow{\begin{pmatrix} 1 & -1 \end{pmatrix}}
  \iota_0 (w) \xrightarrow{\partial'}
  \iota_0 (u)[1]
\end{equation}
in the derived category
$\Der (q_\gamma)$,
where we view each sheaf as a complex concentrated in degree $0$.
Now let $\Sigma := (-)[1]$ and let
\[R_D :=
  \{
  (w, \hat{u}) \in D \times T(D) \mid
  w \preceq \hat{u} \preceq T(w)
  \}\]
as in \cite[Definition A.8]{2021arXiv210809298B}.
As shown in \cref{fig:constrBoundary}, any pair $(w, \hat{u}) \in R_D$
determines an axis-aligned rectangle $u \preceq v_1, v_2 \preceq w \in D$
with $T(u) = \hat{u}$.
Moreover, the collection of homomorphisms $\partial'$
for the corresponding triangles \eqref{eq:triangle}
forms a natural transformation as in the diagram
\begin{equation*}
  \begin{tikzcd}
    R_D
    \arrow[rrr, "\op{pr}_1"]
    \arrow[ddd, "\op{pr}_2"']
    &[-23pt] & &[-18pt]
    D
    \arrow[ddd, "\iota_0"]
    \\[-17pt]
    & &
    {}
    \arrow[ld, Rightarrow, "{\partial'}"]
    \\
    &
    {}
    \\[-20pt]
    T(D)
    \arrow[rrr, "\Sigma \circ \iota_0 \circ T^{-1}"']
    & & &
    \Der (q_{\gamma})
    ,
  \end{tikzcd}
\end{equation*}
where we consider
$R_D$ as a subposet of $D \times T(D)$ with the product order and with
${\op{pr}_1 \colon R_D 
  \rightarrow D}$ and
${\op{pr}_2 \colon R_D \rightarrow T(D)}$ being the canonical projections
from $R_D$ to the first and second component, respectively.
Thus, \cite[Proposition A.14]{2021arXiv210809298B}
yields a functor
\[\iota \colon \M \rightarrow \Der (q_\gamma, \partial q)\]
that is a \emph{strictly stable} extension
of
$\iota_0 \colon
D \rightarrow
\mathrm{Sh} (q_\gamma, \partial q) \hookrightarrow
\Der (q_\gamma, \partial q)$
in the sense that
\begin{equation*}
  \iota \circ T = \Sigma \circ \iota = \iota(-)[1]
  \quad \text{and} \quad
  \iota |_D = \iota_0
  .
\end{equation*}
We show the following counterpart to \cref{lem:iota0}.

\begin{prp}
  \label{prp:iota}
  For $u, v \in \M$ we have
  \begin{equation*}
    \Hom_{\Der (q_{\gamma})} (\iota (u), \iota (v)) =
    \langle \iota (u \preceq v) \rangle
    \cong
    \begin{cases}
      \F
      &
      v \in (\uparrow u) \cap \op{int} (\downarrow T(u))
      \\
      \{0\}
      &
      \text{otherwise}
      .
    \end{cases}
  \end{equation*}
\end{prp}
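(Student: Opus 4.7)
The plan is to exploit the strict stability $\iota \circ T = \Sigma \circ \iota$ together with the agreement $\iota|_D = \iota_0$ to reduce to a computation in $\mathrm{Sh}(q_\gamma)$, which can then be attacked with the derived version of \cref{lem:iotaLocalSections}. Since $\{T^k(D)\}_{k \in \Z}$ tessellates $\M$, we can (generically) write $u = T^a u_0$ and $v = T^b v_0$ with $u_0, v_0 \in D$ and $a, b \in \Z$. Strict stability then gives $\iota(u) = \iota_0(u_0)[a]$ and $\iota(v) = \iota_0(v_0)[b]$, hence
$$\Hom_{\Der (q_\gamma)}(\iota(u), \iota(v)) \cong \Ext^{k}_{\mathrm{Sh}(q_\gamma)}(\iota_0(u_0), \iota_0(v_0)),$$
where $k := b - a$; under the tessellation, the geometric condition $v \in (\uparrow u) \cap \op{int}(\downarrow T(u))$ translates into a concrete constraint on the pair $(v_0, k)$.

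Next, I would evaluate this $\Ext$ group. Deriving \cref{lem:iotaLocalSections} via an injective resolution of the second argument yields, for every $F \in \Der (q_\gamma)$,
$$R \Hom_{\Der (q_\gamma)}(\iota_0(u_0), F) \cong R \Gamma_{q \cap (\uparrow u_0)}\bigl(q \cap \op{int}(\downarrow T(u_0));\, F\bigr).$$
Specializing to $F = \iota_0(v_0) = \F_{Z(v_0)}$ and combining with the excision triangle, the target becomes the cohomology of an intersection of the three rectangular regions $\uparrow u_0$, $\op{int}(\downarrow T(u_0))$, and $Z(v_0)$, with appropriate support conditions. Since $\F_{Z(v_0)}$ is constant on the open rectangle $Z(v_0)$ and zero elsewhere, and since the 2-term resolution $\kappa_0(v_0)$ from \eqref{eq:acyclicRes} is $\Gamma(U; -)$-acyclic on connected opens $U$ (by the argument of \cref{lem:derUnit}), these cohomologies are directly readable from the geometry.

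When the resulting $\Hom$ turns out one-dimensional, I would identify its generator with $\iota(u \preceq v)$ by tracing through the construction: the internal maps $\iota(u \preceq v)$ crossing tile boundaries arise precisely from the connecting morphisms of the triangles \eqref{eq:triangle}, and the same long exact sequence used for the computation shows these connecting morphisms to be nontrivial. A cleaner alternative to the direct local-cohomology calculation would be induction on $|k|$ using the distinguished triangles \eqref{eq:triangle}: applying $R\Hom_{\Der (q_\gamma)}(\iota_0(u_0), -)$, the resulting long exact sequences reduce the computation of $\Ext^k(\iota_0(u_0), \iota_0(v_0))$ to neighboring $\Ext^{k-1}$ and $\Ext^k$, with the base case $k = 0$ supplied by \cref{lem:iota0}.

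The main obstacle is the geometric case analysis needed to match, for each triple $(u_0, v_0, k)$, the dimension and degree of the local cohomology with the combinatorial condition $v \in (\uparrow u) \cap \op{int}(\downarrow T(u))$: the support rectangle may intersect several tiles $T^k(D)$, and one must verify that only the ``diagonal'' shift produces a nontrivial $\Ext$ group. Boundary cases, where $u_0$ or $v_0$ lies in $\partial D$ or $\partial \M$, and the subtleties of the $\gamma$-topology via $\phi_\gamma$, will require some care but should pose no conceptual difficulty.
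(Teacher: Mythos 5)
Your proposal is correct and follows essentially the same route as the paper: reduce via strict stability to computing $\Hom_{\Der(q)}(\F_{Z(u)},\F_{Z(w)}[n])$ (transported from $q_\gamma$ to $q$ using \cref{lem:derUnit}), identify this with local cohomology supported on $Z(u)$, compute it via the excision triangle $R\Gamma_{Z(u)}\to R\Gamma_U\to R\Gamma_V$ and the acyclic two-term resolution $\kappa_0$, carry out the geometric case analysis, and identify the generator in the nonvanishing case by applying $\Hom(\iota(u),-)$ to the triangles \eqref{eq:triangle}. The paper packages this as \cref{lem:les,lem:auxIsoCoker,lem:vanishing}, and the case analysis you flag as the main obstacle is indeed where most of the work lies, but your outline matches it.
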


The long exact sequence from the following lemma
is one ingredient to our proof of this proposition.

\begin{lem}
  \label{lem:les}
  For all $u, w \in D$ there is a long exact sequence
  \begin{equation}
    \label{eq:les}
    \!\!\!\!\!\!\!\!\!\!\!\!\!\!\!\!\!\!\!\!\!\!\!\!\!\!\!\!\!
    \begin{tikzcd}[row sep=6ex, column sep=2.5ex]
      &
      \Hom_{\Der (q)} (\F_{Z(u)}, \F_{Z(w)} [3])
      \arrow[r]
      &
      0
      \arrow[r]
      &
      \cdots
      ,
      \\
      &
      \Hom_{\Der (q)} (\F_{Z(u)}, \F_{Z(w)} [2])
      \arrow[r]
      &
      0
      \arrow[r]
      &
      0
      \arrow[ull, out=0, in=180, looseness=1.4, overlay]      
      \\
      &
      \Hom_{\Der (q)} (\F_{Z(u)}, \F_{Z(w)} [1])
      \arrow[r]
      &
      \op{coker} \Gamma \left(U; \delta^0_w\right)
      \arrow[r]
      &
      \op{coker} \Gamma \left(V; \delta^0_w\right)
      \arrow[ull, out=0, in=180, looseness=1.4, overlay]      
      \\
      0
      \arrow[r]
      &
      \Hom_{\Der (q)} (\F_{Z(u)}, \F_{Z(w)})
      \arrow[r]
      &
      \Gamma \left(U; \F_{Z(w)}\right)
      \arrow[r]
      &
      \Gamma \left(V; \F_{Z(w)}\right)
      \arrow[ull, out=0, in=180, looseness=1.4, overlay]
    \end{tikzcd}
  \end{equation}
  where $U := q \cap \op{int}(\downarrow T(u))$
  and $V := q \setminus (\uparrow u)$.
\end{lem}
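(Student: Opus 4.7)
The plan is to deduce the long exact sequence from a single distinguished triangle obtained by combining a derived analogue of \cref{lem:iotaLocalSections} with the quasi-isomorphism $\F_{Z(w)} \simeq \kappa_0(w)$ from \eqref{eq:acyclicRes}. First I will upgrade \cref{lem:iotaLocalSections} to the derived setting (transported from $q_\gamma$ to $q$), producing a natural isomorphism
\[
  R \Hom_{\Der(q)}(\F_{Z(u)}, F) \;\cong\; R \Gamma_{\rho_0(u)}(U; F)
  \qquad (F \in \Der(q)),
\]
where $\rho_0(u) = q \cap (\uparrow u)$ and $U = q \cap \op{int}(\downarrow T(u))$. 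This will be obtained by choosing an injective resolution of $F$ and lifting, term by term, the chain of identifications used in the proof of \cref{lem:iotaLocalSections}---the pairing of $\cHom$ with global sections, the Kashiwara--Schapira identity (2.3.16), and Propositions 2.3.9.(ii)--(iii)---to their right-derived counterparts, using flabby resolutions as the common acyclicity class.

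Next I will make a small geometric observation: $\rho_1(u) \subseteq \rho_0(u)$, since otherwise the stalk of $H^1(\kappa_0(u))$ at any point of $\rho_1(u) \setminus \rho_0(u)$ would be non-zero, contradicting \eqref{eq:acyclicRes}. Consequently
\[
  V \;=\; q \setminus \rho_0(u) \;\subseteq\; q \setminus \rho_1(u) \;=\; U
  \qquad \text{and hence} \qquad U \setminus \rho_0(u) = V,
\]
so that the standard local-cohomology distinguished triangle on $U$ with respect to the closed subset $\rho_0(u) \subseteq U$ becomes
\[
  R\Hom(\F_{Z(u)}, F) \longrightarrow R\Gamma(U; F) \longrightarrow R\Gamma(V; F) \longrightarrow R\Hom(\F_{Z(u)}, F)[1].
\]

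I will then specialise to $F = \F_{Z(w)}$ and identify the middle and right terms via the quasi-isomorphism $\F_{Z(w)} \simeq \kappa_0(w) = \bigl[\F_{\rho_0(w)} \xrightarrow{\delta^0_w} \F_{\rho_1(w)}\bigr]$ from \eqref{eq:acyclicRes}. Since each $\F_{\rho_i(w)}$ is $\Gamma(I; -)$-acyclic for every connected open $I \subseteq q$ (as recorded in the paragraph just before \cref{prp:rho}), and since both $U$ and $V$ are connected open sub-arcs of the zigzag $q$ (the defining inequalities cut out initial or terminal segments along any monotone parameterisation of $\partial Q$), I obtain $R\Gamma(U; \F_{Z(w)}) \simeq \Gamma(U; \kappa_0(w))$ and $R\Gamma(V; \F_{Z(w)}) \simeq \Gamma(V; \kappa_0(w))$, each concentrated in cohomological degrees $0$ and $1$. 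By left exactness of $\Gamma(-;-)$ applied to $0 \to \F_{Z(w)} \to \F_{\rho_0(w)} \to \F_{\rho_1(w)} \to 0$, the $H^0$ terms are $\Gamma(U; \F_{Z(w)})$ and $\Gamma(V; \F_{Z(w)})$, while the $H^1$ terms are $\op{coker}\Gamma(U; \delta^0_w)$ and $\op{coker}\Gamma(V; \delta^0_w)$.

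Reading off the long exact sequence in cohomology of the above distinguished triangle then yields precisely the sequence claimed in the lemma, with its termination at degree $2$ explained by the vanishing $H^{\geq 2}(U; \F_{Z(w)}) = H^{\geq 2}(V; \F_{Z(w)}) = 0$. The main obstacle is the very first step: lifting the underived chain of isomorphisms from \cref{lem:iotaLocalSections} into the derived category in a form that is also valid for sheaves on $q$ (not just on $q_\gamma$); the remainder is a routine assembly of standard triangles and acyclicity observations.
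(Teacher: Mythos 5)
Your proposal follows essentially the same route as the paper: the local-cohomology distinguished triangle for the closed subset $Z(u) = U \cap \rho_0(u)$ of $U$ with open complement $V$, the identification $H^n_{Z(u)}(q;\F_{Z(w)}) \cong \Hom_{\Der(q)}(\F_{Z(u)},\F_{Z(w)}[n])$ (which the paper simply cites as \cite[Eq.~(2.6.9)]{Kashiwara1990} rather than re-deriving from \cref{lem:iotaLocalSections}), and the computation of the remaining columns via the $\Gamma$-acyclic resolution $\kappa_0(w)$. One inaccuracy to fix: $V = q \setminus (\uparrow u)$ is in general \emph{not} connected --- removing the closed arc $\rho_0(u)$ from the curve $q$ leaves up to two components --- so you should justify the acyclicity of $\F_{\rho_i(w)}|_V$ componentwise (as the paper does, invoking that $V$ is a disjoint union of at most two connected open sets); this does not affect the resulting exact sequence, but the two-component structure of $V$ is essential in the subsequent \cref{lem:auxIsoCoker,lem:vanishing}.
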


\begin{proof}
  We have $V = U \setminus Z(u)$
  and thus there is a distinguished triangle
  \begin{equation*}
    R \Gamma_{Z(u)} \left(\F_{Z(w)}\right)
    \rightarrow
    R \Gamma_{U} \left(\F_{Z(w)}\right)
    \rightarrow
    R \Gamma_{V} \left(\F_{Z(w)}\right)
    \rightarrow
    R \Gamma_{Z(u)} \left(\F_{Z(w)}\right) [1]
  \end{equation*}
  by \cite[Equation (2.6.32)]{Kashiwara1990}.
  Applying the \emph{cohomological} functor
  \cite[Definition 1.5.2]{Kashiwara1990}
  $H^0 (q; -)$ to this triangle we obtain the long exact sequence
  \begin{equation*}
    \!\!\!\!\!\!\!\!\!\!\!\!\!\!\!\!\!\!\!\!\!
    \begin{tikzcd}[row sep=6ex, column sep=2.5ex]
      &
      H^2_{Z(u)} \left(q; \F_{Z(w)}\right)
      \arrow[r]
      &
      H^2_{U} \left(q; \F_{Z(w)}\right)
      \arrow[r]
      &
      \cdots
      .
      \\
      &
      H^1_{Z(u)} \left(q; \F_{Z(w)}\right)
      \arrow[r]
      &
      H^1_{U} \left(q; \F_{Z(w)}\right)
      \arrow[r]
      &
      H^1_{V} \left(q; \F_{Z(w)}\right)
      \arrow[ull, out=0, in=180, looseness=1.4, overlay]      
      \\
      0
      \arrow[r]
      &
      H^0_{Z(u)} \left(q; \F_{Z(w)}\right)
      \arrow[r]
      &
      H^0_{U} \left(q; \F_{Z(w)}\right)
      \arrow[r]
      &
      H^0_{V} \left(q; \F_{Z(w)}\right)
      \arrow[ull, out=0, in=180, looseness=1.4, overlay]      
    \end{tikzcd}
  \end{equation*}
  Now by \cite[Equation (2.6.9)]{Kashiwara1990} we have
  \begin{equation*}
    H^n_{Z(u)} \left(q; \F_{Z(w)}\right) \cong
    \Hom_{\Der (q)} \left(\F_{Z(u)}, \F_{Z(w)} [n]\right)
    .
  \end{equation*}
  By substituting the corresponding terms in above exact sequence
  we obtain an exact sequence whose left column coincides
  with the sequence \eqref{eq:les}.
  Moreover,
  by \mbox{\cite[Proposition 2.3.9.(iii) and Remark 2.6.9]{Kashiwara1990}}
  we have
  \begin{equation*}
    H^n_{U} \left(q; \F_{Z(w)}\right) \cong
    H^n \left(U; \F_{Z(w)} |_U\right) \cong
    H^n \left(U; \F_{Z(w)}\right)
  \end{equation*}
  and similarly
  \begin{equation*}
    H^n_{V} \left(q; \F_{Z(w)}\right) \cong
    H^n \left(V; \F_{Z(w)}\right)
  \end{equation*}
  for all $n \in \N_0$.
  Furthermore,
  as $U$ is open and connected and $V$ is a disjoint union of at most
  two connected open subsets of $q$,
  the complex $\kappa_0 (w)$ is a resolution of $\F_{Z(w)}$
  by $\Gamma(U; -)$-acyclic and $\Gamma(V; -)$-acyclic sheaves
  by \eqref{eq:acyclicRes}.
  Thus, we may replace each term in the center and right column
  of above exact sequence by
  $H^n (\Gamma(U; \kappa_0 (w))$ and $H^n (\Gamma(V; \kappa_0 (w))$,
  respectively.
  Simplifying these terms using \eqref{eq:acyclicRes}
  for each $n \in \N_0$ we obtain
  the long exact sequence \eqref{eq:les}.
\end{proof}

To make use of the long exact sequence from the previous lemma,
we need to understand the map
$\op{coker} \Gamma \left(U; \delta^0_w\right) \rightarrow
\op{coker} \Gamma \left(V; \delta^0_w\right)$
from this sequence.

\begin{lem}
  \label{lem:auxIsoCoker}
  Suppose we have $u, w \in D$ with $w \preceq u \not\preceq T(w)$
  and $U$ and $V$ as in the previous lemma,
  then the linear map
  $\op{coker} \Gamma \left(U; \delta^0_w\right) \rightarrow
  \op{coker} \Gamma \left(V; \delta^0_w\right)$
  is an isomorphism.
\end{lem}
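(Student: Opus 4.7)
The strategy is to identify both cokernels with $H^1$ of the sheaf $\F_{Z(w)}$ and then exploit a support-disjointness consequence of the hypothesis.

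First, I would verify, analogously to the proof of \cref{lem:derUnit}, that $\kappa_0(w)$ provides $\Gamma(U; -)$- and $\Gamma(V; -)$-acyclic resolutions of $\F_{Z(w)}$: each $\F_{\rho_i(w)}$ is a pushforward of a locally constant sheaf along a closed inclusion into $q$, so its higher sheaf cohomology over the connected open subset $U$ (respectively over $V$, which is a disjoint union of at most two connected open subsets) vanishes by homotopy invariance. This yields
\[\op{coker}\Gamma(U; \delta^0_w) \cong H^1(U; \F_{Z(w)}) \quad \text{and} \quad \op{coker}\Gamma(V; \delta^0_w) \cong H^1(V; \F_{Z(w)}),\]
and the map in the statement is induced by restriction along $V = U \setminus Z(u)$, exactly as in the proof of \cref{lem:les}.

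Next, I would extract the geometric consequences of the hypothesis. From $w \preceq u$ we get $\op{int}(\downarrow T(w)) \subseteq \op{int}(\downarrow T(u))$, hence $Z(w) \subseteq U$. From $u \not\preceq T(w)$ we obtain $\uparrow u \cap \downarrow T(w) = \emptyset$ (otherwise an intermediate point would witness $u \preceq T(w)$), and since the closure satisfies $\overline{Z(w)} \subseteq \uparrow w \cap \downarrow T(w) \cap q$, the closure of $Z(w)$ in $U$ is disjoint from $\uparrow u$, and therefore from $Z(u) = U \cap \uparrow u$. In particular, $\F_{Z(w)}$ vanishes on the open set $U \setminus \overline{Z(w)}$, which is an open neighborhood of $Z(u)$ in $U$.

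Finally, the excision-type distinguished triangle
\[R\Gamma_{Z(u)}(\F_{Z(w)}) \rightarrow R\Gamma_U(\F_{Z(w)}) \rightarrow R\Gamma_V(\F_{Z(w)}) \rightarrow R\Gamma_{Z(u)}(\F_{Z(w)})[1]\]
from the proof of \cref{lem:les}, combined with $R\Gamma_{Z(u)}(\F_{Z(w)}) = 0$ (since $\F_{Z(w)}$ vanishes on a neighborhood of $Z(u)$), shows that the restriction map $R\Gamma_U(\F_{Z(w)}) \rightarrow R\Gamma_V(\F_{Z(w)})$ is an isomorphism. Taking $H^1$ yields the claim. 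The main obstacle is the acyclic-resolution bookkeeping in the first step; the geometric disjointness in the second step is a direct consequence of the hypothesis once the right closures and interiors are tracked.
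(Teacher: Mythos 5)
Your proof is correct and takes a genuinely different route from the paper. The paper argues combinatorially: it splits into the cases $q \subset \uparrow w$ and $q \not\subset \uparrow w$, and in each inspects directly how the connected components of $\rho_0(w)$ and $\rho_1(w)$ interact after intersecting with $U$ and with $V$; the hypothesis $u \notin [w, T(w)]$ guarantees this interaction is unchanged by those intersections, so the map of cokernels is (up to isomorphism) either $\op{id}_{\F}$ or $\op{id}_{\{0\}}$. You instead deduce from $u \not\preceq T(w)$ that $\uparrow u \cap \downarrow T(w) = \emptyset$, hence the support $\overline{Z(w)}$ misses a neighbourhood of the subset $Z(u)$, which is closed in $U$; therefore $R\Gamma_{Z(u)}(q; \F_{Z(w)}) = 0$, and the excision triangle from the proof of \cref{lem:les} makes the full restriction $R\Gamma(U; \F_{Z(w)}) \rightarrow R\Gamma(V; \F_{Z(w)})$ an isomorphism, of which the cokernel map is the $H^1$ piece. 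Your argument is strictly stronger than the statement: it yields $\Hom_{\Der(q)}(\F_{Z(u)}, \F_{Z(w)}[n]) = 0$ for all $n$ at once, which would in fact streamline parts of the case analysis in \cref{lem:vanishing}, and it makes visible that the hypothesis $w \preceq u$ is not actually used, only $u \not\preceq T(w)$. The paper's argument, by contrast, stays entirely at the level of the explicit two-term complex $\Gamma(-; \kappa_0(w))$ and avoids the support formalism. One small remark: the acyclicity bookkeeping you flag as the main obstacle is already carried out inside the proof of \cref{lem:les}, so you may simply cite it rather than re-derive it.
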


\begin{proof}
  We distinguish between two cases.
  If ${q \subset (\uparrow w)}$,
  then two connected components of $\rho_1 (w)$
  are contained in the same connected component of $\rho_0 (w)$.
  As $u \notin [w, T(w)]$ this property still holds
  after we intersect both sets with $U$ or $V$.
  Thus, when we apply the cokernel to the map of maps
  $\Gamma \left(U; \delta^0_w\right) \rightarrow
  \Gamma \left(V; \delta^0_w\right)$,
  we obtain the identity
  $K \xrightarrow{\op{id}} K$
  (up to isomorphism of maps).
  If $q \not\subset (\uparrow w)$ both cokernels vanish.
\end{proof}

Using the previous two lemmas
we show the following \enquote{vanishing theorem}.

\begin{lem}\label{lem:vanishing}
  If we have $u, v \in \M$ with
  $v \notin (\uparrow u) \cap \op{int} (\downarrow T(u))$,
  then
  \begin{equation*}
    \Hom_{\Der (q_{\gamma})} (\iota (u), \iota (v)) \cong \{0\}
    .
  \end{equation*}
\end{lem}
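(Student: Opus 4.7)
The plan is to reduce the claim to a computation of $\Hom$-groups in $\Der(q)$ and then extract the vanishing from the long exact sequence of \cref{lem:les}. First, using strict stability $\iota \circ T = \Sigma \circ \iota$, I may shift both $u$ and $v$ by the same power of $T$ without changing the $\Hom$-group, so I assume $u \in D$. Next, I write $v = T^m(v_0)$ with $v_0 \in D$ and $m \in \Z$ the unique such pair (up to boundary issues on $\partial D$, which are handled by the equality $\iota \circ T = \Sigma \circ \iota$). This gives $\iota(v) = \iota_0(v_0)[m]$, and so
\begin{equation*}
  \Hom_{\Der(q_\gamma)}(\iota(u), \iota(v)) \;\cong\; \Hom_{\Der(q_\gamma)}(\iota_0(u), \iota_0(v_0)[m]).
\end{equation*}
Applying the derived unit isomorphism of \cref{lem:derUnit} together with the adjunction $\phi_\gamma^{-1} \dashv R\phi_{\gamma*}$, this in turn identifies with $\Hom_{\Der(q)}(\F_{Z(u)}, \F_{Z(v_0)}[m])$, which is exactly the object computed by \cref{lem:les}.

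Next I dispatch the easy ranges of $m$. For $m < 0$ the $\Hom$ vanishes for general reasons (no negative ext between sheaves concentrated in degree $0$). For $m \geq 3$ the long exact sequence of \cref{lem:les} kills the $\Hom$ directly, since the relevant terms are zero. For $m = 0$ the $\Hom$-group in the derived category coincides with the $\Hom$-group of sheaves, so \cref{lem:iota0} applies and gives vanishing precisely when $v_0 \notin (\uparrow u) \cap \op{int}(\downarrow T(u))$ — which is the hypothesis in this case.

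The substantive cases are $m = 1$ and $m = 2$. Translating the forbidden rectangle under $T^{-m}$, the hypothesis $v \notin (\uparrow u) \cap \op{int}(\downarrow T(u))$ becomes $v_0 \notin (\uparrow T^{-m}(u)) \cap \op{int}(\downarrow T^{1-m}(u))$. From the long exact sequence of \cref{lem:les}, $\Hom_{\Der(q)}(\F_{Z(u)}, \F_{Z(v_0)}[1])$ is controlled by the cokernel of $\Gamma(U; \F_{Z(v_0)}) \to \Gamma(V; \F_{Z(v_0)})$ together with the kernel of $\op{coker}\Gamma(U;\delta^0_{v_0}) \to \op{coker}\Gamma(V;\delta^0_{v_0})$, while $\Hom[2]$ is the cokernel of the latter map. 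The key lever is \cref{lem:auxIsoCoker}, which turns the map between cokernels into an isomorphism exactly under the combinatorial condition $v_0 \preceq u \not\preceq T(v_0)$; this immediately forces $\Hom[2] = 0$ in that sub-range. In the remaining sub-ranges I use the explicit description of $\rho_0, \rho_1, U, V$ together with \eqref{eq:Zrho} to verify by hand that either the restriction $\Gamma(U; \F_{Z(v_0)}) \to \Gamma(V; \F_{Z(v_0)})$ is surjective (killing the $\Hom[1]$-contribution) or both cokernels are zero.

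The main obstacle will be bookkeeping the geometry in the $m \in \{1,2\}$ cases: there are several sub-configurations depending on which of $v_0, T(v_0), T^{-1}(u), T(u)$ lie above or below the boundary lines $l_0, l_1$, and for each sub-configuration one has to read off the number of connected components of $Z(v_0) \cap U$ and $Z(v_0) \cap V$. I expect \cref{lem:auxIsoCoker} together with the precise formula \eqref{eq:rhoExplicit} for $\rho$ to cover the generic sub-configurations cleanly, with the boundary cases (where $v_0$ or $u$ sits on $\partial \M$) reducing by the vanishing of $\iota$ on $\partial \M$ built into the definition of $\iota_0$.
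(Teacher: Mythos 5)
Your overall strategy matches the paper's: reduce to $\Hom$ in $\Der(q)$ via \cref{lem:derUnit}, assume $u \in D$, split by the power $m$ of $T$, dispatch $m < 0$ (no negative Ext), $m \geq 3$ (the long exact sequence is zero there), and $m = 0$ (via \cref{lem:iota0}), then handle $m \in \{1,2\}$ with \cref{lem:les} and \cref{lem:auxIsoCoker}. One logical slip, though: for $m = 1$ you assert that ``either $\Gamma(U;\F_{Z(v_0)}) \to \Gamma(V;\F_{Z(v_0)})$ is surjective (killing the $\Hom[1]$-contribution) or both cokernels are zero,'' but a single one of those two conditions does not suffice. From \cref{lem:les} one has an exact sequence
$\Gamma(U;\F_{Z(v_0)}) \xrightarrow{\varphi} \Gamma(V;\F_{Z(v_0)}) \to \Hom[1] \to \op{coker}\Gamma(U;\delta^0_{v_0}) \xrightarrow{\psi} \op{coker}\Gamma(V;\delta^0_{v_0})$,
so vanishing of $\Hom[1]$ requires \emph{both} surjectivity of $\varphi$ \emph{and} injectivity of $\psi$; ``both cokernels zero'' gives you $\psi$ injective but says nothing about $\varphi$. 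The paper splits the $m = 1$ case exactly along these lines (whether $u \preceq v$), verifying both conditions in each sub-case, and this is where essentially all the work lives: the geometric reasoning with the colored triangles of \cref{fig:vanishing} that pins down when $\Gamma(V;\F_{Z(w)}) = 0$, when $U \cap \rho_1(w)$ has at most one component, and when $V \cap Z(w)$ is connected. You correctly anticipate that this bookkeeping is the main obstacle, but as written the disjunctive target would not close the argument; you should aim at the conjunction $(\varphi$ surjective$) \wedge (\psi$ injective$)$ in each sub-configuration.
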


\begin{figure}[t]
  \centering
\begingroup%
  \makeatletter%
  \providecommand\color[2][]{%
    \errmessage{(Inkscape) Color is used for the text in Inkscape, but the package 'color.sty' is not loaded}%
    \renewcommand\color[2][]{}%
  }%
  \providecommand\transparent[1]{%
    \errmessage{(Inkscape) Transparency is used (non-zero) for the text in Inkscape, but the package 'transparent.sty' is not loaded}%
    \renewcommand\transparent[1]{}%
  }%
  \providecommand\rotatebox[2]{#2}%
  \newcommand*\fsize{\dimexpr\f@size pt\relax}%
  \newcommand*\lineheight[1]{\fontsize{\fsize}{#1\fsize}\selectfont}%
  \ifx\svgwidth\undefined%
    \setlength{\unitlength}{265.9914093bp}%
    \ifx\svgscale\undefined%
      \relax%
    \else%
      \setlength{\unitlength}{\unitlength * \real{\svgscale}}%
    \fi%
  \else%
    \setlength{\unitlength}{\svgwidth}%
  \fi%
  \global\let\svgwidth\undefined%
  \global\let\svgscale\undefined%
  \makeatother%
  \begin{picture}(1,0.47933879)%
    \lineheight{1}%
    \setlength\tabcolsep{0pt}%
    \put(0,0){\includegraphics[width=\unitlength,page=1]{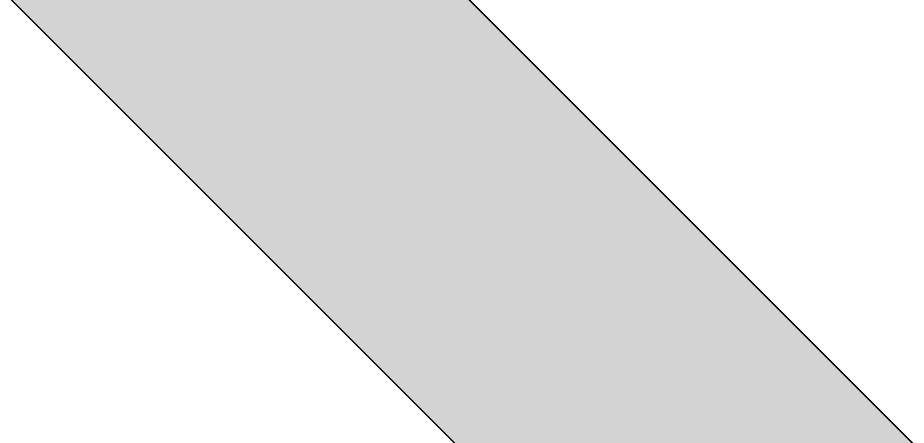}}%
    \put(0.48398753,0.2525825){\makebox(0,0)[t]{\lineheight{1.25}\smash{\begin{tabular}[t]{c}$q$\end{tabular}}}}%
    \put(0.35950409,0.32644635){\makebox(0,0)[t]{\lineheight{1.25}\smash{\begin{tabular}[t]{c}$T(w)$\end{tabular}}}}%
    \put(0.55371892,0.17768581){\makebox(0,0)[t]{\lineheight{1.25}\smash{\begin{tabular}[t]{c}$u$\end{tabular}}}}%
    \put(0.69421481,0.06198358){\makebox(0,0)[t]{\lineheight{1.25}\smash{\begin{tabular}[t]{c}$w$\end{tabular}}}}%
    \put(0,0){\includegraphics[width=\unitlength,page=2]{vanishing-coker.pdf}}%
  \end{picture}%
\endgroup%

  \caption{Here $u \in [w, T(w)]$ and $V \cap \rho_1 (w)$
    has two connected components.}
  \label{fig:vanishingCoker}
\end{figure}

\begin{figure}[t]
  \centering
\begingroup%
  \makeatletter%
  \providecommand\color[2][]{%
    \errmessage{(Inkscape) Color is used for the text in Inkscape, but the package 'color.sty' is not loaded}%
    \renewcommand\color[2][]{}%
  }%
  \providecommand\transparent[1]{%
    \errmessage{(Inkscape) Transparency is used (non-zero) for the text in Inkscape, but the package 'transparent.sty' is not loaded}%
    \renewcommand\transparent[1]{}%
  }%
  \providecommand\rotatebox[2]{#2}%
  \newcommand*\fsize{\dimexpr\f@size pt\relax}%
  \newcommand*\lineheight[1]{\fontsize{\fsize}{#1\fsize}\selectfont}%
  \ifx\svgwidth\undefined%
    \setlength{\unitlength}{264.50001526bp}%
    \ifx\svgscale\undefined%
      \relax%
    \else%
      \setlength{\unitlength}{\unitlength * \real{\svgscale}}%
    \fi%
  \else%
    \setlength{\unitlength}{\svgwidth}%
  \fi%
  \global\let\svgwidth\undefined%
  \global\let\svgscale\undefined%
  \makeatother%
  \begin{picture}(1,0.65217388)%
    \lineheight{1}%
    \setlength\tabcolsep{0pt}%
    \put(0,0){\includegraphics[width=\unitlength,page=1]{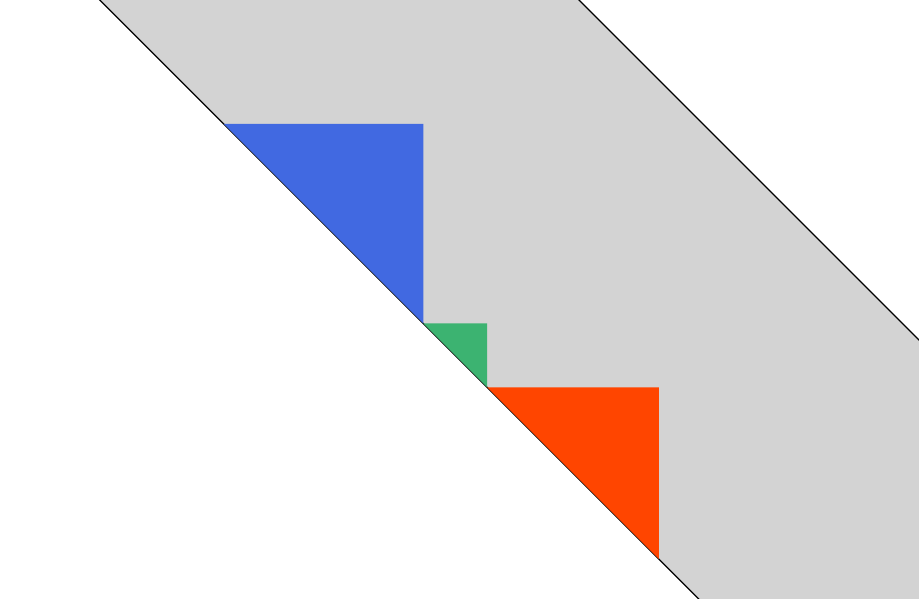}}%
    \put(0.53043485,0.57826074){\makebox(0,0)[t]{\lineheight{1.25}\smash{\begin{tabular}[t]{c}$v$\end{tabular}}}}%
    \put(0.44608693,0.53478258){\makebox(0,0)[t]{\lineheight{1.25}\smash{\begin{tabular}[t]{c}$T(u)$\end{tabular}}}}%
    \put(0.74565223,0.22391313){\makebox(0,0)[t]{\lineheight{1.25}\smash{\begin{tabular}[t]{c}$w$\end{tabular}}}}%
    \put(0.78695638,0.29347824){\makebox(0,0)[t]{\lineheight{1.25}\smash{\begin{tabular}[t]{c}$u$\end{tabular}}}}%
    \put(0,0){\includegraphics[width=\unitlength,page=2]{vanishing.pdf}}%
  \end{picture}%
\endgroup%

  \caption{Here $u \preceq v$ factors through a point in $\partial \M$.}
  \label{fig:vanishing}
\end{figure}

\begin{proof}
  By \cref{lem:derUnit} it suffices to show that
  \begin{equation*}
    \Hom_{\Der (q)} (\phi_\gamma^{-1} \iota (u), \phi_\gamma^{-1} \iota (v))
    \cong \{0\}
    .
  \end{equation*}
  Without loss of generality we assume $u \in D$.
  For
  \[v \notin D \cup T(D) \cup T^2 (D)\]
  we have
  \begin{equation*}
    \Hom_{\Der (q)} (\phi_\gamma^{-1} \iota (u), \phi_\gamma^{-1} \iota (v))
    \cong
    \Hom_{\Der (q)} (\F_{Z(u)}, \F_{Z(w)} [n])
  \end{equation*}
  for some $w \in D$ and $n \in \Z \setminus \{0, 1, 2\}$,
  and thus the statement
  follows directly from \cref{lem:les} in this case.
  If $v \in D$, then the statement follows from \cref{lem:iota0}.
  
  Now suppose we have $v \in T^2 (D)$, then let
  $w := T^{-2}(v)$ and let
  $U$ and $V$ be as in the previous two lemmas.
  We have to show that
  \begin{equation*}
    \Hom_{\Der (q)} (\phi_\gamma^{-1} \iota (u), \phi_\gamma^{-1} \iota (v))
    \cong
    \Hom_{\Der (q)} (\F_{Z(u)}, \F_{Z(w)} [2])
    \cong \{0\}
    .
  \end{equation*}  
  By \cref{lem:les} it suffices to show that the map
  \[\psi \colon \op{coker} \Gamma \left(U; \delta^0_w\right) \rightarrow
  \op{coker} \Gamma \left(V; \delta^0_w\right)\]
  is an epimorphism.
  Now in order for $\op{coker} \Gamma \left(V; \delta^0_w\right)$
  to be non-zero,
  there need to be at least two connected components in
  $V \cap \rho_1(w)$
  which lie in the same connected component of the superset $V \cap \rho_0(w)$.
  Moreover, for $V \cap \rho_1(w)$ to have at least
  two connected components we need to have 
  $w \preceq u$.
  If $u \not\preceq T(w)$,
  then $\psi$
  is an isomorphism by \cref{lem:auxIsoCoker}.
  If $u \preceq T(w)$,
  then the two components of $V \cap \rho_1(w)$
  lie in different components of $V \cap \rho_0(w)$
  as illustrated by \cref{fig:vanishingCoker}
  and thus $\op{coker} \Gamma \left(V; \delta^0_w\right) \cong \{0\}$,
  hence $\psi$ is surjective.
  
  Now suppose we have $v \in T(D)$, then let
  $w := T^{-1}(v)$ and let
  $U$ and $V$ be as in the previous two lemmas.
  We have to show that
  \begin{equation*}
    \Hom_{\Der (q)} (\phi_\gamma^{-1} \iota (u), \phi_\gamma^{-1} \iota (v))
    \cong
    \Hom_{\Der (q)} (\F_{Z(u)}, \F_{Z(w)} [1])
    \cong \{0\}
    .
  \end{equation*}  
  By \cref{lem:les} it suffices to show that the map
  \[\varphi \colon \Gamma \left(U; \F_{Z(w)}\right) \rightarrow
  \Gamma \left(V; \F_{Z(w)}\right)\]
  is an epimorphism and that
  \[\psi \colon \op{coker} \Gamma \left(U; \delta^0_w\right) \rightarrow
  \op{coker} \Gamma \left(V; \delta^0_w\right)\]
  is injective.
  If $u \not\preceq v$,
  then $Z(w)$ is not a closed subset of $V$
  and thus $\Gamma \left(V; \F_{Z(w)}\right) \cong \{0\}$,
  hence $\varphi$ is surjective.
  Moreover, $\psi$
  is an isomorphism by \cref{lem:auxIsoCoker}.
  Finally, we consider the case $u \preceq v$.
  By symmetry we may assume
  without loss of generality
  that the $y$-coordinate of $v$ is at least as large
  as the $y$-coordinate of $T(u)$,
  see also \cref{fig:vanishing}.
  In this case $U \cap \rho_1 (w)$ has at most
  one connected component
  and thus
  $\op{coker} \Gamma \left(U; \delta^0_w\right) \cong \{0\}$,
  hence $\psi$ is injective.
  Moreover,
  $q$ has to intersect at least one of the three colored triangles
  in \cref{fig:vanishing}.
  (It may happen that the blue and the red triangle overlap,
  in which case the green triangle is empty.)
  Here each triangle is closed at the right edge and open
  at the top edge in the sense that for each triangle,
  the vertical edge on the right is part of the triangle,
  but not the horizontal edge at the top.
  If $q$ intersects the blue triangle,
  then $Z(w)$ and $V$ are disjoint,
  hence $\Gamma \left(V; \F_{Z(w)}\right) \cong \{0\}$
  and thus $\varphi$ is surjective.
  If $q$ intersects the green triangle,
  then $V \cap Z(w)$ is not a non-empty closed subset of $V$,
  hence we have $\Gamma \left(V; \F_{Z(w)}\right) \cong \{0\}$
  in this case as well
  and thus $\varphi$ is surjective.
  If $q$ intersects the red triangle,
  then $U \cap Z(w)$ is a closed subset of $U$
  and $V \cap Z(w)$ has at most one connected component,
  hence $\varphi$ is surjective.
\end{proof}

Now we can deduce \cref{prp:iota} from this \enquote{vanishing theorem}.

\begin{figure}[t]
  \centering
  \import{strip-diagrams/_diagrams/}{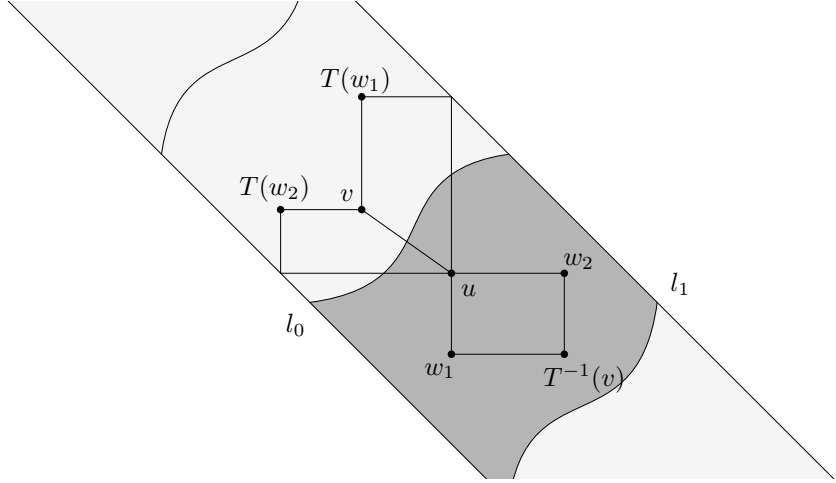}
  \caption{The axis-aligned rectangle in $D$ determined by $u$ and $v$.}
  \label{fig:iso_iota}
\end{figure}

\begin{proof}[Proof of \cref{prp:iota}]
  Without loss of generality we assume $u \in D$.
  If \[v \notin (\uparrow u) \cap \op{int} (\downarrow T(u)),\]
  then the statement follows from the previous \cref{lem:vanishing}.
  Now suppose we have
  \[v \in (\uparrow u) \cap \op{int} (\downarrow T(u)).\]
  If $v \in D$, the statement follows from \cref{lem:iota0}.
  For $v \in T(D)$ let $w_1, w_2 \in D$ be as in \cref{fig:iso_iota}.
  Now by construction of $\iota$ we have
  $\iota(u \preceq v) = \partial'$,
  where $\partial'$ is the boundary map of the triangle
  \begin{equation*}
    \iota \left(T^{-1}(v)\right)
    \xrightarrow{\begin{pmatrix} 1 \\ 1 \end{pmatrix}}
    \iota (w_1) \oplus \iota (w_2)
    \xrightarrow{\begin{pmatrix} 1 & -1 \end{pmatrix}}
    \iota (u) \xrightarrow{\partial'}
    \iota (v)[1]
  \end{equation*}
  associated to the short exact sequence
  \begin{equation*}
    0 \rightarrow
    \iota_0 \left(T^{-1}(v)\right)
    \xrightarrow{\begin{pmatrix} 1 \\ 1 \end{pmatrix}}
    \iota_0 (w_1) \oplus \iota_0 (w_2)
    \xrightarrow{\begin{pmatrix} 1 & -1 \end{pmatrix}}
    \iota_0 (u) \rightarrow
    0
    .
  \end{equation*}
  Applying the cohomological functor
  $\Hom_{\Der (q_{\gamma})} (\iota(u), -)$ to this triangle
  we obtain the exact sequence
  \begin{equation*}
    \begin{tikzcd}[row sep=10ex]
      \Hom_{\Der (q_{\gamma})} (\iota(u), \iota(w_1))
      \oplus
      \Hom_{\Der (q_{\gamma})} (\iota(u), \iota(w_2))
      \arrow[d, "(1 \,~ -1)" description]
      \\
      \Hom_{\Der (q_{\gamma})} (\iota(u), \iota(u))
      \arrow[d,
      "{\Hom_{\Der (q_{\gamma})} (\iota(u), \iota(u \preceq v))}" description]
      \\
      \Hom_{\Der (q_{\gamma})} (\iota(u), \iota(v))
      \arrow[d, "{\begin{pmatrix} 1 \\ 1 \end{pmatrix}}"]
      \\
      \Hom_{\Der (q_{\gamma})} (\iota(u), (\iota \circ T)(w_1))
      \oplus
      \Hom_{\Der (q_{\gamma})} (\iota(u), (\iota \circ T)(w_2))
      .
    \end{tikzcd}
  \end{equation*}
  By the previous \cref{lem:vanishing} this exact sequence starts with $\{0\}$
  and ends with $\{0\}$,
  hence $\Hom_{\Der (q_{\gamma})} (\iota(u), \iota(u \preceq v))$
  is an isomorphism.
  Moreover,
  \[\Hom_{\Der (q_{\gamma})} (\iota(u), \iota(u)) =
  \langle \iota(u \preceq u) \rangle =
  \langle \op{id}_{\iota(u)} \rangle \cong \F\]
  by \cref{lem:iota0}
  and thus
  \begin{equation*}
    \Hom_{\Der (q_{\gamma})} (\iota(u), \iota(v)) =
    \langle \iota(u \preceq v) \rangle \cong \F
    .
    \qedhere
  \end{equation*}
\end{proof}


\section{Induced Cohomological Functors on $\M$}
\label{sec:cohoFunctors}

We consider a bounded below chain complex $F$ of sheaves on $q_{\gamma}$
as an object of the derived category $ \Der (q_{\gamma})$.
Then we have the functor
\begin{equation*}
  h_\gamma (F) :=
  \Hom_{\Der (q_{\gamma})} (\iota(-), F) \colon \M^{\circ} \rightarrow \VectF
\end{equation*}
from $\M$ to the category of vector spaces over $\F$.
As ${\iota(u) \cong 0}$ for all ${u \in \partial \M}$
the functor ${h_{\gamma} (F)}$ vanishes on ${\partial \M}$.

\begin{lem}
  \label{lem:boundedAboveSupport}
  The functor
  $h_{\gamma} (F) \colon \M^{\circ} \rightarrow \mathrm{Vect}_{\F}$
  has bounded above support.
\end{lem}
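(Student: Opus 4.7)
The plan is to exploit the strict stability relation $\iota \circ T = \Sigma \circ \iota$ together with the bounded-below hypothesis on $F$ in order to reduce the claim to a standard Ext-vanishing statement. Since $D$ is a fundamental domain for the $\langle T \rangle$-action on $\M$, every $u \in \M$ has a unique decomposition $u = T^n(u_0)$ with $u_0 \in D$ and $n \in \Z$, and iterating the stability relation yields $\iota(u) = \iota_0(u_0)[n]$. Consequently,
\begin{equation*}
  h_{\gamma}(F)(u) = \Hom_{\Der (q_{\gamma})}\bigl(\iota_0(u_0)[n],\, F\bigr) \cong \Hom_{\Der (q_{\gamma})}\bigl(\iota_0(u_0),\, F[-n]\bigr).
\end{equation*}

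Fix $N \in \Z$ with $F^i = 0$ for all $i < N$, which exists because $F$ is bounded below. Then $F[-n]$ is concentrated in degrees $\geq N + n$, which is strictly positive once $n > -N$. Choosing an injective resolution $F[-n] \to I^{\bullet}$ with $I^i = 0$ for $i < N + n$, any chain map out of the sheaf $\iota_0(u_0)$ (concentrated in degree $0$) into $I^{\bullet}$ must vanish in degree $0$ and hence in all degrees, so $h_{\gamma}(F)(T^n(u_0)) = 0$ for every $n > -N$. Therefore
\begin{equation*}
  \op{supp} h_{\gamma}(F) \subseteq \bigcup_{n \leq -N} T^n(D) \subseteq T^{-N}(Q),
\end{equation*}
and the right-hand side is a proper closed downset of $\M$ because $Q$ is one and $T^{-N}$ is an order-preserving automorphism of $\M$ (a fact that reads off directly from the defining incidences of $T$). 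Containment of the support in a proper downset of $\M$ is precisely what having \enquote{bounded above support} amounts to.

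The main obstacle, such as it is, is bookkeeping: matching the shift convention against the direction in which positive powers of $T$ move points in the poset $\M$, and verifying that $T$ preserves the componentwise order on $\R^{\circ} \times \R$. The cohomological core of the argument is entirely formal and relies only on the bounded-belowness of $F$ together with the strict stability of $\iota$.
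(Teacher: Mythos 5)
Your proof is correct and uses the same essential idea as the paper's (much terser) argument: bounded-belowness of $F$ together with the strict stability relation $\iota \circ T = \Sigma \circ \iota$ forces $h_{\gamma}(F)(T^{n}(u_0)) = \Hom(\iota_0(u_0), F[-n])$ to vanish once $n > -N$, so the support lies in the downset $T^{-N}(Q)$. Your bookkeeping of the shift convention is consistent with the paper's conclusion that the support is contained in $T^n(Q)$ where $F^k \cong 0$ for $k < -n$.
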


\begin{proof}
  Since $F$ is a bounded below chain complex of sheaves,
  there is an integer $n \in \Z$,
  such that $F^k \cong 0$ for all $k < -n$.
  Thus, we have
  $h_{\gamma} (F)(u) \cong \{0\}$
  for all $u \in \M \setminus T^n(Q)$.
\end{proof}

We will shortly see
that the functor ${h_{\gamma} (F) \colon \M^{\circ} \rightarrow \VectF}$
satisfies the exactness properties of the following definition.

\begin{dfn}
  \label{dfn:cohomological}
  We say that a functor
  $G \colon \M^{\circ} \rightarrow \mathrm{Vect}_{\F}$
  vanishing on $\partial \M$ is \emph{cohomological},
  if for any axis-aligned rectangle
  with one corner lying on $l_1$ and the other corners
  ${u \preceq v \preceq w \in \M}$,
  the long sequence
  \begin{equation*}
    \begin{tikzcd}
      &
      \cdots
      \arrow[r]
      &
      G(T(u))
      \arrow[dll, out=0, in=180, looseness=2, overlay]
      \\
      G(w)
      \arrow[r]
      &
      G(v)
      \arrow[r]
      &
      G(u)
      \arrow[dll, out=0, in=180, looseness=2, overlay]
      \\
      G(T^{-1}(w))
      \arrow[r]
      &
      \cdots
      .
    \end{tikzcd}  
  \end{equation*}
  is exact;
  see also
  \cref{fig:cohomological} or
  \cite[Definition C.1]{2021arXiv210809298B}.
\end{dfn}

\begin{figure}[t]
  \centering
  \import{strip-diagrams/_diagrams/}{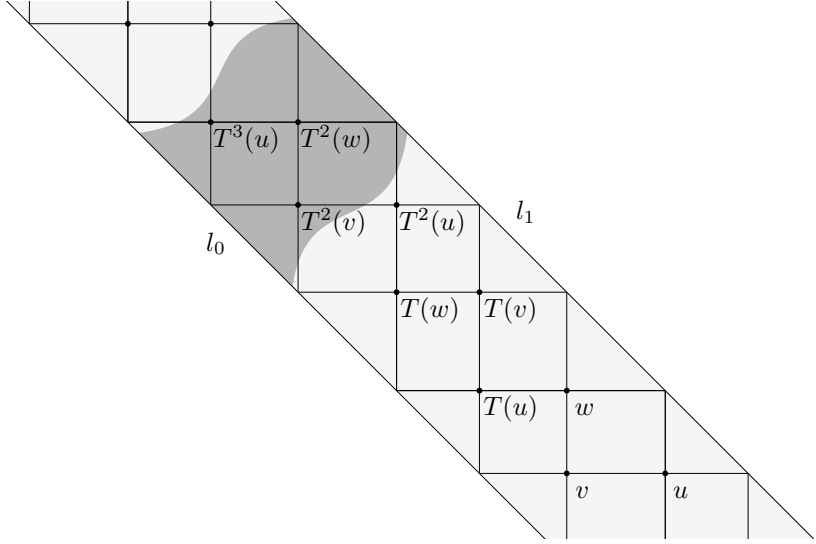}
  \caption{
    The linear subposet given by the orbits of $u$, $v$, and $w$.
    The region shaded in dark grey is our fundamental domain $D$.
  }
  \label{fig:cohomological}
\end{figure}

In \cite[Proposition C.2]{2021arXiv210809298B} we also provide
the following useful characterization of cohomological functors.

\begin{prp}
  \label{prp:cohomological}
  A functor $G \colon \M^{\circ} \rightarrow \mathrm{Vect}_{\F}$
  vanishing on $\partial \M$
  is cohomological iff
  for any axis-aligned rectangle
  $u \preceq v_1, v_2 \preceq w \in D$
  as shown in \cref{fig:constrBoundary}
  the long sequence
  \begin{equation}
    \label{eq:charCohomological}
    \begin{tikzcd}
      &
      \cdots
      \arrow[r]
      &
      G(T(u))
      \arrow[dll, out=0, in=180, looseness=2, overlay]
      \\
      G(w)
      \arrow[r]
      &
      G(v_1) \oplus G(v_2)
      \arrow[r, "(1 ~\, -1)"]
      &
      G(u)
      \arrow[dll, out=0, in=180, looseness=2, overlay]
      \\
      G(T^{-1}(w))
      \arrow[r]
      &
      \cdots
    \end{tikzcd}  
  \end{equation}
  is exact.
\end{prp}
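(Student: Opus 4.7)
The plan is to prove the two directions of the \emph{iff} separately, exploiting the $T$-equivariance built into both long sequences. The common observation is that each sequence is indexed by $T$-iterates of the rectangle's corners, so replacing a rectangle $R$ by any translate $T^n(R)$ merely reindexes the same sequence. Hence exactness of the long sequence depends only on the $T$-orbit of $R$, and we may freely normalize the rectangle's position using $T$.

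For the direction ``characterization $\Rightarrow$ cohomological'': given an axis-aligned rectangle with one corner on $l_1$ and other corners $u \preceq v \preceq w$, a short geometric argument shows that the corner on $l_1$ must be one of the two incomparable middle corners of the rectangle. Indeed, the maximum of the rectangle in the product order on $\R^{\circ} \times \R$ cannot lie on $l_1$, for this would force a neighboring corner outside $\M$. Since $G$ vanishes on $\partial \M$, this middle corner contributes zero, and the sequence \eqref{eq:charCohomological} reduces termwise to the sequence in \cref{dfn:cohomological}. To transport exactness, I would translate the rectangle by a suitable $T^{-n}$ so that the remaining corners $u, v, w$ land in $D$, and then consider a slight perturbation of the $l_1$-corner into $\op{int} \M$ to obtain a rectangle strictly in $D$ whose characterization sequence agrees with the original sequence on all non-vanishing terms.

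For the direction ``cohomological $\Rightarrow$ characterization'': given a rectangle $u \preceq v_1, v_2 \preceq w$ in $D$, I would construct two auxiliary rectangles $R_1, R_2$, each obtained from the original by extending along one axis until the corresponding corner $v_i$ lands on $l_1$. Each $R_i$ then satisfies the hypothesis of \cref{dfn:cohomological} and yields an exact long sequence in which the corner on $l_1$ contributes zero. These two long sequences share the four corners $u, v_1, v_2, w$ (after suitable $T$-translations of some terms), and their splicing via a Mayer--Vietoris style diagram chase produces the exact sequence \eqref{eq:charCohomological}, with the middle term $G(v_1) \oplus G(v_2)$ arising as the direct sum of the middle terms of $R_1$ and $R_2$.

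I expect the main obstacle to be bookkeeping: verifying that the connecting map $G(u) \to G(T^{-1}(w))$ emerging from the splicing agrees with the one prescribed in \eqref{eq:charCohomological}, and that the two maps into and out of $G(v_1) \oplus G(v_2)$ take the required matrix form with alternating signs. Once the auxiliary rectangles are correctly identified and the $T$-periodic indexing is tracked consistently, this reduces to a routine diagram chase in the style of Barratt--Whitehead, using only the functoriality of $G$ and the vanishing of $G$ on $\partial \M$.
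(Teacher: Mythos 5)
First, a caveat: this paper does not prove \cref{prp:cohomological} at all --- it is imported from \cite[Proposition C.2]{2021arXiv210809298B} --- so I can only judge your argument on its own terms.

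Your direction \enquote{cohomological $\Rightarrow$ characterization} is essentially the right argument, but the auxiliary rectangles are not the ones you describe. Of the two incomparable middle corners of a rectangle, only the one lying on the same vertical line as the minimum $u$ (equivalently, the one with the larger coordinate sum) can ever be pushed onto $l_1$; extending the rectangle along the other axis until the second middle corner reaches $l_1$ forces a corner of the enlarged rectangle outside $\M$. Writing $v_1$ for the corner on the vertical through $u$ and $v_2$ for the other one, the correct pair consists of the $l_1$-cornered rectangle whose chain begins $u \preceq v_2$ and the one whose chain begins $v_1 \preceq w$; the second is \emph{not} an extension of the original rectangle but of its edge $[v_1, w]$. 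These two rectangles share their maximum $w_a$ and their $l_1$-corner, so the two long exact sequences do not \enquote{share the four corners $u, v_1, v_2, w$} --- they share only the orbit of $G(w_a)$ and between them cover the four corners. The internal maps of $G$ give a commutative ladder between the two sequences that is the identity on the $G(T^n(w_a))$-terms, and Barratt--Whitehead then yields \eqref{eq:charCohomological}; the connecting map is the internal map $G$ of $T^{-1}(w) \preceq u$ because it arises as a composite of two internal maps, so the \enquote{main obstacle} you name is indeed only functoriality.

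The direction \enquote{characterization $\Rightarrow$ cohomological} has a genuine gap. Exactness of \eqref{eq:charCohomological} is only hypothesized for rectangles whose corners lie in the fixed fundamental domain $D = Q \setminus T^{-1}(Q)$, and an $l_1$-cornered rectangle with chain $u \preceq v \preceq w$ need not admit \emph{any} power of $T$ carrying $u$, $v$, and $w$ simultaneously into $D$. Indeed, writing $N(p)$ for the unique integer with $p \in T^{-N(p)}(D)$, the relations $u \preceq w \preceq T(u)$ and the fact that $Q$ is a downset give $N(u) - 1 \le N(w) \le N(u)$, and a common translate exists only when $N(u) = N(w)$; already for $Q = \{(x,y) \in \M \mid y \le 0\}$ one easily writes down $l_1$-cornered rectangles with $u \in D$ but $w \in T(D)$. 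So the normalization step fails, and the perturbation step cannot repair it: nudging the $l_1$-corner into $\op{int} \M$ necessarily moves a second corner of the rectangle as well (the corners are constrained to a grid), it introduces a value of $G$ at the perturbed corner that has no reason to vanish, and \cref{prp:cohomological} imposes no continuity hypothesis on $G$ that would let you pass exactness to the limit. (For those $l_1$-cornered rectangles that do lie in a translate of $D$ no perturbation is needed in the first place, since $D$ may meet $l_1$ and the vanishing of $G$ there literally turns \eqref{eq:charCohomological} into the sequence of \cref{dfn:cohomological}.) What this direction actually requires is a mechanism for propagating exactness across the boundary curves $T^n(q)$ --- for instance a pasting lemma asserting that exactness of \eqref{eq:charCohomological} for two rectangles sharing an edge implies it for their union, together with an argument handling the fact that $q$ is an arbitrary monotone curve, so a rectangle meeting it cannot in general be subdivided into finitely many axis-aligned cells each inside a single translate of $D$. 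That is where the real content of the proposition lies, and your proposal does not address it.
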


\begin{lem}
  \label{lem:cohomological}
  The functor
  ${h_{\gamma} (F) \colon \M^{\circ} \rightarrow \VectF}$
  is cohomological.  
\end{lem}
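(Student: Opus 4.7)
The plan is to reduce the statement to \cref{prp:cohomological} and then obtain the required exactness by applying a contravariant cohomological functor to the distinguished triangle \eqref{eq:triangle} lifted along the strictly stable extension $\iota$.

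More concretely, fix an axis-aligned rectangle $u \preceq v_1, v_2 \preceq w \in D$ as in \cref{fig:constrBoundary}. By \cref{prp:cohomological} it suffices to verify exactness of the long sequence \eqref{eq:charCohomological} at every term. From \cref{lem:exact} the short sequence
\begin{equation*}
  0 \rightarrow
  \iota_0(u) \xrightarrow{\begin{pmatrix} 1 \\ 1 \end{pmatrix}}
  \iota_0(v_1) \oplus \iota_0(v_2)
  \xrightarrow{\begin{pmatrix} 1 & -1 \end{pmatrix}}
  \iota_0(w) \rightarrow 0
\end{equation*}
is exact in $\mathrm{Sh}(q_\gamma, \partial q)$, hence yields a distinguished triangle in $\Der(q_\gamma)$ as in \eqref{eq:triangle}. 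By the very construction of the strictly stable extension $\iota \colon \M \rightarrow \Der(q_\gamma, \partial q)$ we obtain the distinguished triangle
\begin{equation*}
  \iota(u) \xrightarrow{\begin{pmatrix} 1 \\ 1 \end{pmatrix}}
  \iota(v_1) \oplus \iota(v_2)
  \xrightarrow{\begin{pmatrix} 1 & -1 \end{pmatrix}}
  \iota(w) \rightarrow
  \iota(T(u))
\end{equation*}
in $\Der(q_\gamma)$, where we used $\iota(u)[1] = \Sigma \iota(u) = \iota(T(u))$.

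Next I would apply the contravariant cohomological functor $\Hom_{\Der(q_\gamma)}(-, F)$ to this distinguished triangle. The standard fact that $\Hom_{\Der(q_\gamma)}(-, F)$ turns distinguished triangles into long exact sequences yields exactness of
\begin{equation*}
  \cdots \rightarrow
  \Hom(\iota(T(u)), F) \rightarrow
  \Hom(\iota(w), F) \rightarrow
  \Hom(\iota(v_1), F) \oplus \Hom(\iota(v_2), F) \rightarrow
  \Hom(\iota(u), F) \rightarrow \cdots
\end{equation*}
where the higher terms are of the form $\Hom(\iota(T^n(u)), F)$, $\Hom(\iota(T^n(v_i)), F)$, and $\Hom(\iota(T^n(w)), F)$ for $n \in \Z$. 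Using $\iota \circ T = \Sigma \circ \iota$ once more, these terms are exactly the values of $h_\gamma(F)$ at $T^n(u)$, $T^n(v_i)$, $T^n(w)$, and the connecting maps are precisely $h_\gamma(F)$ applied to the internal morphisms in $\M$ along the orbits. This identifies the long exact sequence with \eqref{eq:charCohomological}, completing the proof by \cref{prp:cohomological}.

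There is no genuine obstacle here; the only subtle point is to make sure that the signs and the identification $\iota(T(u)) = \iota(u)[1]$ match the indexing conventions of \eqref{eq:charCohomological}, which is built in by the strict stability of $\iota$ established via \cite[Proposition A.14]{2021arXiv210809298B}.
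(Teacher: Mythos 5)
Your proposal is correct and matches the paper's own argument: the paper likewise applies the cohomological functor $\Hom_{\Der(q_{\gamma})}(-,F)$ to the distinguished triangle \eqref{eq:triangle} and concludes via \cref{prp:cohomological}. Your additional remarks on strict stability and the identification $\iota(T(u)) = \iota(u)[1]$ just make explicit what the paper leaves implicit.
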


\begin{proof}
  If we apply the cohomological functor
  $\Hom_{\Der (q_{\gamma})} (-, F) \colon D^+ (q_{\gamma}) \rightarrow \VectF$
  to the distinguished triangle \eqref{eq:triangle}
  for $u \preceq v_1, v_2 \preceq w \in D$ as in \cref{fig:constrBoundary},
  then we obtain the long exact sequence
  \begin{equation*}
    \begin{tikzcd}
      &
      \cdots
      \arrow[r]
      &
      h_{\gamma} (F)(T(u))
      \arrow[dll, out=0, in=180, looseness=2, overlay]
      \\
      h_{\gamma} (F)(w)
      \arrow[r]
      &
      h_{\gamma} (F)(v_1) \oplus h_{\gamma} (F)(v_2)
      \arrow[r, "(1 ~\, -1)"]
      &
      h_{\gamma} (F)(u)
      \arrow[dll, out=0, in=180, looseness=2, overlay]
      \\
      h_{\gamma} (F)(T^{-1}(w))
      \arrow[r]
      &
      \cdots
      .
    \end{tikzcd}  
  \end{equation*}
  Thus, the long sequence \eqref{eq:charCohomological} is exact for
  ${G := h_{\gamma} (F) = \Hom_{\Der (q_{\gamma})} (\iota(-), F)}$
  and hence
  ${h_{\gamma} (F) \colon \M^{\circ} \rightarrow \VectF}$
  is cohomological.
\end{proof}

We also have the following primitive type of a cohomological functor,
see also \cref{fig:contraBlock}.

\begin{dfn}[Contravariant Block]
  \label{dfn:contraBlock}
  For $v \in \M$ we define
  \begin{equation*}
    B_v \colon \M^{\circ} \rightarrow \mathrm{Vect}_{\F},
    u \mapsto
    \begin{cases}
      \F & u \in (\downarrow v) \cap \op{int}\left(\uparrow T^{-1}(v)\right)
      \\
      \{0\} & \text{otherwise},
    \end{cases}
  \end{equation*}
  where $\op{int}\left(\uparrow T^{-1}(v)\right)$ is the interior of the upset
  of $T^{-1}(v)$ in $\M$.
  The internal maps are identities whenever both domain and codomain are $\F$,
  otherwise they are zero.
\end{dfn}

\begin{lem}[Yoneda]
  \label{lem:yoneda}
  Let ${G \colon \M^{\circ} \rightarrow \mathrm{Vect}_{\F}}$
  be a functor
  vanishing on $\partial \M$
  and let ${v \in \op{int} \M := \M \setminus \partial \M}$.
  Then the evaluation at ${1 \in \F = B_v(v)}$
  yields a linear isomorphism
  \begin{equation*}
    \op{Nat}(B_v, G) \cong G(v),
  \end{equation*}
  where $\op{Nat}(B_v, G)$ denotes the vector space
  of natural transformations from $B_v$ to $G$.
\end{lem}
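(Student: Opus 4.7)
The plan is to construct an explicit inverse to the evaluation map. Given $g \in G(v)$, define a candidate natural transformation $\eta^g \colon B_v \to G$ by $\eta^g_u := 0$ for $u \notin \mathrm{supp}(B_v) := (\downarrow v) \cap \op{int}(\uparrow T^{-1}(v))$ and $\eta^g_u(1) := G(u \preceq v)(g)$ otherwise. Then $\eta^g_v(1) = g$, so once $\eta^g$ is shown natural, $g \mapsto \eta^g$ is a right inverse to evaluation; conversely, for any $\eta \in \op{Nat}(B_v, G)$ and any $u \in \mathrm{supp}(B_v)$, naturality applied to the arrow $u \preceq v$ in $\M$ (along which $B_v$ is the identity of $\F$) forces $\eta_u(1) = G(u \preceq v)(\eta_v(1))$, so $\eta = \eta^{\eta_v(1)}$ and $g \mapsto \eta^g$ is a two-sided inverse.

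Verifying naturality of $\eta^g$ on an arrow $u' \preceq u$ in $\M$ then splits into three cases: (i) if $u \notin \mathrm{supp}(B_v)$, both $\eta^g_u$ and $B_v(u)$ vanish and the square trivially commutes; (ii) if both $u, u' \in \mathrm{supp}(B_v)$, the internal map of $B_v$ is the identity on $\F$ and the square commutes by functoriality of $G$. The only nontrivial case is (iii): $u \in \mathrm{supp}(B_v)$ and $u' \notin \mathrm{supp}(B_v)$. Here $B_v(u' \preceq u) = 0$, so naturality demands $G(u' \preceq u)\bigl(G(u \preceq v)(g)\bigr) = G(u' \preceq v)(g) = 0$. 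Since $g \in G(v)$ is arbitrary, the entire lemma reduces to the following claim: $G(u' \preceq v) = 0$ for every $u' \in \M$ with $u' \preceq v$ and $u' \notin \mathrm{supp}(B_v)$.

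The geometric heart of the argument, and the main potential obstacle, is this vanishing. I would prove it by exhibiting, for each such $u'$, a factorization $u' \preceq w \preceq v$ through some $w \in \partial \M = l_0 \cup l_1$; then $G(u' \preceq v) = G(u' \preceq w) \circ G(w \preceq v) = 0$ because $G$ vanishes on $\partial \M$. If $u' \in \partial \M$, take $w := u'$. Otherwise $u' \in \op{int} \M$, and the failure $u' \notin \op{int}(\uparrow T^{-1}(v))$ means that $u'$ lies past $T^{-1}(v)$ either to the right of its vertical line or below its horizontal line. The defining incidences of $T$ ensure that the vertical through $T^{-1}(v)$ meets $l_1$ on the horizontal through $v$, and dually that the horizontal through $T^{-1}(v)$ meets $l_0$ on the vertical through $v$. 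In the first case I take $w$ to be the intersection of $l_1$ with the vertical line through $u'$; in the second, the intersection of $l_0$ with the horizontal line through $u'$. A short check using $u' \in \M$ (together with $u' \preceq v$) then verifies $u' \preceq w \preceq v$ in each case, completing the proof.
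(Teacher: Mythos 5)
Your proof is correct and complete. The paper states \cref{lem:yoneda} without proof (treating it as a routine Yoneda-type computation), so there is no argument of the authors' to compare against; what you have written supplies exactly the details that are left implicit. The formal part (evaluation is injective because $B_v$ is $\F$ on its support with identity internal maps, and surjective once $\eta^g$ is natural) is standard, and you correctly identify that the only substantive point is the case where an arrow $u' \preceq u$ exits the support of $B_v$, which reduces to showing $G(u' \preceq v) = 0$. Your factorization through $\partial\M$ checks out: writing $l_0 = \{x+y=c_0\}$ and $l_1 = \{x+y=c_1\}$, the defining incidences give $T^{-1}(v) = (c_1 - v_2,\, c_0 - v_1)$, so the failure of $u' \in \op{int}(\uparrow T^{-1}(v))$ means $u'_1 \geq c_1 - v_2$ or $u'_2 \leq c_0 - v_1$; in the first case $w = (u'_1,\, c_1 - u'_1) \in l_1$ satisfies $u' \preceq w$ (because $u' \in \M$ gives $u'_2 \leq c_1 - u'_1$) and $w \preceq v$ (because $u'_1 \geq v_1$ and $c_1 - u'_1 \leq v_2$), and the second case is symmetric with $l_0$. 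Since $G$ vanishes on $\partial\M$, the map factors through zero, and the lemma follows.
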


\sloppy
Using the Yoneda \cref{lem:yoneda},
we may rephrase \cref{prp:iota} as follows.

\begin{cor}
  \label{cor:iota}
  The unique natural transformation
  \[
    B_v \rightarrow
    (h_\gamma \circ \iota)(v) = \Hom_{\Der (q_{\gamma})} (\iota(-), \iota(v))
  \]
  sending $1 \in \F = B_v (v)$
  to $\op{id}_{\iota(v)} \in \Hom_{\Der (q_{\gamma})} (\iota(v), \iota(v))$
  is a natural isomorphism.
\end{cor}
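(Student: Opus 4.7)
The plan is to apply the Yoneda Lemma \ref{lem:yoneda} to reduce the corollary to a pointwise check, and then read off the pointwise isomorphism directly from Proposition \ref{prp:iota}. Both $B_v$ and $(h_\gamma \circ \iota)(v)$ are functors $\M^\circ \to \VectF$ vanishing on $\partial \M$ (the latter because $\iota(u) \cong 0$ for $u \in \partial \M$, so $\Hom_{\Der(q_\gamma)}(\iota(u), \iota(v)) = 0$ in that case). For $v \in \op{int} \M$ this makes Lemma \ref{lem:yoneda} applicable and gives the unique natural transformation $\alpha \colon B_v \to (h_\gamma \circ \iota)(v)$ with $\alpha_v(1) = \op{id}_{\iota(v)}$; if $v \in \partial \M$ both functors are already zero and there is nothing to prove.

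First, I would identify the common support. Definition \ref{dfn:contraBlock} says $B_v(u) \cong \F$ exactly when $u \in (\downarrow v) \cap \op{int}(\uparrow T^{-1}(v))$, while Proposition \ref{prp:iota} says $(h_\gamma \circ \iota)(v)(u) = \Hom_{\Der(q_\gamma)}(\iota(u), \iota(v)) \cong \F$ exactly when $v \in (\uparrow u) \cap \op{int}(\downarrow T(u))$. These two conditions are equivalent: the first conjuncts both translate to $u \preceq v$, and for the second, $T$ is a lattice automorphism of $\M$ which is also a homeomorphism (being a glide reflection), so
\[
  v \in \op{int}(\downarrow T(u)) \iff T^{-1}(v) \in \op{int}(\downarrow u) \iff u \in \op{int}(\uparrow T^{-1}(v)),
\]
where the second equivalence uses the product structure of $\M \subset \R^\circ \times \R$, in which both conditions unfold to the same coordinatewise strict inequalities between $u$ and $T^{-1}(v)$.

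Second, I would check $\alpha_u$ is a linear isomorphism for every $u \in \M$. Off this common support both sides are zero, and $\alpha_u$ is the zero map between zero vector spaces, so it is trivially an isomorphism. Inside the support both sides are one-dimensional, so it suffices to verify $\alpha_u(1) \neq 0$. By naturality of $\alpha$ at the morphism $u \preceq v$ in $\M^\circ$, together with $B_v(u \preceq v)(1) = 1$ and the normalization $\alpha_v(1) = \op{id}_{\iota(v)}$, one computes
\[
  \alpha_u(1) = \bigl(h_\gamma(\iota(v))(u \preceq v)\bigr)(\op{id}_{\iota(v)}) = \op{id}_{\iota(v)} \circ \iota(u \preceq v) = \iota(u \preceq v),
\]
and Proposition \ref{prp:iota} says $\iota(u \preceq v)$ generates the one-dimensional space $\Hom_{\Der(q_\gamma)}(\iota(u), \iota(v))$, hence is nonzero.

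The main substance of the argument is already carried by Proposition \ref{prp:iota}; the only genuine subtlety here is the geometric verification that the rectangular support of $B_v$ (with $v$ at the upper-left corner, cf.\ \cref{fig:contraBlock}) coincides with the dual region appearing in Proposition \ref{prp:iota}, which is exactly the equivariance of the interior under $T$ combined with the product lattice structure of $\M$.
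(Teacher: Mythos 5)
Your proof is correct and takes essentially the same route as the paper, which presents the corollary without a displayed proof as an immediate "rephrasing" of \cref{prp:iota} via the Yoneda \cref{lem:yoneda}. You have simply spelled out the two implicit checks (the translation between $u \in (\downarrow v) \cap \op{int}(\uparrow T^{-1}(v))$ and $v \in (\uparrow u) \cap \op{int}(\downarrow T(u))$, and the naturality computation identifying $\alpha_u(1)$ with the generator $\iota(u \preceq v)$), both of which are exactly what the paper intends.
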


\sloppy
The following proposition 
provides a description of $h_{\gamma} (F)$
in terms of local sheaf cohomology.

\begin{prp}
  \label{prp:localCoho}
  Let $u \in D$ and let $n \in \Z$,
  then we have a natural isomorphism
  \begin{equation*}
    h_{\gamma}(F)(T^{-n}(u))
    =
    \Hom_{\Der (q_{\gamma})} ((\iota \circ T^{-n})(u), F)
    \cong
    H^{n}_{q \cap (\uparrow u)} (q \cap \op{int}(\downarrow T(u)); F)
    .
  \end{equation*}
\end{prp}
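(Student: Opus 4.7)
The plan is to reduce to the already-established isomorphism of \cref{lem:iotaLocalSections} and then derive it. The key observation is that by the strict stability of $\iota$, namely $\iota \circ T = \Sigma \circ \iota$, iteration gives $\iota \circ T^{-n} = \Sigma^{-n} \circ \iota$, so that for $u \in D$ one has
\begin{equation*}
  (\iota \circ T^{-n})(u) = \iota(u)[-n] = \iota_0(u)[-n] = \F_{Z(u)}[-n].
\end{equation*}
Consequently,
\begin{equation*}
  h_\gamma(F)(T^{-n}(u))
  = \Hom_{\Der(q_\gamma)}(\F_{Z(u)}[-n], F)
  \cong \Hom_{\Der(q_\gamma)}(\F_{Z(u)}, F[n]).
\end{equation*}
So the task reduces to computing $\op{Ext}^n$-groups out of $\F_{Z(u)}$ into $F$.

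Next, I would derive the chain of isomorphisms used in the proof of \cref{lem:iotaLocalSections}. Each equality or isomorphism there has a counterpart at the level of derived functors in \cite{Kashiwara1990}: the identification of $\Hom$ with global sections of the internal $\cHom$ derives to $R\Hom \cong R\Gamma \circ R\cHom$; the isomorphism $\cHom(\F_{Z(u)}, F) \cong \Gamma_{Z(u)}(F)$ coming from \cite[(2.3.16)]{Kashiwara1990} has a derived analogue $R\cHom(\F_{Z(u)}, F) \cong R\Gamma_{Z(u)}(F)$; and the factorization $\Gamma_{Z(u)} = \Gamma_{q \cap \op{int}(\downarrow T(u))} \circ \Gamma_{q \cap (\uparrow u)}$ from \cite[Proposition 2.3.9.(ii)]{Kashiwara1990} derives likewise by composition. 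Combining with \cite[Proposition 2.3.9.(iii) and (2.3.12)]{Kashiwara1990} for the identification of $R\Gamma$ on a locally closed subset with sections-with-support, this yields
\begin{equation*}
  R\Hom_{\Der(q_\gamma)}(\F_{Z(u)}, F) \cong R\Gamma_{q \cap (\uparrow u)}(q \cap \op{int}(\downarrow T(u)); F).
\end{equation*}
Taking $H^n$ of both sides then produces
\begin{equation*}
  \Hom_{\Der(q_\gamma)}(\F_{Z(u)}, F[n]) \cong H^n_{q \cap (\uparrow u)}(q \cap \op{int}(\downarrow T(u)); F),
\end{equation*}
and combined with the first paragraph this is exactly the claim.

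The main obstacle is bookkeeping: making sure each step of the proof of \cref{lem:iotaLocalSections} genuinely lifts to the derived category. The sheaf $\F_{Z(u)}$ need not be $\cHom$-acyclic or $\Gamma_{Z(u)}$-acyclic on the nose, so one should either quote the derived versions of the cited Kashiwara--Schapira results directly (they are all available as derived statements in chapter 2 of \cite{Kashiwara1990}) or, alternatively, replace $F$ by an injective resolution and run the same four-step calculation termwise, noting that for injective $I$ the sheaf $\Gamma_{q \cap (\uparrow u)}(I)$ is flabby on $q \cap \op{int}(\downarrow T(u))$ and hence acyclic for the remaining global sections functor. Either route is routine once the derived reformulations are set up; no new geometric input is needed beyond what the local description of $\iota_0(u) = \F_{Z(u)}$ already provides.
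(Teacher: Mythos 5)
Your proposal is correct and follows the same route as the paper: the paper's proof is exactly the reduction $\Hom_{\Der(q_\gamma)}((\iota\circ T^{-n})(u),F)=\Hom_{\Der(q_\gamma)}(\iota_0(u)[-n],F)=\Hom_{\Der(q_\gamma)}(\iota_0(u),F[n])$ via strict stability, followed by an appeal to \cref{lem:iotaLocalSections} in its derived form. You merely make explicit the derived-functor bookkeeping (equivalently, \cite[Equation (2.6.9)]{Kashiwara1990}) that the paper leaves implicit.
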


\begin{proof}
  By \cref{lem:iotaLocalSections} we have
  \begin{align*}
    \Hom_{\Der (q_{\gamma})} ((\iota \circ T^{-n})(u), F)
    & =
      \Hom_{\Der (q_{\gamma})} (\iota_0(u)[-n], F)
    \\
    & =
      \Hom_{\Der (q_{\gamma})} (\iota_0(u), F[n])
    \\
    & \cong
      H^{n}_{q \cap (\uparrow u)} (q \cap \op{int}(\downarrow T(u)); F)
      .
      \qedhere
  \end{align*}
\end{proof}

Finally we note that the assignment
$F \mapsto h_{\gamma} (F) = \Hom_{\Der (q_{\gamma})} (\iota(-), F)$
is functorial in $F$.
Therefore we obtain the functor
$h_{\gamma} \colon D^+(q_{\gamma}) \rightarrow \VectF^{\M^{\circ}}$.
Moreover, we take note of the following.

\begin{lem}
  \label{lem:compatCorefl}
  For each complex of sheaves $F$ of the derived category $D^+ (q_{\gamma})$
  the counit
  $\varepsilon_F \colon R \flat (F) \rightarrow F$
  of the adjunction
  \begin{equation*}
    \begin{tikzcd}
      D^+ (q_{\gamma}, \partial q)
      \arrow[r, ""{name=I}, bend right, hook]
      &
      D^+ (q_{\gamma})
      \arrow[l, "R \flat"'{name=b}, bend right]
      \arrow[phantom, from=I, to=b, "\dashv" rotate=90]
    \end{tikzcd}
  \end{equation*}
  is mapped to a natural isomorphism
  $h_{\gamma} (\varepsilon_F) \colon
  h_{\gamma} (R \flat (F)) \rightarrow h_{\gamma} (F)$.
\end{lem}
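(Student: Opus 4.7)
The plan is to check the claim pointwise on $\M^\circ$ and reduce it to the defining universal property of the coreflection established in \cref{cor:vanishingCoreflector}. Since $h_\gamma(F) = \Hom_{\Der(q_\gamma)}(\iota(-), F)$, for each $u \in \M$ the map $h_\gamma(\varepsilon_F)$ evaluated at $u$ is exactly post-composition with $\varepsilon_F$,
\begin{equation*}
  \Hom_{\Der(q_\gamma)}(\iota(u), R\flat(F)) \xrightarrow{\varepsilon_F \circ -} \Hom_{\Der(q_\gamma)}(\iota(u), F).
\end{equation*}

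First, I would handle the trivial case $u \in \partial \M$: then $\iota(u) \cong 0$, so both Hom-sets vanish and the map is vacuously an isomorphism. Second, for $u \in \op{int}\M$, I would observe that by construction $\iota$ factors through the full subcategory $\Der(q_\gamma, \partial q) \hookrightarrow \Der(q_\gamma)$, so $\iota(u)$ is an object of the coreflective subcategory. By \cref{cor:vanishingCoreflector} the inclusion is left adjoint to $R\flat$ with counit $\varepsilon$, and since the inclusion of a coreflective subcategory is fully faithful, the adjunction bijection reads
\begin{equation*}
  \Hom_{\Der(q_\gamma, \partial q)}(\iota(u), R\flat(F)) \cong \Hom_{\Der(q_\gamma)}(\iota(u), F),
\end{equation*}
and this bijection is implemented precisely by post-composition with $\varepsilon_F$. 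Identifying the left-hand Hom with $\Hom_{\Der(q_\gamma)}(\iota(u), R\flat(F))$ (using full faithfulness of the inclusion) gives the desired isomorphism.

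There is no substantive obstacle; the statement is essentially the content of what it means for $\Der(q_\gamma, \partial q)$ to be coreflective in $\Der(q_\gamma)$, applied to the fact that $\iota$ lands in the subcategory. The only point worth stating carefully is that the fully faithful inclusion allows us to identify Hom-sets computed in the subcategory with those in the ambient derived category, so that the adjunction bijection literally is the map induced by $\varepsilon_F$. Naturality in $u$ (and in $F$) is automatic from the naturality of the counit and the functoriality of $\iota$.
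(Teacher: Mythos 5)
Your proposal is correct and follows the same route as the paper's proof: since $\iota(u)$ lies in the coreflective subcategory $D^+(q_\gamma, \partial q)$ for every $u \in \M$, post-composition with the counit $\varepsilon_F$ is the adjunction bijection on Hom-sets and hence an isomorphism pointwise. The paper states this in one line; your extra remarks on the boundary case and full faithfulness are fine but not needed.
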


\begin{proof}
  Let $u \in \M$,
  then $\iota(u)$ is an object of $D^+ (q_{\gamma}, \partial q)$.
  Thus, the linear map
  \[\Hom_{\Der (q_{\gamma})} (\iota(u), \varepsilon_F) \colon
    \Hom_{\Der (q_{\gamma})} (\iota(u), R \flat (F)) \rightarrow
    \Hom_{\Der (q_{\gamma})} (\iota(u), F)
  \]
  is an isomorphism.
\end{proof}

Let
$h_{\gamma,0} \colon D^+(q_{\gamma}, \partial q) \rightarrow \VectF^{\M^{\circ}}$
be the restriction of $h_{\gamma}$ to $D^+(q_{\gamma}, \partial q)$.
With this
we may rephrase \cref{lem:compatCorefl} more concisely:

\begin{cor}
  \label{cor:vanishingBeckChev}
  The commutative square
  \begin{equation}
    \label{eq:vanishingDualBeckChev}
    \begin{tikzcd}[row sep=6ex]
      D^+(q_{\gamma}, \partial q)
      \arrow[r, hook]
      \arrow[d, "h_{\gamma,0}"']
      &
      D^+(q_{\gamma})
      \arrow[d, "h_{\gamma}"]
      \\
      \VectF^{\M^{\circ}}
      \arrow[r, equal]
      &
      \VectF^{\M^{\circ}}
    \end{tikzcd}
  \end{equation}
  of categories and functors satisfies
  the dual Beck--Chevalley condition
  as in \cref{dfn:BeckChevDual}.
\end{cor}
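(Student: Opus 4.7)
The plan is to recognize that the content of the corollary is essentially a direct repackaging of \cref{lem:compatCorefl}, modulo unpacking the definition of the dual Beck--Chevalley condition for this particular square. The horizontal inclusion $i \colon D^+(q_{\gamma}, \partial q) \hookrightarrow D^+(q_{\gamma})$ carries the right adjoint $R\flat$ by \cref{cor:vanishingCoreflector}, and the bottom horizontal identity is trivially its own right adjoint, so the appropriate mate transformation is what needs to be identified.

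First, I would note that the square commutes strictly on the nose: by construction, $h_{\gamma,0}$ is the restriction of $h_{\gamma}$ along $i$, so $h_{\gamma,0} = h_{\gamma} \circ i$. Next, I would spell out the mate of this commuting square under the two horizontal adjunctions. Replacing the horizontal functors by their right adjoints and pasting in the counit $\varepsilon \colon i \circ R\flat \Rightarrow \op{id}_{D^+(q_{\gamma})}$ of the coreflection, the resulting natural transformation is the composite
\begin{equation*}
  h_{\gamma,0} \circ R\flat \;=\; h_{\gamma} \circ i \circ R\flat \;\xRightarrow{\,h_{\gamma}(\varepsilon)\,}\; h_{\gamma} \circ \op{id} \;=\; h_{\gamma}.
\end{equation*}
The dual Beck--Chevalley condition of \cref{dfn:BeckChevDual} then amounts to the statement that this mate is a natural isomorphism.

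Finally, I would apply \cref{lem:compatCorefl} pointwise: for every object $F \in D^+(q_{\gamma})$, the morphism $h_{\gamma}(\varepsilon_F) \colon h_{\gamma}(R\flat(F)) \to h_{\gamma}(F)$ is an isomorphism in $\VectF^{\M^{\circ}}$. Hence the mate is a natural isomorphism, and the square satisfies the dual Beck--Chevalley condition.

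I do not expect any real obstacle here. The only nontrivial ingredient, namely that the counit of the coreflection becomes invertible after applying $h_{\gamma}$, was already established in \cref{lem:compatCorefl} using the fact that $\iota(u)$ lies in $D^+(q_{\gamma}, \partial q)$ for every $u \in \M$. The remaining work is bookkeeping with the definition of the mate, which is straightforward once the two adjunctions and the on-the-nose commutativity are in hand.
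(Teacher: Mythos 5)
Your proof is correct and takes essentially the same approach as the paper: the paper introduces $h_{\gamma,0}$ as the restriction of $h_\gamma$ and then states \cref{cor:vanishingBeckChev} as a concise rephrasing of \cref{lem:compatCorefl} with no further argument. You simply spell out the bookkeeping the paper elides, correctly identifying the dual Beck--Chevalley mate as $h_\gamma(\varepsilon)$ and invoking \cref{lem:compatCorefl} to conclude it is a natural isomorphism.
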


\begin{cor}
  \label{cor:compsWithDirIms}
  We have the diagram
  \begin{equation*}
    \begin{tikzcd}[row sep=11ex, column sep=7ex]
      &
      D^+(\dot{q})
      \arrow[dl, "R i_!"'{name=dirImProperSupp}]
      \arrow[d , "R i_*"]
      \\
      D^+(q_{\gamma}, \partial q)
      \arrow[d , "h_{\gamma,0}"']
      \arrow[dr, Rightarrow, "h_{\gamma} \circ \varepsilon",
      shorten >=1.5ex, shorten <=1.5ex]
      &
      D^+(q_{\gamma})
      \arrow[d, "h_{\gamma}"]
      \arrow[l, "R \flat"']
      \arrow[to=dirImProperSupp, Leftarrow, shorten >=1.5ex, shorten <=1.5ex]
      \\
      \VectF^{\M^{\circ}}
      \arrow[r, equal]
      &
      \VectF^{\M^{\circ}}      
    \end{tikzcd}
  \end{equation*}
  with all $2$-cells natural isomorphisms,
  where
  $D^+ (\dot{q})$ is the bounded-below derived category of the category
  of sheaves on $\dot{q}$
  with the subspace topology inherited from $q_{\gamma}$.
\end{cor}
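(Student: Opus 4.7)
The diagram contains two independent $2$-cells, which I would handle separately. The $2$-cell in the lower square, labelled $h_\gamma \circ \varepsilon$, has component at $F \in D^+(q_\gamma)$ given by $h_\gamma(\varepsilon_F) \colon h_{\gamma,0}(R\flat(F)) \to h_\gamma(F)$. That this is a natural isomorphism is precisely the content of \cref{lem:compatCorefl} (equivalently \cref{cor:vanishingBeckChev}): for each $u \in \M$ the object $\iota(u)$ lies in the coreflective subcategory $D^+(q_\gamma,\partial q)$, so the functor $\Hom_{D^+(q_\gamma)}(\iota(u), -)$ inverts the counit of the coreflective adjunction pointwise.

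For the upper triangle, the plan is to promote the abelian-category identity $i_! = \flat \circ i_*$ from \eqref{eq:dirImProperSuppAsFlat} to the derived level, producing a natural isomorphism $R i_! \cong R\flat \circ R i_*$ by way of Grothendieck's composition theorem \cite[Proposition 1.8.7]{Kashiwara1990}. The convenient acyclic class is that of flabby sheaves: injective sheaves on $\dot{q}$ are flabby, so flabby sheaves compute $R i_*$; direct image along any continuous map preserves flabbiness, so $i_*$ sends flabby sheaves on $\dot{q}$ to flabby sheaves on $q_\gamma$; and flabby sheaves on $q_\gamma$ are $\flat$-acyclic, as was shown in the proof of the derived coreflective adjunction following \cref{lem:unitAtFlabbySheaf}, where $R\flat(F) \cong \ker(\eta^j_F) = \flat(F)$ for flabby $F$. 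Grothendieck composition then yields $R(\flat \circ i_*) \cong R\flat \circ R i_*$, and \eqref{eq:dirImProperSuppAsFlat} identifies the left-hand side with $R i_!$. Naturality in $F$ is automatic from the construction.

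There is no substantive obstacle here: the corollary is essentially a packaging of the preceding results. The only care required is bookkeeping about the directions of the two $2$-cells to match the arrows of the tikz diagram; since both cells are natural isomorphisms, the choice of orientation is immaterial for the statement of the corollary.
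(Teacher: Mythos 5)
Your proof is correct and follows essentially the same route as the paper's. The paper's proof of the lower square cites \cref{lem:compatCorefl} (equivalently \cref{cor:vanishingBeckChev}), exactly as you do. For the upper triangle the paper cites \eqref{eq:dirImProperSuppAsFlat} together with \cref{lem:compose}; your flabby-sheaf argument is precisely the verification of the Grothendieck composition hypothesis that makes \cref{lem:compose} apply in this instance (the paper leaves \cref{lem:compose} unproved in the appendix, where one would normally argue via $i_*$ preserving injectives, but the adapted class of flabby sheaves that you use gives the same conclusion and is the one the paper already employed to establish $\flat$-acyclicity after \cref{lem:unitAtFlabbySheaf}). So the only difference is that you unpack the citation rather than black-box it.
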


\begin{proof}
  The triangular $2$-cell at the top is an isomorphism
  by \eqref{eq:dirImProperSuppAsFlat} and \cref{lem:compose}
  and the square $2$-cell at the bottom is an isomorphism
  by \cref{cor:vanishingBeckChev} or \cref{lem:compatCorefl}.
\end{proof}

In addition to the commutative square \eqref{eq:vanishingDualBeckChev}
we may also consider the commutative square
\begin{equation}
  \label{eq:interiorBeckChev}
  \begin{tikzcd}
    D^+ (\dot{q})
    \arrow[r, "R i_*"]
    \arrow[d, "h_{\gamma} \circ R i_*"']
    &
    D^+ (q_{\gamma})
    \arrow[d, "h_{\gamma}"]
    \\
    \VectF^{\M^{\circ}}
    \arrow[r, equal]
    &
    \VectF^{\M^{\circ}}
    .
  \end{tikzcd}
\end{equation}

\begin{lem}
  \label{lem:interiorBeckChev}
  The commutative square \eqref{eq:interiorBeckChev}
  satisfies the Beck--Chevalley condition as in \cref{dfn:BeckChev}.
\end{lem}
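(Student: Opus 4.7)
My plan is to unfold the Beck--Chevalley mate of the defining commutativity of \eqref{eq:interiorBeckChev} and then reduce the claim that the mate is a natural isomorphism to a formal check via a chain of adjunctions. The bottom horizontal arrow is the identity, with trivial adjoints, and the top arrow $R i_*$ has left adjoint $i^{-1}$. Hence the mate is the natural transformation
\[
  h_{\gamma}(F) \longrightarrow h_{\gamma}\bigl(R i_* \, i^{-1} F\bigr)
\]
induced by the unit $\eta_F \colon F \to R i_* \, i^{-1} F$ of $i^{-1} \dashv R i_*$. So the Beck--Chevalley condition for the square reduces to showing that $h_{\gamma}(\eta_F)$ is a natural isomorphism for every $F \in \Der(q_{\gamma})$.

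To prove this pointwise, let $u \in \M$ and consider the $u$-component
\[
  \Hom_{\Der(q_{\gamma})}(\iota(u), F) \longrightarrow \Hom_{\Der(q_{\gamma})}\bigl(\iota(u), R i_* \, i^{-1} F\bigr).
\]
By the adjunction $i^{-1} \dashv R i_*$, the codomain is canonically $\Hom_{\Der(\dot q)}(i^{-1}\iota(u), i^{-1}F)$. For the domain, I would exploit that $\iota(u) \in \Der(q_{\gamma}, \partial q)$: the coreflection of \cref{cor:vanishingCoreflector} gives $\Hom_{\Der(q_{\gamma})}(\iota(u), F) \cong \Hom_{\Der(q_{\gamma})}(\iota(u), R\flat F)$, and the derived analogue of \cref{lem:flatUnitIso} supplies a natural isomorphism $R\flat F \cong i_! \, i^{-1} F$ (legitimate because $i_!$ is exact for the open inclusion $i$, so no further deriving is needed). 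The derived version of the adjunction $i_! \dashv i^{-1}$ from \cref{lem:adjDirImProperSupp} then identifies the domain likewise with $\Hom_{\Der(\dot q)}(i^{-1}\iota(u), i^{-1}F)$.

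The remaining task is to verify that these two identifications of Hom-spaces agree on the map induced by $\eta_F$, so that the Beck--Chevalley mate corresponds to the identity on $\Hom_{\Der(\dot q)}(i^{-1}\iota(u), i^{-1}F)$. I expect this compatibility between the zigzag identities for the three adjunctions $i^{-1} \dashv R i_*$, $i_! \dashv i^{-1}$, and $\mathrm{incl} \dashv R\flat$ to be the only nontrivial step, though still fairly routine. A perhaps cleaner equivalent formulation is to observe that the counit $i_! \, i^{-1} \iota(u) \to \iota(u)$ is a natural isomorphism (since $\iota(u)$ vanishes on $\partial q$, this counit factors through, and agrees with, the iso $R\flat \iota(u) \xrightarrow{\cong} \iota(u)$), and then to deduce the claim by naturality of the mate with respect to this canonical identification of representing objects.
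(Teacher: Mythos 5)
Your plan---to unfold the Beck--Chevalley mate to the map $h_{\gamma}(\eta_F)$ and prove it pointwise an isomorphism by comparing both $\Hom$-spaces to $\Hom_{\Der(\dot q)}(i^{-1}\iota(u), i^{-1}F)$ through adjunctions---is a genuinely different route from the paper's. The paper instead reduces via \cref{lem:dotsAdjEquiv} to the open subset $\ddot q$, passes from $h_{\gamma}$ to the local cohomology functors $H^n_{Z(u)}(\ddot q; -)$ and $H^n_{Z(u)}(q_\gamma; -)$, and then cites \cref{cor:BeckChevLocalCoho}, which is proven by a concrete computation with local sections. Both strategies are adjunction-theoretic at heart, but yours tries to avoid the detour through $\ddot q$ entirely.

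That detour is not optional, however, and this is where your argument has a real gap. You assert that $i_!$ is exact ``for the open inclusion $i$,'' and you also appeal to the counit $i_! i^{-1}\iota(u) \to \iota(u)$. But $i \colon \dot q \hookrightarrow q_\gamma$ is not an open inclusion in general: \cref{lem:interiorBeckChev} carries no hypothesis that $\partial q$ be closed in $q_\gamma$, and when it is not, $\dot q$ fails to be open (this is exactly why the paper introduces $\ddot q$ as ``the smallest open subset containing $\dot q$''). In that regime $i_! = \flat \circ i_*$ is the composition of two right adjoints and there is no reason for it to be right exact; indeed the adjunction $i_! \dashv i^{-1}$, and with it the counit $\varepsilon^!$ you invoke in your ``cleaner'' reformulation, is established only under the hypothesis ``$\partial q$ closed'' in \cref{lem:adjDirImProperSupp}. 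Without exactness of $i_!$ the identification $R\flat F \cong i_!\, i^{-1}F$ is not available at the object level; one only has $R\flat F \cong R(i_! \circ i^{-1})(F) = Ri_!(i^{-1}F)$ via \cref{lem:compose}.

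The argument can be repaired: replace $i_!$ with $Ri_! = R\flat \circ Ri_*$ throughout, use the composed derived adjunction $i^{-1}|_{\Der(q_\gamma,\partial q)} \dashv Ri_!$ obtained from $\mathrm{incl} \dashv R\flat$ and $i^{-1} \dashv Ri_*$, and then show that the two resulting identifications of $\Hom_{\Der(q_\gamma)}(\iota(u), -)$ evaluated at $F$ and at $Ri_*i^{-1}F$ with $\Hom_{\Der(\dot q)}(i^{-1}\iota(u), i^{-1}F)$ are intertwined by $h_{\gamma}(\eta_F)$. But this last compatibility---which you flag as ``fairly routine''---is then the entire mathematical content, and it is precisely the kind of unit/counit bookkeeping the paper sidesteps by passing to $\ddot q$ and working with the explicit local-sections comparison in \cref{cor:BeckChevLocalCoho}. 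As written, the proposal leans on an exactness property and a counit that do not exist in the generality required.
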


\begin{proof}
  By \cref{lem:dotsAdjEquiv} it suffices to show that the square
  \begin{equation}
    \label{eq:realInteriorBeckChev}
    \begin{tikzcd}
      D^+ (\ddot{q})
      \arrow[r, "R i_{2*}"]
      \arrow[d, "h_{\gamma} \circ R i_{2*}"']
      &
      D^+ (q_{\gamma})
      \arrow[d, "h_{\gamma}"]
      \\
      \VectF^{\M^{\circ}}
      \arrow[r, equal]
      &
      \VectF^{\M^{\circ}}
    \end{tikzcd}
  \end{equation}
  satisfies the Beck--Chevalley condition.
  Now \eqref{eq:realInteriorBeckChev} satisfies the Beck--Chevalley condition
  iff
  the square
  \begin{equation*}
    \begin{tikzcd}
      D^+ (\ddot{q})
      \arrow[r, "R i_{2*}"]
      \arrow[d, "H^n_{Z(u)} (\ddot{q}; -)"'{name=intCoho}]
      &
      D^+ (q_{\gamma})
      \arrow[d, "H^n_{Z(u)} (q_{\gamma}; -)"{name=coho}]
      \\
      \VectF
      \arrow[r, equal]
      &
      \VectF
      \arrow[phantom, from=intCoho, to=coho, "\cong"]
    \end{tikzcd}    
  \end{equation*}
  satisfies the Beck--Chevalley condition for any $u \in D$ and any $n \in \Z$.
  This in turn follows from \cref{cor:BeckChevLocalCoho}.
\end{proof}

Now if $\partial q$ is closed in $q_{\gamma}$,
then $D^+(q_{\gamma}, \partial q)$ and $D^+(\dot{q})$
are equivalent by \cref{lem:qAdjEquivProperSupp}.
So dealing with both of these categories separately is somewhat redundant
in this case.

\begin{lem}
  \label{lem:bothBeckChev}
  If $\partial q$ is closed in $q_{\gamma}$,
  then the square diagram
  \begin{equation}
    \label{eq:bothBeckChev}
    \begin{tikzcd}[row sep=8ex, column sep=7ex]
      D^+ (q_{\gamma})
      \arrow[r, "i^{-1}"]
      \arrow[d, "h_{\gamma}"']
      &
      D^+ (\dot{q})
      \arrow[d, "h_{\gamma} \circ R i_*"]
      \\
      \VectF^{\M^{\circ}}
      \arrow[r, equal]
      \arrow[ru, "h_{\gamma} \circ \eta^i",
      Rightarrow, shorten >=1.5ex, shorten <=1.5ex]
      &
      \VectF^{\M^{\circ}}
    \end{tikzcd}
  \end{equation}
  satisfies both (the ordinary and the dual) Beck--Chevalley conditions,
  where ${\eta^i \colon \op{id} \rightarrow Ri_* \circ i^{-1}}$
  is the unit of the adjunction ${i^{-1} \dashv R i_*}$.
\end{lem}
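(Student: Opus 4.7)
The plan is to reduce both Beck--Chevalley conditions for \eqref{eq:bothBeckChev} to already-established results. The central observation is that the filling 2-cell $h_{\gamma} \circ \eta^i$ is itself a natural isomorphism: this 2-cell is nothing other than the mate of the identity commutation of \eqref{eq:interiorBeckChev} under the adjunction $i^{-1} \dashv R i_*$, and \cref{lem:interiorBeckChev} asserts precisely that this mate is an isomorphism. Hence \eqref{eq:bothBeckChev} commutes up to natural iso, and both Beck--Chevalley conditions amount to checking that two further mates (one for each of the two adjoints of $i^{-1}$ available when $\partial q$ is closed) are isomorphisms.

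For the dual Beck--Chevalley direction I would use the right adjoint $R i_*$ of $i^{-1}$ on the top row, paired with the identity self-adjunction on the bottom row. Because the 2-cell $h_{\gamma} \circ \eta^i$ is constructed directly out of the unit of $i^{-1} \dashv R i_*$, the standard triangle identities cause its mate through this same adjunction to collapse to the identity on $h_{\gamma} \circ R i_*$, which is trivially an isomorphism. In other words, the dual condition is essentially a tautological restatement of \cref{lem:interiorBeckChev}.

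For the ordinary Beck--Chevalley direction I would instead exploit the left adjoint $R i_!$ of $i^{-1}$, which exists precisely because $\partial q$ is assumed closed (via the derived version of \cref{lem:adjDirImProperSupp}). The corresponding mate of $h_{\gamma} \circ \eta^i$ takes the form of a natural transformation relating $h_{\gamma} \circ R i_!$ and $h_{\gamma} \circ R i_*$ after precomposition. To show this is an iso, I would invoke \cref{cor:compsWithDirIms}: its top triangular 2-cell provides the natural isomorphism $R\flat \cong R i_! \circ i^{-1}$, while its bottom square (equivalently \cref{lem:compatCorefl}) provides $h_{\gamma} \circ R\flat \cong h_{\gamma}$. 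Pasting these together and chasing the definition of the mate through the units and counits of $R i_! \dashv i^{-1} \dashv R i_*$ should yield the required iso.

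The main obstacle will not be any deep input but rather the careful bookkeeping of mates through the two different adjoints of $i^{-1}$, ensuring that the naturally occurring compositions of units and counits of $R i_! \dashv i^{-1} \dashv R i_*$ match up with the explicit isomorphisms furnished by \cref{lem:interiorBeckChev} and \cref{cor:compsWithDirIms}. Once the mate diagrams are drawn out and filled in with these isomorphisms, the verification should be essentially mechanical.
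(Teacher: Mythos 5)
Your proposal matches the paper's own proof in both structure and ingredients: the dual Beck--Chevalley condition is read off from \cref{lem:interiorBeckChev} via the triangle identities for $i^{-1} \dashv R i_*$, and the ordinary condition is obtained by passing to the left adjoint $R i_!$ (available since $\partial q$ is closed), using \cref{lem:compatCorefl} and \cref{cor:compsWithDirIms} to identify $h_{\gamma} \circ R i_!$ with $h_{\gamma} \circ R i_*$ up to isomorphism, and then chasing mates exactly as the paper does with $\varepsilon^!$. The only minor slip is attributing the isomorphism $R\flat \cong R i_! \circ i^{-1}$ to the top cell of \cref{cor:compsWithDirIms} (which instead gives $R i_! \cong R\flat \circ R i_*$) rather than to \cref{lem:flatUnitIso}, but both facts are established in the paper and the argument goes through unchanged.
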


\begin{proof}
  By \cref{lem:interiorBeckChev} the natural transformation
  ${h_{\gamma} \circ \eta^i \colon
    h_{\gamma} \rightarrow h_{\gamma} \circ Ri_* \circ i^{-1}}$,
  which is the mate of the identity natural transformation
  in the square \eqref{eq:interiorBeckChev},
  is a natural isomorphism,
  hence \eqref{eq:bothBeckChev} satisfies the dual Beck--Chevalley condition.
  Moreover,
  by \cref{lem:adjDirImProperSupp}
  the functor
  ${i^{-1} \colon D^+ (q_{\gamma}) \rightarrow D^+ (\dot{q})}$
  also has the left adjoint
  ${R i_! \colon D^+ (\dot{q}) \rightarrow D^+ (q_{\gamma})}$.
  Now ${h_{\gamma} \circ R i_! = h_{\gamma} \circ \flat \circ R i_*}$
  and moreover,
  the natural transformation
  \begin{equation*}
    h_{\gamma} \circ \varepsilon \circ R i_* \colon
    h_{\gamma} \circ \flat \circ R i_* \Rightarrow
    h_{\gamma} \circ R i_*
  \end{equation*}
  is a natural isomorphism
  by \cref{lem:compatCorefl}.
  Thus, the square diagram \eqref{eq:bothBeckChev}
  satisfies the Beck--Chevalley condition
  iff
  the composition of square diagrams
  \begin{equation}
    \label{eq:compSquareDiagrams}
    \begin{tikzcd}[row sep=8ex, column sep=7ex]
      D^+ (q_{\gamma})
      \arrow[r, "i^{-1}"]
      \arrow[d, "h_{\gamma}"']
      &
      D^+ (\dot{q})
      \arrow[d, "h_{\gamma} \circ R i_*" description]
      \arrow[d, ""{name=dirIm}, phantom]
      \arrow[r, equal]
      &
      D^+ (\dot{q})
      \arrow[d, "h_{\gamma} \circ R i_!"{name=dirImProperSupp}]
      \\
      \VectF^{\M^{\circ}}
      \arrow[r, equal]
      \arrow[ru, "h_{\gamma} \circ \eta^i",
      Rightarrow, shorten >=1.5ex, shorten <=1.5ex]
      &
      \VectF^{\M^{\circ}}
      \arrow[r, equal]
      &
      \VectF^{\M^{\circ}}
      \arrow[from=dirImProperSupp, to=dirIm, Rightarrow,
      shorten >=3.5ex, shorten <=1.5ex,
      "\quad h_{\gamma} \circ \varepsilon \circ R i_*"']
    \end{tikzcd}
  \end{equation}
  satisfies the Beck--Chevalley condition.
  Now for a sheaf $F$ on $q_{\gamma}$
  we have the commutative diagram
  \begin{equation*}
    \begin{tikzcd}[row sep=5ex, column sep=8ex]
      \flat F
      \arrow[r, "(\flat \circ \eta^i)_F"]
      \arrow[d, "\varepsilon_F"']
      &
      i_! i^{-1} F
      \arrow[d, "(\varepsilon \circ i_* \circ i^{-1})_F"]
      \arrow[dl, "\varepsilon^!_F"']
      \\
      F
      \arrow[r, "\eta^i_F"']
      &
      i_* i^{-1} F
      ,
    \end{tikzcd}
  \end{equation*}
  hence the composition of square diagrams \eqref{eq:compSquareDiagrams}
  is the square diagram
  \begin{equation}
    \label{eq:properSuppBeckChev}
    \begin{tikzcd}[row sep=8ex, column sep=7ex]
      D^+ (q_{\gamma})
      \arrow[r, "i^{-1}"]
      \arrow[d, "h_{\gamma}"']
      &
      D^+ (\dot{q})
      \arrow[d, "h_{\gamma} \circ R i_!"]
      \arrow[dl, "h_{\gamma} \circ \varepsilon^!"',
      Rightarrow, shorten >=1.5ex, shorten <=1.5ex]
      \\
      \VectF^{\M^{\circ}}
      \arrow[r, equal]
      &
      \VectF^{\M^{\circ}}
      .
    \end{tikzcd}
  \end{equation}
  Moreover,
  the natural transformation
  ${h_{\gamma} \circ \varepsilon^! \colon
    h_{\gamma} \circ R i_! \circ i^{-1} \rightarrow h_{\gamma}}$
  is just the mate of the identity natural transformation
  of the commutative square
  \begin{equation*}
    \begin{tikzcd}[row sep=6ex, column sep=7ex]
      D^+ (q_{\gamma})
      \arrow[d, "h_{\gamma}"']
      &
      D^+ (\dot{q})
      \arrow[d, "h_{\gamma} \circ R i_!"]
      \arrow[l, "R i_!"']
      \\
      \VectF^{\M^{\circ}}
      \arrow[r, equal]
      &
      \VectF^{\M^{\circ}}
    \end{tikzcd}    
  \end{equation*}
  and thus
  \eqref{eq:properSuppBeckChev} satisfies the Beck--Chevalley condition.
\end{proof}

\begin{prp}
  \label{prp:conservative}
  The functor
  $h_{\gamma,0} \colon D^+(q_{\gamma}, \partial q) \rightarrow \VectF^{\M^{\circ}}$
  is conservative.
  In other words,
  if $\psi \colon G \rightarrow F$ is a homomorphism of
  $D^+(q_{\gamma}, \partial q)$
  with
  $h_{\gamma} (\psi) \colon h_{\gamma} (G) \rightarrow h_{\gamma} (F)$
  a natural isomorphism,
  then $\psi \colon G \rightarrow F$ is an isomorphism as well.
\end{prp}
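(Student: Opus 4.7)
The approach is the usual cone reduction. Given $\psi \colon G \to F$ in $D^+(q_{\gamma}, \partial q)$ with $h_{\gamma}(\psi)$ a natural isomorphism, complete it to a distinguished triangle
\begin{equation*}
  G \xrightarrow{\psi} F \to C \to G[1]
\end{equation*}
in $D^+(q_{\gamma})$. Since the long exact sequence of cohomology sheaves shows that $C$ has cohomology sheaves vanishing on $\partial q$, we have $C \in D^+(q_{\gamma}, \partial q)$. By \cref{lem:cohomological} the functor $h_{\gamma} = \Hom_{\Der(q_{\gamma})}(\iota(-), -)$ is cohomological; applying it to the triangle and using that $h_{\gamma}(\psi)$ is a natural isomorphism forces $h_{\gamma}(C) = 0$. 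So it suffices to show the following: if $C \in D^+(q_{\gamma}, \partial q)$ and $h_{\gamma}(C) = 0$, then $C \cong 0$.

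For the implication, I would exploit the strict stability $\iota \circ T = \Sigma \circ \iota$ to rewrite $h_{\gamma}(C) = 0$ as the vanishing of $\Hom_{\Der(q_{\gamma})}(\iota(u), C[n])$ for all $u \in \M$ and $n \in \Z$. Via \cref{prp:localCoho} this is equivalent to
\begin{equation*}
  H^n_{q \cap (\uparrow u)}\bigl(q \cap \op{int}(\downarrow T(u));\, C\bigr) = 0
  \qquad \text{for all } u \in D, \, n \in \Z.
\end{equation*}
Through the distinguished triangle of local cohomology \cite[(2.6.32)]{Kashiwara1990}, this vanishing is equivalent to each restriction morphism
\begin{equation*}
  R\Gamma(V_u; C) \xrightarrow{\;\simeq\;} R\Gamma(V_u \setminus Z(u); C)
\end{equation*}
being an isomorphism, where $V_u := q \cap \op{int}(\downarrow T(u))$ and $Z(u) := V_u \cap (\uparrow u)$.

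To conclude $C \cong 0$, I would reduce to showing that each cohomology sheaf $H^n(C)$ vanishes. Such a sheaf already vanishes on $\partial q$ by assumption, so it remains to check that all stalks at points $p \in \dot{q}$ are zero. The sets $V_u$, as $u$ varies in $D$ with $p$ arbitrarily close to $Z(u)$ from inside $V_u$, form a cofinal family of $\gamma$-open neighborhoods of $p$ in $q_{\gamma}$. The equivalence $R\Gamma(V_u; C) \simeq R\Gamma(V_u \setminus Z(u); C)$ then says that the sections of $C$ over $V_u$ coincide with those over an open subset that excludes a neighborhood of $p$; passing to the colimit over such $u$ computes the stalk $H^n(C)_p$ as a colimit of groups that can be made to vanish simultaneously, forcing $(H^n(C))_p = 0$.

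\textbf{The main obstacle} is the last geometric step: extracting from the family of quasi-isomorphisms $R\Gamma(V_u; C) \simeq R\Gamma(V_u \setminus Z(u); C)$ a cofinality argument on $\gamma$-open neighborhoods of $p$ that annihilates the stalk. This requires a careful analysis of the $\gamma$-topology on $q$ near $p$, of how the closed sets $Z(u)$ sweep across $p$ as $u$ varies, and possibly an induction on the bounded-below amplitude of $C$ (using a truncation triangle to isolate the lowest non-vanishing cohomology sheaf and reduce to the case where $C$ is a single sheaf concentrated in one degree).
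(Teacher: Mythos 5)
Your reduction is fine as far as it goes: forming the cone $C$ of $\psi$, observing that $C$ lies in $D^+(q_{\gamma},\partial q)$ (this is \cref{cor:vanishingCoreflector}), and using that $h_{\gamma}$ is cohomological together with strict stability $\iota\circ T=\Sigma\circ\iota$ to reduce the proposition to the claim that $h_{\gamma}(C)=0$ forces $C\cong 0$, is all correct. The translation of $h_{\gamma}(C)=0$ into the vanishing of $H^n_{q\cap(\uparrow u)}(q\cap\op{int}(\downarrow T(u));C)$ via \cref{prp:localCoho} is also correct. But the step you yourself flag as ``the main obstacle'' is precisely the mathematical content of the proposition, and the heuristic you offer for it does not work as stated. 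The isomorphisms $R\Gamma(V_u;C)\xrightarrow{\sim} R\Gamma(V_u\setminus Z(u);C)$ involve open sets $V_u\setminus Z(u)$ that do \emph{not} contain the point $p$ whenever $p\in Z(u)$, so they are not members of the neighbourhood filter of $p$, and the restriction maps $H^n(V_u;C)\to H^n(V_{u'};C)$ computing the stalk do not factor through $H^n(V_u\setminus Z(u);C)$ in any evident way. There is no colimit ``of groups that can be made to vanish simultaneously'' here: what you actually know is that certain relative (local cohomology) groups vanish, and passing from that to the vanishing of the absolute groups $H^n(V_u;C)$ is exactly what has to be proved. (Note also that a nonzero sheaf can easily have $\Gamma_Z(U;\mathcal{F})=0$ for many closed $Z$, so the truncation induction you suggest does not close the gap by itself.)

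The paper's proof supplies the missing argument by a different, direct route that you would need to reproduce in some form: it shows that $H^n(U;\psi)$ is an isomorphism for every $U$ in a basis $\mathcal{B}=\{q\cap\op{int}(\downarrow T(u))\mid u\in D\}$ (plus $q$ itself when $\partial q$ is not discrete), splitting $D$ into regions. For $u$ in the region where $U\subseteq(\uparrow u)$, one has $H^n(U;-)\cong H^n_{q\cap(\uparrow u)}(U;-)$, which is literally a component of $h_{\gamma}(-)\circ T^{-n}$ by \cref{prp:localCoho}; for $u$ on the separating segments and at the crosshair point, one writes $H^n(U;-)$ as a filtered colimit of such local cohomology groups using the triangle of \cite[Equation (2.6.32)]{Kashiwara1990}, where the complementary terms are killed in the colimit precisely because the cohomology sheaves vanish on $\partial q$. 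In your language, this shows directly that $H^n(V_u;C)$ is either a value of $h_{\gamma}(C)$ or a colimit of such values, hence zero; your sketch contains no substitute for this region-by-region identification, so the proof is incomplete at its decisive point.
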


\begin{proof}
  Suppose $\psi \colon G \rightarrow F$ is a homomorphism of
  $D^+(q_{\gamma}, \partial q)$
  with
  $h_{\gamma} (\psi) \colon h_{\gamma} (G) \rightarrow h_{\gamma} (F)$
  a natural isomorphism and
  let $n \in \Z$ be arbitrary.
  It suffices to show that the induced linear map
  \[H^n (U; \psi) \colon H^n (U; G) \rightarrow H^n (U; F)\]
  is an isomorphism for each open subset $U$
  of some basis $\mathcal{B}$ of $q_\gamma$.
  To this end,
  let
  \[\mathcal{B} := \{q \cap \op{int} (\downarrow (T(u)) \mid u \in D\}.\]
  If $\partial q$ is discrete as a subposet of $\M$,
  then $\mathcal{B}$ is a basis of $q_\gamma$.
  Otherwise, $\mathcal{B} \cup \{q\}$ is a basis of $q_\gamma$
  and both
  $H^n (F; q)$ and
  $H^n \left(G; q\right)$
  vanish.
  So in either case, it suffices to check that $H^n (U; \psi)$
  is an isomorphism for each $U \in \mathcal{B}$.

  \begin{figure}[t]
    \centering
\begingroup%
  \makeatletter%
  \providecommand\color[2][]{%
    \errmessage{(Inkscape) Color is used for the text in Inkscape, but the package 'color.sty' is not loaded}%
    \renewcommand\color[2][]{}%
  }%
  \providecommand\transparent[1]{%
    \errmessage{(Inkscape) Transparency is used (non-zero) for the text in Inkscape, but the package 'transparent.sty' is not loaded}%
    \renewcommand\transparent[1]{}%
  }%
  \providecommand\rotatebox[2]{#2}%
  \newcommand*\fsize{\dimexpr\f@size pt\relax}%
  \newcommand*\lineheight[1]{\fontsize{\fsize}{#1\fsize}\selectfont}%
  \ifx\svgwidth\undefined%
    \setlength{\unitlength}{275.49998474bp}%
    \ifx\svgscale\undefined%
      \relax%
    \else%
      \setlength{\unitlength}{\unitlength * \real{\svgscale}}%
    \fi%
  \else%
    \setlength{\unitlength}{\svgwidth}%
  \fi%
  \global\let\svgwidth\undefined%
  \global\let\svgscale\undefined%
  \makeatother%
  \begin{picture}(1,0.51724141)%
    \lineheight{1}%
    \setlength\tabcolsep{0pt}%
    \put(0,0){\includegraphics[width=\unitlength,page=1]{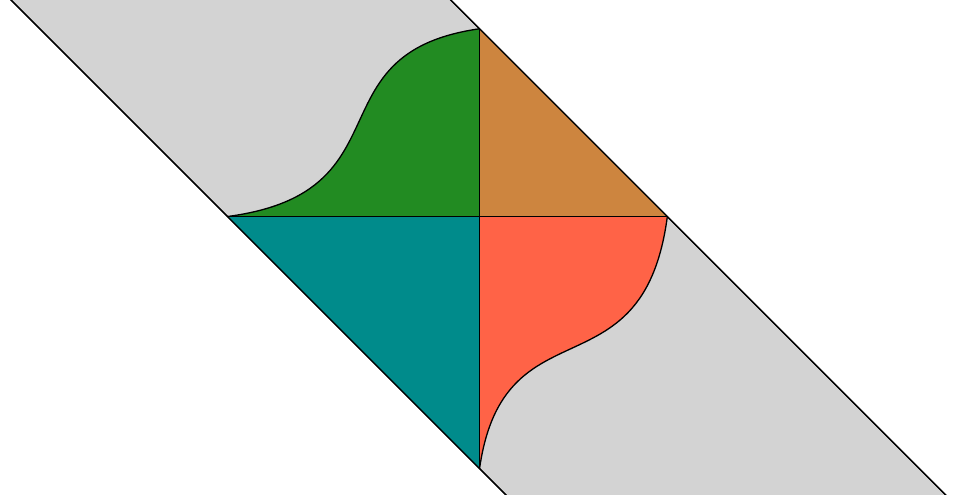}}%
    \put(0.78882473,0.23646263){\makebox(0,0)[t]{\lineheight{1.25}\smash{\begin{tabular}[t]{c}$l_1$\end{tabular}}}}%
    \put(0.28998568,0.19277288){\makebox(0,0)[t]{\lineheight{1.25}\smash{\begin{tabular}[t]{c}$l_0$\end{tabular}}}}%
  \end{picture}%
\endgroup%

    \caption{
      The fundamental domain $D$ partitioned into four regions.
    }
    \label{fig:fourRegions}
  \end{figure}

  Now let $u \in D$ be contained in the region of $D$
  shaded in red in \cref{fig:fourRegions}
  and let ${U := q \cap \op{int} (\downarrow (T(u))}$.
  Then we have
  \[H^n (U; \psi) \cong H^n_q (U; \psi) \cong
    \Hom_{\Der (q_{\gamma})} ((\iota \circ T^{-n})(u), \psi) \cong
    (h_\gamma (\psi) \circ T^{-n})_u 
  \]
  as linear maps
  by \cref{prp:localCoho} and thus $H^n (U; \psi)$
  is an isomorphism.

  Now suppose that $u = (x, y) \in D$ is a point on the
  vertical line segment separating the red from the cyan region
  in \cref{fig:fourRegions},
  presuming the cyan region is non-empty.
  (Otherwise there is no need to treat this case.)
  Then for all $s < x$ the open subsets
  $q \cap \op{int} (\downarrow (T(s, y))$
  are identical.
  So let $U := q \cap \op{int} (\downarrow (T(s, y))$
  for some (and thus any) $s < x$.
  Now for $s < x$ we have the exact sequence
  \begin{equation*}
    H^{n-1} (q \cap ((s, \infty) \times \R); \psi) \rightarrow
    H^n_{q \cap (\uparrow (s, y))} (U; \psi) \rightarrow
    H^n (U; \psi) \rightarrow
    H^{n  } (q \cap ((s, \infty) \times \R); \psi)
  \end{equation*}
  in the category of linear maps by \cite[Equation (2.6.32)]{Kashiwara1990}.
  As both $F$ and $G$
  vanish on $\partial q$,
  taking the direct limit of this sequence over all $s < x$
  we obtain the exact sequence
  \begin{equation*}
    0 \rightarrow
    \varinjlim_{s < x} H^n_{q \cap (\uparrow (s, y))} (U; \psi) \rightarrow
    H^n (U; \psi) \rightarrow
    0
    .
  \end{equation*}
  Moreover, \cref{prp:localCoho} implies
  \begin{equation*}
    H^n_{q \cap (\uparrow (s, y))} (U; \psi) \cong
    \Hom_{\Der (q_{\gamma})} ((\iota \circ T^{-n})(s, y), \psi) \cong
    (h_\gamma (\psi) \circ T^{-n})_{(s, y)}
    ,
  \end{equation*}
  hence $H^n_{q \cap (\uparrow (s, y))} (U; \psi)$
  is an isomorphism for any $s < x$.
  Thus,
  \[\varinjlim_{s < x} H^n_{q \cap (\uparrow (s, y))} (U; \psi) \cong
    H^n (U; \psi)\]
  is an isomorphism as well.

  Completely analogously one can show that
  $H^n (q \cap \op{int} (\downarrow T(u)); \psi)$
  is an isomorphism for any $u \in D$ contained in the region of $D$
  shaded in brown in $\cref{fig:fourRegions}$.

  Now suppose that $u \in D$ is the point
  in the center of the crosshair shown in \cref{fig:fourRegions},
  presuming the green region is non-empty.
  Then for all $w \in \op{int}(\uparrow u)$ we have
  $q \subset \op{int} (\downarrow T(w))$
  as well as the exact sequence
  \begin{equation*}
    H^{n-1} (q \setminus (\uparrow w); \psi) \rightarrow
    H^n_{q \cap (\uparrow w)} (q; \psi) \rightarrow
    H^n (q; \psi) \rightarrow
    H^{n  } (q \setminus (\uparrow w); \psi)
  \end{equation*}
  in the category of linear maps by \cite[Equation (2.6.32)]{Kashiwara1990}.
  As both $F$ and $G$
  vanish on $\partial q$,
  taking the direct limit of this sequence over all $w \in \op{int}(\uparrow u)$
  we obtain the exact sequence
  \begin{equation*}
    0 \rightarrow
    \varinjlim_{\substack{w \in \op{int}(\uparrow u)}}
    H^n_{q \cap (\uparrow w)} (q; \psi)
    \rightarrow
    H^n (q; \psi) \rightarrow
    0
    .
  \end{equation*}
  Moreover, \cref{prp:localCoho} implies
  \begin{equation*}
    H^n_{q \cap (\uparrow w)} (q; \psi) \cong
    \Hom_{\Der (q_{\gamma})} ((\iota \circ T^{-n})(w), \psi) \cong
    (h_\gamma (\psi) \circ T^{-n})_{w}
    ,
  \end{equation*}
  hence $H^n_{q \cap (\uparrow w)} (q; \psi)$
  is an isomorphism
  for all $w \in \op{int}(\uparrow u)$.
  Thus,
  \[\varinjlim_{\substack{w \in \op{int}(\uparrow u)}} H^n_{q \cap (\uparrow w)} (q; \psi)
    \cong
    H^n (q; \psi)\]
  is an isomorphism as well.
\end{proof}

\subsection{Taming Induced Cohomological Functors}
\label{sec:tamingCohoFunctors}

As before let
$\dot{q} = q_{\gamma} \setminus \partial q$
and let ${i \colon \dot{q} \hookrightarrow q_{\gamma}}$
be the corresponding subspace inclusion.

\begin{dfn}
  \label{dfn:tame}
  We say that a bounded below complex $F$
  of sheaves on $q_{\gamma}$ is \emph{tame}
  if ${h_{\gamma} (F) \colon \M^{\circ} \rightarrow \VectF}$
  is pointwise finite-dimensional (pfd).
  Similarly, we say that an object $F$ of $D^+(\dot{q})$
  is \emph{tame}
  if $R i_* F$ is tame.
  We denote the full subcategories of tame complexes
  in the derived categories $D^+ (q_{\gamma})$ and $D^+(\dot{q})$
  by $D^+_t (q_{\gamma})$ and $D^+_t(\dot{q})$ respectively.
  Furthermore,
  we denote the intersection of the two full subcategories
  $D^+_t (q_{\gamma})$ and ${D^+ (q_{\gamma}, \partial q)}$
  by ${D^+_t (q_{\gamma}, \partial q)}$.
\end{dfn}

\begin{lem}
  \label{lem:pfd}
  Let $F$ be an object of $D^+ (q_{\gamma})$.
  If
  ${\dim_{\F} H^n(I; F) < \infty}$
  for any connected open subset ${I \subseteq q_{\gamma}}$
  and any integer $n \in \Z$,
  then $F$ is tame.
\end{lem}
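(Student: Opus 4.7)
The plan is to deduce pointwise finite-dimensionality of $h_{\gamma}(F)$ from the hypothesis via a local cohomology long exact sequence, exploiting the $T$-equivariance of the functor $h_{\gamma}$. Since every $v \in \M$ can be written as $T^{-n}(u)$ for some $u \in D$ and some $n \in \Z$, \cref{prp:localCoho} reduces the claim to showing that
\[
h_{\gamma}(F)(T^{-n}(u)) \cong H^n_{q \cap (\uparrow u)}\bigl(q \cap \op{int}(\downarrow T(u));\, F\bigr)
\]
is finite-dimensional for every $u \in D$ and every $n \in \Z$.

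Writing $U := q \cap \op{int}(\downarrow T(u))$ and $V := U \setminus (q \cap (\uparrow u))$, both $\gamma$-open subsets of $q_{\gamma}$, the distinguished triangle
\[
R\Gamma_{Z(u)}(U; F) \to R\Gamma(U; F) \to R\Gamma(V; F) \to R\Gamma_{Z(u)}(U; F)[1]
\]
from \cite[(2.6.32)]{Kashiwara1990} yields a long exact sequence computing the local cohomology group $H^n_{q \cap (\uparrow u)}(U; F)$ from $H^{n-1}(V; F)$, $H^n(U; F)$, and $H^n(V; F)$. Hence it is enough to verify that $H^n(U; F)$ and $H^n(V; F)$ are finite-dimensional for every such $u$ and $n$.

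For this I would check, case by case in $u \in D$ and using the same geometric pictures underlying the proofs of \cref{lem:vanishing} and \cref{lem:auxIsoCoker}, that each of $U$ and $V$ is a disjoint union of at most two connected $\gamma$-open subsets of $q_{\gamma}$. Since sheaf cohomology distributes over disjoint unions of open subspaces, the hypothesis $\dim_{\F} H^n(I; F) < \infty$ on connected $\gamma$-opens $I$ propagates to $U$ and $V$, and the long exact sequence then forces $H^n_{q \cap (\uparrow u)}(U; F)$ to be finite-dimensional as well, yielding tameness of $F$.

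The main obstacle is precisely this geometric case analysis: the shape of $q$ relative to the quadrants $\op{int}(\downarrow T(u))$ and $\uparrow u$ depends on whether $u$ lies in the interior of $D$, along one of its boundary segments, or at a corner on $\partial \M$, and one must check in each configuration that no proliferation of components occurs. This is a finite geometric bookkeeping entirely analogous to the case distinctions already carried out in the proof of \cref{lem:vanishing}, and once it is settled the rest of the argument is the formal combination of the excision long exact sequence with the additivity of sheaf cohomology under disjoint unions.
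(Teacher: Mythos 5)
Your proposal takes essentially the same route as the paper's own proof: reduce via $T$-equivariance and \cref{prp:localCoho} to the local cohomology group $H^n_{q \cap (\uparrow u)}(q \cap \op{int}(\downarrow T(u)); F)$, then squeeze it between absolute cohomology groups of open subsets using the excision long exact sequence \cite[(2.6.32)]{Kashiwara1990}. The only substantive difference is that you flag the connectivity check for $U$ and $V$ as a nontrivial ``main obstacle'' requiring case analysis parallel to \cref{lem:vanishing}, whereas the paper dispatches it in one line: since $q$ is the boundary of a closed proper downset in the strip $\M$, it is a topological line, so its intersection $U = q \cap \op{int}(\downarrow T(u))$ with the interior of a downset is a single open interval (connected, not merely a two-component union), and removing the closed set $q \cap (\uparrow u)$ leaves at most two intervals. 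The one-dimensionality of $q$ is what makes this trivial and not case-dependent in the way that \cref{lem:vanishing} is, which depends on $v$ ranging over the two-dimensional region $\M$. Apart from that overestimation of difficulty, your argument is correct and complete in outline.
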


\begin{proof}
  Let $u \in D$ and let $n \in \Z$.
  We show $\dim_{\F} \left(h_{\gamma} (F) \circ T^{-n}\right)(u) < \infty$.
  To this end,
  let $I := q \cap \op{int}(\downarrow T(u))$
  and $J := q \cap (\uparrow u)$.
  By \cref{prp:localCoho} it suffices to show that
  $\dim_{\F} H^n_{J} (I; F) < \infty$.
  We consider the fragment
  \begin{equation*}
    H^{n-1}(I; F) \rightarrow
    H^{n-1}(I \setminus J; F) \rightarrow
    H^n_{J} (I; F) \rightarrow
    H^{n}(I; F) \rightarrow
    H^{n}(I \setminus J; F)
  \end{equation*}
  of the corresponding long exact sequence for local sheaf cohomology,
  see for example
  \mbox{\cite[Equation (2.6.32) and Remark 2.6.10]{Kashiwara1990}}.
  Now $I$ is connected and $I \setminus J$
  is the disjoint union of at most two connected open sets.
  By our assumptions on $F$
  all four cohomology spaces surrounding $H^n_{J} (I; F)$
  in above exact sequence are finite-dimensional.
  As a result, $H^n_{J} (I; F)$ is finite-dimensional as well.
\end{proof}

Now let $F$ be an object of $D^+ (\dot{q})$.

\begin{lem}
  \label{lem:interiorIntervalCoho}
  Let ${I \subseteq q_{\gamma}}$ be a connected open subset.
  Then we have
  ${H^n(I; Ri_* F) \cong H^n(I \setminus \partial q; F)}$
  for all integers ${n \in \Z}$.
\end{lem}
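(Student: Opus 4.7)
The plan is to factor the inclusion $i \colon \dot{q} \hookrightarrow q_{\gamma}$ through the open interior $\ddot{q}$ of $\dot{q}$ in $q_{\gamma}$, writing $i = i_2 \circ i_1$ with $i_1 \colon \dot{q} \hookrightarrow \ddot{q}$ and $i_2 \colon \ddot{q} \hookrightarrow q_{\gamma}$, so that $Ri_* = Ri_{2*} \circ Ri_{1*}$. Since Lemma \ref{lem:dotsAdjEquiv} asserts that $i_1^{-1} \dashv i_{1*}$ is an exact adjoint equivalence, we have $Ri_{1*} = i_{1*}$ and hence $Ri_* F \cong Ri_{2*} (i_{1*} F)$.

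Next, because $i_2 \colon \ddot{q} \hookrightarrow q_{\gamma}$ is an open inclusion and $I \subseteq q_{\gamma}$ is open, the standard open-embedding identity should yield
\begin{equation*}
  H^n(I; Ri_{2*} G) \cong H^n(I \cap \ddot{q}; G)
\end{equation*}
for any $G \in D^+(\ddot{q})$; this follows from $R\Gamma(I; -) \cong R\Hom(\F_I, -)$ combined with the adjunction $i_2^{-1} \dashv Ri_{2*}$ and the identity $i_2^{-1}\F_I \cong \F_{I \cap \ddot{q}}$. Applying this with $G = i_{1*} F$ reduces the claim to showing that cohomology is preserved by the equivalence $i_{1*}$, i.e.\
\begin{equation*}
  H^n(V; i_{1*} F) \cong H^n(V \cap \dot{q}; F)
\end{equation*}
for the open $V := I \cap \ddot{q}$ of $\ddot{q}$. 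Since $\dot{q} \subseteq \ddot{q}$ and $\dot{q} = q_{\gamma} \setminus \partial q$, we have $(I \cap \ddot{q}) \cap \dot{q} = I \setminus \partial q$, and stringing the isomorphisms together completes the argument. Note that the connectedness hypothesis on $I$ is not actually used.

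The main obstacle is justifying that cohomology transfers under the equivalence from Lemma \ref{lem:dotsAdjEquiv}. The proof of that lemma observes that $i_1$ induces a lattice isomorphism between the topologies of $\dot{q}$ and $\ddot{q}$ preserving all covers; this means that Čech complexes (or injective/flabby resolutions) on corresponding opens coincide under $i_{1*}$, so every derived functor of sections matches up. Spelling this out carefully — or equivalently noting that $i_{1*}$ sends flabby sheaves to flabby sheaves and that $\Gamma(V; i_{1*}(-)) = \Gamma(V \cap \dot{q}; -)$ by construction of $i_{1*}$ — is the only substantive step, and it is essentially formal once the equivalence of topologies is in hand.
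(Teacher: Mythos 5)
Your argument is correct, but it reaches the conclusion by a longer route than necessary, and the paper's own proof shows why. The paper simply applies \cref{lem:compose} to the composition $\Gamma(I;-)\circ i_*$: by definition of the pushforward one has $\Gamma(I; i_* F) = \Gamma(i^{-1}(I); F) = \Gamma(I\setminus\partial q; F)$ for \emph{any} continuous map $i$ (open embedding or not), and $i_*$ sends injectives to injectives because its left adjoint $i^{-1}$ is exact, so $R\bigl(\Gamma(I;-)\circ i_*\bigr)\cong R\Gamma(I;-)\circ Ri_*$ and the lemma follows in one step. Your factorization $i = i_2\circ i_1$ through $\ddot{q}$ is sound — $Ri_{1*}=i_{1*}$ since the adjunction of \cref{lem:dotsAdjEquiv} is an exact equivalence, the identity $H^n(I;Ri_{2*}G)\cong H^n(I\cap\ddot{q};G)$ follows from adjunction against $\F_I$ (or again from \cref{lem:compose}), and the transfer of cohomology along the equivalence $i_{1*}$ is justified exactly as you say — but notice that the "only substantive step" you isolate at the end, namely $\Gamma(V; i_{1*}(-))=\Gamma(V\cap\dot{q};-)$ together with preservation of an adapted class of sheaves, is precisely the observation that, applied to $i$ itself rather than to $i_1$, dispenses with the factorization entirely. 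The detour through $\ddot{q}$ is genuinely needed elsewhere in the paper (e.g.\ to build the left adjoint $i_!$ and to prove the Beck--Chevalley statements, since $\dot{q}$ need not be open in $q_{\gamma}$), but not here. Your remark that connectedness of $I$ is unused is correct; it enters only in the corollaries where the lemma is applied.
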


\begin{proof}
  This follows from \cref{lem:compose}
  with
  ${\Gamma(I; -) \colon \mathrm{Sh}(q_{\gamma}) \rightarrow \mathrm{Vect}_{\F}}$
  in place of ${H \colon \mathcal{B} \rightarrow \mathcal{A}}$
  and
  ${i_* \colon \mathrm{Sh}(\dot{q}) \rightarrow \mathrm{Sh}(q_{\gamma})}$
  in place of
  ${G \colon \mathcal{C} \rightarrow \mathcal{B}}$.
\end{proof}

\begin{cor}
  \label{cor:interiorIntervalCoho}
  For any $n \in \Z$ we have
  ${\dim_{\F} H^n(I; F) < \infty}$
  for all connected open subsets ${I \subseteq \dot{q}}$
  iff we have
  ${\dim_{\F} H^n(I; R i_* F) < \infty}$
  for all connected open subsets ${I \subseteq q_{\gamma}}$.
\end{cor}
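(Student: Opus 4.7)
The plan is to deduce both implications directly from \cref{lem:interiorIntervalCoho}, which supplies the isomorphism $H^n(I; R i_* F) \cong H^n(I \setminus \partial q; F)$ whenever $I \subseteq q_{\gamma}$ is a connected open subset. The two directions then correspond to transferring a connected open from one side of the inclusion $\dot{q} \hookrightarrow q_{\gamma}$ to the other.

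For the implication ($\Leftarrow$), I would start with a connected open ${J \subseteq \dot{q}}$ and lift it along the lattice isomorphism of \cref{lem:dotsAdjEquiv} to a unique open ${I \subseteq \ddot{q} \subseteq q_{\gamma}}$ with ${I \cap \dot{q} = J}$, i.e.\ ${I \setminus \partial q = J}$. Connectedness of $J$ transfers to connectedness of $I$ because, for opens, being connected is the lattice-theoretic property of not being a disjoint join of two nonempty opens, and \cref{lem:dotsAdjEquiv} furnishes precisely such a lattice isomorphism. Then \cref{lem:interiorIntervalCoho} gives ${H^n(I; R i_* F) \cong H^n(J; F)}$, so the assumed finiteness over $q_{\gamma}$ delivers the desired finiteness over $\dot{q}$.

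For the implication ($\Rightarrow$), given a connected open ${I \subseteq q_{\gamma}}$, I would decompose ${I \setminus \partial q}$ into its connected components $\{J_\alpha\}_{\alpha \in A}$, each of which is a connected open in $\dot{q}$. Sheaf cohomology commutes with finite disjoint unions, so
\begin{equation*}
  H^n(I \setminus \partial q; F) \cong \bigoplus_{\alpha \in A} H^n(J_\alpha; F),
\end{equation*}
with each summand finite-dimensional by hypothesis. It remains to verify that the index set $A$ is finite, which follows from a case split: if $\partial q$ is discrete (at most two points), then ${I \setminus \partial q}$ has at most three connected components; if $\partial q$ is non-discrete, then, exactly as in the proof of \cref{lem:unitAtFlabbySheaf}, any $\gamma$-open $I$ meeting the non-discrete portion of $\partial q$ is forced to coincide with $q_{\gamma}$, so ${I \setminus \partial q = \dot{q}}$, whose connected components are finite in number by the geometry of $q$. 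Combining these observations with \cref{lem:interiorIntervalCoho} completes the argument.

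The main obstacle is the finiteness of $A$ in the forward direction: the $\gamma$-topology interacts subtly with the possibly non-discrete set $\partial q$, so controlling how $\partial q$ can disconnect a connected $\gamma$-open requires the case analysis above rather than a direct topological argument. Once this finiteness is in place, the corollary is a routine consequence of \cref{lem:interiorIntervalCoho} and the observed connectedness-preserving bijection between open subsets of $\dot{q}$ and $\ddot{q}$.
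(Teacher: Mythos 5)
The paper leaves this corollary without a proof, so there is nothing explicit to compare against; your overall strategy of reducing both implications to \cref{lem:interiorIntervalCoho} and transferring connected opens across $\dot{q} \hookrightarrow q_{\gamma}$ is clearly the intended one, and the $\Leftarrow$ direction via the lattice isomorphism of \cref{lem:dotsAdjEquiv} is handled correctly: a connected open $J$ of $\dot{q}$ lifts to a connected open $I$ of $\ddot{q}$ (hence of $q_{\gamma}$) with $I \setminus \partial q = J$, and the lemma gives the desired isomorphism of cohomology.

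The gap is in the $\Rightarrow$ direction, and you yourself correctly flag it as ``the main obstacle.'' You need $I \setminus \partial q$ to have finitely many connected components, but the justification offered does not close the gap. First, the assertion that \emph{any} $\gamma$-open $I$ meeting the non-discrete portion of $\partial q$ must equal $q_{\gamma}$ is strictly stronger than what \cref{lem:unitAtFlabbySheaf} actually proves; there the conclusion $U = q_{\gamma}$ is derived only under the hypothesis $\partial q \subset U$, and an open set can certainly contain one but not the other point of $\partial q$. Second, even granting $I = q_{\gamma}$, the finiteness of the number of components of $\dot{q}$ is asserted ``by the geometry of $q$'' without any supporting argument, and this is exactly the nontrivial geometric input the corollary rests on. A further small point: sheaf cohomology takes a disjoint union of opens to a \emph{product}, not a direct sum, so you should write $\prod_{\alpha} H^n(J_\alpha; F)$; this matters precisely because finiteness of the index set is what you must establish, and an infinite product of nonzero finite-dimensional spaces is infinite-dimensional. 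To repair the argument one has to actually bound the number of components of $I \setminus \partial q$ for a connected open $I \subseteq q_{\gamma}$ from the structure of $q$ as the boundary of a downset and the nature of the $\gamma$-topology, rather than appealing to \cref{lem:unitAtFlabbySheaf} under weaker hypotheses than it allows.
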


\begin{lem}
  \label{lem:interiorCharTameness}
  If
  ${\dim_{\F} H^n (I; F) < \infty}$
  for all connected open subsets ${I \subseteq \dot{q}}$
  and all integers ${n \in \Z}$,
  then $F$ is tame.
  If $\partial q$ is closed in $q_{\gamma}$,
  then the converse is true as well.
\end{lem}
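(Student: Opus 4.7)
Assuming $\dim_{\F} H^n(I; F) < \infty$ for every connected open $I \subseteq \dot q$ and every $n \in \Z$, \cref{cor:interiorIntervalCoho} yields the analogous statement for $Ri_*F$ on every connected open subset of $q_\gamma$, whereupon \cref{lem:pfd} concludes that $Ri_*F$ is tame. By \cref{dfn:tame}, this is precisely tameness of $F$.

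\textbf{Converse, assuming $\partial q$ is closed.} Suppose $F$ is tame and set $G := Ri_*F$, so that $h_{\gamma}(G) \colon \M^{\circ} \to \VectF$ is pfd. Because $\partial q$ is closed, every connected open $I \subseteq \dot q$ is also open in $q_\gamma$; by \cref{cor:interiorIntervalCoho} it suffices to show $\dim_{\F} H^n(I; G) < \infty$ for every connected open $I \subseteq q_\gamma$ and every $n$. Mirroring the strategy of \cref{lem:pfd} and of the basis argument in the proof of \cref{prp:conservative}, I write $I = q \cap \op{int}(\downarrow T(u))$ for a suitable $u \in D$ (or $I = q$ in the non-discrete $\partial q$ case, treated separately), set $J := q \cap (\uparrow u)$, which is closed in $I$, and apply the long exact sequence
\[
\cdots \to H^{n-1}(I \setminus J; G) \to H^n_J(I; G) \to H^n(I; G) \to H^n(I \setminus J; G) \to \cdots
\]
of local cohomology \cite[Equation (2.6.32)]{Kashiwara1990}. \cref{prp:localCoho} identifies $H^n_J(I; G) \cong h_{\gamma}(G)(T^{-n}(u))$, which is finite-dimensional by pfd. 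This reduces the problem to the finite-dimensionality of $H^n(I \setminus J; G)$ for each $n$.

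\textbf{Main obstacle.} The complement $I \setminus J$ splits into at most two connected components, each of the form $q \cap \op{int}(\downarrow v)$ for a point $v \in \partial \M$; at such $v$ the functor $h_{\gamma}(G)$ vanishes, so pfd does not apply directly and naïve iteration does not terminate. I would circumvent this by a case analysis parallel to the four-region subdivision of $D$ used in the proof of \cref{prp:conservative}. In each case either $I \setminus J = \emptyset$ (so $H^n(I; G) \cong H^n_J(I; G)$ is finite-dimensional outright), or each component admits a cofinal exhaustion by basic opens $q \cap \op{int}(\downarrow T(u_k))$ with $u_k \in \op{int}\M$ approaching $T^{-1}(v)$; continuity of sheaf cohomology together with the pfd control provided by \cref{prp:localCoho} at each $u_k$ then forces the cohomology of the limit to be finite-dimensional. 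The global case $I = q$, relevant when $\partial q$ is not discrete, is disposed of by choosing $u \in l_0$ so that $J = q$ and $I \setminus J = \emptyset$, reducing $H^n(q; G)$ directly to the finite-dimensional group $h_{\gamma}(G)(T^{-n}(u))$.
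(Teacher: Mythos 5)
Your forward direction is correct and matches the paper verbatim: \cref{cor:interiorIntervalCoho} transfers the hypothesis to $Ri_*F$ and \cref{lem:pfd} concludes.

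For the converse you have correctly identified the sticking point — that once you set $I = q \cap \op{int}(\downarrow T(u))$ and $J = q \cap (\uparrow u)$ and invoke the long exact sequence, you are stuck controlling $H^n(I \setminus J; G)$, whose components again ``escape'' towards $\partial\M$ — but your proposed resolution does not close the gap. ``Continuity of sheaf cohomology'' along an exhaustion $U_k \nearrow U$ is not available in the form you need: higher sheaf cohomology does not commute with such limits in general (one would at best have a Milnor-type $\varprojlim^1$ sequence contingent on a Mittag--Leffler condition that would itself have to be established), and even granting that, an inverse limit of finite-dimensional spaces is typically infinite-dimensional unless one also controls the growth of dimensions. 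So the argument as sketched is circular: finiteness for the original $I$ is being deferred to finiteness for a cofinal family of sub-intervals that approach the boundary, with no base case.

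The paper sidesteps the obstacle entirely, and the device is in fact the one you noticed yourself for the special case $I = q$ and then failed to exploit in general. Since $\partial q$ is closed, for \emph{every} connected open $I \subseteq \dot{q}$ one can find $u \in D$ with $\rho_0(u) = q$ (equivalently $q \subseteq \uparrow u$, so $J = q$) \emph{and} $\rho_1(u) = q \setminus I$ (equivalently $q \cap \op{int}(\downarrow T(u)) = I$). With that choice the support condition in \cref{prp:localCoho} is vacuous, the complement $I \setminus J$ is empty, and one reads off directly
\[
  \bigl(h_{\gamma}(Ri_*F)\circ T^{-n}\bigr)(u) \;\cong\; H^n_q(I;\, Ri_*F) = H^n(I;\, Ri_*F) \;\cong\; H^n(I; F),
\]
the last step via \cref{lem:interiorIntervalCoho}. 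No long exact sequence, no exhaustion. The moral is that you over-constrained $u$: you picked it to realize $I$ as $q \cap \op{int}(\downarrow T(u))$ with $u$ generic in $D$, and then had to wrestle with the resulting $J \subsetneq q$. The closedness of $\partial q$ is used precisely to guarantee that $u$ can also be taken far enough into the corner that $\uparrow u \supseteq q$, which kills the error term. Note also that the parameterization you offered for the components of $I \setminus J$, namely $q \cap \op{int}(\downarrow v)$ with $v \in \partial\M$, is off: since $I \subseteq \dot q$ those components stay inside $\dot q$, and $h_{\gamma}$ vanishing at boundary points of $\M$ was not actually the issue.
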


\begin{proof}
  The first implication follows from
  \cref{cor:interiorIntervalCoho} and \cref{lem:pfd}.
  Now suppose
  $\partial q$ is closed in $q_{\gamma}$,
  let $I \subseteq \dot{q}$ be a connected open subset,
  let $n \in \Z$,
  and suppose that
  ${h_{\gamma} (R i_* F) \colon \M^{\circ} \rightarrow \VectF}$
  is pfd.
  As $\partial q$ is closed in $q_{\gamma}$ there is some point
  ${u \in D}$ with
  ${\rho_1 (u) = q \setminus I}$
  and ${\rho_0 (u) = q}$.
  By \cref{prp:localCoho}, \eqref{eq:rhoExplicit},
  and \cref{lem:interiorIntervalCoho}
  this implies
  \begin{equation*}
    \left(h_{\gamma}(R i_* F) \circ T^{-n}\right)(u)
    \cong
    H^n (I; R i_* F)
    \cong
    H^n (I; F)
    ,
  \end{equation*}
  hence $H^n (I; F)$ is finite-dimensional.
\end{proof}

\begin{dfn}
  \label{dfn:cont}
  We say that a 
  functor
  $G \colon \M^{\circ} \rightarrow \mathrm{Vect}_{\F}$
  is \emph{sequentially continuous},
  if for any increasing sequence
  $(u_k)_{k=1}^{\infty}$ in $\M$
  converging to $u$
  the natural map
  \begin{equation*}
    G(u) \rightarrow \varprojlim_{k} G(u_k)
  \end{equation*}
  is an isomorphism,
  see also \cite[Definition 2.4]{2021arXiv210809298B}.
\end{dfn}

\begin{prp}
  \label{prp:cont}
  Let $F$ be an object of $D^+(q_{\gamma})$ and suppose that
  $h_{\gamma} (F) \colon \M^{\circ} \rightarrow \mathrm{Vect}_{\F}$
  is pfd.
  Then $h_{\gamma} (F)$ is sequentially continuous.
\end{prp}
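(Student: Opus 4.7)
The plan is to identify both sides of the claimed isomorphism with local sheaf cohomology on $q$ via Proposition~\ref{prp:localCoho}, and then to deduce sequential continuity from a Mittag--Leffler argument fuelled by the pfd hypothesis.

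Since $\iota \circ T = \Sigma \circ \iota$, the functor $h_\gamma(F)$ intertwines $T$-translation with cohomological shift of $F$, so after translating by a suitable power of $T$ I may assume $u \in \op{int} D$. The convergence $u_k \nearrow u$ then forces $u_k \in D \cap \op{int} \M$ for all sufficiently large $k$, and by discarding finitely many initial terms I take this to hold for every $k$. By Proposition~\ref{prp:localCoho} applied in each degree $n$, we then have natural identifications $h_\gamma(F)(T^{-n}u) \cong H^n_J(I; F)$ and $h_\gamma(F)(T^{-n}u_k) \cong H^n_{J_k}(I_k; F)$, where $J := q \cap (\uparrow u)$ and $I := q \cap \op{int}(\downarrow T(u))$, with the analogous definitions at $u_k$. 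Since $u_k \nearrow u$, the closed sets $J_k$ decrease to $J$, the open sets $I_k$ increase to $I$, and consequently $I_k \setminus J_k$ increases to $I \setminus J$.

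Choose an injective resolution $I^\bullet$ of $F$. Flabbiness of each $I^m$ makes restriction along the inclusions of pairs $(I_k, J_k) \hookrightarrow (I_{k+1}, J_{k+1})$ surjective, so the inverse system of cochain complexes $(\Gamma_{J_k}(I_k; I^\bullet))_k$ is degree-wise Mittag--Leffler with limit $\Gamma_J(I; I^\bullet)$ (using the sheaf axiom for $\Gamma$ on the increasing union $I = \bigcup_k I_k$ together with left-exactness of $\varprojlim$). The Milnor short exact sequence for the cohomology of a Mittag--Leffler inverse limit of complexes then gives
\begin{equation*}
0 \to {\varprojlim}^1_k H^{n-1}_{J_k}(I_k; F) \to H^n_J(I; F) \to \varprojlim_k H^n_{J_k}(I_k; F) \to 0 .
\end{equation*}
By Proposition~\ref{prp:localCoho} each $H^{n-1}_{J_k}(I_k; F) \cong h_\gamma(F)(T^{1-n}u_k)$ is finite-dimensional by the pfd hypothesis, so this inverse system of finite-dimensional vector spaces is automatically Mittag--Leffler and its $\varprojlim^1$ term vanishes. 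This yields $h_\gamma(F)(T^{-n}u) \cong \varprojlim_k h_\gamma(F)(T^{-n}u_k)$; untranslating recovers sequential continuity of $h_\gamma(F)$ at the original point.

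The hardest part is the cochain-level analysis of the flabby resolution with respect to the varying supports $J_k$: one must check carefully that the restriction maps among the local-section complexes $\Gamma_{J_k}(I_k; I^\bullet)$ behave as needed for the Milnor sequence, that their inverse limit really is $\Gamma_J(I; I^\bullet)$, and that the resulting identifications of cohomology match up with the $h_\gamma(F)$-values predicted by Proposition~\ref{prp:localCoho}. Once this compatibility between the flabby-resolution picture and the local-cohomology description is established, the pfd hypothesis cleanly supplies Mittag--Leffler at the cohomology level, and sequential continuity drops out of the Milnor sequence.
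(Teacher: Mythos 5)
Your proof is correct and takes essentially the same route as the paper: reduce to a single tile, identify the values as local sheaf cohomology via \cref{prp:localCoho}, and deduce the limit isomorphism from Mittag--Leffler, which holds because the pfd hypothesis makes the degree-$(n-1)$ local cohomology groups finite-dimensional. The paper simply cites \cite[Proposition 2.7.1.(ii)]{Kashiwara1990} for the limit-preservation step that you re-derive by hand via a flabby resolution and the Milnor sequence, so the ``hardest part'' you flag is precisely what that citation dispatches.
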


\begin{proof}
  Let $(u_k)_{k=1}^{\infty}$ be an increasing sequence in $\M$
  converging to $u \in \M$.
  Without loss of generality we assume that $(u_k)_{k=1}^{\infty}$
  is contained in a single tile $T^{-n} (D)$ for some $n \in \Z$.
  As $h_{\gamma} (F)$ is pfd 
  the projective system
  \begin{equation*}
    \left\{
      H^{n-1}_{q \cap (\uparrow u_k)} (q \cap \op{int}(\downarrow T(u_k)); F)
    \right\}_{k=1}^{\infty}
  \end{equation*}
  satisfies the
  \href{
    https://en.wikipedia.org/wiki/Inverse_limit#Mittag-Leffler_condition
  }{Mittag-Leffler condition}
  by \cref{prp:localCoho}.
  Thus, the natural map
  \begin{equation*}
    H^{n}_{q \cap (\uparrow u)} (q \cap \op{int}(\downarrow T(u)); F)
    \rightarrow
    \varprojlim_k
    H^{n}_{q \cap (\uparrow u_k)} (q \cap \op{int}(\downarrow T(u_k)); F)
  \end{equation*}
  is an isomorphism by \cite[Proposition 2.7.1.(ii)]{Kashiwara1990}.
  In conjunction with \cref{prp:localCoho} this implies
  the sequential continuity of $h_{\gamma} (F)$.  
\end{proof}

As already noted in \cref{sec:intro}
we denote the category
of finite-dimensional vector spaces over $\F$
by $\mathrm{vect}_{\F}$
and
by $\mathcal{J}$ the full subcategory of pfd functors
${\M^{\circ} \rightarrow \vectF}$
that are cohomological, sequentially continuous,
and have bounded above support.
We summarize the results of this subsection so far:

\begin{prp}
  \label{prp:tameJ}
  Let $F$ be an object of $D^+ (q_{\gamma})$
  or of $D^+ (\dot{q})$.
  Then $F$ is tame
  iff
  $h_{\gamma} (F) \colon \M^{\circ} \rightarrow \mathrm{Vect}_{\F}$
  respectively
  $h_{\gamma} (R i_* F) \colon \M^{\circ} \rightarrow \mathrm{Vect}_{\F}$
  is in $\mathcal{J}$.
\end{prp}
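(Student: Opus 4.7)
The plan is to observe that this proposition is essentially a corollary of results already proven in this section and the previous one, and the proof amounts to matching the definitions against those established facts. The definition of $\mathcal{J}$ requires four conditions on a functor $\M^{\circ} \to \VectF$: being pfd, cohomological, sequentially continuous, and of bounded above support. The definition of tameness, meanwhile, only requires that $h_{\gamma}(F)$ (or $h_{\gamma}(R i_* F)$) be pfd. So the content of the proposition is that, for objects coming from $D^+(q_{\gamma})$ via $h_{\gamma}$, the pfd condition already implies the other three.

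First I would handle the case $F \in D^+(q_{\gamma})$. For the forward direction, assume $F$ is tame, so $h_{\gamma}(F)$ is pfd by definition. The bounded above support of $h_{\gamma}(F)$ follows immediately from \cref{lem:boundedAboveSupport}, which used only that $F$ is bounded below. The cohomological property follows from \cref{lem:cohomological}, which applied the cohomological functor $\Hom_{\Der(q_{\gamma})}(-, F)$ to the distinguished triangles \eqref{eq:triangle}. Sequential continuity then follows from \cref{prp:cont}, which required exactly the pfd hypothesis (invoking the Mittag-Leffler condition via \cref{prp:localCoho}). The reverse direction is trivial since membership in $\mathcal{J}$ includes being pfd.

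For the case $F \in D^+(\dot{q})$, I would just unwind the definition: $F$ is tame iff $R i_* F$ is tame, and this by the first case is equivalent to $h_{\gamma}(R i_* F) \in \mathcal{J}$.

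There is no real obstacle here — the proof is purely a matter of assembling prior results in the right order. The only thing worth emphasizing in the write-up is that the three \enquote{structural} properties (cohomological, sequentially continuous, bounded above support) are automatic for any functor of the form $h_{\gamma}(F)$, so the tameness assumption (pfd) is precisely the extra condition needed to land in $\mathcal{J}$.
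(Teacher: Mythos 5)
Your proposal is correct and matches the paper's approach exactly: the paper's proof is a one-liner citing Lemmas \ref{lem:boundedAboveSupport}, \ref{lem:cohomological}, and \cref{prp:cont}, which is precisely the chain of facts you assemble. Your additional remark that the $D^+(\dot{q})$ case reduces to the $D^+(q_\gamma)$ case by unwinding \cref{dfn:tame} is the intended (implicit) step.
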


\begin{proof}
  This follows directly from Lemmas
  \ref{lem:boundedAboveSupport},
  \ref{lem:cohomological},
  and \cref{prp:cont}.  
\end{proof}

Now by \cref{cor:vanishingBeckChev} the coreflector
$R \flat \colon D^+ (q_{\gamma}) \rightarrow D^+ (q_{\gamma}, \partial q)$
restricts to a coreflector for the full subcategory inclusion
$D^+_t (q_{\gamma}, \partial q) \hookrightarrow D^+_t (q_{\gamma})$.
With this we obtain the following.

\begin{lem}
  \label{lem:replSubcatBeckChev}
  The commutative square
  \begin{equation*}
    \begin{tikzcd}[row sep=6ex]
      D^+_t(q_{\gamma}, \partial q)
      \arrow[r, hook]
      \arrow[d, hook]
      &
      D^+_t(q_{\gamma})
      \arrow[d, hook]
      \\
      D^+(q_{\gamma}, \partial q)
      \arrow[r, hook]
      &
      D^+(q_{\gamma})
    \end{tikzcd}
  \end{equation*}
  of full replete subcategory inclusions
  satisfies the dual Beck--Chevalley condition.
\end{lem}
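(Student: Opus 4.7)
The plan is to show that the right adjoint $R\flat$ of the bottom inclusion restricts to a right adjoint of the top inclusion, after which the dual Beck--Chevalley condition reduces to an identity natural transformation. The only real work is to check that $R\flat$ preserves tameness; everything else is a formal consequence of working with replete full subcategories.

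First, I would verify that $R\flat$ sends $D^+_t(q_\gamma)$ into $D^+_t(q_\gamma, \partial q)$. Let $F \in D^+_t(q_\gamma)$. By \cref{cor:vanishingCoreflector}, $R\flat F$ already lies in $D^+(q_\gamma, \partial q)$, so it remains to see that $R\flat F$ is tame. This is precisely the content of \cref{lem:compatCorefl}: the counit $\varepsilon_F \colon R\flat F \to F$ induces a natural isomorphism $h_\gamma(\varepsilon_F) \colon h_\gamma(R\flat F) \xrightarrow{\cong} h_\gamma(F)$. Since $F$ is tame, $h_\gamma(F)$ is pfd, hence so is $h_\gamma(R\flat F)$, which means $R\flat F \in D^+_t(q_\gamma, \partial q)$ as claimed. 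Write $R\flat_t \colon D^+_t(q_\gamma) \to D^+_t(q_\gamma, \partial q)$ for this restriction.

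Second, I would argue that $R\flat_t$ is right adjoint to the top inclusion. Because all four subcategories are full and replete, $\Hom$-spaces in the tame subcategories coincide with those in the ambient derived categories. Thus for $X \in D^+_t(q_\gamma, \partial q)$ and $Y \in D^+_t(q_\gamma)$ the bijection
\begin{equation*}
  \Hom_{D^+(q_\gamma)}(X, Y) \cong \Hom_{D^+(q_\gamma, \partial q)}(X, R\flat Y)
\end{equation*}
provided by the original adjunction restricts to a bijection
\begin{equation*}
  \Hom_{D^+_t(q_\gamma)}(X, Y) \cong \Hom_{D^+_t(q_\gamma, \partial q)}(X, R\flat_t Y),
\end{equation*}
so the restricted counit makes $R\flat_t$ right adjoint to the top inclusion.

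Finally, the dual Beck--Chevalley condition requires that the canonical mate natural transformation from the right-adjoint-of-top composed with the right-vertical inclusion to the left-vertical inclusion composed with the right-adjoint-of-bottom is an isomorphism. By construction $R\flat_t$ is literally the corestriction of $R\flat$, so this mate is the identity natural transformation on $R\flat$ (viewed as taking values in $D^+(q_\gamma, \partial q)$ after applying the left-vertical inclusion), and is therefore trivially an isomorphism. The main (and essentially only) obstacle is the tameness-preservation step, which however is immediate from \cref{lem:compatCorefl}.
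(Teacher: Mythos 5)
Your proof is correct and takes essentially the same route as the paper: the paper's justification is the sentence preceding the lemma, which invokes \cref{cor:vanishingBeckChev} (a rephrasing of \cref{lem:compatCorefl}, which you cite directly) to conclude that $R\flat$ restricts to a coreflector on the tame subcategories, from which the dual Beck--Chevalley condition follows immediately since the mate is the identity.
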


Now let
\begin{equation*}
  h_{\gamma,t} \colon D^+_t(q_{\gamma}) \rightarrow \mathcal{J}
  \quad \text{and} \quad
  h_{\gamma,0,t} \colon D^+_t(q_{\gamma}, \partial q) \rightarrow \mathcal{J}
\end{equation*}
be the corresponding restrictions of
$h_{\gamma} \colon D^+(q_{\gamma}) \rightarrow \VectF^{\M^{\circ}}$.

\begin{lem}
  The commutative square
  \begin{equation*}
    \begin{tikzcd}[row sep=6ex]
      D^+_t(q_{\gamma}, \partial q)
      \arrow[r, hook]
      \arrow[d, "h_{\gamma,0,t}"']
      &
      D^+_t(q_{\gamma})
      \arrow[d, "h_{\gamma,t}"]
      \\
      \mathcal{J}
      \arrow[r, equal]
      &
      \mathcal{J}
    \end{tikzcd}
  \end{equation*}
  satisfies the dual Beck--Chevalley condition.
\end{lem}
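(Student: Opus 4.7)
The plan is to deduce the lemma by pasting the two dual Beck--Chevalley squares that have already been established. First, by Lemma \ref{lem:replSubcatBeckChev}, the coreflector $R\flat$ of the inclusion $D^+(q_\gamma, \partial q) \hookrightarrow D^+(q_\gamma)$ restricts to a coreflector
\[
R\flat_t \colon D^+_t(q_\gamma) \longrightarrow D^+_t(q_\gamma, \partial q)
\]
for the tame inclusion; in particular, the upper horizontal arrow of the square in the statement admits a right adjoint, namely (up to canonical isomorphism) the restriction of $R\flat$.

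The dual Beck--Chevalley condition for the square then amounts to asserting that the mate of the strict commutativity
\[
h_{\gamma,t} \circ (\text{inclusion}) \,=\, \mathrm{id}_{\mathcal{J}} \circ h_{\gamma,0,t} \,=\, h_{\gamma,0,t},
\]
namely the natural transformation
\[
h_{\gamma,0,t} \circ R\flat_t \longrightarrow h_{\gamma,t}
\]
obtained by whiskering with the counit of the adjunction, is a natural isomorphism. I would observe that each of $h_{\gamma,t}$, $h_{\gamma,0,t}$, and $R\flat_t$ is, by construction, the restriction of the corresponding non-tame functor; that the codomain can be corestricted from $\VectF^{\M^{\circ}}$ to $\mathcal{J}$ on tame inputs is precisely the content of Proposition \ref{prp:tameJ}. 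Consequently, the mate natural transformation in the present square is simply the restriction of the mate of the outer square from Corollary \ref{cor:vanishingBeckChev}, and that mate is already known to be a natural isomorphism by that corollary (equivalently, by Lemma \ref{lem:compatCorefl}). Since restricting a natural isomorphism of functors to a full subcategory of the domain and corestricting to a full subcategory of the codomain preserves the isomorphism property, the proof is complete.

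I do not anticipate any substantive obstacle here, since the heavy lifting was done in Corollary \ref{cor:vanishingBeckChev} and Lemma \ref{lem:replSubcatBeckChev}, and the present statement is a formal vertical pasting of dual Beck--Chevalley squares. The only piece of bookkeeping is to confirm that $h_{\gamma,t}$ and $h_{\gamma,0,t}$ really do agree with the restrictions of $h_{\gamma}$ and $h_{\gamma,0}$, with codomain corestricted into $\mathcal{J}$; this is exactly Proposition \ref{prp:tameJ} applied pointwise.
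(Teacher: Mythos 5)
Your proposal is correct and follows exactly the route the paper takes: the paper's own proof is the one-line observation that the claim follows directly from \cref{cor:vanishingBeckChev}, and your argument simply spells out why — the coreflector restricts by \cref{lem:replSubcatBeckChev}, the mate of the tame square is the restriction of the mate of the outer square, and restricting a natural isomorphism to full subcategories preserves the isomorphism property. The bookkeeping via \cref{prp:tameJ} that you flag is indeed the only point that needs checking, and you have handled it correctly.
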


\begin{proof}
  This follows directly from \cref{cor:vanishingBeckChev}.
\end{proof}

\begin{lem}
  \label{lem:tameInteriorAdj}
  The derived adjunction
  \begin{equation*}
    \begin{tikzcd}
      D^+(q_{\gamma})
      \arrow[r, "D^+(i^{-1})"'{name=la}, bend right]
      &
      D^+(\dot{q})
      \arrow[l, "R i_*"'{name=ra}, bend right]
      \arrow[phantom, from=la, to=ra, "\dashv" rotate=90]
    \end{tikzcd}
  \end{equation*}
  restricts to an adjunction
  \begin{equation*}
    \begin{tikzcd}
      D^+_t(q_{\gamma})
      \arrow[r, "D^+(i^{-1})"'{name=la}, bend right]
      &
      D^+_t(\dot{q})
      .
      \arrow[l, "R i_*"'{name=ra}, bend right]
      \arrow[phantom, from=la, to=ra, "\dashv" rotate=90]
    \end{tikzcd}
  \end{equation*}
\end{lem}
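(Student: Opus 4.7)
The plan is to reduce the claim to two preservation statements: that $R i_*$ sends $D^+_t(\dot{q})$ into $D^+_t(q_\gamma)$ and that $i^{-1}$ sends $D^+_t(q_\gamma)$ into $D^+_t(\dot{q})$. Once both functors restrict, the restricted adjunction comes for free: the unit and counit of $i^{-1} \dashv R i_*$ are already defined at every object, so their restrictions to the full subcategories of tame objects continue to form an adjunction (the triangle identities pass verbatim, and tameness is a property rather than extra structure).

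The preservation statement for $R i_*$ is immediate from \cref{dfn:tame}: by definition an object $F$ of $D^+(\dot q)$ is tame precisely when $R i_* F$ is tame, so $R i_*$ tautologically sends $D^+_t(\dot q)$ into $D^+_t(q_\gamma)$. The preservation statement for $i^{-1}$ is the one that requires real input. Let $F \in D^+_t(q_\gamma)$, so that $h_\gamma(F) \in \mathcal{J}$ by \cref{prp:tameJ}. The key tool is \cref{lem:interiorBeckChev}: the Beck--Chevalley condition applied to the commutative square \eqref{eq:interiorBeckChev}, using the adjunction $i^{-1} \dashv R i_*$ with unit $\eta^i \colon \op{id} \Rightarrow R i_* \circ i^{-1}$, says that the mate
\begin{equation*}
  h_\gamma \circ \eta^i \colon h_\gamma \Rightarrow h_\gamma \circ R i_* \circ i^{-1}
\end{equation*}
of the identity natural transformation witnessing commutativity of \eqref{eq:interiorBeckChev} is a natural isomorphism. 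Evaluating at $F$ yields $h_\gamma(R i_* i^{-1} F) \cong h_\gamma(F) \in \mathcal{J}$. Therefore $R i_* i^{-1} F$ is tame by \cref{prp:tameJ}, which by \cref{dfn:tame} means precisely that $i^{-1} F$ lies in $D^+_t(\dot{q})$.

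With both preservation statements in hand, the claim follows by the general principle that a full subcategory inclusion closed under the action of an adjoint pair of functors inherits the adjunction. I do not anticipate a serious obstacle: the entire argument is a packaging of \cref{lem:interiorBeckChev}, whose content lies in the naturality of the unit $\eta^i$ after passing to the invariant $h_\gamma$. The only care needed is to correctly identify which mate the Beck--Chevalley condition produces and to observe that it is precisely the natural transformation $h_\gamma \circ \eta^i$ needed to transport tameness from $F$ to $R i_* i^{-1} F$.
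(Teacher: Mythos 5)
Your proof is correct, and its overall skeleton matches the paper's: the $R i_*$ direction is tautological from \cref{dfn:tame}, and the substance is showing that $R i_* i^{-1} F$ is tame whenever $F$ is. Where you diverge is in how you obtain the comparison $h_{\gamma}(R i_* i^{-1} F) \cong h_{\gamma}(F)$: you invoke \cref{lem:interiorBeckChev} directly, correctly identifying the mate of the identity in the square \eqref{eq:interiorBeckChev} as $h_{\gamma} \circ \eta^i$ (the paper itself confirms this reading in the proof of \cref{lem:bothBeckChev}). The paper instead argues through the coreflector: by \cref{cor:vanishingBeckChev} it suffices to show $R\flat R i_* i^{-1} F$ is tame, and then the chain $R\flat R i_* i^{-1} F \cong R(i_! \circ i^{-1})(F) \cong R\flat F$ via \cref{lem:compose}, \eqref{eq:dirImProperSuppAsFlat}, and \cref{lem:flatUnitIso} finishes the job. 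Your route is shorter and arguably cleaner, since \cref{lem:interiorBeckChev} is already available at this point and packages exactly the needed isomorphism; the paper's route has the minor advantage of staying entirely on the sheaf-theoretic side (it never needs the unit $\eta^i$ to be compatible with $h_\gamma$, only the interplay of $\flat$, $i_*$, and $i_!$). Both are sound, and your reduction of the restricted adjunction to the two preservation statements is the standard and correct general principle.
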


\begin{proof}
  By \cref{dfn:tame}
  the derived functor
  $R i_* \colon D^+(\dot{q}) \rightarrow D^+(q_{\gamma})$
  restricts to a functor
  $D^+_t(\dot{q}) \rightarrow D^+_t(q_{\gamma})$.
  Now suppose $F$ is an object of $D^+_t(q_{\gamma})$.
  We have to show that $D^+(i^{-1})(F)$ is an object of $D^+_t(\dot{q})$.
  With some abuse of notation we write $i^{-1} F$ for $D^+(i^{-1})(F)$.
  By \cref{dfn:tame} and \cref{cor:vanishingBeckChev} it suffices to show that
  $R \flat R i_* i^{-1} F$ is an object of $D^+_t(q_{\gamma})$.
  Now by \cref{lem:compose}, \eqref{eq:dirImProperSuppAsFlat},
  and \cref{lem:flatUnitIso} 
  we have
  \begin{equation*}
    R \flat R i_* i^{-1} F \cong
    R(\flat \circ i_* \circ i^{-1})(F) =
    R(i_! \circ i^{-1})(F) \cong
    R \flat F
    ,
  \end{equation*}
  which is in $D^+_t(q_{\gamma})$ by
  \cref{cor:vanishingBeckChev} or \cref{lem:replSubcatBeckChev}.
\end{proof}

\begin{lem}
  The commutative square
  \begin{equation*}
    \begin{tikzcd}
      D^+_t (\dot{q})
      \arrow[r, "R i_*"]
      \arrow[d, "h_{\gamma,t} \circ R i_*"']
      &
      D^+_t (q_{\gamma})
      \arrow[d, "h_{\gamma,t}"]
      \\
      \mathcal{J}
      \arrow[r, equal]
      &
      \mathcal{J}
    \end{tikzcd}
  \end{equation*}
  satisfies the Beck--Chevalley condition.
\end{lem}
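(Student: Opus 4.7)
The plan is to deduce this from the analogous statement for the non-tame square established in \cref{lem:interiorBeckChev} by showing that the Beck--Chevalley mate associated with the restricted square agrees with the restriction of the Beck--Chevalley mate of the unrestricted square. Since the latter is a natural isomorphism, the same will be true of the former.

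Concretely, first I would invoke \cref{lem:tameInteriorAdj} to observe that the adjunction $i^{-1} \dashv R i_*$ restricts to the tame subcategories, so that $D^+(i^{-1})$ has a genuine right adjoint at the level of $D^+_t$ and it is legitimate to form the mate of the identity natural transformation associated to the square in question. Then I would note that the restriction functors $D^+_t(\dot{q}) \hookrightarrow D^+(\dot{q})$ and $D^+_t(q_\gamma) \hookrightarrow D^+(q_\gamma)$ are full subcategory inclusions that commute with both $i^{-1}$ and $R i_*$ (again by \cref{lem:tameInteriorAdj}), and that $h_{\gamma,t}$ is by definition the restriction of $h_\gamma$ to $D^+_t(q_\gamma)$.

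With these compatibilities recorded, the Beck--Chevalley mate
\[
  h_{\gamma,t} \circ R i_* \Rightarrow h_{\gamma,t} \circ R i_* \circ D^+(i^{-1}) \circ R i_*
\]
associated to the tame square is obtained by applying the unit $\eta^i$ of $D^+(i^{-1}) \dashv R i_*$ and reducing by the counit, and this is precisely the restriction along the inclusion $D^+_t(\dot{q}) \hookrightarrow D^+(\dot{q})$ of the corresponding mate for the unrestricted square from \cref{lem:interiorBeckChev}. Since the latter was shown to be a natural isomorphism (via \cref{cor:BeckChevLocalCoho}), its restriction to the tame subcategory is also a natural isomorphism, which is exactly the Beck--Chevalley condition for the square in the statement.

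I do not anticipate a genuine obstacle here; the only subtlety is verifying that \cref{lem:tameInteriorAdj} indeed lets one form the mate inside the tame categories and that the two mates agree, but this is essentially a matter of unpacking definitions and using fullness of the inclusions, with no new computations required.
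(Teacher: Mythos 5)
Your approach is exactly the paper's: invoke \cref{lem:tameInteriorAdj} to know the adjunction restricts (so the mate exists), observe that the tame mate is the restriction of the untamed mate of \cref{lem:interiorBeckChev}, and conclude. The paper indeed dispatches this with a one-line citation of those two lemmas.

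One small correction to your displayed formula: the Beck--Chevalley mate of the identity $2$-cell associated with the tame square has source $\mathrm{id}_{\mathcal{J}} \circ h_{\gamma,t} = h_{\gamma,t}$ and target $(h_{\gamma,t} \circ R i_*) \circ D^+(i^{-1})$, both viewed as functors $D^+_t(q_\gamma) \to \mathcal{J}$; concretely it is $h_{\gamma,t} \circ \eta^i$, which is precisely the restriction of $h_{\gamma} \circ \eta^i$ from \cref{lem:interiorBeckChev}. What you wrote, namely a transformation $h_{\gamma,t} \circ R i_* \Rightarrow h_{\gamma,t} \circ R i_* \circ D^+(i^{-1}) \circ R i_*$, is that mate pre-whiskered with $R i_*$, which is a different (and weaker) assertion since $R i_*$ need not be essentially surjective; showing that this whiskered transformation is an isomorphism would not by itself give the Beck--Chevalley condition. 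Since your surrounding prose makes clear you intend the genuine mate and identify it with the restriction of the untamed one, the argument is sound once the displayed formula is corrected.
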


\begin{proof}
  This follows directly
  from Lemmas \ref{lem:tameInteriorAdj} and \ref{lem:interiorBeckChev}.
\end{proof}

By Lemmas \ref{lem:replSubcatBeckChev}, \ref{lem:tameInteriorAdj},
and \ref{lem:compose}
we may compose the adjunctions
\begin{equation*}
  \begin{tikzcd}
    D^+_t (q_{\gamma}, \partial q)
    \arrow[r, ""{name=I}, bend right, hook]
    &
    D^+_t (q_{\gamma})
    \arrow[l, "R \flat"'{name=b}, bend right]
    \arrow[phantom, from=I, to=b, "\dashv" rotate=90]
    \arrow[r, "D^+(i^{-1})"'{name=la}, bend right]
    &
    D^+_t(\dot{q})
    \arrow[l, "R i_*"'{name=ra}, bend right]
    \arrow[phantom, from=la, to=ra, "\dashv" rotate=90]
  \end{tikzcd}
\end{equation*}
to obtain the adjunction
\begin{equation}
  \label{eq:vanishingTameInteriorAdj}
  \begin{tikzcd}
    D^+_t (q_{\gamma}, \partial q)
    \arrow[r, "D^+(i^{-1})"'{name=la}, bend right]
    &
    D^+_t(\dot{q})
    .
    \arrow[l, "R i_!"'{name=ra}, bend right]
    \arrow[phantom, from=la, to=ra, "\dashv" rotate=90]
  \end{tikzcd}
\end{equation}

\begin{lem}
  \label{lem:vanishingTameInteriorAdj}
  If $\partial q$ is closed in $q_{\gamma}$,
  then the adjunction \eqref{eq:vanishingTameInteriorAdj}
  is an adjoint equivalence.
\end{lem}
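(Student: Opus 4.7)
The plan is to obtain the derived adjoint equivalence by lifting the non-derived one provided by \cref{lem:qAdjEquivProperSupp} and then restricting to the tame subcategories. The key observation is that under the hypothesis that $\partial q$ is closed in $q_\gamma$, the subspace $\dot{q} = q_\gamma \setminus \partial q$ is open, so $i \colon \dot{q} \hookrightarrow q_\gamma$ is an open embedding and $i_!$ is exact (extension by zero). In particular $R i_! = i_!$, viewed degree-wise on complexes.

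First I would lift the exact adjoint equivalence
\begin{equation*}
  i^{-1} \colon \mathrm{Sh}(q_\gamma, \partial q) \xrightleftarrows{\hphantom{xxx}} \mathrm{Sh}(\dot{q}) \colon i_!
\end{equation*}
from \cref{lem:qAdjEquivProperSupp} to an adjoint equivalence of the bounded-below derived categories $D^+(q_\gamma, \partial q) \simeq D^+(\dot{q})$. Since both functors are exact, they descend to the derived level by pointwise application on complexes, and the unit and counit -- which are natural isomorphisms on the underlying abelian categories -- induce natural isomorphisms at the derived level as well, yielding the desired adjoint equivalence.

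Next I would argue that this derived equivalence restricts to the tame subcategories in both directions. For the functor $D^+(i^{-1}) \colon D^+_t(q_\gamma, \partial q) \to D^+_t(\dot{q})$, this is immediate by precomposing the full subcategory inclusion $D^+_t(q_\gamma, \partial q) \hookrightarrow D^+_t(q_\gamma)$ from \cref{lem:replSubcatBeckChev} with $i^{-1}$, using \cref{lem:tameInteriorAdj}. For the functor $R i_! \colon D^+_t(\dot{q}) \to D^+_t(q_\gamma, \partial q)$, I would use the identification $R i_! \cong R\flat \circ R i_*$ from the definition of the composed adjunction \eqref{eq:vanishingTameInteriorAdj} together with \eqref{eq:dirImProperSuppAsFlat} and \cref{lem:flatUnitIso}: if $F \in D^+_t(\dot{q})$, then $R i_* F \in D^+_t(q_\gamma)$ by \cref{lem:tameInteriorAdj}, and then $R\flat(R i_* F) \in D^+_t(q_\gamma, \partial q)$ by \cref{cor:vanishingBeckChev} (or equivalently \cref{lem:replSubcatBeckChev}). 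Since the unit and counit are isomorphisms at every object of the ambient derived categories, their restrictions to the tame subcategories are also isomorphisms, so the restricted adjunction is an adjoint equivalence as claimed.

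The main subtlety -- though not really an obstacle -- is to match the functor $R i_!$ appearing in \eqref{eq:vanishingTameInteriorAdj} (defined as the composition of the coreflector $R\flat$ with $R i_*$) with the exact functor $i_!$ furnished by the non-derived abelian equivalence of \cref{lem:qAdjEquivProperSupp}. This matching is exactly what was already invoked in the construction of the adjunction via \cref{lem:flatUnitIso}, so no new verification is required; once this identification is in place, the rest is bookkeeping about restricting an adjoint equivalence to full subcategories preserved by both functors.
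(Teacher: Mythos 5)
Your proposal is correct and takes essentially the same route as the paper, which simply cites \cref{lem:qAdjEquivProperSupp}: once one knows the underlying abelian adjunction is an \emph{exact} adjoint equivalence, the lift to bounded-below derived categories and the restriction to tame subcategories (both directions of which are already established in the surrounding lemmas) follow by bookkeeping. Your fleshed-out version, including the identification of $R i_! \cong R\flat \circ R i_*$ with the exact $i_!$ via \eqref{eq:dirImProperSuppAsFlat} and \cref{lem:flatUnitIso}, fills in exactly the steps the paper leaves implicit.
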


\begin{proof}
  This follows directly from \cref{lem:qAdjEquivProperSupp}.
\end{proof}

\begin{lem}
  If $\partial q$ is closed in $q_{\gamma}$,
  then the square diagram
  \begin{equation}
    \label{eq:bothBeckChev}
    \begin{tikzcd}[row sep=8ex, column sep=7ex]
      D^+_t (q_{\gamma})
      \arrow[r, "i^{-1}"]
      \arrow[d, "h_{\gamma,t}"']
      &
      D^+_t (\dot{q})
      \arrow[d, "h_{\gamma,t} \circ R i_*"]
      \\
      \mathcal{J}
      \arrow[r, equal]
      \arrow[ru, "h_{\gamma} \circ \eta^i",
      Rightarrow, shorten >=1.5ex, shorten <=1.5ex]
      &
      \mathcal{J}
    \end{tikzcd}
  \end{equation}
  satisfies both (the ordinary and the dual) Beck--Chevalley conditions.
\end{lem}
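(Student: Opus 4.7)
The plan is to deduce this statement directly from the non-tame analogue proved earlier (also labeled \cref{lem:bothBeckChev}, now for the full derived categories $D^+(q_\gamma)$ and $D^+(\dot{q})$) together with the preservation-of-tameness results established in this subsection. Since the substantive content of both Beck--Chevalley conditions has already been verified for the ambient categories, the remaining task is to check that the relevant adjunctions, natural transformations, and mates restrict compatibly to the tame subcategories.

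For the dual Beck--Chevalley condition, I would observe that the non-tame version already establishes $h_\gamma \circ \eta^i \colon h_\gamma \Rightarrow h_\gamma \circ R i_* \circ i^{-1}$ as a natural isomorphism on all of $D^+(q_\gamma)$. By \cref{lem:tameInteriorAdj} the adjunction $i^{-1} \dashv R i_*$ restricts to tame objects, and by \cref{prp:tameJ} the functor $h_\gamma$ sends $D^+_t(q_\gamma)$ into $\mathcal{J}$, so restricting to the tame setting yields the required dual Beck--Chevalley $2$-cell.

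For the ordinary Beck--Chevalley condition, the essential step is to produce a left adjoint to $i^{-1} \colon D^+_t(q_\gamma) \to D^+_t(\dot{q})$. Here I would combine the coreflection $R \flat$, which restricts to the tame setting by \cref{lem:replSubcatBeckChev}, with the adjoint equivalence from \cref{lem:vanishingTameInteriorAdj}, to obtain a left adjoint $R i_!$ factoring as $D^+_t(\dot{q}) \xrightarrow{\sim} D^+_t(q_\gamma, \partial q) \hookrightarrow D^+_t(q_\gamma)$. The mate of the defining identity $2$-cell via this left adjoint is then precisely the restriction of the natural transformation $h_\gamma \circ \varepsilon^!$ that appears in the non-tame proof, which was shown there to be a natural isomorphism by means of \cref{lem:compatCorefl}.

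The proof is thus almost entirely bookkeeping. The main (minor) obstacle is verifying that the left adjoint constructed by the composition above agrees up to canonical isomorphism with the one appearing in the non-tame argument, so that the tame mate is genuinely the restriction of its non-tame counterpart; this follows from the uniqueness of adjoints up to unique isomorphism.
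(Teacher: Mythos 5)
Your proposal is correct and takes essentially the same route as the paper, which simply cites the non-tame version of the lemma as directly implying the tame one. You have merely spelled out the restriction bookkeeping (via \cref{lem:tameInteriorAdj}, \cref{lem:replSubcatBeckChev}, \cref{lem:vanishingTameInteriorAdj}, and uniqueness of adjoints) that the paper leaves implicit.
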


\begin{proof}
  This follows directly from \cref{lem:bothBeckChev}.
\end{proof}

We also note that $\mathcal{J}$
is an additive $\F$-linear subcategory of $\VectF^{\M^{\circ}}$.
We end this section with a proof that
$D^+_t (q_{\gamma}, \partial q)$
is a triangulated subcategory of
$D^+ (q_{\gamma})$.
To this end,
we show the following auxiliary lemma.

\begin{lem}
  \label{lem:weakSerreJ}
  The category of pfd sequentially continuous functors
  $\M^{\circ} \rightarrow \VectF$
  is a weak Serre subcategory of $\VectF^{\M^{\circ}}$,
  i.e.
  for any exact sequence
  \begin{equation*}
    F_1 \rightarrow F_2 \rightarrow F_3 \rightarrow F_4 \rightarrow F_5
  \end{equation*}
  with
  ${F_i \colon \M^{\circ} \rightarrow \VectF}$
  pfd sequentially continuous for ${i=1,2,4,5}$
  the functor
  ${F_3 \colon \M^{\circ} \rightarrow \VectF}$
  is pfd sequentially continuous as well.
\end{lem}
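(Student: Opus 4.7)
My plan is to prove the statement in two parts, first checking point-wise finite-dimensionality (pfd) and then sequential continuity. Both follow from the analogous properties of the outer four functors together with standard diagram-chasing.

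For the pfd part: fix $u \in \M$ and evaluate the exact sequence at $u$ to obtain an exact sequence of $\F$-vector spaces
\begin{equation*}
F_1(u) \to F_2(u) \to F_3(u) \to F_4(u) \to F_5(u).
\end{equation*}
Since $F_2(u)$ and $F_4(u)$ are finite-dimensional, the image of $F_2(u) \to F_3(u)$ is finite-dimensional and the image of $F_3(u) \to F_4(u)$ is finite-dimensional, so $F_3(u)$ fits into a short exact sequence with these two as the outer terms, making it finite-dimensional as well.

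For sequential continuity, let $(u_k)_{k=1}^{\infty}$ be an increasing sequence in $\M$ converging to $u$. Applying the $F_i$ to this sequence gives five projective systems of $\F$-vector spaces linked by the exact sequence of functors, so evaluating at each $u_k$ and taking the limit we obtain a commutative ladder
\begin{equation*}
\begin{tikzcd}[column sep=2.2ex]
F_1(u) \ar[r] \ar[d, "\cong"'] & F_2(u) \ar[r] \ar[d, "\cong"'] & F_3(u) \ar[r] \ar[d] & F_4(u) \ar[r] \ar[d, "\cong"] & F_5(u) \ar[d, "\cong"] \\
\varprojlim_k F_1(u_k) \ar[r] & \varprojlim_k F_2(u_k) \ar[r] & \varprojlim_k F_3(u_k) \ar[r] & \varprojlim_k F_4(u_k) \ar[r] & \varprojlim_k F_5(u_k)
\end{tikzcd}
\end{equation*}
whose upper row is exact by hypothesis and whose four outer vertical arrows are isomorphisms by sequential continuity of $F_1, F_2, F_4, F_5$.

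The key intermediate step is to argue that the lower row is exact, from which the five lemma then yields that the middle vertical arrow is an isomorphism. Here I would invoke that a countable projective system of finite-dimensional vector spaces automatically satisfies the Mittag--Leffler condition (the decreasing chain of images in each finite-dimensional term must stabilize). Consequently, by \cite[Proposition 2.7.1.(ii)]{Kashiwara1990} (or the standard vanishing of $\varprojlim^1$ on Mittag--Leffler systems indexed by $\N$), the functor $\varprojlim_k$ is exact on each of the four pfd projective systems $\{F_i(u_k)\}$ for $i = 1,2,4,5$. Exactness of the bottom row at the three interior positions then follows from a short diagram chase using that these four systems have vanishing $\varprojlim^1$. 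This last point is the only real subtlety: one needs $\varprojlim^1$ to vanish on the four outer systems so that the long exact sequence obtained from applying $\varprojlim$ collapses to exactness of the five-term lower row. Once the diagram is in place, the five lemma concludes.
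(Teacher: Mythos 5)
Your proof is correct and follows essentially the same route as the paper: evaluate pointwise to get the pfd part, then set up the same five-term ladder and apply the five lemma. The paper compresses the exactness of the bottom row into the single sentence ``inverse limits of finite-dimensional vector spaces are exact,'' whereas you unfold this into the Mittag--Leffler/$\varprojlim^1$ argument, which is a fine and arguably more self-contained justification. One small imprecision in your wording: to push exactness through $\varprojlim$ at the interior positions, what one actually needs is vanishing of $\varprojlim^1$ on the \emph{image and kernel subsystems} appearing when the five-term complex is chopped into short exact sequences, not literally on the four outer systems $\{F_i(u_k)\}$ themselves. This is harmless here because those subsystems are subobjects of pfd systems (including $\{F_3(u_k)\}$, which you already showed is pfd), so they remain pfd and therefore Mittag--Leffler; but stating it as a condition on the four outer systems slightly obscures where the exactness really comes from.
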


\begin{proof}
  The statement that
  $F_3 \colon \M^{\circ} \rightarrow \VectF$
  is pfd follows point-wise from the analogous property
  of the full subcategory inclusion $\vectF \hookrightarrow \VectF$
  of finite-dimensional vector spaces in $\VectF$.
  Now let
  $(u_k)_{k=1}^{\infty}$
  be an increasing sequence in $\M$
  converging to $u$.
  As inverse limits of finite-dimensional vector spaces are exact,
  both rows of the commutative diagram
  \begin{equation}
    \label{eq:seqContLatter}
    \begin{tikzcd}[column sep=3.2ex]
      F_1 (u)
      \arrow[r]
      \arrow[d]
      &
      F_2 (u)
      \arrow[r]
      \arrow[d]
      &
      F_3 (u)
      \arrow[r]
      \arrow[d]
      &
      F_4 (u)
      \arrow[r]
      \arrow[d]
      &
      F_5 (u)
      \arrow[d]
      \\
      \varprojlim_k F_1(u_k)
      \arrow[r]
      &
      \varprojlim_k F_2(u_k)
      \arrow[r]
      &
      \varprojlim_k F_3(u_k)
      \arrow[r]
      &
      \varprojlim_k F_4(u_k)
      \arrow[r]
      &
      \varprojlim_k F_5(u_k)
    \end{tikzcd}
  \end{equation}
  are exact.
  Moreover,
  as the functors
  ${F_i \colon \M^{\circ} \rightarrow \VectF}$ for ${i=1,2,4,5}$
  are sequentially continuous,
  all four non-center vertical maps
  of \eqref{eq:seqContLatter} are isomorphisms.
  With this it follows from the five lemma that
  ${F_3 (u) \rightarrow \varprojlim_{k} F_3 (u_k)}$
  is an isomorphism as well.
\end{proof}

\begin{cor}
  \label{cor:tameTriaSubcat}
  The category
  $D^+_t (q_{\gamma})$
  is a triangulated subcategory of $D^+ (q_{\gamma})$.
\end{cor}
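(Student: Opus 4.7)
My plan is to verify the three standard axioms of a triangulated subcategory: repleteness, closure under shifts, and the two-out-of-three property for distinguished triangles. Repleteness is immediate since tameness of $F$ is defined as a property of $h_\gamma(F)$ regarded up to isomorphism in $\VectF^{\M^\circ}$, and the zero object is obviously tame.

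For closure under shifts I would exploit strict stability $\iota \circ T = \Sigma \circ \iota$ to get
\[h_\gamma(F[n])(u) = \Hom_{\Der(q_\gamma)}(\iota(u), F[n]) = \Hom_{\Der(q_\gamma)}(\iota(T^{-n}(u)), F) = h_\gamma(F)(T^{-n}(u)),\]
i.e.\ $h_\gamma(F[n]) = h_\gamma(F) \circ T^{-n}$. Since $T$ is an order-preserving bijection of $\M$, precomposition with $T^{\pm 1}$ preserves every condition defining $\mathcal{J}$ (pfd, cohomological, sequentially continuous, bounded-above support), so $\mathcal{J}$ is stable under $T^{\pm 1}$ and tame complexes are stable under $[n]$.

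For two-out-of-three, consider a distinguished triangle $F \to G \to H \to F[1]$ in $\Der(q_\gamma)$ and apply the cohomological functor $\Hom_{\Der(q_\gamma)}(\iota(u), -)$ pointwise in $u$. Using the identification above, this yields a long exact sequence of functors $\M^\circ \to \VectF$ whose terms are $h_\gamma(F) \circ T^{-n}$, $h_\gamma(G) \circ T^{-n}$, and $h_\gamma(H) \circ T^{-n}$ for $n \in \Z$. Assuming, say, $F$ and $G$ are tame (the other two cases are symmetric), all four functors $h_\gamma(F), h_\gamma(G), h_\gamma(F) \circ T^{-1}, h_\gamma(G) \circ T^{-1}$ lie in $\mathcal{J}$ by the shift step, and in particular are pfd and sequentially continuous. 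The five-term exact subsequence
\[h_\gamma(F)(u) \to h_\gamma(G)(u) \to h_\gamma(H)(u) \to h_\gamma(F)(T^{-1}(u)) \to h_\gamma(G)(T^{-1}(u))\]
then falls within the scope of \cref{lem:weakSerreJ}, forcing $h_\gamma(H)$ to be pfd and sequentially continuous. Since $h_\gamma(H)$ is automatically cohomological with bounded-above support by \cref{lem:cohomological,lem:boundedAboveSupport}, it lies in $\mathcal{J}$ and hence $H$ is tame.

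The main point requiring care--more than a genuine obstacle--is the bookkeeping between the derived shift $[n]$ and the poset automorphism $T$: the neighbors of $h_\gamma(H)(u)$ in the five-term sequence are values not only at $u$ but also at $T^{-1}(u)$, so one must confirm that as functors of $u$ they are pfd and sequentially continuous, which is exactly what the shift-closure step provides, making \cref{lem:weakSerreJ} directly applicable.
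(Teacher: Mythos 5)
Your proof is correct and takes essentially the same route as the paper: apply the cohomological functor $h_\gamma$ to the triangle to obtain a five-term exact sequence, then invoke \cref{lem:weakSerreJ} together with \cref{lem:cohomological,lem:boundedAboveSupport}. The only substantive difference is that you make explicit the identification $h_\gamma(F[n]) = h_\gamma(F) \circ T^{-n}$ via strict stability of $\iota$ and the resulting shift-closure of tameness, whereas the paper's proof asserts that $h_\gamma(F[1])$ and $h_\gamma(G[1])$ lie in $\mathcal{J}$ ``by assumption'' and leaves this small bookkeeping step implicit.
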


\begin{proof}
  Suppose
  \begin{equation*}
    F \rightarrow G \rightarrow H \rightarrow F[1]
  \end{equation*}
  is a distinguished triangle with $F$ and $G$ in $D^+_t (q_{\gamma})$.
  We have to show that
  ${h_{\gamma}(H) \colon \M^{\circ} \rightarrow \VectF}$
  is a functor in $\mathcal{J}$.
  By \cref{lem:cohomological} the functor
  $h_{\gamma}(H)$ is cohomological.
  Thus, it remains to show $h_{\gamma}(H)$ is pfd and sequentially continuous.
  As ${h_{\gamma} \colon D^+ (q_{\gamma}) \rightarrow \VectF^{\M^{\circ}}}$
  is cohomological,
  we obtain the exact sequence
  \begin{equation*}
    h_{\gamma}(F) \rightarrow
    h_{\gamma}(G) \rightarrow
    h_{\gamma}(H) \rightarrow
    h_{\gamma}(F[1]) \rightarrow
    h_{\gamma}(G[1])
  \end{equation*}
  with $h_{\gamma}(F)$, $h_{\gamma}(G)$, $h_{\gamma}(F[1])$,
  and $h_{\gamma}(G[1])$
  functors in $\mathcal{J}$ by assumption.
  In conjunction with \cref{lem:weakSerreJ}
  we obtain that
  $h_{\gamma}(H) \colon \M^{\circ} \rightarrow \VectF$
  is pfd and sequentially continuous as well.
\end{proof}

\begin{cor}
  \label{cor:tameVanishingTriaSubcat}
  The category
  $D^+_t (q_{\gamma}, \partial q)$
  is a triangulated subcategory of $D^+ (q_{\gamma})$.  
\end{cor}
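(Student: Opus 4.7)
The plan is to exhibit $D^+_t(q_\gamma, \partial q)$ as an intersection of two triangulated subcategories of $D^+(q_\gamma)$ whose triangulated structure is already established elsewhere in the paper, and then note that such an intersection is automatically triangulated.

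More concretely, by \cref{dfn:tame} we have
\begin{equation*}
  D^+_t (q_{\gamma}, \partial q) = D^+_t (q_{\gamma}) \cap D^+ (q_{\gamma}, \partial q),
\end{equation*}
where the intersection is taken inside $D^+(q_\gamma)$. The first factor $D^+_t(q_\gamma)$ is a triangulated subcategory of $D^+(q_\gamma)$ by the preceding \cref{cor:tameTriaSubcat}. The second factor $D^+(q_\gamma, \partial q)$ is a triangulated subcategory of $D^+(q_\gamma)$ by \cref{cor:vanishingCoreflector}, which established that it is even a triangulated coreflective subcategory with coreflector $R\flat$.

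It then suffices to verify that the intersection of two triangulated subcategories of a triangulated category is again triangulated; this is routine. Given a distinguished triangle $F \to G \to H \to F[1]$ in $D^+(q_\gamma)$ with $F$ and $G$ in $D^+_t(q_\gamma, \partial q)$, membership of $H$ in $D^+_t(q_\gamma)$ follows from \cref{cor:tameTriaSubcat} applied to the triangle, while membership of $H$ in $D^+(q_\gamma, \partial q)$ follows from \cref{cor:vanishingCoreflector} applied to the same triangle. Closure under shifts and isomorphisms is immediate from the corresponding closure for each factor. There is no real obstacle here; the substance of the statement is already contained in \cref{cor:tameTriaSubcat} and \cref{cor:vanishingCoreflector}, and the present corollary is essentially a bookkeeping consequence that packages those two results together.
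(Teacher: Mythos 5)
Your proposal is correct and takes essentially the same approach as the paper: the paper's proof also just cites \cref{cor:vanishingCoreflector} and \cref{cor:tameTriaSubcat}, leaving implicit the fact (which you spell out) that by \cref{dfn:tame} the category in question is the intersection of those two triangulated subcategories, and that such an intersection is automatically triangulated.
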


\begin{proof}
  This follows
  from Corollaries \ref{cor:vanishingCoreflector} and \ref{cor:tameTriaSubcat}.
\end{proof}

\begin{cor}
  \label{cor:tameInteriorTriaSubcat}
  The category
  $D^+_t (\dot{q})$
  is a triangulated subcategory of $D^+ (\dot{q})$.    
\end{cor}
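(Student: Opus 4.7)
The plan is to transport the triangulated-subcategory property across the functor $Ri_* \colon D^+(\dot{q}) \to D^+(q_{\gamma})$, using that tameness on $\dot{q}$ is defined through $Ri_*$ and that \cref{cor:tameTriaSubcat} already supplies the analogous statement downstairs.

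First I would fix a distinguished triangle
\begin{equation*}
  F \to G \to H \to F[1]
\end{equation*}
in $D^+(\dot{q})$ with $F$ and $G$ in $D^+_t(\dot{q})$, so that by \cref{dfn:tame} the complexes $Ri_* F$ and $Ri_* G$ lie in $D^+_t(q_{\gamma})$. Since $Ri_*$ is a triangulated functor (as the right-derived functor of a left exact functor between the abelian sheaf categories), applying it gives a distinguished triangle
\begin{equation*}
  Ri_* F \to Ri_* G \to Ri_* H \to Ri_* F[1]
\end{equation*}
in $D^+(q_{\gamma})$. By \cref{cor:tameTriaSubcat} the two-out-of-three property for triangulated subcategories then forces $Ri_* H$ to lie in $D^+_t(q_{\gamma})$, which by \cref{dfn:tame} is exactly the assertion that $H$ is tame.

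To finish, I would check the remaining closure properties: $D^+_t(\dot{q})$ is closed under isomorphism in $D^+(\dot{q})$ (immediate, because $Ri_*$ sends isomorphic objects to isomorphic objects and $D^+_t(q_{\gamma})$ is replete), it contains the zero object (since $Ri_* 0 \cong 0$ is tame), and it is closed under the shift $[1]$ (because $Ri_* (H[1]) \cong (Ri_* H)[1]$ and $D^+_t(q_{\gamma})$ is shift-closed by \cref{cor:tameTriaSubcat}). Together these establish that $D^+_t(\dot{q})$ is a full triangulated subcategory of $D^+(\dot{q})$.

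There is no real obstacle here; the argument is a pure transport along the triangulated functor $Ri_*$, and the only thing one has to be comfortable with is that \cref{dfn:tame} for objects of $D^+(\dot{q})$ is phrased entirely in terms of the image under $Ri_*$, so every closure property we need follows mechanically from the corresponding closure property of $D^+_t(q_{\gamma})$ already proved in \cref{cor:tameTriaSubcat}.
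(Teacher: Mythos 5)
Your argument is correct and follows essentially the same route as the paper: the paper's proof also simply notes that $Ri_*$ is a triangulated functor and invokes \cref{dfn:tame} together with \cref{cor:tameTriaSubcat}. Your write-up just makes the two-out-of-three and closure-under-shift checks explicit, which the paper leaves implicit.
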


\begin{proof}
  The derived functor
  $R i_* \colon D^+(\dot{q}) \rightarrow D^+(q_{\gamma})$
  is triangulated,
  hence the result follows from
  \cref{dfn:tame} and \cref{cor:tameTriaSubcat}.
\end{proof}

\subsection{Alternative Construction of Induced Cohomological Functors}
\label{sec:altConstr}

Let $G$ be a bounded below complex of flabby sheaves on $q_{\gamma}$.
In the following we provide an alternative construction
of $h_{\gamma} (G) \colon \M^{\circ} \rightarrow \VectF$,
which will be useful later.
To this end,
we define the functor
\begin{equation*}
  \tilde{F}' \colon D \rightarrow C^+(\VectF)^{\circ},\,
  u \mapsto \Gamma_{q \cap (\uparrow u)}(q \cap \op{int}(\downarrow u); G)
  ,
\end{equation*}
where the internal maps are induced by inclusions.
Post-composing
${\tilde{F}' \colon D \rightarrow C^+(\VectF)^{\circ}}$
with the opposite graded cohomology functor
\[{H^{\bullet \circ} \colon
    C^+(\VectF)^{\circ} \rightarrow \left(\VectF^{\Z}\right)^{\circ},\,
    C \mapsto H^{\bullet}(C)}\]
we obtain the functor
\begin{equation*}
  F' \colon D \rightarrow \left(\VectF^{\Z}\right)^{\circ},\,
  u \mapsto H^{\bullet}_{q \cap (\uparrow u)}(q \cap \op{int}(\downarrow u); G)
\end{equation*}
as $G$ is a complex of flabby sheaves.
Now for $n \in \Z$ and $u \in D$ we have
an isomorphism
\begin{equation}
  \label{eq:phi^n}
  \varphi_u^n \colon h_{\gamma}(G)(T^{-n}(u))
  \xrightarrow{\cong}
  H^n_{q \cap (\uparrow u)}(q \cap \op{int}(\downarrow u); G)
  =
  (F'(u))^n
\end{equation}
by \cref{prp:localCoho},
which is natural in $u$.
Now in order to use this to connect our original construction
of $h_{\gamma}(G) \colon \M^{\circ} \rightarrow \VectF$
with this alternative approach,
let
${h_{\gamma}^{\# \circ}(G) \colon \M \rightarrow \left(\VectF^{\Z}\right)^{\circ}}$
be the transform of
${\left(h_{\gamma}(G)\right)^{\circ} \colon \M \rightarrow \VectF^{\circ}}$
under the $2$-adjunction from \cite[Lemma A.6]{2021arXiv210809298B}.
Then the family of isomorphisms $\{\varphi_u^n\}_{u \in D, n \in \Z}$
assembles to a
natural isomorphism
\begin{equation*}
  \varphi' \colon
  F'
  \xrightarrow{\cong}
  h_{\gamma}^{\# \circ}(G) |_D
  .
\end{equation*}
Now the missing ingredient,
in order to obtain a strictly stable functor
${F \colon \M \rightarrow \left(\VectF^{\Z}\right)^{\circ}}$
from
${F' \colon D \rightarrow \left(\VectF^{\Z}\right)^{\circ}}$
is a natural transformation
as in the diagram
\begin{equation}
  \label{eq:partialTrafo}
  \begin{tikzcd}
    R_D
    \arrow[rrr, "\op{pr}_1"]
    \arrow[ddd, "\op{pr}_2"']
    &[-23pt] & &[-18pt]
    D
    \arrow[ddd, "F'"]
    \\[-17pt]
    & &
    {}
    \arrow[ld, Rightarrow, "{\partial'}"]
    \\
    &
    {}
    \\[-20pt]
    T(D)
    \arrow[rrr, "\Sigma \circ F' \circ T^{-1}"']
    & & &
    \left(\VectF^{\Z}\right)^{\circ}
  \end{tikzcd}
\end{equation}
according to \cite[Proposition A.14]{2021arXiv210809298B},
where
\[R_D :=
  \{
  (w, \hat{u}) \in D \times T(D) \mid
  w \preceq \hat{u} \preceq T(w)
  \}\]
as in \cite[Definition A.8]{2021arXiv210809298B}.
To this end, let $(w, \hat{u}) \in R_D$, let $u := T^{-1}(\hat{u})$,
and let $v_1, v_2 \in [u, w]$
be the lower left respectively the upper right vertex of $[u, w]$
as shown in \cref{fig:constrBoundary}.
If we now instantiate \cref{cor:sesFlabbyLocalSections}
with
\begin{align*}
  X_1 & := q \cap \op{int}(\downarrow v_1),
  & A_1 & := q \setminus (\uparrow v_1),
  \\
  X_2 & := q \cap \op{int}(\downarrow v_2),
        \quad \text{and}
  & A_2 & := q \setminus (\uparrow v_2),
\end{align*}
then we obtain the short exact sequence
\begin{equation}
  \label{eq:sesCochainsAlternative}
  \begin{tikzcd}
    0
    \arrow[r]
    &
    \tilde{F}'(w)
    \arrow[r]
    &
    \tilde{F}'(v_1)
    \oplus
    \tilde{F}'(v_2)
    \arrow[r, "(1 ~ -1)"]
    &[1.5ex]
    \tilde{F}'(u) = (\tilde{F}' \circ T^{-1})(\hat{u})
    \arrow[r]
    &
    0
  \end{tikzcd}
\end{equation}
of cochain complexes in $\VectF$.
By the zig-zag lemma,
this short exact sequence yields a differential
\begin{equation*}
  \delta'_{(w, \hat{u})} \colon
  (\Sigma \circ F' \circ T^{-1})(\hat{u})
  \rightarrow
  F'(w).
\end{equation*}
Now let
\begin{equation*}
  \partial'_{(w, \hat{u})} :=
  \big(\delta'_{(w, \hat{u})}\big)^{\circ} \colon
  F'(w)
  \rightarrow
  (\Sigma \circ F' \circ T^{-1})(\hat{u})
\end{equation*}
be the corresponding homomorphism
in the opposite category $\left(\VectF^{\Z}\right)^\circ$
for all $(w, \hat{u}) \in R_D$.
Then $\partial'$ is a natural transformation as in \eqref{eq:partialTrafo}.
As it turns out,
the diagram
\begin{equation*}
  \begin{tikzcd}[column sep=15ex, row sep=6ex]
    F' \circ \op{pr}_1
    \arrow[d, "\partial'"', Rightarrow]
    \arrow[r, "\varphi' \circ \op{pr}_1", Rightarrow]
    &
    h_{\gamma}^{\# \circ}(G) \circ \op{pr}_1
    \arrow[d, "{\partial(h_{\gamma}^{\# \circ}(G), D)}", Rightarrow]
    \\
    \Sigma \circ F' \circ T^{-1} \circ \op{pr}_2
    \arrow[r, "\Sigma \circ \varphi' \circ T^{-1} \circ \op{pr}_2"', Rightarrow]
    &
    h_{\gamma}^{\# \circ}(G) \circ \op{pr}_2
  \end{tikzcd}
\end{equation*}
of functors and natural transformations commutes,
where $\partial(h_{\gamma}^{\# \circ}(G), D)$
is defined as in \cite[Definition A.9]{2021arXiv210809298B}.
By \cite[Proposition A.14]{2021arXiv210809298B}
this determines a unique strictly stable functor
$F \colon \M \rightarrow \left(\VectF^{\Z}\right)^{\circ}$
as well as a unique strictly stable natural transformation
$\varphi \colon F \rightarrow h_{\gamma}^{\# \circ}(G)$.
Moreover, as $\varphi$ is strictly stable
and as its restriction
$\varphi |_D = \varphi'$
to the fundamental domain $D$ is a natural isomorphism,
$\varphi \colon F \rightarrow h_{\gamma}^{\# \circ}(G)$
is a natural isomorphism as well.
With this we obtain the natural isomorphism
\begin{equation*}
  \op{ev}^0 \circ \varphi^{\circ} \colon
  h_{\gamma} (G) \rightarrow
  \op{ev}^0 \circ F^{\circ}
  ,
\end{equation*}
where
$\op{ev}^0 \colon \VectF^{\Z} \rightarrow \VectF,\,
M^{\bullet} \mapsto M^0$
is the evaluation at $0$ as in \cite[Lemma A.6]{2021arXiv210809298B}.

\subsection{Connection to Derived Level Set Persistence and RISC}
\label{sec:derivedLevelSet}

The following form of derived level set persistence
has been introduced by \cite{MR3259939,MR3873181}.
Let $f \colon X \rightarrow \R$ continuous function.
Then we may consider the derived pushforward
$R f_* \F_X$ as an object of the derived category $D^+(\R)$
of $\F$-linear sheaves on $\R$.
This construction can be made into a contravariant functor
in the following way.
For a commutative triangle
\begin{equation}
  \label{eq:homOverIR}
  \begin{tikzcd}
    X
    \arrow[rr, bend left, "\varphi"]
    \arrow[dr, "f"']
    &
    &
    Y
    \arrow[dl, "g"]
    \\
    &
    \R
  \end{tikzcd}
\end{equation}
of topological spaces,
we consider the unit
\begin{equation*}
  \eta^{\varphi}_{\F_Y} \colon
  \F_Y \rightarrow
  R \varphi_* \varphi^{-1} \F_Y \simeq R \varphi_* \F_X
\end{equation*}
of the derived adjunction $\varphi^{-1} \dashv R \varphi_*$
at $\F_Y$.
Applying the functor $R g_*$ to $\eta^{\varphi}_{\F_X}$
we obtain the homomorphism
\begin{equation*}
  R \varphi_* \F_{\varphi} \colon
  R g_* \F_Y \xrightarrow{(R g_* \circ \eta^{\varphi})_{\F_Y}}
  R g_* R \varphi_* \F_X \simeq R (g \circ \varphi)_* \F_X = R f_* \F_X
  ,
\end{equation*}
see also \cite[(2.7.4)]{Kashiwara1990}.
This way we obtain the functor
\begin{equation*}
  R (-)_* \F_{(-)} \colon
  (\mathrm{Top} / \R)^{\circ} \rightarrow D^+(\R),\,
  (f \colon X \rightarrow \R) \mapsto R f_* \F_X
\end{equation*}
from the opposite category of
the category of topological spaces over the reals $\mathrm{Top} / \R$
to the derived category $D^+(\R)$.
As a note of caution,
we point out that this notation is not being used consistently
across the literature
as \cite{2019arXiv190709759B}
use the same notation for the functor
\begin{equation}
  \label{eq:dirImCoho}
  \mathrm{Top} / \R \rightarrow D^+(\R),\,
  (f \colon X \rightarrow \R) \mapsto \bigoplus_{n=0}^{\infty} R^n f_* \F_X [-i]
  .
\end{equation}
Below we provide the \cref{exm:hood},
which also shows that the functor \eqref{eq:dirImCoho}
and the functor we denote as $R (-)_* \F_{(-)}$ are not naturally isomorphic.

We now assume we are in the setting of \cite[Section 2]{2021arXiv210809298B}.
More specifically,
we assume $l_0$ and $l_1$ intersect the $x$-axis
in $-\pi$ and $\pi$ respectively
and that ${Q = \downarrow \op{Im} \blacktriangle}$,
where
\[{\blacktriangle = \Delta \circ \arctan \colon
    \overline{\R} = [-\infty, \infty] \rightarrow \M,\,
    t \mapsto (\arctan t, \arctan t)
    .
  }
\]
Then the restriction
$\blacktriangle |_{\R} \colon \R \rightarrow q_{\gamma}$
yields an embedding of $\R$ onto $\dot{q} \subset q_{\gamma}$
as shown in \cref{fig:embeddingReals}.
\begin{figure}[t]
  \centering
\begingroup%
  \makeatletter%
  \providecommand\color[2][]{%
    \errmessage{(Inkscape) Color is used for the text in Inkscape, but the package 'color.sty' is not loaded}%
    \renewcommand\color[2][]{}%
  }%
  \providecommand\transparent[1]{%
    \errmessage{(Inkscape) Transparency is used (non-zero) for the text in Inkscape, but the package 'transparent.sty' is not loaded}%
    \renewcommand\transparent[1]{}%
  }%
  \providecommand\rotatebox[2]{#2}%
  \newcommand*\fsize{\dimexpr\f@size pt\relax}%
  \newcommand*\lineheight[1]{\fontsize{\fsize}{#1\fsize}\selectfont}%
  \ifx\svgwidth\undefined%
    \setlength{\unitlength}{306bp}%
    \ifx\svgscale\undefined%
      \relax%
    \else%
      \setlength{\unitlength}{\unitlength * \real{\svgscale}}%
    \fi%
  \else%
    \setlength{\unitlength}{\svgwidth}%
  \fi%
  \global\let\svgwidth\undefined%
  \global\let\svgscale\undefined%
  \makeatother%
  \begin{picture}(1,0.41666667)%
    \lineheight{1}%
    \setlength\tabcolsep{0pt}%
    \put(0,0){\includegraphics[width=\unitlength,page=1]{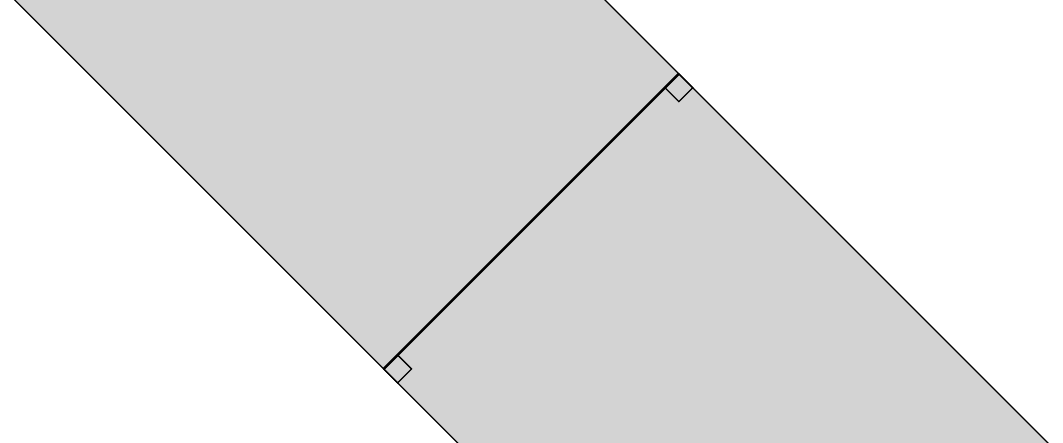}}%
    \put(0.41203701,0.26851863){\makebox(0,0)[t]{\lineheight{1.25}\smash{\begin{tabular}[t]{c}$\M$\end{tabular}}}}%
    \put(0.7777777,0.23148137){\makebox(0,0)[t]{\lineheight{1.25}\smash{\begin{tabular}[t]{c}$ \pi $\end{tabular}}}}%
    \put(0.2222223,0.17129632){\makebox(0,0)[t]{\lineheight{1.25}\smash{\begin{tabular}[t]{c}$-\pi~~$\end{tabular}}}}%
    \put(0,0){\includegraphics[width=\unitlength,page=2]{embedding-reals.pdf}}%
    \put(0.54629632,0.17129632){\makebox(0,0)[t]{\lineheight{1.25}\smash{\begin{tabular}[t]{c}$\op{Im} \blacktriangle$\end{tabular}}}}%
    \put(0.72288015,0.30026789){\makebox(0,0)[t]{\lineheight{1.25}\smash{\begin{tabular}[t]{c}$l_1$\end{tabular}}}}%
    \put(0.2855848,0.09867451){\makebox(0,0)[t]{\lineheight{1.25}\smash{\begin{tabular}[t]{c}$l_0$\end{tabular}}}}%
  \end{picture}%
\endgroup%

  \caption{
    The strip $\M$ and
    the image of the embedding
    $\blacktriangle \colon \overline{\R} \rightarrow \M$.
  }
  \label{fig:embeddingReals}
\end{figure}
The induced tessellation of for this particular choice of $Q \subset \M$
is shown in \cref{fig:tessellationReals}.
Moreover,
we have ${q_{\gamma} = \op{Im} \blacktriangle \cong \overline{\R}}$.
Thus, by post-composing the functor
$R (\blacktriangle |_{\R})_* \colon D^+(\R) \rightarrow D^+(q_{\gamma})$
with 
$h_{\gamma}$
we obtain the functor
\begin{equation*}
  h_{\R} \colon
  D^+(\R) \xrightarrow{R (\blacktriangle |_{\R})_*}
  D^+(q_{\gamma}) \xrightarrow{h_{\gamma}}
  \VectF^{\M^{\circ}}
  .
\end{equation*}
Furthermore,
by composing
${R (-)_* \F_{(-)} \colon (\mathrm{Top} / \R)^{\circ} \rightarrow D^+(\R)}$
and
${h_{\R} \colon D^+(\R) \rightarrow \VectF^{\M^{\circ}}}$
we obtain a functor
\begin{equation*}
  (\mathrm{Top} / \R)^{\circ} \xrightarrow{R (-)_* \F_{(-)}}
  D^+ (\R) \xrightarrow{h_{\R}}
  \VectF^{\M^{\circ}}
  .
\end{equation*}
Now in \cite[Section 2]{2021arXiv210809298B}
we have already provided the functor
$h \colon (\mathrm{Top} / \R)^{\circ} \rightarrow \VectF^{\M^{\circ}}$.
In the following we show that these two functors
are naturally isomorphic
when restricted to functions on locally contractible spaces.
More specifically,
let $\mathrm{lcContr}$ denote the full subcategory
of locally contractible topological spaces.
Before we construct a natural isomorphism $\zeta$
as in the diagram
\begin{equation*}
  \begin{tikzcd}[row sep=12ex, column sep=7ex]
    (\mathrm{lcContr} / \R)^{\circ}
    \arrow[d , "R (-)_* \F_{(-)}"']
    \arrow[dr, "h"{name=h0}]
    \\
    D^+ (\R)
    \arrow[r, "h_{\R}"']
    \arrow[to=h0, Leftarrow, "\zeta", shorten >=1.5ex, shorten <=1.5ex]
    &
    \VectF^{\M^{\circ}}
    ,
  \end{tikzcd}
\end{equation*}
we illustrate the behavior of all three of these functors
with an example.

\begin{exm}
  \label{exm:hood}
  Let $C_4$ be the cyclic graph on four vertices
  $\{1, 2, 3, 4\}$.
  Moreover,
  let $5 * C_4$ be the abstract simplicial cone over $C_4$
  with $5$ as the tip of the cone,
  let $A$ be the full subcomplex of $5 * C_4$
  spanned by the four vertices $\{1,2,3,5\}$,
  let $X := |A|$ and $Y := |5 * C_4|$
  be the corresponding geometric realizations,
  and let $\varphi \colon X \hookrightarrow Y$
  be the corresponding inclusion.
  Furthermore,
  let $a < b < c$,
  let
  $g \colon Y \rightarrow \R$
  be the unique simplexwise linear function with
  \begin{align*}
    g(1) = g(3) = g(4) & = a, \\
    g(2) & = b, \\
    \text{and} \quad
    g(5) & = c,
  \end{align*}
  and let $f := g |_X \colon X \rightarrow \R$
  be the restriction of $g \colon Y \rightarrow \R$
  to $X$.
  Then the diagram \eqref{eq:homOverIR} commutes;
  see also
  \cref{fig:hood}.
  \begin{figure}[t]
    \centering
    \import{strip-diagrams/_diagrams/}{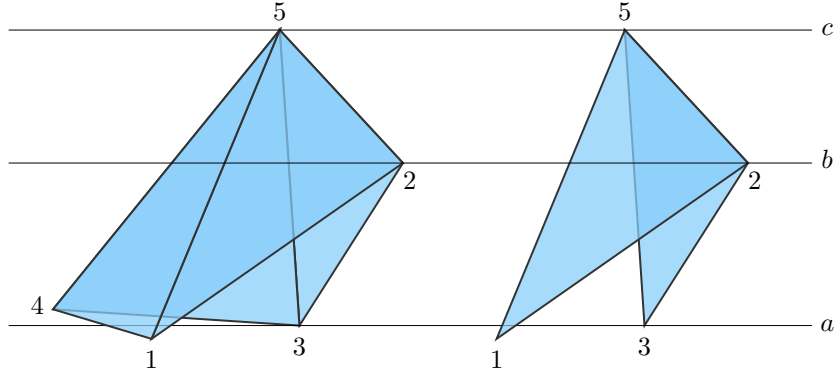}
    \caption{
      The geometric realization
      $Y := |5 * C_4|$ on the left hand side
      and
      $X := |A|$
      on the right hand side,
      making $g \colon Y \rightarrow \R$
      and $f \colon X \rightarrow \R$
      the corresponding height functions
      of the depicted embeddings into $\R^3$.
    }
    \label{fig:hood}
  \end{figure}
  Now let
  \begin{equation*}
    \bar{a} := \arctan a, \quad \bar{b} := \arctan b, \quad
    \text{and} \quad
    \bar{c} := \arctan c.
  \end{equation*}
  Then we have
  \begin{equation}
    \label{eq:exampleRISCdecomp}
    \begin{split}
      h(g) & \cong
      B_{(\bar{c}, \bar{a})}
      \oplus
      B_{\left(\pi-\bar{b}, \bar{c}-2\pi\right)}
      \\
      \text{and} \quad
      h(f) & \cong
      B_{(\bar{c}, \bar{a})}
      \oplus
      B_{\left(\pi-\bar{b}, \bar{a}\right)}      
    \end{split}
  \end{equation}
  or, in other words
  \begin{align*}
    (\beta^0 \circ h)(g)
    =
    \mathbf{1}_{
    \left\{(\bar{c}, \bar{a}), \left(\pi-\bar{b}, \bar{c}-2\pi\right)\right\}
    }
    \quad \text{and} \quad
    (\beta^0 \circ h)(f)
    =
    \mathbf{1}_{
    \left\{(\bar{c}, \bar{a}), \left(\pi-\bar{b}, \bar{a}\right)\right\}
    }
    ,
  \end{align*}
  where
  ${\mathbf{1}_{
      \left\{(\bar{c}, \bar{a}), \left(\pi-\bar{b}, \bar{a}\right)\right\}
    } \colon \op{int} \M \rightarrow \N_0}$
  is the indicator function of the subset
  ${\left\{(\bar{c}, \bar{a}), \left(\pi-\bar{b}, \bar{a}\right)\right\} \subset
    \op{int} \M}$;
  see also \cref{fig:hoodDiagram}.
  \begin{figure}[t]
    \centering
    \import{strip-diagrams/_diagrams/}{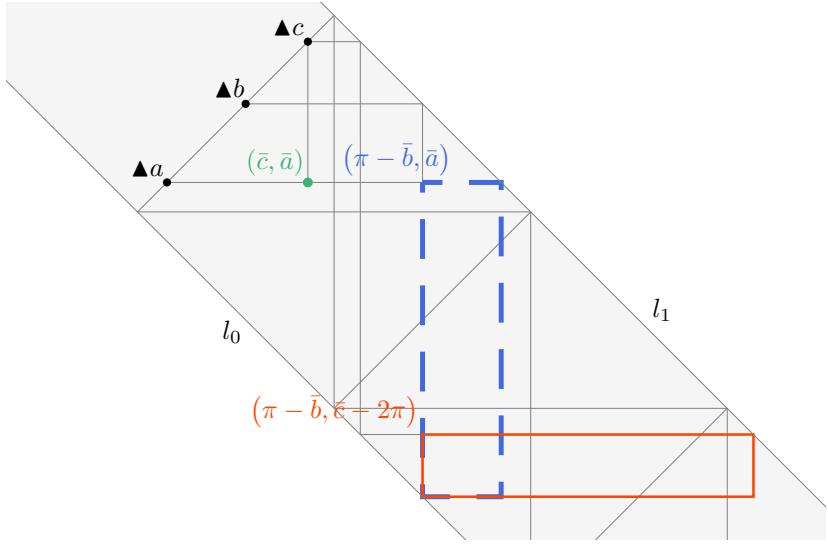}
    \caption{
      The indecomposables
      $B_{\left(\pi-\bar{b}, \bar{a}\right)}$
      and
      $B_{\left(\pi-\bar{b}, \bar{c}-2\pi\right)}$.
    }
    \label{fig:hoodDiagram}
  \end{figure}
  We consider the natural transformation
  \begin{equation*}
    \eta := 
    B_{\left(\pi-\bar{b}, \bar{c}-2\pi\right) \preceq \left(\pi-\bar{b}, \bar{a}\right)}
    \colon
    B_{\left(\pi-\bar{b}, \bar{c}-2\pi\right)}
    \rightarrow
    B_{\left(\pi-\bar{b}, \bar{a}\right)}
    .
  \end{equation*}
  As is shown in \cref{fig:hoodDiagram} the supports of
  $B_{(\pi-\bar{b}, \bar{a})}$ and
  $B_{(\pi-\bar{b}, \bar{c}-2\pi)}$
  have a non-empty intersection.
  For any $u$ in this intersection,
  $\eta_u$
  maps
  $B_{(\pi-\bar{b}, \bar{c}-2\pi)} (u) = \F$
  identically onto
  $B_{(\pi-\bar{b}, \bar{a})} (u) = \F$.
  As it turns out, we have the commutative diagram
  \begin{equation}
    \label{eq:hoodSquareRISC}
    \begin{tikzcd}[row sep=5ex]
      B_{(\bar{c}, \bar{a})}
      \oplus
      B_{\left(\pi-\bar{b}, \bar{c}-2\pi\right)}
      \arrow[r, "\cong"]
      \arrow[d, "B_{(\bar{c}, \bar{a})} \oplus \eta"']
      &
      h(g)
      \arrow[d, "h(\varphi)"]
      \\
      B_{(\bar{c}, \bar{a})}
      \oplus
      B_{\left(\pi-\bar{b}, \bar{a}\right)}
      \arrow[r, "\cong"]
      &
      h(f)
      ,
    \end{tikzcd}
  \end{equation}
  when the isomorphisms \eqref{eq:exampleRISCdecomp} are chosen appropriately.
  Now
  \begin{equation}
    \label{eq:exampleSheavesDecomp}
    \begin{split}
      R g_* \F_Y
      & \cong
      \F_{[a,c]}
      \oplus
      \F_{[b,c)} [-1]
      \cong
      \blacktriangle^{-1} \iota(\bar{c}, \bar{a})
      \oplus
      \blacktriangle^{-1} \iota\left(\pi-\bar{b}, \bar{c}-2\pi\right)
      \\
      \text{and} \quad
      R f_* \F_X
      & \cong
      \F_{[a,c]}
      \oplus
      \F_{[a,b)}
      \cong
      \blacktriangle^{-1} \iota(\bar{c}, \bar{a})
      \oplus
      \blacktriangle^{-1} \iota\left(\pi-\bar{b}, \bar{a}\right)
      .      
    \end{split}
  \end{equation}
  Moreover,
  considering the short exact sequence
  \begin{equation*}
    0 \rightarrow
    \F_{[a, b)} \rightarrow
    \F_{[a,c)} \rightarrow
    \F_{[b, c)} \rightarrow
    0
  \end{equation*}
  of sheaves
  we obtain the distinguished triangle
  \begin{equation*}
    \F_{[a, b)} \rightarrow
    \F_{[a,c)} \rightarrow
    \F_{[b, c)} \xrightarrow{\partial}
    \F_{[a, b)} [1]
    .
  \end{equation*}
  Furthermore,
  we have commutative diagrams
  \begin{equation*}
    \begin{tikzcd}[row sep=8ex]
      \blacktriangle^{-1} \iota\left(\pi-\bar{b}, \bar{c}-2\pi\right)
      \arrow[r, "\cong"]
      \arrow[
      d,
      "{\blacktriangle^{-1}
        \iota\left(
          \left(\pi-\bar{b}, \bar{c}-2\pi\right)
          \preceq
          \left(\pi-\bar{b}, \bar{a}\right)
        \right)
      }" description
      ]
      &
      \F_{[b, c)} [-1]
      \arrow[d, "{\partial[-1]}"]
      \\
      \blacktriangle^{-1} \iota\left(\pi-\bar{b}, \bar{a}\right)
      \arrow[r, "\cong"]
      &
      \F_{[a,b)}
    \end{tikzcd}
  \end{equation*}
  and
  \begin{equation}
    \label{eq:hoodSquareSheaves}
    \begin{tikzcd}[row sep=8ex]
      \F_{[a,c]}
      \oplus
      \F_{[b,c)} [-1]
      \arrow[r, "\cong"]
      \arrow[d, "{\F_{[a,c]} \oplus \partial[-1]}"']
      &
      R g_* \F_Y
      \arrow[d, "R \varphi_* \F_{\varphi}"]
      \\
      \F_{[a,c]}
      \oplus
      \F_{[a,b)}
      \arrow[r, "\cong"]
      &
      R f_* \F_X
      ,
    \end{tikzcd}
  \end{equation}
  when the isomorphisms \eqref{eq:exampleSheavesDecomp}
  are chosen appropriately.
  Now the functor
  ${h_{\R} \colon D^+(\R) \rightarrow \VectF^{\M^{\circ}}}$
  maps the commutative square \eqref{eq:hoodSquareSheaves}
  to a square isomorphic to \eqref{eq:hoodSquareRISC}.
  Moreover, \cref{fig:hoodDiagram} shows that the intersection
  of the supports of
  $B_{\left(\pi-\bar{b}, \bar{a}\right)}$
  and
  $B_{\left(\pi-\bar{b}, \bar{c}-2\pi\right)}$
  is disjoint from the regions of $\M$
  covered by the south faces of the Mayer--Vietoris pyramids
  \cite{Carlsson:2009:ZPH:1542362.1542408}
  in the tessellation \cref{fig:tessellationReals}.
  This way we obtain a geometric explanation
  of the phenomenon described in \cite[Remark 4.9]{2019arXiv190709759B}.
\end{exm}

Now let ${f \colon X \rightarrow \R}$
be a continuous function with $X$ a locally contractible topological space.
Moreover,
let $C^{\bullet}$ be the presheaf of singular cochains
with coefficients in $\F$ on $X$
and let
$\epsilon \colon \F_X \rightarrow C^{\bullet}$
be the embedding of $\F_X$ as the subpresheaf
of $0$-cocycles of $C^{\bullet}$.
By \cite{2016arXiv160206674S} there is a complex $\mathcal{F}$
of flabby sheaves on $X$
together with a quasi-isomorphism of complexes of presheaves
${\tilde{\psi} \colon C^{\bullet} \rightarrow \mathcal{F}}$
such that ${\tilde{\psi} \circ \epsilon \colon \F_X \rightarrow \mathcal{F}}$
is a quasi-isomorphism of complexes of sheaves.
For ${u \in D}$ we consider the commutative diagram
\begin{equation}
  \label{eq:relCochainsLocalSections}
  \begin{tikzcd}
    0
    \arrow[d]
    &
    0
    \arrow[d]
    \\
    (C^{\bullet} \circ f^{-1} \circ \blacktriangle^{-1})
    (\op{int}(\downarrow T(u)), \M \setminus (\uparrow u))
    \arrow[r, dashed]
    \arrow[d]
    &
    \Gamma_{(\blacktriangle \circ f)^{-1}(\uparrow u)}
    ((\blacktriangle \circ f)^{-1}(\op{int}(\downarrow T(u))); \mathcal{F})
    \arrow[d]
    \\
    (C^{\bullet} \circ f^{-1} \circ \blacktriangle^{-1})
    (\op{int}(\downarrow T(u)))
    \arrow[r, "\tilde{\psi}_{(\blacktriangle \circ f)^{-1}(\op{int}(\downarrow T(u)))}"]
    \arrow[d]
    &
    (\mathcal{F} \circ f^{-1} \circ \blacktriangle^{-1})
    (\op{int}(\downarrow T(u)))
    \arrow[d]
    \\
    (C^{\bullet} \circ f^{-1} \circ \blacktriangle^{-1})
    (\M \setminus (\uparrow u))
    \arrow[r, "\tilde{\psi}_{(\blacktriangle \circ f)^{-1}(\M \setminus (\uparrow u))}"]
    \arrow[d]
    &
    (\mathcal{F} \circ f^{-1} \circ \blacktriangle^{-1})
    (\M \setminus (\uparrow u))
    \arrow[d]
    \\
    0
    &
    \,
    0
    .
  \end{tikzcd}
\end{equation}
By definition of the relative singular cochain complex
$(C^{\bullet} \circ f^{-1} \circ \blacktriangle^{-1})
(\op{int}(\downarrow T(u)), \M \setminus (\uparrow u))$
and the set of local sections
$\Gamma_{(\blacktriangle \circ f)^{-1}(\uparrow u)}
((\blacktriangle \circ f)^{-1}(\op{int}(\downarrow T(u))); \mathcal{F})$
both columns are exact.
In particular, the horizontal dashed arrow exists as indicated.
Moreover,
the lower two horizontal arrows
are quasi-isomorphisms of cochain complexes
as $\tilde{\psi} \colon C^{\bullet} \rightarrow \mathcal{F}$
is a quasi-isomorphism of complexes of presheaves.
Thus, the dashed arrow is a quasi-isomorphism of cochain complexes as well.
Taking the $\Z$-graded cohomology of the cochain complex
$(C^{\bullet} \circ f^{-1} \circ \blacktriangle^{-1})
(\op{int}(\downarrow T(u)), \M \setminus (\uparrow u))$
we obtain
\begin{equation}
  \label{eq:cohoRISC}
  (H^{\bullet} \circ f^{-1} \circ \blacktriangle^{-1})
  (\op{int}(\downarrow T(u)), \M \setminus (\uparrow u))
  =
  h^{\#}(f)(u)
  ,
\end{equation}
where $h(f) \colon \M^{\circ} \rightarrow \VectF$
is the relative interlevel set cohomology (RISC)
of $f \colon X \rightarrow \R$
as defined in \cite[Section 2]{2021arXiv210809298B}
and $h^{\#}(f) \colon \M^{\circ} \rightarrow \VectF^{\Z}$
is the transform of $h(f)$ under the $2$-adjunction
from \cite[Lemma A.6]{2021arXiv210809298B}.
(Strictly speaking, we should only use the term RISC,
when $f \colon X \rightarrow \R$ is $\F$-tame,
so we may still adapt this notion to more general settings.)
Moreover,
we have
\begin{equation*}
  \begin{split}
    \Gamma_{(\blacktriangle \circ f)^{-1}(\uparrow u)}
    ((\blacktriangle \circ f)^{-1}(\op{int}(\downarrow T(u))); \mathcal{F})
    & =
    \Gamma_{q \cap (\uparrow u)}
    (q \cap \op{int}(\downarrow T(u)); (\blacktriangle \circ f)_* \mathcal{F})
    \\
    & =
    \Gamma_{q \cap (\uparrow u)}
    (q \cap \op{int}(\downarrow T(u)); G)
    ,
  \end{split}
\end{equation*}
where $G := (\blacktriangle \circ f)_* \mathcal{F}$.
As both $\mathcal{F}$ and $G$ are complexes of flabby sheaves,
this implies that
\begin{equation}
  \label{eq:localSectionsPushforward}
  H^{\bullet}_{(\blacktriangle \circ f)^{-1}(\uparrow u)}
  ((\blacktriangle \circ f)^{-1}(\op{int}(\downarrow T(u))); \mathcal{F})
  \cong
  H^{\bullet}_{q \cap (\uparrow u)}
  (q \cap \op{int}(\downarrow T(u)); G)
  =
  F'(u)
  ,
\end{equation}
where
\begin{equation*}
  F' \colon D \rightarrow \left(\VectF^{\Z}\right)^{\circ},\,
  u \mapsto H^{\bullet}_{q \cap (\uparrow u)}(q \cap \op{int}(\downarrow u); G)
\end{equation*}
as in the previous \cref{sec:altConstr}.
Altogether,
\eqref{eq:cohoRISC},
the dashed arrow in \eqref{eq:relCochainsLocalSections},
and \eqref{eq:localSectionsPushforward} yield a natural isomorphism
\begin{equation*}
  \psi' \colon
  F'
  \xrightarrow{\cong}
  h^{\# \circ}(f) |_D
  .
\end{equation*}
Now in order
to apply \cite[Proposition A.14]{2021arXiv210809298B}
to extend $\psi'$
to a strictly stable natural isomorphism
\begin{equation*}
  \psi \colon
  F
  \rightarrow
  h^{\# \circ}(f)
  ,
\end{equation*}
where
${F \colon \M \rightarrow \left(\VectF^{\Z}\right)^{\circ}}$
is defined as in the previous \cref{sec:altConstr},
we have to show that the diagram
\begin{equation}
  \label{eq:psiCompatStrictlyStable}
  \begin{tikzcd}[column sep=15ex, row sep=6ex]
    F \circ \op{pr}_1
    \arrow[d, "\partial'"', Rightarrow]
    \arrow[r, "\psi' \circ \op{pr}_1", Rightarrow]
    &
    h_{0}^{\# \circ}(f) \circ \op{pr}_1
    \arrow[d, "{\partial(h_{0}^{\# \circ}(f), D)}", Rightarrow]
    \\
    F \circ \op{pr}_2
    \arrow[r, "\Sigma \circ \psi' \circ T^{-1} \circ \op{pr}_2"', Rightarrow]
    &
    h_{0}^{\# \circ}(f) \circ \op{pr}_2
  \end{tikzcd}
\end{equation}
of functors and natural transformations commutes,
where $\partial(h_{0}^{\# \circ}(f), D)$
is defined as in \cite[Definition A.9]{2021arXiv210809298B}.
To this end,
it suffices to check the commutativity of \eqref{eq:psiCompatStrictlyStable}
for all pairs $(w, \hat{u}) \in R_D$
with the same $x$- or the same $y$-coordinate
and both within the square $\left(-\frac{\pi}{2}, \frac{\pi}{2}\right)^2$.
We treat the case, where $w$ and $\hat{u}$ have the same $y$-coordinate,
the other case is similar.
In this case there are real numbers $a < b < c$
with
\begin{equation*}
  \hat{u} = (\arctan a, \arctan b)
  \quad \text{and} \quad
  w = (\arctan c, \arctan b)
  .
\end{equation*}
Then we have
\begin{align*}
  \left(h_{0}^{\#}(f) \circ \op{pr}_1\right)(w)
  & =
    (H^{\bullet} \circ f^{-1})(\R, \R \setminus [a, b])
  \\
  \text{and} \quad
  \left(h_{0}^{\#}(f) \circ \op{pr}_2\right)(\hat{u})
  & =
    (H^{\bullet-1} \circ f^{-1})((a, b))
    .
\end{align*}
Moreover,
\begin{equation}
  \label{eq:mvsDiffCompatStrictlyStable}
  \partial(h_{0}^{\# \circ}(f), D)^{\circ}_{(w, \hat{u})} \colon
  (H^{\bullet-1} \circ f^{-1})((a, b)) \rightarrow
  (H^{\bullet} \circ f^{-1})(\R, \R \setminus [a, b])
\end{equation}
is the differential
of the Mayer--Vietoris sequence associated
to the square-shaped sublattice
\begin{equation*}
  \begin{tikzcd}
    f^{-1} (\R, \R \setminus [b, c])
    &
    f^{-1} ((-\infty, b), (-\infty, b))
    \arrow[l, hook']
    \\
    f^{-1} ((a, \infty), (c, \infty))
    \arrow[u, hook']
    &
    f^{-1} ((a, b), \emptyset)
    \arrow[l, hook']
    \arrow[u, hook']
  \end{tikzcd}
\end{equation*}
of pairs of open subspaces of $X$.
Now considering the right hand side of
\eqref{eq:psiCompatStrictlyStable}
we have
\begin{align*}
  \left(F \circ \op{pr}_1\right)(w)
  & \cong
    H^{\bullet}_{f^{-1}([b,c])} (X; \mathcal{F})
  \\
  \text{and} \quad
  \left(F \circ \op{pr}_2\right)(\hat{u})
  & \cong
    H^{\bullet-1} (f^{-1}((a, b)); \mathcal{F})
    .
\end{align*}
Under these isomorphisms the map
${\delta'_{(w, \hat{u})} \colon
  \left(F' \circ \op{pr}_2\right)(\hat{u}) \rightarrow
  \left(F' \circ \op{pr}_1\right)(w)}$
corresponds to the differential
associated to the short exact sequence
\begin{equation}
  \label{eq:sesSheavesBeforeExcision}
  0 \rightarrow
  \Gamma_{f^{-1}([b, c])} (X; \mathcal{F}) \rightarrow
  \Gamma_{f^{-1}((-\infty, c])} (f^{-1}((a, \infty)); \mathcal{F}) \rightarrow
  \Gamma (f^{-1}((a, b)); \mathcal{F}) \rightarrow
  0
\end{equation}
of cochain complexes in $\VectF$.
Now in order to show the commutativity of \eqref{eq:psiCompatStrictlyStable}
at ${(w, \hat{u}) \in R_D}$
we show that the Mayer--Vietoris differential
\eqref{eq:mvsDiffCompatStrictlyStable}
can be realized as the differential associated to a short exact sequence
of cochain complexes as well.
To this end,
we consider the sublattice
\begin{equation}
  \label{eq:sublatticePairsIR}
  \begin{tikzcd}[column sep=2.8ex]
    (\R, \R \setminus [b, c])
    &
    (\R \setminus [b, c], \R \setminus [b, c])
    \arrow[l, hook']
    &
    ((-\infty, b), (-\infty, b))
    \arrow[l, hook']
    \\
    ((a, \infty), (a, b) \cup (c, \infty))
    \arrow[u, hook']
    &
    ((a, b) \cup (c, \infty), (a, b) \cup (c, \infty))
    \arrow[l, hook']
    \arrow[u, hook']
    &
    ((a, b), (a, b))
    \arrow[l, hook']
    \arrow[u, hook']
    \\
    ((a, \infty), (c, \infty))
    \arrow[u, hook']
    &
    ((a, b) \cup (c, \infty), (c, \infty))
    \arrow[l, hook']
    \arrow[u, hook']
    &
    ((a, b), \emptyset)
    .
    \arrow[l, hook']
    \arrow[u, hook']
  \end{tikzcd}
\end{equation}
Now all inclusions of \eqref{eq:sublatticePairsIR}
other than those of the lower left square
induce isomorphisms in singular cohomology by excision.
Thus,
the Mayer--Vietoris sequence associated to the outer square
of \eqref{eq:sublatticePairsIR}
is isomorphic
to the Mayer--Vietoris sequence associated to the lower left square
of \eqref{eq:sublatticePairsIR}.
Moreover,
the Mayer--Vietoris sequence associated to the lower left square
of \eqref{eq:sublatticePairsIR}
is the same as the long exact sequence associated to the triple
${f^{-1} ((a, \infty), (a, b) \cup (c, \infty), (c, \infty))}$,
which is the long exact sequence associated to the short exact sequence
\begin{equation}
  \label{eq:sesAfterExcision}
  \begin{tikzcd}
    0
    \arrow[d]
    \\
    (C^{\bullet} \circ f^{-1}) ((a, \infty), (a, b) \cup (c, \infty))
    \arrow[d]
    \\
    (C^{\bullet} \circ f^{-1}) ((a, \infty), (c, \infty))
    \arrow[d]
    \\
    (C^{\bullet} \circ f^{-1}) ((a, b) \cup (c, \infty), (c, \infty))
    \arrow[d]
    \\
    0    
  \end{tikzcd}
\end{equation}
of cochain complexes in $\VectF$.
Furthermore,
the short exact sequence \eqref{eq:sesSheavesBeforeExcision}
is isomorphic to the short exact sequence
\begin{equation}
  \label{eq:sesSheavesAfterExcision}
  \begin{tikzcd}
    0
    \arrow[d]
    \\
    \Gamma_{f^{-1}([b, c])}(f^{-1}((a, \infty)); \mathcal{F})
    \arrow[d]
    \\
    \Gamma_{f^{-1}((a, c])}(f^{-1}((a, \infty)); \mathcal{F})
    \arrow[d]
    \\
    \Gamma_{f^{-1}((a, c])}(f^{-1}((a, b) \cup (c, \infty)); \mathcal{F})
    \arrow[d]
    \\
    0    
  \end{tikzcd}
\end{equation}
by the sheaf condition.
Now the presheaf homomorphism
$\tilde{\psi} \colon C^{\bullet} \rightarrow \mathcal{F}$
induces a homomorphism
\begin{equation}
  \label{eq:sesPresheafHom}
  \begin{tikzcd}
    0
    \arrow[d]
    &
    0
    \arrow[d]
    \\
    (C^{\bullet} \circ f^{-1}) ((a, \infty), (a, b) \cup (c, \infty))
    \arrow[d]
    \arrow[r]
    &
    \Gamma_{f^{-1}([b, c])}(f^{-1}((a, \infty)); \mathcal{F})
    \arrow[d]
    \\
    (C^{\bullet} \circ f^{-1}) ((a, \infty), (c, \infty))
    \arrow[d]
    \arrow[r]
    &
    \Gamma_{f^{-1}((a, c])}(f^{-1}((a, \infty)); \mathcal{F})
    \arrow[d]
    \\
    (C^{\bullet} \circ f^{-1}) ((a, b) \cup (c, \infty), (c, \infty))
    \arrow[d]
    \arrow[r]
    &
    \Gamma_{f^{-1}((a, c])}(f^{-1}((a, b) \cup (c, \infty)); \mathcal{F})
    \arrow[d]
    \\
    0
    &
    0
  \end{tikzcd}
\end{equation}
of short exact sequences.
Moreover,
the horizontal arrow at the top of \eqref{eq:sesPresheafHom}
induces the homomorphism
${\psi'_w \colon F(w) \rightarrow h_{0}^{\# \circ}(f)(w)}$
under the aforementioned isomorphisms.
Similarly the horizontal arrow at the bottom of \eqref{eq:sesPresheafHom}
induces the homomorphism
${(\Sigma \circ \psi' \circ T^{-1})_{\hat{u}} \colon
  F(\hat{u}) \rightarrow h_{0}^{\# \circ}(f)(\hat{u})}$.
Thus, the diagram
\begin{equation}
  \label{eq:psiCompatStrictlyStableConcrete}
  \begin{tikzcd}[column sep=15ex, row sep=6ex]
    F(w)
    \arrow[d, "\partial'_{(w, \hat{u})}"']
    \arrow[r, "\psi'_w"]
    &
    h_{0}^{\# \circ}(f)(w)
    \arrow[d, "{\partial(h_{0}^{\# \circ}(f), D)_{(w, \hat{u})}}"]
    \\
    F(\hat{u})
    \arrow[r, "(\Sigma \circ \psi' \circ T^{-1})_{\hat{u}}"']
    &
    h_{0}^{\# \circ}(f)(\hat{u})
  \end{tikzcd}
\end{equation}
commutes.
By a similar argument \eqref{eq:psiCompatStrictlyStableConcrete}
commutes for any pair $(w, \hat{u}) \in R_D$
with identical $x$-coordinates and both points within the square
$\left(-\frac{\pi}{2}, \frac{\pi}{2}\right)^2$.
We just need to keep track of an additional sign,
as in this case the long exact sequence of the associated triple
\enquote{passes through the second direct summand
  of the Mayer--Vietoris sequence}.
Now for an arbitrary pair $(w, \hat{u}) \in R_D$
we can always find a pair $(w', \hat{u}') \in R_D$
with the same $x$- or the same $y$-coordinate
and both within the square $\left(-\frac{\pi}{2}, \frac{\pi}{2}\right)^2$
and $w \preceq w' \preceq \hat{u}' \preceq \hat{u}$,
hence the diagram \eqref{eq:psiCompatStrictlyStable} commutes.
As a result,
${\psi' \colon F' \xrightarrow{\cong} h^{\# \circ}(f) |_D}$
extends to a unique strictly stable natural transformation
${\psi \colon F \rightarrow h^{\# \circ}(f)}$
by \cite[Proposition A.14]{2021arXiv210809298B}.
Moreover,
as ${\psi' \colon F' \xrightarrow{\cong} h^{\# \circ}(f) |_D}$
is a natural isomorphism,
${\psi \colon F \rightarrow h^{\# \circ}(f)}$
is a natural isomorphism as well.
In conjunction with the previous \cref{sec:altConstr}
we obtain the span
\begin{equation*}
  h^{\# \circ}(f)
  \xLongleftarrow{\psi}
  F
  \xLongrightarrow{\varphi}
  h_{\gamma}^{\# \circ}(G)
\end{equation*}
of strictly stable functors and
and strictly stable natural isomorphisms.
Applying the oppositization $2$-functor
to this span and using the $2$-adjunction from
\cite[Lemma A.6]{2021arXiv210809298B}
we obtain a natural isomorphism
\begin{equation}
  \label{eq:RISCtoSellaPushforward}
  h(f) \cong h_{\gamma}(G)
  .
\end{equation}
Moreover,
as ${\tilde{\psi} \circ \epsilon \colon \F_X \rightarrow \mathcal{F}}$
is a quasi-isomorphism of complexes of sheaves,
we have a quasi-isomorphism
\begin{equation*}
  G
  =
  (\blacktriangle \circ f)_* \mathcal{F}
  \simeq
  R (\blacktriangle \circ f)_* \F_X
  \simeq
  R \blacktriangle_* R f_* \F_X
\end{equation*}
and hence
\begin{equation}
  \label{eq:SellaDerivedLevelSets}
  h_{\gamma} (G) \cong h_{\gamma} (R \blacktriangle_* R f_* \F_X)
  \cong h_{\R} (R f_* \F_X)
  .
\end{equation}
Combining \eqref{eq:RISCtoSellaPushforward}
and \eqref{eq:SellaDerivedLevelSets}
we obtain the natural isomorphism
\[\zeta_f \colon
  h (f) \xrightarrow{\cong} h_{\R} (R f_* \F_X)
  ,
\]
which is natural in $f \colon X \rightarrow \R$.
In summary we obtain the following.

\begin{prp}
  \label{prp:h0Iso}
  There is a natural isomorphism $\zeta$
  as in the diagram
  \begin{equation*}
    \begin{tikzcd}[row sep=12ex, column sep=7ex]
      (\mathrm{lcContr} / \R)^{\circ}
      \arrow[d , "R (-)_* \F_{(-)}"']
      \arrow[dr, "h"{name=h0}]
      \\
      D^+ (\R)
      \arrow[r, "h_{\R}"']
      \arrow[to=h0, Leftarrow, "\zeta", shorten >=1.5ex, shorten <=1.5ex]
      &
      \VectF^{\M^{\circ}}
      .
    \end{tikzcd}
  \end{equation*}
\end{prp}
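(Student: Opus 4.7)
The plan is to observe that the construction preceding the statement has already produced, for each continuous $f \colon X \to \R$ with $X$ locally contractible, a specific natural isomorphism $\zeta_f \colon h(f) \xrightarrow{\cong} h_{\R}(Rf_*\F_X)$, namely the composition of \eqref{eq:RISCtoSellaPushforward} and \eqref{eq:SellaDerivedLevelSets}. All that remains is to verify that the family $\{\zeta_f\}_f$ is natural in $f$, i.e.\ that for every morphism $\varphi \colon f \to g$ in $\mathrm{lcContr}/\R$ the obvious square involving $\zeta_f$, $\zeta_g$, $h(\varphi)$, and $h_{\R}(R\varphi_*\F_\varphi)$ commutes.

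The first step is to pin down a choice of flabby resolution that is itself functorial in the space. Either Sella's construction \cite{2016arXiv160206674S} or the classical Godement canonical resolution provides an assignment $X \mapsto (\mathcal{F}_X, \tilde\psi_X)$ together with a presheaf quasi-isomorphism $\tilde\psi_X \colon C^\bullet_X \to \mathcal{F}_X$ natural in $X$. With such a choice fixed, any morphism $\varphi \colon X \to Y$ over $\R$ induces a sheaf homomorphism $\mathcal{F}_Y \to \varphi_* \mathcal{F}_X$ compatible with $\tilde\psi$ and with the embedding $\epsilon$ of $\F_{(-)}$ into $C^\bullet_{(-)}$. Consequently, every ingredient of the diagram \eqref{eq:relCochainsLocalSections} becomes natural in $f$.

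The second step is to verify the naturality of the two factors of $\zeta_f$ separately. For the first factor, the natural isomorphism $\psi' \colon F' \cong h^{\# \circ}(f) |_D$ extracted from the dashed arrow in \eqref{eq:relCochainsLocalSections} is natural in $f$ by the previous step together with the contravariance of relative singular cochains under continuous maps. The uniqueness clause in the extension procedure \cite[Proposition A.14]{2021arXiv210809298B} then forces the strictly stable extension $\psi$, and hence also the isomorphism \eqref{eq:RISCtoSellaPushforward}, to be natural. For the second factor, the isomorphism \eqref{eq:SellaDerivedLevelSets} is obtained by applying the functor $h_{\gamma}$ to the natural quasi-isomorphism $(\blacktriangle \circ f)_* \mathcal{F}_f \simeq R(\blacktriangle \circ f)_* \F_X \simeq R \blacktriangle_* R f_* \F_X$ in $D^+(q_{\gamma})$; functoriality of $h_{\gamma}$, of the derived pushforward, and of the comparison from a flabby resolution to a derived pushforward then yields naturality in $f$.

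The main obstacle is precisely the functoriality of the flabby resolution: without a canonical choice, the diagram \eqref{eq:relCochainsLocalSections} is natural only up to a chosen comparison quasi-isomorphism, and additional bookkeeping would be required. Using the Godement resolution bypasses this issue entirely. Alternatively, one can carry out the entire argument at the level of the derived category $D^+(\R)$, where any two compatible flabby resolutions of $\F_X$ yield the same object up to canonical isomorphism; naturality of $\zeta$ then reduces to the standard naturality of the derived pushforward $R(-)_*\F_{(-)}$ and of the Mayer--Vietoris boundary homomorphism, both of which are well known.
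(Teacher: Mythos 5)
Your proposal matches the paper's approach: the statement is proved by the long construction that precedes it, which exhibits $\zeta_f$ as the composition of $\psi$ (from the comparison of relative singular cochains with local sections of a flabby resolution) and $\varphi$ (from the alternative description of $h_\gamma$), and the proposition records the result. You correctly identify the one point the paper treats somewhat tersely, namely that the flabby resolution $\mathcal{F}$ furnished by Sella's result must be chosen functorially in the space so that the dashed comparison arrow in \eqref{eq:relCochainsLocalSections} is natural in $f$, and you propose sensible fixes (a canonical Godement-type resolution, or working entirely in $D^+(\R)$). One small caution on the Godement suggestion: the Godement resolution by itself does not come with a presheaf quasi-isomorphism from singular cochains $C^\bullet$; that comparison map is exactly what \cite{2016arXiv160206674S} supplies, so you would still need a functorial version of Sella's comparison or an independent argument that the two presheaf resolutions agree up to a natural quasi-isomorphism. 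Apart from that caveat, the argument is sound and faithful to the paper's route.
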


Now in \cite[Definition 2.2]{2021arXiv210809298B}
we have also defined the following notion.

\begin{dfn}
  \label{dfn:FtameFunctions}
  We say that a function $f \colon X \rightarrow \R$
  is \emph{$\F$-tame} if the singular cohomology
  $H^n (f^{-1}(I); \F)$
  is finite-dimensional
  for all open intervals $I \subseteq \R$ and any integer $n \in \Z$.
  Moreover,
  we denote the full subcategory of all $\F$-tame functions
  on locally contractible spaces by
  $(\mathrm{lcContr} / \R)_t$.
\end{dfn}

Analogously we may define the following notion for objects of $D^+(\R)$.

\begin{dfn}
  \label{dfn:tameComplexOfSheavesOnIR}
  We say that an object $F$ of $D^+(\R)$ is \emph{tame}
  if $H^n(I; F)$ is finite-dimensional for any open interval $I \subseteq \R$
  and any integer $n \in \Z$.
  Moreover,
  we denote the full subcategory of tame objects in $D^+(\R)$ by $D^+_t (\R)$.
\end{dfn}

By \cref{lem:interiorCharTameness} and \cref{prp:tameJ} the functor
${h_{\R} \colon D^+(\R) \rightarrow \VectF^{\M^{\circ}}}$
restricts to a functor
${h_{\R,t} \colon D^+_t (\R) \rightarrow \mathcal{J}}$.
In conjunction with \cref{prp:h0Iso} we obtain the diagram
\begin{equation*}
  \begin{tikzcd}[row sep=12ex, column sep=7ex]
    (\mathrm{lcContr} / \R)_t^{\circ}
    \arrow[d , "R (-)_* \F_{(-)}"']
    \arrow[dr, "h"{name=h0}]
    \\
    D^+_t (\R)
    \arrow[r, "h_{\R,t}"']
    \arrow[to=h0, Leftarrow, "\zeta", shorten >=1.5ex, shorten <=1.5ex]
    &
    \mathcal{J}
    .
  \end{tikzcd}
\end{equation*}

\section{Projective Covers of $\mathcal{J}$-Presentable Functors}
\label{sec:projCover}

Let $F \colon \M^{\circ} \rightarrow \vectF$
be a pfd sequentially continuous functor.

\begin{lem}
  \label{lem:maxSub}
  Suppose $G \hookrightarrow F$
  is a proper subfunctor of $F \colon \M^{\circ} \rightarrow \vectF$.
  Then, there is a point $w \in \M$ and a commutative triangle
  of the form
  \begin{equation*}
    \begin{tikzcd}
      G
      \arrow[ r, hook]
      \arrow[dr, "0"']
      &
      F
      \arrow[d , two heads]
      \\
      &
      S_w
      .
    \end{tikzcd}
  \end{equation*}
\end{lem}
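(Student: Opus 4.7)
The plan is to reduce the problem to finding a point $w \in \M$ such that $Q(w) \neq Z(w)$, where $Q := F/G$ is the (non-zero) cokernel presheaf and $Z(w) := \sum_{u \succ w} \mathrm{Im}(Q(u \succeq w) \colon Q(u) \to Q(w))$. Given such a $w$, any non-zero linear functional $\eta_w \colon Q(w) \to \F$ vanishing on $Z(w)$ extends by zero at all other points to a well-defined natural epimorphism $Q \twoheadrightarrow S_w$ (naturality is automatic because $S_w$ vanishes off $\{w\}$ and $\eta_w$ annihilates $Z(w)$); composing with $F \twoheadrightarrow Q$ yields the required surjection $F \twoheadrightarrow S_w$ whose composition with $G \hookrightarrow F$ is zero.

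To produce such a $w$, I argue by contradiction, assuming $Q(v) = Z(v)$ for every $v$ in the support of $Q$. Fix a starting pair $(v_0, x_0)$ with $x_0 \in Q(v_0) \setminus \{0\}$. By transfinite induction, I build a strictly $\M$-increasing well-ordered chain $(v_\alpha, x_\alpha)_\alpha$ with $x_\alpha \in Q(v_\alpha) \setminus \{0\}$ satisfying the invariant that the restriction $Q(v_\alpha \succeq v_0)(x_\alpha) \in Q(v_0)$ is non-zero. At a successor stage $\alpha + 1$, the assumption $x_\alpha \in Z(v_\alpha)$ lets me write $x_\alpha = \sum_i Q(u_i \succeq v_\alpha)(z_i)$ with finitely many $u_i \succ v_\alpha$ and $z_i \in Q(u_i)$; applying $Q(v_\alpha \succeq v_0)$ gives a non-zero sum by the invariant, so some summand $Q(u_i \succeq v_0)(z_i)$ is non-zero, and I set $v_{\alpha+1} := u_i$ and $x_{\alpha+1} := z_i$.

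At a limit stage $\lambda$, set $v_\lambda := \sup_{\alpha < \lambda} v_\alpha \in \M$; this supremum lies in $\M$ because $F$ (and hence $Q$) has bounded-above support in the context of $\mathcal{J}$-presentable functors where this lemma is applied. Chains in $\M \subseteq \R^2$ have countable cofinality, so I may restrict attention to a countable cofinal subchain. The descending family $\{\mathrm{Im}(Q(v_\alpha) \to Q(v_0))\}_\alpha$ of subspaces of the finite-dimensional $Q(v_0)$ stabilises to some $W$ which, by the invariant, contains the non-zero elements $Q(v_\alpha \succeq v_0)(x_\alpha)$ for large $\alpha$; hence $W \neq 0$. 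The Mittag-Leffler condition holds for the pfd inverse systems $\{Q(v_\alpha)\}$ and $\{G(v_\alpha)\}$, so $\varprojlim$ applied to $0 \to G(v_\alpha) \to F(v_\alpha) \to Q(v_\alpha) \to 0$ remains exact. A non-zero element of $W$ lifts to a compatible system in $\varprojlim Q(v_\alpha)$, which in turn lifts to $\varprojlim F(v_\alpha) \cong F(v_\lambda)$ via the sequential continuity of $F$; projecting back to $Q$ yields $x_\lambda \in Q(v_\lambda)$ with non-zero $v_0$-shadow, completing the induction step.

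The construction never halts, because at every $v_\alpha$ the assumption $Q(v_\alpha) = Z(v_\alpha)$ supplies a successor. On the other hand, a strictly increasing well-ordered chain in $\M \subseteq \R^2$ is necessarily countable, since its coordinate projections to $\R$ are monotone well-ordered sequences in a separable space. This contradiction delivers the desired $w$. The hardest part is the limit step, which has to coordinate two applications of Mittag-Leffler (on $Q$ and on $G$) with the sequential continuity of $F$, while preserving the invariant that the $v_0$-shadow of $x_\alpha$ stays non-zero.
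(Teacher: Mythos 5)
Your overall strategy --- pass to the quotient $Q=F/G$, look for a point $w$ where the ``radical'' $Z(w)$ is a proper subspace of $Q(w)$, and rule out the alternative by a transfinite ascending-chain argument --- is genuinely different from the paper's proof. The paper argues dually and locally: it picks $u$ with $G(u)\subsetneq F(u)$ and a functional $\alpha\in F(u)^*$ killing $G(u)$, then pushes $\alpha$ forward along the dual covariant functor $F^*$, first along the maximal \emph{horizontal} segment of $\M$ through $u$ (whose increasing endpoint lies on $l_0$, where $F$ vanishes) and then along the maximal \emph{vertical} segment through the resulting point (ending on $l_1$). Sequential cocontinuity of $F^*$ guarantees the functional is still non-zero at the last point $w$ where it survives, all further pullbacks vanish, and it therefore defines the desired epimorphism $F\twoheadrightarrow S_w$ annihilating $G$. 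Because those two segments are compact, no supremum of a potentially unbounded chain ever has to be taken.

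That is exactly where your proof has a genuine gap: at a limit ordinal $\lambda$ you need $v_\lambda=\sup_{\alpha<\lambda}v_\alpha$ to exist in $\M$, and you justify this by invoking bounded-above support, which is not a hypothesis of the lemma (the standing assumption in this section is only that $F$ is pfd and sequentially continuous; the paper's proof additionally uses vanishing on $\partial\M$, but nothing like boundedness of the support). Without it, a strictly increasing chain can escape to infinity along the strip, the recursion halts at some countable stage, and no contradiction is reached. In the paper's applications the relevant functors do have bounded-above support, and increasing chains inside a proper downset of $\M$ are in fact bounded, so your argument is repairable in context --- but as written it proves a weaker statement than the one asserted. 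Two smaller points. First, the countability of a strictly increasing well-ordered chain in $\M$ does not follow merely from the coordinate projections being monotone (a constant transfinite sequence is monotone of any length); you need that, say, $\alpha\mapsto x_2(v_\alpha)-x_1(v_\alpha)$ is \emph{strictly} increasing and hence embeds the index ordinal into $\R$. Second, the elements $x_\alpha$ are not compatible under the restriction maps, so it is essential --- as you correctly do --- to lift a non-zero element of the stable image $W$ rather than the system $(x_\alpha)$ itself; that part of the limit step, combining Mittag--Leffler with sequential continuity, is sound.
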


\begin{cor}
  \label{cor:maxSub}
  Any proper subfunctor of
  $F \colon \M^{\circ} \rightarrow \vectF$
  is contained point-wise
  in a maximal subfunctor of $F$.
\end{cor}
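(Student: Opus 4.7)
The plan is to prove this corollary as an essentially immediate consequence of \cref{lem:maxSub}, with no appeal to Zorn's lemma required. Given a proper subfunctor $G \hookrightarrow F$, the lemma furnishes a point $w \in \M$ together with a surjection $\pi \colon F \twoheadrightarrow S_w$ whose composition with the inclusion $G \hookrightarrow F$ is zero. Equivalently, $G$ is contained point-wise in the subfunctor $M := \ker \pi$. Since $\pi$ is a nonzero surjection onto $S_w \neq 0$, the subfunctor $M$ is itself proper.

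Next I would argue that $M$ is a maximal proper subfunctor of $F$. The quotient $F / M$ is naturally isomorphic to $S_w$, so it suffices to observe that $S_w$ is simple as an object of $\VectF^{\M^{\circ}}$: any subfunctor $H \subseteq S_w$ satisfies $H(v) = 0$ for $v \neq w$ and $H(w) \subseteq \F$, so $H$ is either $0$ or all of $S_w$. Consequently, for any subfunctor $M'$ with $M \subseteq M' \subseteq F$, the image $M' / M \subseteq F / M \cong S_w$ is either $0$, forcing $M' = M$, or all of $S_w$, forcing $M' = F$. This establishes that $M$ is a maximal proper subfunctor of $F$ containing $G$, completing the proof.

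The main point is that there is no real obstacle at this stage: the combinatorial and analytic heart of the argument lies in \cref{lem:maxSub} itself, which has already exploited the pointwise finite-dimensionality and sequential continuity of $F$ to produce the surjection onto $S_w$. The only additional observation here is the simplicity of $S_w$, which is immediate from the definition \eqref{eq:dfnSimpleFunc}.
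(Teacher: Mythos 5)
Your argument is correct and is exactly the intended deduction: the paper gives no separate proof of \cref{cor:maxSub}, treating it as immediate from \cref{lem:maxSub} in just the way you describe — the kernel of the surjection $F \twoheadrightarrow S_w$ contains $G$ point-wise and is maximal because $S_w$ is simple. Nothing to add.
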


\begin{proof}[Proof of \cref{lem:maxSub}]
  Since the point-wise inclusion
  $G \hookrightarrow F$
  is proper,
  there is a point $u := (x, y) \in \M$
  such that $G(u)$ is a proper subspace of $F(u)$.
  Thus, there is a linear form
  $\alpha \in (F(u))^*$ with
  $\alpha |_{G(u)} = 0$.
  As
  $F \colon \M^{\circ} \rightarrow \vectF$
  is pfd and sequentially continuous,
  the dual covariant functor
  $F^* \colon \M \rightarrow \vectF$
  is sequentially cocontinuous,
  i.e.
  for any increasing sequence
  $(v_k)_{k=1}^{\infty}$ in $\M$
  converging to $v \in \M$
  the natural map
  \begin{equation}
    \varprojlim_{k} F^* (v_k) \rightarrow F^* (v)
  \end{equation}
  is an isomorphism.
  We consider the restriction of
  $F^* \colon \M^{\circ} \rightarrow \vectF$
  to the maximal horizontal line segment $h \subset \M$ through $u$.
  Since $F^*$ vanishes on $l_0$,
  the linear form $\alpha$ has to die at some point $v \in h$.
  Let $\beta := F^* (u \preceq v) (\alpha)$ be the pullback
  of $\alpha$ to $F(v)$.
  As $F^* \colon \M^{\circ} \rightarrow \vectF$
  is sequentially cocontinuous
  we have $\beta \neq 0$
  (the linear form $\alpha$ dies hard so to speak).
  Now let $g \subset \M$ be the maximal vertical line segment
  through $v$.
  As $F^*$ vanishes on $l_1$,
  the linear form $\beta$ has to die at some point $w \in g$.
  Let $\gamma := F^* (v \preceq w) (\beta)$ be the pullback
  of $\beta$ to $G(w)$.
  Again, $\gamma \neq 0$
  as $F^*$ is sequentially cocontinuous.
  Now $\gamma \in F^* (v)$
  determines a point-wise surjective family of maps
  $\rho \colon F \rightarrow S_w$.
  As all internal pullbacks of $\gamma$ are trivial,
  the family of maps $\rho \colon F \rightarrow S_w$
  is a natural transformation.
\end{proof}

Now let $p \colon I \rightarrow \op{int} \M$ be a map of sets with
\begin{equation*}
  \# p^{-1} (u) = \dim_{\F} \mathrm{Nat}(F, S_u)
\end{equation*}
for any $u \in \op{int} \M$.
For $i \in I$ we now also write
$S_i := S_{p(i)} \colon \M^{\circ} \rightarrow \VectF$.
Then $\left(\mathrm{Nat}(F, S_i)\right)_{i \in I}$ is an indexed family
of non-empty sets.
Let $(\alpha_i)_{i \in I}$ be a choice function
of this indexed family of sets such that
$(\alpha_i)_{i \in p^{-1}(u)}$
is a basis of $\mathrm{Nat}(F, S_u)$
for any $u \in \op{int} \M$.
As $F \colon \M^{\circ} \rightarrow \vectF$ is pfd
we have
$\bigoplus_{i \in I} S_i = \prod_{i \in I} S_i$.
Let
\begin{equation*}
  \theta \colon F \rightarrow \bigoplus_{i \in I} S_i
\end{equation*}
be the natural transformation induced by the family
$(\alpha_i)_{i \in I}$,
then $\theta \colon F \rightarrow \bigoplus_{i \in I} S_i$
is an epimorphism since
$(\alpha_i)_{i \in p^{-1}(u)}$ is a basis of $\mathrm{Nat}(F, S_u)$
for any $u \in \op{int} \M$.
Moreover, let
\begin{equation*}
  \op{rad} F \colon \M^{\circ} \rightarrow \vectF
\end{equation*}
be the point-wise intersection
of all maximal subfunctors of $F \colon \M^{\circ} \rightarrow \vectF$.

\begin{lem}
  \label{lem:radical}
  We have the point-wise inclusion
  $\ker \theta \hookrightarrow \op{rad} F$
  of subfunctors.
\end{lem}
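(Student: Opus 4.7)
My plan is to verify that $\ker\theta(u) \subseteq M(u)$ for every maximal subfunctor $M \subseteq F$ and every point $u \in \M$; since $\op{rad} F$ is by definition the pointwise intersection of all such $M$, this suffices.

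First I would classify the simple quotients of $F$. Given a maximal subfunctor $M \subseteq F$, the quotient $F/M$ is again pfd and sequentially continuous, and vanishes on $\partial \M$ (inheriting this from $F$, since the ambient context restricts attention to $\mathcal{J}$-presentable functors). Applying \cref{lem:maxSub} to the zero subfunctor of $F/M$ yields an epimorphism $F/M \twoheadrightarrow S_w$ for some $w \in \M$, which I would compose with the quotient map $F \twoheadrightarrow F/M$ to obtain $\rho \colon F \twoheadrightarrow S_w$. Since $\ker\rho$ is a proper subfunctor of $F$ containing $M$, maximality forces $\ker\rho = M$ and hence $F/M \cong S_w$; because $F/M$ vanishes on $\partial \M$, the point $w$ must lie in $\op{int}\M$.

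The second step is to exploit that $(\alpha_i)_{i \in p^{-1}(w)}$ is by construction a basis of $\op{Nat}(F, S_w)$, so I can expand $\rho = \sum_{i \in p^{-1}(w)} c_i \alpha_i$ for some coefficients $c_i \in \F$. Fix any $u \in \M$ and $x \in \ker\theta(u)$. For $u \neq w$ the inclusion is trivial, as $S_w(u) = 0$ forces $M(u) = \ker\rho_u = F(u)$. For $u = w$, the only components of $\theta_w$ that need not be automatically zero are the $(\alpha_i)_w$ with $i \in p^{-1}(w)$; hence $x \in \ker\theta(w)$ entails $(\alpha_i)_w(x) = 0$ for each such $i$, and by linearity $\rho_w(x) = \sum_i c_i (\alpha_i)_w(x) = 0$, i.e.\ $x \in \ker\rho_w = M(w)$.

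I expect the main obstacle to be the identification of $F/M$ with some $S_w$ with $w \in \op{int}\M$, as this is the single place where \cref{lem:maxSub} (together with the hypothesis that $F$ vanish on $\partial \M$) enters essentially; it amounts to showing that simple pfd sequentially continuous functors vanishing on $\partial \M$ are precisely the $S_w$ for $w \in \op{int}\M$. Once this classification is in hand the rest is routine linear algebra using the chosen basis of $\op{Nat}(F, S_w)$.
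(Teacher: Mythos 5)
Your overall strategy coincides with the paper's: reduce to showing $\ker\theta \subseteq \ker\alpha$ for the epimorphism $\alpha \colon F \twoheadrightarrow S_w$ attached to each maximal subfunctor $M$, and conclude by expanding $\alpha$ in the chosen basis $(\alpha_i)_{i \in p^{-1}(w)}$ of $\op{Nat}(F, S_w)$. The second half of your argument (the pointwise linear algebra at $u = w$ and the triviality at $u \neq w$) is correct and is exactly the content of the paper's commutative diagram involving $\eta \colon S_u \to S_u^{\# p^{-1}(u)}$.

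There is, however, a genuine misstep in how you produce the identification $F/M \cong S_w$. You propose to apply \cref{lem:maxSub} to the zero subfunctor of $F/M$, but \cref{lem:maxSub} is stated (and proved) for pfd \emph{sequentially continuous} functors, and $F/M$ need not be sequentially continuous -- indeed it cannot be in general, since the simple functor $S_w$ itself fails sequential continuity: for a strictly increasing sequence $u_k \nearrow w$ one has $\varprojlim_k S_w(u_k) = 0 \neq \F = S_w(w)$. So the hypotheses of the lemma are not available for $F/M$, and the step as written is circular in spirit (you would be invoking the lemma on an object whose structure is precisely what you are trying to determine). The fix is immediate and is what the paper does: apply \cref{lem:maxSub} directly to the proper subfunctor $M \hookrightarrow F$ of the given pfd sequentially continuous $F$. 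This yields an epimorphism $\alpha \colon F \twoheadrightarrow S_w$ with $\alpha|_M = 0$, hence $M \subseteq \ker\alpha \subsetneq F$, and maximality of $M$ forces $M = \ker\alpha$; that $w \in \op{int}\M$ then follows as you say from $F$ vanishing on $\partial\M$. With this repair the rest of your argument goes through unchanged. Relatedly, your closing remark that the key point is classifying ``simple pfd sequentially continuous functors'' is slightly off for the same reason: the simples $S_w$ are not sequentially continuous; what one classifies are the simple \emph{quotients} of a pfd sequentially continuous functor vanishing on $\partial\M$.
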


\begin{proof}
  By \cref{lem:maxSub} we know
  that any maximal subfunctor $G \hookrightarrow F$ of
  $F \colon \M^{\circ} \rightarrow \vectF$
  yields a short exact sequence
  \begin{equation}
    \label{eq:maxSimple}
    \begin{tikzcd}
      0
      \arrow[r]
      &
      G
      \arrow[r, hook]
      &
      F
      \arrow[r, "\alpha", two heads]
      &
      S_u
      \arrow[r]
      &
      0
    \end{tikzcd}
  \end{equation}
  for some $u \in \op{int} \M$.
  Since
  $(\alpha_i)_{i \in p^{-1}(u)}$
  is a basis of $\mathrm{Nat}(F, S_u)$
  there is a linear combination
  \begin{equation*}
    \alpha = \sum_{i \in p^{-1}(u)} \lambda_i \alpha_i
    .
  \end{equation*}
  Now let $\eta \colon S_u \rightarrow S_u^{\# p^{-1}(u)}$
  be the natural transformation induced by the family
  $\left(\lambda_i \op{id}_{S_u}\right)_{i \in p^{-1}(u)}$,
  then we have the commutative diagram
  \begin{equation*}
    \begin{tikzcd}[row sep=6ex]
      G
      \arrow[r, hook]
      &
      F
      \arrow[r, "\alpha", two heads]
      \arrow[d, "\theta"', two heads]
      &
      S_u
      \arrow[d, "\eta", tail]
      \\
      &
      \bigoplus_{i \in I} S_i
      &
      S_u^{\# p^{-1}(u)}
      \arrow[l, tail]
      .
    \end{tikzcd}
  \end{equation*}
  Since $\eta \colon S_u \rightarrow S_u^{\# p^{-1}(u)}$
  and the horizontal arrow at the bottom
  are both natural monomorphisms
  the kernel of
  $\theta \colon F \rightarrow \bigoplus_{i \in I} S_i$
  is contained in the kernel of
  $\alpha \colon F \rightarrow S_u$,
  which is $G \colon \M^{\circ} \rightarrow \vectF$
  by the exactness of \eqref{eq:maxSimple}.
\end{proof}

\begin{cor}
  \label{cor:essential}
  The epimorphism
  $\theta \colon F \rightarrow \bigoplus_{i \in I} S_i$
  is essential.
\end{cor}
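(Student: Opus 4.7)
The plan is to show essentiality in the standard form used for projective covers: for any subfunctor $H \hookrightarrow F$ with the composition $H \hookrightarrow F \xrightarrow{\theta} \bigoplus_{i \in I} S_i$ still an epimorphism, we must have $H = F$. The proof combines the two preceding results (\cref{cor:maxSub} and \cref{lem:radical}) with a one-line sum argument.

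First I would establish the point-wise identity $H + \ker \theta = F$ whenever $\theta|_H$ is an epimorphism. Given $u \in \M$ and $f \in F(u)$, surjectivity at $u$ produces some $h \in H(u)$ with $\theta_u(h) = \theta_u(f)$, whence $f - h \in (\ker \theta)(u)$ and therefore $f \in H(u) + (\ker \theta)(u)$. This is immediate because sums and kernels of subfunctors of a pfd functor into $\vectF$ are computed point-wise.

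Next, suppose for contradiction that $H$ is a proper subfunctor of $F$. By \cref{cor:maxSub} there exists a maximal subfunctor $M \subsetneq F$ with $H \subseteq M$. By \cref{lem:radical} we have the inclusion $\ker \theta \subseteq \op{rad} F$, and since $\op{rad} F$ is defined as the point-wise intersection of all maximal subfunctors of $F$, we obtain $\ker \theta \subseteq M$ as well. Putting these inclusions together gives
\begin{equation*}
  F = H + \ker \theta \subseteq M + M = M,
\end{equation*}
which contradicts $M \subsetneq F$. Therefore $H = F$, and $\theta$ is essential.

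No substantial obstacle is expected: the work has already been done in \cref{lem:radical,cor:maxSub}, and the corollary is the standard abstract-nonsense observation that an epimorphism whose kernel lies in the radical is automatically essential. The only care required is to verify the sum argument in the functor category, which is immediate because the relevant subobject lattice is the point-wise subspace lattice in $\vectF^{\M^{\circ}}$.
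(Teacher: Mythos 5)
Your proof is correct and follows the paper's argument essentially verbatim: use \cref{cor:maxSub} to produce a maximal subfunctor containing the proper subfunctor, then use \cref{lem:radical} to place $\ker\theta$ in the radical and hence in that maximal subfunctor, yielding the contradiction. The only cosmetic difference is that you spell out the translation from ``$\theta|_H$ is an epimorphism'' to ``$H + \ker\theta = F$,'' whereas the paper takes the latter as its starting hypothesis.
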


The proof of this corollary is almost identical
to the proof of a similar statement in
\cite[Lemma 3.3.(2)]{MR3431480}.

\begin{proof}
  Let $G \hookrightarrow F$ be a subfunctor of
  $F \colon \M^{\circ} \rightarrow \vectF$ with
  $F = G + \ker \theta$.
  If $G \neq F$,
  then there is a maximal subfunctor
  $G' \hookrightarrow F$ of $F$
  containing $G \colon \M^{\circ} \rightarrow \vectF$
  point-wise by \cref{cor:maxSub}.
  Thus we have
  \[F = G + \ker \theta \hookrightarrow G + \op{rad} F \hookrightarrow G'\]
  by \cref{lem:radical}.
  This is a contradiction and therefore $F = G$.
  It follows that ${\theta \colon F \rightarrow \bigoplus_{i \in I} S_i}$
  is essential.
\end{proof}

Recall that $\mathcal{C}$ is the category of functors
$\M^{\circ} \rightarrow \VectF$
vanishing on $\partial \M$.
By
\cite[Corollary 3.6]{2021arXiv210809298B}
any functor in $\mathcal{J}$
is projective in $\mathcal{C}$.
Thus,
we may view $\mathcal{J}$
as a full replete additive subcategory of projectives in $\mathcal{C}$.
In \cref{sec:presentable} we provide some auxiliary results
in this particular context
that we will need here.
Now suppose
$F \colon \M^{\circ} \rightarrow \VectF$
is some $\mathcal{J}$-presentable functor
in the sense of \cref{dfn:presentable}.
This implies in particular,
that $F \colon \M^{\circ} \rightarrow \VectF$
is pfd, sequentially continuous by the Mittag-Leffler condition,
has bounded above support,
and vanishes on $\partial \M$.
So above constructions apply to $F \colon \M^{\circ} \rightarrow \VectF$.
Now at this point,
we haven't shown \cref{thm:frobenius} yet.
So we may not use the Betti functions of \cref{dfn:betti} yet.
However,
as the $\Ext_{\mathcal{C}}^0$-functor
is naturally isomorphic to the $\mathrm{Nat}$-functor
and since we know that $F \colon \M^{\circ} \rightarrow \VectF$ is pfd,
we can use the $0$-th Betti function
$\beta^0 (F) \colon \op{int} \M \rightarrow \N_0$.
Moreover,
by our choice of $p \colon I \rightarrow \op{int} \M$ above we have
\begin{equation}
  \label{eq:bettiIndexCount}
  \beta^0 (F) (u) = \dim_{\F} \mathrm{Nat}(F, S_u) = \# p^{-1} (u) 
\end{equation}
for any $u \in \op{int} \M$.

\begin{lem}
  \label{lem:admissibleBetti}
  For any $u \in \op{int} \M$ we have
  \begin{equation*}
    \sum_{v \in (\uparrow u) \cap \op{int} (\downarrow T(u))}
    \beta^0 (F) (v) < \infty
    .
  \end{equation*}
\end{lem}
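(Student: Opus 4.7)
The plan is to bound each $\beta^0(F)(v)$, for $v$ in the rectangle ${R_u := (\uparrow u) \cap \op{int}(\downarrow T(u))}$, by the multiplicity $m_v$ of $B_v$ in the top of a $\mathcal{J}$-presentation of $F$, and then observe that the sum $\sum_{w \in R_u} m_w$ is itself finite because the presenting object is pointwise finite-dimensional (pfd).

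Concretely, since $F$ is $\mathcal{J}$-presentable I would fix a presentation ${H \to G \twoheadrightarrow F \to 0}$ with ${G \in \mathcal{J}}$, and invoke the decomposition theorem \mbox{\cite[Theorem 3.5]{2021arXiv210809298B}} to write ${G \cong \bigoplus_{w \in \op{int} \M} B_w^{(m_w)}}$. Evaluating at $w$ and using ${B_w(w) = \F}$ for ${w \in \op{int} \M}$ already gives ${m_w \leq \dim_{\F} G(w) < \infty}$, so each multiplicity is a finite cardinal.

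For the multiplicity bound, I would apply the left-exact functor $\mathrm{Nat}(-, S_v)$ to the epimorphism ${G \twoheadrightarrow F}$ to obtain an injection ${\mathrm{Nat}(F, S_v) \hookrightarrow \mathrm{Nat}(G, S_v)}$. By the Yoneda \cref{lem:yoneda}, ${\mathrm{Nat}(B_w, S_v) \cong S_v(w)}$, which equals $\F$ exactly when ${w = v}$ and vanishes otherwise; hence ${\mathrm{Nat}(G, S_v) \cong \F^{m_v}}$ as an $\F$-vector space, and so ${\beta^0(F)(v) = \dim_{\F} \mathrm{Nat}(F, S_v) \leq m_v}$.

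For the finiteness step, I would evaluate $G$ at the fixed point $u$. By \cref{dfn:contraBlock}, ${B_w(u) = \F}$ precisely when ${w \in R_u}$, so ${\dim_{\F} G(u) = \sum_{w \in R_u} m_w}$, which is finite because $G \in \mathcal{J}$ is pfd. Combining the two steps yields
\[
  \sum_{v \in R_u} \beta^0(F)(v) \;\leq\; \sum_{v \in R_u} m_v \;=\; \dim_{\F} G(u) \;<\; \infty,
\]
as required. The argument is quite short; the only point needing a little care is ensuring that the multiplicities $m_w$ are finite cardinals rather than transfinite ones, which is dispatched at the start by pointwise finite-dimensionality of $G$ evaluated at each $w$.
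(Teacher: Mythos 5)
Your proof is correct and follows essentially the same route as the paper's: reduce to the projective top $G$ of a presentation via left-exactness of $\mathrm{Nat}(-,S_v)$, identify $\beta^0(G)(v)$ with the multiplicity of $B_v$ via the decomposition theorem and the Yoneda lemma, and then recognize the sum over the rectangle as $\dim_{\F} G(u)$, which is finite since $G$ is pfd. The only (harmless) addition is your explicit remark that each multiplicity is a finite cardinal.
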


\begin{proof}
  Since ${F \colon \M^{\circ} \rightarrow \vectF}$ is $\mathcal{J}$-presentable,
  there is some epimorphism
  ${\psi \colon P \rightarrow F}$
  with $P$ a functor in $\mathcal{J}$.
  Moreover,
  as ${\mathrm{Nat}(-, S_u)}$ is a left-exact functor
  for any ${u \in \op{int} \M}$
  we have the point-wise inequality
  ${\beta^0(F) \leq \beta^0(P)}$.
  Thus, it suffices to show
  \begin{equation*}
    \sum_{v \in (\uparrow u) \cap \op{int} (\downarrow T(u))}
    \beta^0 (P) (v) < \infty
  \end{equation*}
  for any $u \in \op{int} \M$.
  By \cite[Theorem 3.5]{2021arXiv210809298B}
  the cohomological functor
  $P \colon \M^{\circ} \rightarrow \vectF$
  is naturally isomorphic
  to a direct sum of contravariant blocks
  $B_v$, $v \in \op{int} \M$.
  Moreover, we have
  \begin{equation*}
    \dim_{\F} \mathrm{Nat} (B_u, S_v) =
    \dim_{\F} \mathrm{Nat} S_v(u) =
    \begin{cases}
      1 & u = v
      \\
      0 & u \neq v
    \end{cases}
  \end{equation*}
  by the Yoneda \cref{lem:yoneda},
  hence $\beta^0 (P) (v)$ is the multiplicity of $B_v$ in $P$.
  Now let $u \in \op{int} \M$,
  then we have
  \begin{equation*}
    \dim_{\F} B_v (u) =
    \begin{cases}
      1 & v \in (\uparrow u) \cap \op{int} (\downarrow T(u))
      \\
      0 & \text{otherwise}
    \end{cases}
  \end{equation*}
  and thus,
  \begin{equation*}
    \sum_{v \in (\uparrow u) \cap \op{int} (\downarrow T(u))}
    \beta^0 (P) (v) = \dim_{\F} P(u) < \infty
  \end{equation*}
  since ${P \colon \M^{\circ} \rightarrow \vectF}$ is pfd.
\end{proof}

\begin{dfn}
  \label{dfn:admissibleBetti}
  We say that a function
  $b \colon \op{int} \M \rightarrow \N_0$
  is an \emph{admissible Betti function}
  if
  its support is bounded above and
  \begin{equation*}
    \sum_{v \in (\uparrow u) \cap \op{int} (\downarrow T(u))}
    b (v) < \infty
  \end{equation*}
  for all $u \in \op{int} \M$.
  We denote the commutative monoid of admissible Betti functions by
  $\mathbb{B}$.
\end{dfn}

In particular
$\beta^0 (F) \colon \op{int} \M \rightarrow \N_0$
is an admissible Betti function
by \cref{lem:admissibleBetti}.
We will see in \cref{cor:admissibleBetti} below
that all higher Betti functions are admissible as well.

\begin{lem}
  \label{lem:sumInJ}
  For an admissible Betti function
  ${b \colon \op{int} \M \rightarrow \N_0}$
  the direct sum
  \[
    \bigoplus_{v \in \op{int} \M} B_v^{b(v)} \colon
    \M^{\circ} \rightarrow \VectF
  \]
  is a functor in $\mathcal{J}$.
\end{lem}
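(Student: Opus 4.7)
The plan is to verify each of the four defining properties of $\mathcal{J}$ for the functor $G := \bigoplus_{v \in \op{int} \M} B_v^{b(v)}$: pointwise finite-dimensionality, bounded above support, the cohomological property, and sequential continuity.

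I would first dispose of the two easier properties. For pfd-ness, recall that $B_v(u) \cong \F$ precisely when $v$ lies in $V_u := (\uparrow u) \cap \op{int}(\downarrow T(u))$, so $\dim_\F G(u) = \sum_{v \in V_u} b(v)$, which is finite by the admissibility condition of \cref{dfn:admissibleBetti}. For bounded above support, admissibility also requires $\op{supp}(b)$ to be bounded above, and since each $B_v$ is supported within $\downarrow v$, the support of $G$ lies in the bounded-above set $\bigcup_{v \in \op{supp}(b)} \downarrow v$.

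For the cohomological property, each $B_v$ lies in $\mathcal{J}$ by the decomposition theorem of \cite[Theorem 3.5]{2021arXiv210809298B}, hence satisfies the long exact sequence condition of \cref{prp:cohomological}. Since arbitrary direct sums of exact sequences of vector spaces are exact and direct sums in $\VectF^{\M^\circ}$ are computed pointwise, the direct sum $G$ is cohomological as well.

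The main difficulty lies in sequential continuity, as inverse limits do not in general commute with infinite direct sums; this is precisely where admissibility must be used in an essential way. Let $(u_k)_{k=1}^{\infty}$ be an increasing sequence in $\M$ converging to $u$. An element of $\varprojlim_k G(u_k)$ is a compatible family $(y_k)_k$ with $y_k \in G(u_k)$, which decomposes coordinate-wise into families $(y_{k,v})_k \in \varprojlim_k B_v(u_k)^{b(v)}$ for each $v \in \op{int} \M$. By sequential continuity of the individual $B_v$ (valid since $B_v \in \mathcal{J}$), this inverse limit is isomorphic to $B_v(u)^{b(v)}$, so the component indexed by $v$ can be nonzero only when $v \in V_u$. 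Since $V_u \cap \op{supp}(b)$ is finite by admissibility, only finitely many coordinates contribute, so the family $(y_k)_k$ arises uniquely from an element of the direct sum $G(u) = \bigoplus_{v \in V_u} B_v(u)^{b(v)}$. Together with the straightforward injectivity of the canonical map $G(u) \to \varprojlim_k G(u_k)$ (which follows from the fact that each transition $B_v(u_k \preceq u)$ is the identity for $v \in V_u$ and $k$ sufficiently large), this shows the natural map is an isomorphism, completing the proof.
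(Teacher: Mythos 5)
Your proof is correct and mirrors the paper's argument: verify all four defining properties of $\mathcal{J}$, using admissibility for pointwise finite-dimensionality and bounded above support, and inheriting the cohomological and sequentially continuous properties from the individual blocks $B_v$. The one stylistic difference is in the sequential-continuity step: the paper observes that pfd-ness makes the direct sum coincide with the direct product, after which the interchange of the inverse limit with a product is automatic, whereas you unpack the same interchange by hand, tracking compatible families and using admissibility to see that only finitely many coordinates (those $v$ with $v \in (\uparrow u) \cap \op{int}(\downarrow T(u))$ and $b(v) > 0$) can contribute.
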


\begin{proof}
  As each contravariant block is cohomological the direct sum
  $\bigoplus_{v \in \op{int} \M} B_v^{b(v)}$ is a cohomological functor as well.
  Moreover, any upper bound for the support of
  $b \colon \op{int} \M \rightarrow \N_0$
  is an upper bound for the support of
  $\bigoplus_{v \in \op{int} \M} B_v^{b(v)}$ as well.
  Next we show the direct sum
  $\bigoplus_{v \in \op{int} \M} B_v^{b(v)}$ is pfd.
  To this end, let $u \in \op{int} \M$.
  Since
  \begin{equation*}
    \dim_{\F} B_v (u) =
    \begin{cases}
      1 & v \in (\uparrow u) \cap \op{int} (\downarrow T(u))
      \\
      0 & \text{otherwise}
    \end{cases}
  \end{equation*}
  for any $v \in \op{int} \M$,
  we have
  \begin{equation*}
    \dim_{\F}
    \bigoplus_{v \in \op{int} \M} B_v^{b(v)} (u)
    = \sum_{v \in (\uparrow u) \cap \op{int} (\downarrow T(u))} b(v)
    <
    \infty
    .
  \end{equation*}
  Now that
  $\bigoplus_{v \in \op{int} \M} B_v^{b(v)}$ is pfd
  we have
  $\bigoplus_{v \in \op{int} \M} B_v^{b(v)} = \prod_{v \in \op{int} \M} B_v^{b(v)}$.
  With this sequential continuity follows from the commutativity of limits.
\end{proof}

Writing $B_i := B_{p(i)} \colon \M^{\circ} \rightarrow \VectF$
for any $i \in I$,
we obtain the following corollary.

\begin{cor}
  \label{cor:sumInJ}
  The direct sum
  $\bigoplus_{i \in I} B_i \colon \M^{\circ} \rightarrow \VectF$
  is a functor in $\mathcal{J}$.  
\end{cor}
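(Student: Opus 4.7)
The plan is to reduce this statement directly to Lemma \ref{lem:sumInJ} by identifying the correct admissible Betti function. Specifically, define $b \colon \op{int}\M \rightarrow \N_0$ by $b(v) := \#p^{-1}(v)$. Since $p$ sorts the index set $I$ into fibers over $\op{int}\M$ and since $B_i = B_{p(i)}$ by definition, we have a natural isomorphism
\[
  \bigoplus_{i \in I} B_i \;=\; \bigoplus_{v \in \op{int}\M} \bigoplus_{i \in p^{-1}(v)} B_{p(i)} \;\cong\; \bigoplus_{v \in \op{int}\M} B_v^{b(v)},
\]
so the task reduces to showing that $b$ lies in the monoid $\mathbb{B}$ of admissible Betti functions.

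By equation \eqref{eq:bettiIndexCount}, $b(v) = \beta^0(F)(v)$. The finiteness condition
\[
  \sum_{v \in (\uparrow u) \cap \op{int}(\downarrow T(u))} b(v) < \infty
  \quad \text{for all } u \in \op{int}\M
\]
is then exactly Lemma \ref{lem:admissibleBetti}. For the remaining requirement in \cref{dfn:admissibleBetti}, namely that $\op{supp}(b)$ be bounded above, I would argue as follows: since $F$ is $\mathcal{J}$-presentable it admits an epimorphism from some functor in $\mathcal{J}$, and therefore inherits bounded above support. If $F(v) = 0$, then any natural transformation $F \rightarrow S_v$ must vanish at $v$ (since $S_v$ is supported only at $v$, and naturality forces the components at all other points to be zero). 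Consequently $\op{supp}(b) \subseteq \op{supp}(F)$, which is bounded above. Thus $b \in \mathbb{B}$, and applying Lemma \ref{lem:sumInJ} to $b$ concludes the proof. No serious obstacle arises; this is a bookkeeping step tying together the indexing $p$, the pointwise count formula \eqref{eq:bettiIndexCount}, and the previous lemma.
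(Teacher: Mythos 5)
Your proof is correct and follows the paper's own argument: both reduce to \cref{lem:sumInJ} by identifying $\#p^{-1}(-)$ with $\beta^0(F)$ via \eqref{eq:bettiIndexCount} and invoking \cref{lem:admissibleBetti} for admissibility. Your explicit check of the bounded-above-support clause of \cref{dfn:admissibleBetti} (via $\op{supp}(\beta^0(F)) \subseteq \op{supp}(F)$) is actually slightly more careful than the paper, which attributes full admissibility to \cref{lem:admissibleBetti} even though that lemma only establishes the finiteness-of-sums condition.
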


\begin{proof}
  By \cref{lem:admissibleBetti} the function
  $\beta^0 (F) \colon \op{int} \M \rightarrow \N_0$
  is an admissible Betti function
  and
  $\# p^{-1} (v) = \beta^0 (F)(v)$
  for any $v \in \op{int} \M$ by \eqref{eq:bettiIndexCount}.
  Thus,
  $\bigoplus_{i \in I} B_i \colon \M^{\circ} \rightarrow \VectF$
  is a functor in $\mathcal{J}$
  by \cref{lem:sumInJ}
\end{proof}

We now construct a projective cover
$\varphi \colon \bigoplus_{i \in I} B_i \rightarrow F$.
To this end,
let
$\upsilon_i \colon B_i \rightarrow S_i$
be the unique natural transformation sending
$1 \in \F = B_i (p(i))$ to $1 \in \F = S_i (p(i))$
provided by the Yoneda \cref{lem:yoneda}.
As $\bigoplus_{i \in I} B_i$ is projective
in $\mathcal{C}$
by the Yoneda \cref{lem:yoneda},
there is a \enquote{section}
$\varphi \colon \bigoplus_{i \in I} B_i \rightarrow F$
as in the commutative triangle
\begin{equation}
  \label{eq:sectionProjective}
  \begin{tikzcd}[row sep=6ex, column sep=6ex]
    &
    \bigoplus_{i \in I} B_i
    \arrow[ld, "\varphi"']
    \arrow[d, "\bigoplus_{i \in I} \upsilon_i", two heads]
    \\
    F
    \arrow[r, "\theta"', two heads]
    &
    \bigoplus_{i \in I} S_i
    .
  \end{tikzcd}
\end{equation}

\begin{lem}
  \label{lem:phiEpi}
  The natural transformation
  $\varphi \colon \bigoplus_{i \in I} B_i \rightarrow F$
  is a natural epimorphism.
\end{lem}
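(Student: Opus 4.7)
The plan is to use that $\theta$ has already been shown to be an essential epimorphism in \cref{cor:essential}, combined with the commutativity of the triangle \eqref{eq:sectionProjective}. Recall the definition of essentiality used in the proof of \cref{cor:essential}: a subfunctor $G \hookrightarrow F$ with $G + \ker \theta = F$ must coincide with $F$. So it suffices to take $G := \op{im} \varphi$ and verify that $\op{im} \varphi + \ker \theta = F$.

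To establish this, I would first note that $\theta \circ \varphi = \bigoplus_{i \in I} \upsilon_i$ is a natural epimorphism. Indeed, by the Yoneda \cref{lem:yoneda}, each $\upsilon_i \colon B_i \rightarrow S_i$ is pointwise surjective (mapping $1 \in \F = B_i(p(i))$ to $1 \in \F = S_i(p(i))$ and vanishing elsewhere), so the direct sum is pointwise surjective as well. Consequently, for any $u \in \M$ and any $y \in F(u)$, there exists some $x \in \bigoplus_{i \in I} B_i(u)$ with $(\theta \circ \varphi)(x) = \theta(y)$. Hence $y - \varphi(x) \in \ker \theta(u)$, which yields the pointwise decomposition $F(u) = (\op{im} \varphi)(u) + (\ker \theta)(u)$. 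Taking this over all $u \in \M$ gives the required equality of subfunctors $\op{im} \varphi + \ker \theta = F$.

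Applying essentiality of $\theta$ from \cref{cor:essential} to $G = \op{im} \varphi$, we conclude $\op{im} \varphi = F$, i.e., $\varphi$ is a natural epimorphism.

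There is no real obstacle here; the whole point of the preceding development (constructing $\theta$, showing it is essential via \cref{lem:radical} and \cref{cor:maxSub}, and lifting along projectivity of $\bigoplus_{i \in I} B_i$ in $\mathcal{C}$ by \cref{cor:sumInJ} together with \cite[Corollary 3.6]{2021arXiv210809298B}) was precisely to bring this last step within reach. The only minor point to be careful about is that essentiality is phrased in terms of subfunctors rather than morphisms, so one must translate "$\theta \circ \varphi$ is epi" into "$\op{im} \varphi$ together with $\ker \theta$ exhausts $F$" before invoking \cref{cor:essential}.
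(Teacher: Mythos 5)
Your proof is correct and is essentially the paper's argument: the paper likewise combines the commutativity of the triangle \eqref{eq:sectionProjective} (so that $\theta \circ \varphi = \bigoplus_{i \in I} \upsilon_i$ is an epimorphism) with the essentiality of $\theta$ from \cref{cor:essential} to conclude that $\varphi$ is an epimorphism. You merely spell out the translation from ``$\theta \circ \varphi$ is epi'' to ``$\op{im} \varphi + \ker \theta = F$'' that the paper leaves implicit, which is a sound and welcome elaboration.
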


\begin{proof}
  Considering the commutative triangle \eqref{eq:sectionProjective}
  and the fact that
  $\theta \colon F \rightarrow \bigoplus_{i \in I} S_i$
  is an essential epimorphism by \cref{cor:essential},
  we see that $\varphi \colon \bigoplus_{i \in I} B_i \rightarrow F$
  is an epimorphism as well.
\end{proof}

\begin{cor}
  \label{cor:essential2}
  The natural epimorphism
  $\bigoplus_{i \in I} \upsilon_i \colon
  \bigoplus_{i \in I} B_i \rightarrow
  \bigoplus_{i \in I} S_i$
  is essential.
\end{cor}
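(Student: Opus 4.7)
The plan is to mirror almost verbatim the proof of Corollary~\ref{cor:essential}, with $\bigoplus_{i \in I} B_i$ in place of $F$ and $\bigoplus_{i \in I} \upsilon_i$ in place of $\theta$. Since $\bigoplus_{i \in I} B_i$ is pfd and sequentially continuous (this was verified in the proof of Lemma~\ref{lem:sumInJ}), Lemma~\ref{lem:maxSub} and Corollary~\ref{cor:maxSub} apply to it, and in particular the radical $\op{rad} \bigoplus_{i \in I} B_i$, defined as the point-wise intersection of all maximal subfunctors, is a well-defined subfunctor.

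The first substantial step is to prove the analog of Lemma~\ref{lem:radical}, namely that $\ker \bigoplus_{i \in I} \upsilon_i \hookrightarrow \op{rad} \bigoplus_{i \in I} B_i$ point-wise. For this I would take an arbitrary maximal subfunctor, which by Lemma~\ref{lem:maxSub} is the kernel of some nonzero $\alpha \colon \bigoplus_{i \in I} B_i \rightarrow S_u$ for some $u \in \op{int}\M$. By the Yoneda Lemma~\ref{lem:yoneda}, $\mathrm{Nat}(B_j, S_u) \cong S_u(p(j))$, which is trivial unless $p(j) = u$ and is one-dimensional spanned by $\upsilon_j$ otherwise. Hence the restriction of $\alpha$ to each summand $B_j$ is $\lambda_j \upsilon_j$ for some $\lambda_j \in \F$ when $p(j) = u$ and is zero otherwise.

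The key step is then to factor $\alpha$ through $\bigoplus_{i \in I} \upsilon_i$. Because the index set $p^{-1}(u)$ is finite (by \eqref{eq:bettiIndexCount} together with Lemma~\ref{lem:admissibleBetti}), the family $(\lambda_j)_{j \in p^{-1}(u)}$ assembles into a natural transformation $\beta \colon \bigoplus_{i \in I} S_i \rightarrow S_u$ that sends each summand $S_j$ to $S_u$ by multiplication with $\lambda_j$ when $p(j) = u$ and to zero otherwise. A direct check on stalks yields $\beta \circ \bigoplus_{i \in I} \upsilon_i = \alpha$, so $\ker \bigoplus_{i \in I} \upsilon_i \hookrightarrow \ker \alpha$. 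Intersecting over all maximal subfunctors gives the desired inclusion into the radical.

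The essentiality then follows by the same argument as in Corollary~\ref{cor:essential}: if $G \hookrightarrow \bigoplus_{i \in I} B_i$ satisfies $\bigoplus_{i \in I} B_i = G + \ker \bigoplus_{i \in I} \upsilon_i$ but $G \neq \bigoplus_{i \in I} B_i$, then Corollary~\ref{cor:maxSub} supplies a maximal subfunctor $G'$ containing $G$ point-wise, and one deduces $\bigoplus_{i \in I} B_i \hookrightarrow G + \op{rad} \bigoplus_{i \in I} B_i \hookrightarrow G'$, a contradiction. The main obstacle is largely bookkeeping rather than conceptual: one must ensure that $\beta$ is well-defined as a natural transformation on $\bigoplus_{i \in I} S_i$ despite the index set $I$ being potentially infinite, and this is precisely where the finiteness of $p^{-1}(u)$ established via the admissible Betti function condition becomes essential.
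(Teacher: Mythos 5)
Your proof is correct, but it takes a genuinely different route from the paper's. The paper's proof is a short specialization: since $\bigoplus_{i \in I} B_i$ is pfd (\cref{cor:sumInJ}), one may run the construction of $\theta$ with $F = \bigoplus_{i' \in I} B_{i'}$, choosing $\alpha_i := \op{pr}_i \circ \upsilon_i$; by the Yoneda \cref{lem:yoneda} the family $(\alpha_i)_{i \in p^{-1}(u)}$ is a basis of $\mathrm{Nat}\bigl(\bigoplus_{i'} B_{i'}, S_u\bigr)$ for each $u$, so the resulting $\theta$ is literally $\bigoplus_i \upsilon_i$ and \cref{cor:essential} applies as is. You instead re-derive the radical containment $\ker \bigoplus_i \upsilon_i \hookrightarrow \op{rad} \bigoplus_i B_i$ from scratch, by factoring an arbitrary nonzero $\alpha \colon \bigoplus_i B_i \rightarrow S_u$ as $\beta \circ \bigoplus_i \upsilon_i$, and then repeat the essentiality argument. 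This is valid, and your factorization is if anything cleaner than the paper's \cref{lem:radical}, which detours through the monomorphism $\eta \colon S_u \rightarrow S_u^{\# p^{-1}(u)}$; the cost is that it duplicates work already encapsulated in \cref{cor:essential}. One small misattribution at the end: the finiteness of $p^{-1}(u)$ is not what makes $\beta$ well-defined --- a map \emph{out of} a coproduct is freely determined by its restrictions to the summands, with no finiteness hypothesis --- it is rather what lets the paper identify $\bigoplus_i S_i$ with $\prod_i S_i$ so as to assemble a family of maps \emph{into} the direct sum (as in the construction of $\theta$). For your $\beta$ no such identification is needed.
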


\begin{proof}
  Above we defined
  $\theta \colon F \rightarrow \bigoplus_{i \in I} S_i$
  to be the natural transformation induced by the choice function
  $(\alpha_i)_{i \in I}$ of the indexed family
  $(\mathrm{Nat}(F, S_i))_{i \in I}$.
  As $\bigoplus_{i \in I} B_i$ is pfd by \cref{cor:sumInJ},
  we may consider above constructions in the special case where
  $F = \bigoplus_{i \in I} B_i$.
  Moreover, since $\bigoplus_{i \in I} B_i$ is pfd we also have
  $\bigoplus_{i \in I} B_i = \prod_{i \in I} B_i$
  so we may define \[\alpha_i := \op{pr}_i \circ \upsilon_i\] for $i \in I$,
  where
  $\op{pr}_i \colon \bigoplus_{i' \in I} B_{i'} \rightarrow B_i$
  is the projection to the $i$-th summand.
  For this particular choice for the family $(\alpha_i)_{i \in I}$
  we obtain
  $\theta = \bigoplus_{i \in I} \upsilon_i$,
  hence $\bigoplus_{i \in I} \upsilon_i$ is an essential epimorphism
  by \cref{cor:essential}.
\end{proof}

\begin{prp}
  \label{prp:projCover}
  The natural transformation
  $\varphi \colon \bigoplus_{i \in I} B_i \rightarrow F$
  is a projective cover of
  $F \colon \M^{\circ} \rightarrow \vectF$
  in the category
  of $\mathcal{J}$-presentable functors
  $\mathrm{pres}(\mathcal{J})$.
\end{prp}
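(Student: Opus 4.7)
The plan is to verify the three defining properties of a projective cover for $\varphi \colon \bigoplus_{i \in I} B_i \rightarrow F$: that the source $P := \bigoplus_{i \in I} B_i$ is projective in $\mathrm{pres}(\mathcal{J})$, that $\varphi$ is an epimorphism, and that $\varphi$ is essential. The middle property is already established by \cref{lem:phiEpi}.

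For projectivity, first note that by \cref{cor:sumInJ} the functor $P$ lies in $\mathcal{J}$, and by \cite[Corollary 3.6]{2021arXiv210809298B} every object of $\mathcal{J}$ is projective in $\mathcal{C}$. Since $\mathrm{pres}(\mathcal{J})$ is a full subcategory of $\mathcal{C}$, any epimorphism in $\mathrm{pres}(\mathcal{J})$ is in particular a natural (i.e.\ pointwise surjective) transformation, hence an epimorphism in $\mathcal{C}$. Consequently any lifting problem for $P$ against an epi in $\mathrm{pres}(\mathcal{J})$ is solvable in $\mathcal{C}$, and the resulting lift automatically lies in $\mathrm{pres}(\mathcal{J})$ since both its source and target do.

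For essentiality I exploit the commutative triangle \eqref{eq:sectionProjective}. Suppose $\psi \colon Q \rightarrow P$ is a morphism in $\mathrm{pres}(\mathcal{J})$ such that $\varphi \circ \psi$ is an epimorphism. Post-composing with $\theta$ and using commutativity of \eqref{eq:sectionProjective} yields
\[
  \Bigl(\bigoplus_{i \in I} \upsilon_i\Bigr) \circ \psi = \theta \circ \varphi \circ \psi,
\]
which is an epimorphism as a composition of epimorphisms. But $\bigoplus_{i \in I} \upsilon_i$ is essential by \cref{cor:essential2}, so $\psi$ itself must be an epimorphism, witnessing the essentiality of $\varphi$. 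Combining this with \cref{lem:phiEpi} and the projectivity established above yields the projective cover.

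The proof is largely an assembly of earlier results, and no step presents a substantive obstacle. The mildest subtlety is the transfer of projectivity along the full subcategory inclusion $\mathrm{pres}(\mathcal{J}) \hookrightarrow \mathcal{C}$, which rests on the observation that the relevant epimorphisms are pointwise surjections and therefore do not acquire new content when passing between the two categories. If one prefers the subfunctor formulation of essentiality, the same triangle reduces it to the essentiality of $\bigoplus_{i \in I} \upsilon_i$ in an equally direct manner: any proper subfunctor of $P$ on which $\varphi$ were still surjective would map via $\theta$ to a proper subfunctor on which $\bigoplus_{i \in I} \upsilon_i$ is surjective, contradicting \cref{cor:essential2}.
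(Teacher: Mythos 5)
Your proof follows the same route as the paper's: epimorphy from \cref{lem:phiEpi}; essentiality by pushing a test map through the triangle \eqref{eq:sectionProjective} and invoking the essentiality of $\bigoplus_{i \in I} \upsilon_i$ from \cref{cor:essential2} (the paper delegates this two-line composition argument to \cite[Lemma 3.1]{MR3431480}, which you have simply unpacked); and projectivity of $\bigoplus_{i \in I} B_i$ from \cref{cor:sumInJ} together with projectivity in $\mathcal{C}$. The structure is right and the essentiality argument is complete.

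The one place where your justification does not hold up as written is the transfer of projectivity (and, implicitly, of the epimorphism property) from $\mathcal{C}$ to $\mathrm{pres}(\mathcal{J})$. You assert that, because $\mathrm{pres}(\mathcal{J})$ is a full subcategory of $\mathcal{C}$, every epimorphism of $\mathrm{pres}(\mathcal{J})$ is pointwise surjective. Fullness alone does not give this: in a full subcategory a morphism can be right-cancellable against objects of the subcategory without being an epimorphism of the ambient category, so a priori $\mathrm{pres}(\mathcal{J})$ could have extra epimorphisms against which projectivity in $\mathcal{C}$ says nothing; your closing remark that the claim ``rests on the observation that the relevant epimorphisms are pointwise surjections'' is circular, since that observation is precisely what needs proof. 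What actually closes the gap is \cref{lem:presCoker}: the cokernel computed in $\mathcal{C}$ of a map of $\mathcal{J}$-presentable functors is again $\mathcal{J}$-presentable, so a nonzero cokernel already witnesses failure of epimorphy inside $\mathrm{pres}(\mathcal{J})$. This is exactly \cref{cor:presEpi}, and \cref{cor:presProj} then yields the projectivity transfer you want. With those citations in place your argument is complete; the same identification of epimorphisms is also what lets you pass between ``essential in $\mathcal{C}$'' (where \cref{cor:essential2} lives, since the simple functors $S_i$ are not $\mathcal{J}$-presentable) and ``essential in $\mathrm{pres}(\mathcal{J})$'' in the final step.
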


\begin{proof}
  We consider the commutative triangle
  \eqref{eq:sectionProjective}.
  By \cref{lem:phiEpi} the natural transformation
  $\varphi \colon \bigoplus_{i \in I} B_i \rightarrow F$
  is an epimorphism.
  Moreover,
  $\bigoplus_{i \in I} \upsilon_i \colon
  \bigoplus_{i \in I} B_i \rightarrow
  \bigoplus_{i \in I} S_i$
  is essential by \cref{cor:essential2},
  hence
  $\varphi \colon \bigoplus_{i \in I} B_i \rightarrow F$
  is essential by \cite[Lemma 3.1]{MR3431480}.
  Furthermore,
  the direct sum
  $\bigoplus_{i \in I} B_i \colon \M^{\circ} \rightarrow \VectF$
  is projective
  in $\mathcal{C}$
  by the Yoneda \cref{lem:yoneda}
  and thus it is also projective in $\mathrm{pres}(\mathcal{J})$
  by Corollaries \ref{cor:sumInJ} and \ref{cor:presProj}.
\end{proof}

In particular the $0$-th Betti function
$\beta^0 (F) \colon \op{int} \M \rightarrow \N_0$
determines the isomorphism class of the domains
of projective covers of the $\mathcal{J}$-presentable functor
$F \colon \op{int} \M \rightarrow \vectF$.

\begin{cor}
  \label{cor:decompProj}
  If $F \colon \M^{\circ} \rightarrow \VectF$
  is projective in $\mathrm{pres}(\mathcal{J})$,
  then $\varphi \colon \bigoplus_{i \in I} B_i \rightarrow F$
  is a natural isomorphism.
\end{cor}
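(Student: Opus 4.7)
The plan is to deduce the corollary from \cref{prp:projCover} together with the fact that a projective cover of a projective object must be an isomorphism. More specifically, by \cref{prp:projCover} the natural transformation $\varphi \colon \bigoplus_{i \in I} B_i \rightarrow F$ is an essential epimorphism, and its domain is projective in $\mathrm{pres}(\mathcal{J})$. Under the assumption that $F$ itself is projective in $\mathrm{pres}(\mathcal{J})$, I aim to show that $\varphi$ admits a two-sided inverse.

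First, using the projectivity of $F$, I lift the identity $\op{id}_F$ along the epimorphism $\varphi$ to obtain a natural transformation $s \colon F \rightarrow \bigoplus_{i \in I} B_i$ with $\varphi \circ s = \op{id}_F$. In particular $s$ is a split monomorphism. Next, I use the fact that $\varphi$ is essential: this means that for any natural transformation $\psi \colon G \rightarrow \bigoplus_{i \in I} B_i$ such that $\varphi \circ \psi$ is an epimorphism, $\psi$ is itself an epimorphism. Applying this criterion to $\psi := s$, the composite $\varphi \circ s = \op{id}_F$ is trivially an epimorphism, so $s$ must be an epimorphism as well. Combined with being a split monomorphism, $s$ is then an isomorphism, and consequently so is $\varphi$.

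The only subtle point is to verify that the notion of essentiality provided by \cref{prp:projCover} supplies the implication \enquote{$\varphi \circ \psi$ epi $\Rightarrow$ $\psi$ epi}. In the statement of \cref{prp:projCover} \enquote{essential} is used in the sense of \cite[Lemma 3.1]{MR3431480}, i.e.\ that the only subobject $H \hookrightarrow \bigoplus_{i \in I} B_i$ with $\varphi(H) = F$ is the whole domain. To bridge to the form I need, I would apply this to the image $H := \op{im} s \hookrightarrow \bigoplus_{i \in I} B_i$: since $\varphi \circ s = \op{id}_F$ is surjective, we have $\varphi(H) = F$, hence $H = \bigoplus_{i \in I} B_i$, which means $s$ is an epimorphism. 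This is the main step that requires care; the rest is formal. Once $s$ is shown to be an isomorphism, the relation $\varphi \circ s = \op{id}_F$ forces $\varphi = s^{-1}$, completing the proof.
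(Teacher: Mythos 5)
Your proof is correct, but it takes a different route than the paper's own argument, which is shorter: the paper simply observes that when $F$ is projective, the identity $\op{id}_F \colon F \rightarrow F$ is itself a projective cover, and then invokes the uniqueness of projective covers from \cite[Corollary 3.5]{MR3431480} to conclude that the projective cover $\varphi$ from \cref{prp:projCover} must be an isomorphism. Your approach instead unwinds what is essentially the proof of that uniqueness statement in this special case: you split $\varphi$ using projectivity of $F$ to obtain a section $s$, then use essentiality of $\varphi$ to show that the image of $s$ must exhaust the domain, forcing $s$ (and hence $\varphi$) to be an isomorphism. Both arguments rest on the same two ingredients from \cref{prp:projCover} — that $\varphi$ is an essential epimorphism with projective domain — and your version has the virtue of not needing the uniqueness theorem as a black box, at the modest cost of a slightly longer argument. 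One small point worth keeping in mind: the image of $s$ and the essentiality criterion should be interpreted in the ambient abelian category $\mathcal{C}$ (or $\VectF^{\M^{\circ}}$), since at this point in the paper it has not yet been established that $\mathrm{pres}(\mathcal{J})$ is itself abelian; this is harmless since all of Corollaries 4.2 through 4.6 already operate at that level of generality.
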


\begin{proof}
  As $F \colon \M^{\circ} \rightarrow \VectF$
  is projective,
  the identity natural transformation
  $\op{id}_F \colon F \rightarrow F$
  is a projective cover as well.
  By the uniqueness of projective covers
  \cite[Corollary 3.5]{MR3431480}
  the natural transformation
  $\varphi \colon \bigoplus_{i \in I} B_i \rightarrow F$
  is a natural isomorphism.
\end{proof}

\begin{cor}
  \label{cor:decompJ}
  If $F \colon \M^{\circ} \rightarrow \VectF$
  is a functor in $\mathcal{J}$,
  then $\varphi \colon \bigoplus_{i \in I} B_i \rightarrow F$
  is a natural isomorphism.
\end{cor}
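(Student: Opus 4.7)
The plan is to reduce Corollary \ref{cor:decompJ} directly to the immediately preceding Corollary \ref{cor:decompProj}. That preceding result establishes exactly the same conclusion (that $\varphi$ is a natural isomorphism) under the hypothesis that $F$ is projective in $\mathrm{pres}(\mathcal{J})$. So the entire task here is to verify that any functor $F$ in $\mathcal{J}$ is projective in $\mathrm{pres}(\mathcal{J})$, and then invoke the previous corollary.

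First, I would observe that $F \in \mathcal{J}$ automatically lies in $\mathrm{pres}(\mathcal{J})$, since it admits the trivial presentation $0 \to F \xrightarrow{\mathrm{id}} F \to 0$ whose terms are in $\mathcal{J}$. Moreover, $F$ is pfd and sequentially continuous, so the constructions leading to $\varphi$ (the definition of the index set $I$, the family $\left(B_i\right)_{i \in I}$, and the map $\varphi$ itself) apply verbatim to $F$. Next, by \cite[Corollary 3.6]{2021arXiv210809298B}, every functor in $\mathcal{J}$ is projective in the ambient category $\mathcal{C}$ of functors ${\M^{\circ} \to \VectF}$ vanishing on $\partial \M$. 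Invoking Corollary \ref{cor:presProj} -- used in exactly the same way as in the proof of Proposition \ref{prp:projCover} to transfer projectivity from $\mathcal{C}$ to $\mathrm{pres}(\mathcal{J})$ for objects that actually live in $\mathcal{J}$ -- we conclude that $F$ is projective in $\mathrm{pres}(\mathcal{J})$.

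Applying Corollary \ref{cor:decompProj} to this $F$ then immediately yields that $\varphi \colon \bigoplus_{i \in I} B_i \to F$ is a natural isomorphism. I do not expect any real obstacle: all the substantive work (constructing the projective cover, proving its essentiality via \cref{cor:essential,cor:essential2}, and upgrading isomorphism-of-covers to isomorphism of domains) has already been carried out for general $\mathcal{J}$-presentable $F$. The only point to double-check is that no hypothesis used silently in Section \ref{sec:projCover} fails when $F \in \mathcal{J}$, but this is automatic since $\mathcal{J}$-functors are pfd, sequentially continuous, have bounded-above support, and vanish on $\partial \M$ by the very definition of $\mathcal{J}$.
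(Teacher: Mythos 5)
Your proposal is correct and follows essentially the same route as the paper's own proof: the paper likewise invokes \cite[Corollary 3.6]{2021arXiv210809298B} together with \cref{cor:presProj} to conclude that $F$ is projective in $\mathrm{pres}(\mathcal{J})$, and then applies \cref{cor:decompProj}. Your additional remarks (the trivial presentation, the sanity check that the constructions of Section~\ref{sec:projCover} apply to $F$) are correct but not needed in the paper's more terse version.
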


\begin{proof}
  By \cite[Corollary 3.6]{2021arXiv210809298B}
  and \cref{cor:presProj}
  the functor
  $F \colon \M^{\circ} \rightarrow \VectF$
  is projective itself,
  and thus the result follows with \cref{cor:decompProj}.
\end{proof}

\begin{cor}
  \label{cor:JiffProj}
  The additive category $\mathcal{J}$
  is the subcategory of projectives in $\mathrm{pres}(\mathcal{J})$.
\end{cor}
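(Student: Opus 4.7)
The plan is to establish both inclusions of the equality of subcategories, drawing on the projective-cover machinery already assembled in this section.

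For the inclusion $\mathcal{J} \subseteq \mathrm{Proj}(\mathrm{pres}(\mathcal{J}))$, I would argue directly: any functor $F \colon \M^\circ \to \VectF$ in $\mathcal{J}$ is projective in the larger category $\mathcal{C}$ of functors vanishing on $\partial \M$ by \cite[Corollary 3.6]{2021arXiv210809298B}. Since $\mathrm{pres}(\mathcal{J})$ is a full subcategory of $\mathcal{C}$ containing $\mathcal{J}$, \cref{cor:presProj} transports this projectivity from $\mathcal{C}$ down to $\mathrm{pres}(\mathcal{J})$, giving that $F$ is projective in $\mathrm{pres}(\mathcal{J})$.

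For the reverse inclusion, I would start with an arbitrary $F \in \mathrm{pres}(\mathcal{J})$ that is projective, and apply \cref{cor:decompProj}: the projective cover constructed above this result, $\varphi \colon \bigoplus_{i \in I} B_i \to F$, is then a natural isomorphism. By \cref{cor:sumInJ} the domain $\bigoplus_{i \in I} B_i$ lies in $\mathcal{J}$, and since $\mathcal{J}$ is a full replete subcategory of $\VectF^{\M^\circ}$ (being defined by isomorphism-invariant properties: pfd, cohomological, sequentially continuous, and bounded-above support), $F$ itself lies in $\mathcal{J}$.

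The argument requires no new work beyond invoking the previously established corollaries; the only point to confirm is repleteness of $\mathcal{J}$, which is immediate from its definition via invariant properties. There is no substantial obstacle here: both directions reduce to applying results already proven in this section (most notably \cref{cor:decompProj}, whose proof was the genuinely hard step, since it relied on the construction of the projective cover $\varphi$, the essential-epimorphism arguments of \cref{cor:essential} and \cref{cor:essential2}, and the uniqueness of projective covers from \cite[Corollary 3.5]{MR3431480}).
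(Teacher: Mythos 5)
Your proof is correct and coincides with the paper's stated alternative route: the paper proves this corollary either via \cref{prp:projCover}, \cref{cor:sumInJ}, and the abstract \cref{cor:Jproj}, or alternatively via \cref{cor:decompProj} and \cref{cor:decompJ}, and your argument is exactly the latter, with the forward inclusion unpacked to the direct observation (\cite[Corollary 3.6]{2021arXiv210809298B} plus \cref{cor:presProj}) that the proof of \cref{cor:decompJ} itself uses. The repleteness remark is a valid and necessary point, and is indeed immediate from $\mathcal{J}$ being cut out by isomorphism-invariant conditions.
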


\begin{proof}
  This follows from \cref{prp:projCover}, \cref{cor:sumInJ},
  and \cref{cor:Jproj}.
  Alternatively,
  we may also conclude this
  from Corollaries \ref{cor:decompProj} and \ref{cor:decompJ}.
\end{proof}

\section{
  Equivalence of $D^+_t (q_{\gamma}, \partial q)$
  and $\mathcal{J}$
}
\label{sec:equiv}

Let $p \colon I \rightarrow \op{int} \M$
be some map of sets
such that the assignment
\begin{equation*}
  \op{int} \M \rightarrow \N_0, \,
  u \mapsto \# p^{-1}(u)
\end{equation*}
is an admissible Betti function.
Again we write
$B_i := B_{p(i)} \colon \M^{\circ} \rightarrow \vectF$
for any $i \in I$.

\begin{prp}
  \label{prp:dirSumProd}
  The direct sum
  $\bigoplus_{i \in I} (\iota \circ p)(i)$
  together with the projections
  \[\op{pr}_j \colon
    \bigoplus_{i \in I} (\iota \circ p)(i) \rightarrow (\iota \circ p)(j)\]
  for $j \in I$
  satisfies the universal property of the product in
  $D^+ (q_{\gamma}, \partial q)$.
\end{prp}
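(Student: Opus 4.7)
Set $P := \bigoplus_{i \in I}(\iota \circ p)(i)$. My plan is to prove, for every $X \in D^+(q_\gamma, \partial q)$, that the canonical map
\[
\alpha_X \colon \Hom(X, P) \longrightarrow \prod_{i \in I} \Hom(X, \iota(p(i)))
\]
obtained by composing with the projections $\op{pr}_j$ is a bijection. First I would verify that $P$ really lies in $D^+(q_\gamma, \partial q)$: by strict stability of $\iota$ each $\iota(p(i))$ equals $\F_{Z(v_i)}[n_i]$ for $p(i) = T^{n_i}(v_i)$, $v_i \in D$; the bounded-above support of $u \mapsto \#p^{-1}(u)$ (admissibility) forces the shifts $-n_i$ to be bounded below, so the direct sum is bounded below, and the $\partial q$-vanishing is inherited from the summands. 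The projections $\op{pr}_j$ themselves exist by the universal property of the coproduct (take $\op{id}$ on the $j$-th summand and zero elsewhere).

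The key step is to show $h_\gamma(P) \cong \bigoplus_i B_i$. By Proposition \ref{prp:localCoho}, $h_\gamma(P)(T^{-n}(u))$ equals the local sheaf cohomology $H^n_{q \cap(\uparrow u)}(q \cap \op{int}(\downarrow T(u)); P)$ of the direct sum of shifted sheaves $\F_{Z(v_i)}[n_i]$. Admissibility—applied just as in the proof of Lemma \ref{lem:sumInJ}—guarantees that only finitely many summands $\F_{Z(v_i)}$ contribute nontrivially on this local region, so the local cohomology of the direct sum factors as the finite direct sum of local cohomologies of the individual summands, which by Corollary \ref{cor:iota} matches $\bigl(\bigoplus_i B_i\bigr)(T^{-n}(u))$. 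The case $X = \iota(v)$ of $\alpha_X$ is then immediate: $\prod_i \Hom(\iota(v), \iota(p(i))) = \prod_i B_i(v) = \bigoplus_i B_i(v)$ (last equality by pointwise pfd-ness from admissibility), and this matches $h_\gamma(P)(v) = \Hom(\iota(v), P)$.

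To promote bijectivity of $\alpha_X$ from $X = \iota(v)$ to arbitrary $X$, I would observe that $\alpha$ is a natural transformation between cohomological contravariant functors on $D^+(q_\gamma, \partial q)$, so by the five-lemma the full subcategory on which $\alpha_X$ is bijective is triangulated and contains every $\iota(v)$. The main obstacle is that the $\iota(v)$ do not manifestly generate $D^+(q_\gamma, \partial q)$ as a triangulated category. To close this gap I would combine the conservativity of $h_{\gamma,0}$ (Proposition \ref{prp:conservative}) with a flabby-resolution model of $X$: representing $X$ by flabby sheaves along the tiles of the tessellation of $\M$ induced by $T$ and $D$ lets me compute $\Hom(X, P)$ termwise as a limit of local sections, and the same local finiteness from admissibility that made $h_\gamma$ preserve this particular direct sum should allow me to interchange the infinite direct sum with the termwise limit, identifying $\Hom(X, P)$ with $\prod_i \Hom(X, \iota(p(i)))$ as desired.
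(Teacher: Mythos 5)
Your overall target -- verifying $\Hom(X,P)\xrightarrow{\;\sim\;}\prod_{i}\Hom(X,(\iota\circ p)(i))$ for every $X$ in $D^+(q_\gamma,\partial q)$ -- is the right formulation of the universal property, and your check that $P$ lies in $D^+(q_\gamma,\partial q)$ is fine. But the proposal has two genuine gaps, both located exactly where an infinite direct sum of sheaves must be commuted past a cohomology or Hom functor. Your ``key step'' $h_\gamma(P)\cong\bigoplus_i B_i$ amounts to the claim that the local cohomology $H^n_{q\cap(\uparrow u)}(q\cap\op{int}(\downarrow T(u));-)$ commutes with the infinite direct sum $\bigoplus_i\F_{Z(v_i)}[n_i]$, i.e.\ that each $\iota(w)$ is compact with respect to this coproduct. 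Knowing that only finitely many \emph{individual} summands have nonzero local cohomology on the block of $u$ does not yield this: sections of $\bigoplus_i F_i$ over an open set are locally finite families of sections rather than finitely supported ones, so $\bigoplus_i\Gamma(U;F_i)\to\Gamma(U;\bigoplus_i F_i)$ need not be surjective (already for skyscrapers on a discrete space), and in the long exact sequence computing $H^n_J(U;-)$ the ambient terms $H^m(U;\F_{Z(v_i)})$ can be nonzero for infinitely many $i$ even when admissibility bounds the block of $u$. The paper deliberately avoids proving this interchange directly: \cref{cor:dirSumPres} is deduced \emph{from} \cref{prp:dirSumProd}, since once the direct sum is known to be the product one gets $\Hom(\iota(v),\bigoplus_i\iota(p(i)))\cong\prod_i\Hom(\iota(v),\iota(p(i)))$ for free and pointwise finite-dimensionality turns the product of blocks back into the sum. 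Using it as an input, as you do, is circular unless you supply a new argument.

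The second gap is the passage from $X=\iota(v)$ to arbitrary $X$, which is where the actual content of the proposition lies; the sentence that admissibility ``should allow me to interchange the infinite direct sum with the termwise limit'' is a restatement of the difficulty, not a proof, and conservativity of $h_{\gamma,0}$ by itself only detects isomorphisms of objects, not bijectivity of your $\alpha_X$ for fixed $X$. The paper closes this gap by a different route: the product in $D^+(q_\gamma,\partial q)$ is exhibited as the coreflection $R\flat\, R\prod_i(\iota\circ p)(i)$ of the derived product, and the canonical map $\kappa^{\#}$ from the direct sum into it is shown to be an isomorphism by checking cohomology stalks at each $t\in q\setminus\partial q$. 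By \cref{lem:mapDualStalk} the stalk condition is dual to $\Hom_{D^+(q_\gamma)}(-,(\iota\circ T^{-n})(t))$ sending $\kappa^{\#}$ to an isomorphism, and \cref{prp:partFaithful} (partial faithfulness of $h_{\gamma,0}$) translates that into a statement about $\mathrm{Nat}(h_\gamma(-),B_{T^{-n}(t)})$, where \cref{lem:dirSumFlip} and pointwise finite-dimensionality make the sum/product interchange legitimate. Some substitute for \cref{prp:partFaithful} is unavoidable in your approach as well, since the objects $\iota(v)$ do not visibly generate $D^+(q_\gamma,\partial q)$ as a triangulated category.
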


Our proof of this proposition is involved enough
so that we defer it to its own \cref{sec:dirSumProd}.

\begin{remark}
  If $\partial q$ is not closed in $q_{\gamma}$,
  then it may well happen that the derived product
  $R \prod_{i \in I} (\iota \circ p)(i)$,
  which is the product in $D^+ (q_{\gamma})$,
  is not in $D^+ (q_{\gamma}, \partial q)$.
  Only after applying the coreflection
  $R \flat$ to the derived product $R \prod_{i \in I} (\iota \circ p)(i)$
  we obtain the product in $D^+ (q_{\gamma}, \partial q)$,
  which is conveniently isomorphic to the direct sum
  $\bigoplus_{i \in I} (\iota \circ p)(i)$
  by \cref{prp:dirSumProd}.
\end{remark}

\begin{cor}
  \label{cor:dirSumPres}
  The direct sum
  $\bigoplus_{i \in I} (\iota \circ p)(i)$
  is preserved by
  $h_{\gamma} \colon D^+ (q_{\gamma}) \rightarrow \VectF^{\M^{\circ}}$.
\end{cor}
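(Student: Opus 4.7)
The plan is to evaluate $h_{\gamma}$ pointwise and combine \cref{prp:dirSumProd} with the admissibility hypothesis on $u \mapsto \# p^{-1}(u)$ to collapse a product of $\Hom$ sets into a direct sum. By definition $h_{\gamma}(F)(u) = \Hom_{D^+(q_\gamma)}(\iota(u), F)$ for any $F$ and any $u \in \M$, and coproducts in $\VectF^{\M^{\circ}}$ are computed pointwise, so the claim reduces to showing that the canonical comparison map
\begin{equation*}
  \bigoplus_{i \in I} \Hom(\iota(u), (\iota \circ p)(i)) \longrightarrow \Hom\left(\iota(u), \bigoplus_{i \in I} (\iota \circ p)(i)\right)
\end{equation*}
is an isomorphism, naturally in $u$.

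For $u \in \partial \M$ both sides vanish since $\iota(u) \cong 0$, so there is nothing to check. For $u \in \op{int} \M$, I would observe that both $\iota(u)$ and $\bigoplus_{i \in I} (\iota \circ p)(i)$ lie in the full subcategory $D^+(q_\gamma, \partial q)$. Fullness of the inclusion $D^+(q_\gamma, \partial q) \hookrightarrow D^+(q_\gamma)$ lets me compute the right-hand $\Hom$ inside $D^+(q_\gamma, \partial q)$, and \cref{prp:dirSumProd} then identifies it with a product:
\begin{equation*}
  \Hom\left(\iota(u), \bigoplus_{i \in I} (\iota \circ p)(i)\right) \cong \prod_{i \in I} \Hom(\iota(u), (\iota \circ p)(i)).
\end{equation*}

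The final step is to collapse this product to a direct sum. By \cref{prp:iota}, the factor $\Hom(\iota(u), (\iota \circ p)(i))$ is isomorphic to $\F$ exactly when $p(i) \in (\uparrow u) \cap \op{int}(\downarrow T(u))$ and vanishes otherwise. The admissibility hypothesis on $u \mapsto \# p^{-1}(u)$ (cf.\ \cref{dfn:admissibleBetti}) forces
\begin{equation*}
  \#\{i \in I : p(i) \in (\uparrow u) \cap \op{int}(\downarrow T(u))\}
  = \sum_{v \in (\uparrow u) \cap \op{int}(\downarrow T(u))} \# p^{-1}(v) < \infty,
\end{equation*}
so only finitely many factors of the product are nonzero and the product coincides with the direct sum. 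Naturality in $u$ is automatic, and tracing the inclusions $(\iota \circ p)(j) \hookrightarrow \bigoplus_i (\iota \circ p)(i)$ through the identifications confirms that the resulting isomorphism is the canonical comparison map.

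The main -- and really only nontrivial -- input is \cref{prp:dirSumProd}, whose proof is deferred to \cref{sec:dirSumProd}; everything else is a formal assembly of the universal property of the product together with the pointwise collapse of the product to a direct sum coming from admissibility.
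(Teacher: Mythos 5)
Your proof is correct and follows essentially the same route as the paper: both reduce to \cref{prp:dirSumProd} to turn $h_{\gamma}$ of the direct sum into a product of the $(h_{\gamma}\circ\iota\circ p)(i)\cong B_i$, and then collapse the product to the direct sum using the pointwise finiteness coming from admissibility (the paper packages that last step as \enquote{$\bigoplus_i B_i$ is pfd} via \cref{cor:sumInJ}, which is exactly the dimension count you carry out by hand).
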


\begin{proof}
  As $\iota(u)$ is in $D^+ (q_{\gamma}, \partial q)$
  for any $u \in \op{int} \M$,
  the functor
  \[h_{\gamma} \colon D^+ (q_{\gamma}) \rightarrow \VectF^{\M^{\circ}}, \,
    F \mapsto \Hom_{\Der (q_{\gamma})} (\iota(-), F)
  \]
  maps the direct sum
  $\bigoplus_{i \in I} (\iota \circ p)(i)$
  to the direct product
  $\prod_{i \in I} (h_{\gamma} \circ \iota \circ p)(i) \colon
  \M^{\circ} \rightarrow \VectF$
  by \cref{prp:dirSumProd}.
  Moreover, we have
  $(h_{\gamma} \circ \iota \circ p)(i) \cong B_i$
  by \cref{cor:iota},
  hence
  \[\prod_{i \in I} (h_{\gamma} \circ \iota \circ p)(i) \cong
    \prod_{i \in I} B_i.
  \]
  Furthermore,
  the direct sum
  $\bigoplus_{i \in I} B_i$ is pfd by \cref{cor:sumInJ}
  and thus
  $\prod_{i \in I} B_i = \bigoplus_{i \in I} B_i$.
  Invoking \cref{cor:iota} once more we obtain
  \[\bigoplus_{i \in I} B_i \cong
    \bigoplus_{i \in I} (h_{\gamma} \circ \iota \circ p)(i).
    \qedhere
  \]
\end{proof}

\begin{cor}
  The direct sum
  $\bigoplus_{i \in I} (\iota \circ p)(i)$
  is tame.
\end{cor}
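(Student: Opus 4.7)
The plan is to combine the previous corollary about preservation of the direct sum by $h_{\gamma}$ with the earlier observation that $\bigoplus_{i \in I} B_i$ lies in $\mathcal{J}$. Concretely, by \cref{dfn:tame}, to show that $\bigoplus_{i \in I} (\iota \circ p)(i)$ is tame it suffices to verify that $h_{\gamma}\bigl(\bigoplus_{i \in I} (\iota \circ p)(i)\bigr) \colon \M^{\circ} \rightarrow \VectF$ is pointwise finite-dimensional. By \cref{prp:tameJ} it is actually enough (and in fact equivalent) to show it is a functor in $\mathcal{J}$.

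First I would apply \cref{cor:dirSumPres} to rewrite
\[
h_{\gamma}\Bigl(\bigoplus_{i \in I} (\iota \circ p)(i)\Bigr) \cong \bigoplus_{i \in I} (h_{\gamma} \circ \iota \circ p)(i).
\]
Next, I would invoke \cref{cor:iota} pointwise to identify each summand $(h_{\gamma} \circ \iota \circ p)(i)$ with $B_i = B_{p(i)}$, obtaining
\[
h_{\gamma}\Bigl(\bigoplus_{i \in I} (\iota \circ p)(i)\Bigr) \cong \bigoplus_{i \in I} B_i.
\]
Finally, since $p \colon I \rightarrow \op{int} \M$ was chosen so that $u \mapsto \#p^{-1}(u)$ is an admissible Betti function, \cref{cor:sumInJ} (or equivalently \cref{lem:sumInJ}) tells us $\bigoplus_{i \in I} B_i$ is a functor in $\mathcal{J}$, so in particular it is pfd.

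This is a very short chaining argument, so there is no real obstacle: the substantive work was already done in \cref{prp:dirSumProd} (hence \cref{cor:dirSumPres}) and in \cref{cor:sumInJ}, both of which handle the potentially problematic infinite direct sum by showing it behaves as a product on the relevant side. The only thing to verify carefully is that the isomorphism produced by \cref{cor:dirSumPres} indeed lands in $\mathcal{J}$ after the identification via \cref{cor:iota}, but this is immediate from functoriality of the identification.
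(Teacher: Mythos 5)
Your argument is correct and follows the paper's own proof essentially verbatim: apply \cref{cor:dirSumPres} to commute $h_\gamma$ past the direct sum, identify each summand with $B_i$ via \cref{cor:iota}, and conclude via \cref{lem:sumInJ} that $\bigoplus_{i\in I} B_i$ lies in $\mathcal{J}$, hence is pfd. The only cosmetic difference is that you explicitly invoke \cref{dfn:tame} and \cref{prp:tameJ} to frame the reduction, which the paper leaves implicit.
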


\begin{proof}
  By \cref{cor:dirSumPres} the direct sum
  $\bigoplus_{i \in I} (\iota \circ p)(i)$
  is mapped to
  $\bigoplus_{i \in I} (h_{\gamma} \circ \iota \circ p)(i)$.
  Moreover, we have
  $(h_{\gamma} \circ \iota \circ p)(i) \cong B_i$
  by \cref{cor:iota},
  hence
  \[\bigoplus_{i \in I} (h_{\gamma} \circ \iota \circ p)(i) \cong
    \bigoplus_{i \in I} B_i
    ,
  \]
  which is in $\mathcal{J}$ by \cref{lem:sumInJ}.
\end{proof}

Now suppose $F$ is an object of $D^+_t (q_{\gamma}, \partial q)$
and let $p \colon I \rightarrow \op{int} \M$
be some map of sets such that
\begin{equation*}
  \# p^{-1} (u) = \left(\beta^0 \circ h_{\gamma}\right)(F)(u) =
  \dim_{\F} \mathrm{Nat} (h_{\gamma}(F), S_u)
\end{equation*}
for any $u \in \op{int} \M$.
By \cref{cor:decompJ} there is a natural isomorphism
$\varphi \colon \bigoplus_{i \in I} B_i \rightarrow h_{\gamma}(F)$.
For each $j \in I$
let $\psi_j \colon (\iota \circ p)(j) \rightarrow F$
be the image of $1 \in \F = B_j (p(j))$
under the composition of linear maps
\begin{equation*}
  \F = B_j (p(j)) \longrightarrow
  \bigoplus_{i \in I} B_i (p(j)) \xrightarrow{\varphi_{p(j)}}
  h_{\gamma} (F) (p(j)) =
  \Hom_{D^+ (q_{\gamma})} ((\iota \circ p)(j), F)
  .
\end{equation*}
Now the family
$(\psi_i)_{i \in I}$
is a choice function for the indexed family
\[\left(\Hom_{D^+ (q_{\gamma})} ((\iota \circ p)(i), F)\right)_{i \in I}.\]
As such,
the family $(\psi_i)_{i \in I}$ induces a homomorphism
\[\psi \colon \bigoplus_{i \in I} (\iota \circ p)(i) \rightarrow F\]
in the derived category $D^+ (q_{\gamma})$.
Now let $G := \bigoplus_{i \in I} (\iota \circ p)(i)$,
we aim to show that $\psi \colon G \rightarrow F$ is an isomorphism.
We use the following lemma as a stepping stone towards this goal.

\begin{lem}
  \label{lem:inducedIso}
  The natural transformation
  $h_{\gamma} (\psi) \colon h_{\gamma} (G) \rightarrow h_{\gamma} (F)$
  is a natural isomorphism.
\end{lem}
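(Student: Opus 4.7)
The plan is to leverage the preservation of the direct sum by $h_{\gamma}$ (\cref{cor:dirSumPres}) together with the Yoneda-type identification (\cref{cor:iota}) to reduce $h_{\gamma}(\psi)$ to the already established natural isomorphism $\varphi$. First I would invoke \cref{cor:dirSumPres} to obtain a natural isomorphism
\[
  h_{\gamma}(G) = h_{\gamma}\!\left(\bigoplus_{i \in I} (\iota \circ p)(i)\right) \;\cong\; \bigoplus_{i \in I} (h_{\gamma} \circ \iota \circ p)(i),
\]
and then compose with the componentwise natural isomorphisms $(h_{\gamma} \circ \iota)(p(i)) \cong B_i$ supplied by \cref{cor:iota}. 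This yields a natural isomorphism $\Phi \colon h_{\gamma}(G) \xrightarrow{\cong} \bigoplus_{i \in I} B_i$.

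The next step is to show that $h_{\gamma}(\psi) \circ \Phi^{-1} \colon \bigoplus_{i \in I} B_i \to h_{\gamma}(F)$ coincides with $\varphi$. By the Yoneda \cref{lem:yoneda}, it suffices to check that, for every $j \in I$, the two natural transformations agree on the generator $1 \in \F = B_j(p(j))$ when restricted to the $j$-th summand $B_j$. On the one hand, by the very definition of $\psi_j$, the restriction of $\varphi$ to $B_j$ sends $1$ to $\psi_j \in \Hom_{D^+(q_{\gamma})}((\iota \circ p)(j), F) = h_{\gamma}(F)(p(j))$. On the other hand, tracing through $\Phi^{-1}$: under the identification coming from \cref{cor:iota}, the element $1 \in B_j(p(j))$ corresponds to the identity $\op{id}_{(\iota \circ p)(j)}$, which under the isomorphism of \cref{cor:dirSumPres} lifts to the canonical coprojection $(\iota \circ p)(j) \hookrightarrow G$ in $\Hom_{D^+(q_{\gamma})}((\iota \circ p)(j), G) = h_{\gamma}(G)(p(j))$. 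Applying $h_{\gamma}(\psi)$ postcomposes with $\psi$, and by construction of $\psi$ from the family $(\psi_i)_{i \in I}$, this composition is precisely $\psi_j$.

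Thus both natural transformations $h_{\gamma}(\psi) \circ \Phi^{-1}$ and $\varphi$ take the Yoneda generator of each summand to the same element of $h_{\gamma}(F)(p(j))$, so they agree. Since $\varphi$ is a natural isomorphism by \cref{cor:decompJ} and $\Phi$ is a natural isomorphism by the preceding paragraph, $h_{\gamma}(\psi) = \varphi \circ \Phi$ is a natural isomorphism as claimed.

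The main delicacy is ensuring that the identifications of \cref{cor:dirSumPres} and \cref{cor:iota} really do send the coprojection of the $j$-th summand to the Yoneda generator $1 \in B_j(p(j))$; this is essentially a definition-unwinding, but it does require careful bookkeeping of the functoriality of the isomorphism in \cref{cor:iota} under direct sums.
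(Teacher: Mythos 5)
Your proof is correct and follows essentially the same approach as the paper's: both set up the square with $\bigoplus_i B_i$, $\bigoplus_i (h_\gamma \circ \iota \circ p)(i)$, $h_\gamma(G)$, $h_\gamma(F)$, invoke \cref{cor:dirSumPres}, \cref{cor:iota}, and \cref{cor:decompJ} to see three of the four sides are isomorphisms, and conclude the fourth is as well. The only difference is that the paper simply asserts the square commutes, whereas you explicitly verify this by chasing the Yoneda generator $1 \in B_j(p(j))$ through the identifications — a worthwhile piece of bookkeeping that the paper leaves implicit.
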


\begin{proof}
  We consider the commutative diagram
  \begin{equation*}
    \begin{tikzcd}[row sep=5ex]
      \bigoplus_{i \in I} (h_\gamma \circ \iota \circ p)(i)
      \arrow[r]
      &
      h_\gamma (G)
      \arrow[d, "h_\gamma (\psi)"]
      \\
      \bigoplus_{i \in I} B_{i}
      \arrow[u]
      \arrow[r, "\varphi"']
      &
      h_\gamma (F)
    \end{tikzcd}
  \end{equation*}
  of functors ${\M^{\circ} \rightarrow \VectF}$
  and natural transformations.
  By \cref{cor:dirSumPres} the horizontal arrow at the top
  is a natural isomorphism.
  Moreover, the vertical arrow on the left
  is a natural isomorphism by \cref{cor:iota}.
  Furthermore,
  the natural transformation
  ${\varphi \colon \bigoplus_{i \in I} B_{i} \rightarrow h_{\gamma} (F)}$
  is a natural isomorphism
  by \cref{cor:decompJ}.
  As a result
  ${h_{\gamma}(\psi) \colon h_{\gamma}(G) \rightarrow h_{\gamma}(F)}$
  is an isomorphism as well.
\end{proof}

In conjunction with \cref{prp:conservative} we obtain the following.

\begin{cor}
  \label{cor:decompDerCat}
  The homomorphism
  $\psi \colon G \rightarrow F$
  is an isomorphism.
\end{cor}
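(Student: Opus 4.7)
The plan is to deduce the statement directly from the two preceding results, \cref{lem:inducedIso} and \cref{prp:conservative}, which together leave essentially nothing to prove beyond verifying that we are in the right category.

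First I would check that the morphism $\psi \colon G \to F$ actually lives in the subcategory $D^+(q_\gamma, \partial q)$ on which conservativity of $h_{\gamma,0}$ is asserted. By hypothesis $F$ is an object of $D^+_t(q_\gamma, \partial q) \subset D^+(q_\gamma, \partial q)$. For $G = \bigoplus_{i \in I} (\iota \circ p)(i)$, each summand $(\iota \circ p)(i)$ is by construction an object of $D^+(q_\gamma, \partial q)$ (since $p$ takes values in $\op{int} \M$ and $\iota$ lands in $D^+(q_\gamma, \partial q)$), and the direct sum of complexes whose cohomology sheaves vanish on $\partial q$ again has cohomology sheaves vanishing on $\partial q$. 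Hence $\psi$ is indeed a morphism in $D^+(q_\gamma, \partial q)$.

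Next, \cref{lem:inducedIso} tells us that $h_\gamma(\psi) \colon h_\gamma(G) \to h_\gamma(F)$ is a natural isomorphism. Restricting along the inclusion $D^+(q_\gamma, \partial q) \hookrightarrow D^+(q_\gamma)$, this is the same as saying that $h_{\gamma,0}(\psi)$ is a natural isomorphism. Now \cref{prp:conservative} asserts precisely that the functor $h_{\gamma,0}$ is conservative, so from $h_{\gamma,0}(\psi)$ being an isomorphism we conclude that $\psi$ itself is an isomorphism in $D^+(q_\gamma, \partial q)$, and hence also in $D^+(q_\gamma)$.

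There is no real obstacle here — both halves of the argument have already been carried out. The only subtlety worth emphasising in the writeup is the membership check above: conservativity of $h_\gamma$ fails on all of $D^+(q_\gamma)$ (since $h_\gamma$ vanishes on objects supported entirely on $\partial q$), so it is crucial that $G$ and $F$, and therefore $\psi$, lie in the vanishing-on-$\partial q$ subcategory. Once that is in place the conclusion is immediate.
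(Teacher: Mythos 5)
Your proof is correct and follows exactly the paper's route: \cref{lem:inducedIso} gives that $h_\gamma(\psi)$ is an isomorphism, and \cref{prp:conservative} (conservativity of $h_{\gamma,0}$ on $D^+(q_\gamma, \partial q)$) then yields that $\psi$ is an isomorphism. The membership check you include (that $G$ and $F$ lie in $D^+(q_\gamma,\partial q)$) is a reasonable point of care, though the paper takes it as implicit.
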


\begin{thm}
  \label{thm:equiv}
  The functor
  $h_{\gamma,0,t} \colon D^+_t (q_{\gamma}, \partial q) \rightarrow \mathcal{J}$
  is an equivalence of $\F$-linear categories.
\end{thm}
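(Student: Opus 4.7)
The plan is to verify that $h_{\gamma,0,t}$ is both essentially surjective and fully faithful, and to handle each property by reducing to the building blocks $\iota(u)$, $u \in \op{int} \M$, via the decomposition results already at our disposal.

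For essential surjectivity, I start with $F' \in \mathcal{J}$ and apply \cref{cor:decompJ} to fix a natural isomorphism $F' \cong \bigoplus_{i \in I} B_{p(i)}$ for some map $p \colon I \to \op{int} \M$ whose fibre-cardinality function is an admissible Betti function (as guaranteed by \cref{lem:admissibleBetti}). Setting $G := \bigoplus_{i \in I} \iota(p(i))$ in $D^+(q_{\gamma}, \partial q)$, \cref{cor:dirSumPres} combined with \cref{cor:iota} gives
\[
  h_{\gamma}(G) \cong \bigoplus_{i \in I} h_{\gamma}(\iota(p(i))) \cong \bigoplus_{i \in I} B_{p(i)} \cong F'.
\]
In particular $G$ is tame, hence $G \in D^+_t(q_{\gamma}, \partial q)$ and $h_{\gamma,0,t}(G) \cong F'$.

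For full faithfulness, fix objects $F, F' \in D^+_t(q_{\gamma}, \partial q)$. By \cref{cor:decompDerCat} I may replace them, up to isomorphism, by $\bigoplus_{i \in I} \iota(p(i))$ and $\bigoplus_{j \in J} \iota(q(j))$ respectively. Using that these direct sums are coproducts as well as products in $D^+(q_{\gamma}, \partial q)$ (the latter by \cref{prp:dirSumProd}), and that in $\mathcal{J}$ direct sums simultaneously compute coproducts and products (thanks to point-wise finite-dimensionality), the morphism map induced by $h_{\gamma,0,t}$ becomes the product over pairs $(i,j)$ of the individual maps
\[
  \Hom_{D^+(q_{\gamma})}(\iota(p(i)), \iota(q(j))) \longrightarrow \op{Nat}(B_{p(i)}, B_{q(j)}).
\]
By \cref{prp:iota} the source is either zero or the line spanned by $\iota(p(i) \preceq q(j))$, while by the Yoneda \cref{lem:yoneda} combined with \cref{cor:iota} the target is naturally identified with $B_{q(j)}(p(i)) \cong h_{\gamma}(\iota(q(j)))(p(i)) = \Hom_{D^+(q_{\gamma})}(\iota(p(i)), \iota(q(j)))$. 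Tracing through the definitions, these identifications make the above map the identity on this one-dimensional (or zero) space, which settles the claim.

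The main technical hurdle is \cref{prp:dirSumProd}, which asserts that possibly infinite direct sums of the form $\bigoplus_{i} \iota(p(i))$ satisfy the universal property of the product in $D^+(q_{\gamma}, \partial q)$ and whose proof is postponed to \cref{sec:dirSumProd}; with it granted, the Hom-set computation collapses to the single-generator case, where the Yoneda-type identification of \cref{cor:iota} and the vanishing/non-vanishing dichotomy of \cref{prp:iota} match up on the nose. The $\F$-linearity of the equivalence is automatic, since $h_{\gamma}$ is defined as a Hom-functor and therefore preserves the additive and $\F$-linear structure throughout.
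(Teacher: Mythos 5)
Your proposal is correct and follows essentially the same route as the paper's proof: both establish essential surjectivity by decomposing any $F' \in \mathcal{J}$ via \cref{cor:decompJ} and then invoking \cref{cor:iota} and \cref{cor:dirSumPres}, and both establish full faithfulness by decomposing objects of $D^+_t(q_\gamma,\partial q)$ via \cref{cor:decompDerCat}, using \cref{prp:dirSumProd} and pointwise-finite-dimensionality to collapse the Hom-set comparison to the single-generator case, which is then settled by \cref{prp:iota}, \cref{cor:iota}, and the Yoneda \cref{lem:yoneda}. The only difference is presentational: the paper makes the reduction to the single-generator case explicit through a commutative square, while you state it more compactly, but the content is the same.
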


\begin{proof}
  First we show that $h_{\gamma,0,t}$ is essentially surjective.
  To this end, let $F \colon \M^{\circ} \rightarrow \vectF$
  be a functor in $\mathcal{J}$.
  By \cref{cor:decompJ} the functor $F$ is naturally isomorphic
  to a direct sum
  $\bigoplus_{i \in I} B_{p(i)} \colon \M^{\circ} \rightarrow \vectF$
  for some map of sets $p \colon I \rightarrow \op{int} \M$
  with $\# p^{-1}(u) = \beta^0(F)(u)$ for any $u \in \op{int} \M$.
  Now
  ${\bigoplus_{i \in I} B_{p(i)} \cong
    \bigoplus_{i \in I} (h_{\gamma} \circ \iota \circ p)(i)}$
  by \cref{cor:iota}.
  Moreover,
  ${h_{\gamma} \left(\bigoplus_{i \in I} (\iota \circ p)(i)\right) \cong
    \bigoplus_{i \in I} (h_{\gamma} \circ \iota \circ p)(i)}$
  by \cref{cor:dirSumPres},
  hence
  ${h_{\gamma,0,t} \colon D^+_t (q_{\gamma}, \partial q) \rightarrow \mathcal{J}}$
  is essentially surjective.
  
  Next we show ${h_{\gamma,0,t}}$ is fully faithful.
  To this end,
  let $F$ and $G$ be objects of $D^+_t (q_{\gamma}, \partial q)$,
  then we have
  \begin{equation*}
    F \cong \bigoplus_{i \in I} (\iota \circ p_1)(i)
    \quad \text{and} \quad
    G \cong \bigoplus_{j \in J} (\iota \circ p_2)(j)
  \end{equation*}
  for some maps of sets
  $p_1 \colon I \rightarrow \op{int} \M$ and
  $p_2 \colon J \rightarrow \op{int} \M$
  with
  \begin{equation}
    \label{eq:sumsFandG}
    \# p_1^{-1}(u) = (\beta^0 \circ h_{\gamma})(F)(u)
    \quad \text{and} \quad
    \# p_2^{-1}(u) = (\beta^0 \circ h_{\gamma})(G)(u)
  \end{equation}
  for any ${u \in \op{int} \M}$
  by \cref{cor:decompDerCat}.
  Moreover,
  by \cref{prp:dirSumProd} the direct sum
  ${\bigoplus_{j \in J} (\iota \circ p_2)(j)}$
  satisfies the universal property
  of the corresponding product in ${D^+_t (q_{\gamma}, \partial q)}$.
  Furthermore, the functor
  $h_{\gamma} \colon D^+ (q_{\gamma}) \rightarrow \VectF^{\M^{\circ}}$
  preserves the direct sums in \eqref{eq:sumsFandG}
  by \cref{cor:dirSumPres}.
  As any pfd direct sum of functors is a direct product as well
  we obtain the commutative diagram
  \begin{equation*}
    \begin{tikzcd}[row sep=7ex]
      \Hom_{D^+(q_{\gamma})} (F, G)
      \arrow[r, "\cong"]
      \arrow[d, "h_{\gamma}"']
      &
      \prod_{i \in I, j \in J}
      \Hom_{D^+(q_{\gamma})} ((\iota \circ p)(i), (\iota \circ p)(j))
      \arrow[d, "\prod_{i \in I, j \in J} h_{\gamma}"]
      \\
      \mathrm{Nat} (h_{\gamma}(F), h_{\gamma}(G))
      \arrow[r, "\cong"]
      &
      \prod_{i \in I, j \in J}
      \mathrm{Nat}
      ((h_{\gamma} \circ \iota \circ p)(i), (h_{\gamma} \circ \iota \circ p)(j))
    \end{tikzcd}
  \end{equation*}
  with both horizontal maps isomorphisms.
  Thus,
  it suffices to show
  that the restriction of
  ${h_{\gamma} \colon D^+ (q_{\gamma}) \rightarrow \VectF^{\M^{\circ}}}$
  to the full subcategory on the image of $\iota$
  is fully faithful.
  To this end, we note that for a homomorphism
  ${\psi \colon \iota(u) \rightarrow \iota(v)}$,
  where ${u, v \in \op{int} \M}$,
  the natural transformation
  \[h_\gamma (\psi) \colon
    (h_\gamma \circ \iota)(u) \rightarrow (h_\gamma \circ \iota)(v)\]
  sends the identity
  \[\op{id}_{\iota(u)} \in (h_\gamma \circ \iota)(u)(u) =
    \Hom_{\Der (q_{\gamma})} (\iota(u), \iota(u))
  \]
  to
  \[\psi \in (h_\gamma \circ \iota)(v)(u) =
    \Hom_{\Der (q_{\gamma})} (\iota(u), \iota(v))
    .
  \]
  By \cref{cor:iota} and the Yoneda \cref{lem:yoneda}
  this describes a one-to-one correspondence between
  $\Hom_{\Der (q_{\gamma})} (\iota(u), \iota(v))$
  and $\op{Nat} ((h_\gamma \circ \iota)(u), (h_\gamma \circ \iota)(v))$.
\end{proof}

\begin{cor}
  \label{cor:equivBdClosed}
  If $\partial q$ is closed in $q_{\gamma}$,
  then the composition of functors
  \begin{equation*}
    D^+_t (\dot{q}) \xrightarrow{R i_*}
    D^+_t (q_{\gamma}) \xrightarrow{h_{\gamma,t}}
    \mathcal{J}
  \end{equation*}
  yields an equivalence of categories.
\end{cor}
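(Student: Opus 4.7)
The plan is to deduce the corollary from Theorem~\ref{thm:equiv} by pulling it back along the adjoint equivalence of Lemma~\ref{lem:vanishingTameInteriorAdj}. Since $\partial q$ is closed in $q_{\gamma}$ by hypothesis, Lemma~\ref{lem:vanishingTameInteriorAdj} promotes the adjunction~\eqref{eq:vanishingTameInteriorAdj} to an adjoint equivalence
\[
D^+(i^{-1}) \colon D^+_t (q_{\gamma}, \partial q) \rightleftarrows D^+_t(\dot{q}) \colon R i_!.
\]
Composing its quasi-inverse $R i_!$ with the equivalence $h_{\gamma,0,t} \colon D^+_t (q_{\gamma}, \partial q) \to \mathcal{J}$ provided by Theorem~\ref{thm:equiv} then produces an equivalence
\[
h_{\gamma,0,t} \circ R i_! \colon D^+_t(\dot{q}) \xrightarrow{\simeq} \mathcal{J}.
\]

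It remains to identify this composite with $h_{\gamma,t} \circ R i_*$ up to natural isomorphism. Using the identity $i_! = \flat \circ i_*$ of~\eqref{eq:dirImProperSuppAsFlat}, Corollary~\ref{cor:compsWithDirIms} (its upper triangular 2-cell) gives a natural isomorphism $R i_! \cong R \flat \circ R i_*$; independently, Lemma~\ref{lem:compatCorefl} says that $h_{\gamma}$ carries the counit $\varepsilon \colon R \flat \Rightarrow \mathrm{id}_{D^+(q_{\gamma})}$ to a natural isomorphism $h_{\gamma} \circ R \flat \xrightarrow{\cong} h_{\gamma}$. Chaining these together,
\[
h_{\gamma,0,t} \circ R i_! \;\cong\; h_{\gamma} \circ R \flat \circ R i_* \;\cong\; h_{\gamma} \circ R i_* \;=\; h_{\gamma,t} \circ R i_*,
\]
where the final equality is just the restriction of $h_{\gamma}$ to the tame subcategory $D^+_t (q_{\gamma})$. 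Hence $h_{\gamma,t} \circ R i_*$ is an equivalence of categories.

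There is no serious obstacle here, since all of the substantive work has already been done in Theorem~\ref{thm:equiv} and Lemma~\ref{lem:vanishingTameInteriorAdj}. The only care required is bookkeeping the full subcategory inclusions $D^+_t (q_{\gamma}, \partial q) \hookrightarrow D^+_t (q_{\gamma})$ and checking that the natural isomorphisms of Corollary~\ref{cor:compsWithDirIms} and Lemma~\ref{lem:compatCorefl} restrict correctly to the tame contexts, which is guaranteed by Lemma~\ref{lem:replSubcatBeckChev} together with the definitions of $h_{\gamma,t}$ and $h_{\gamma,0,t}$.
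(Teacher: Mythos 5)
Your proof is correct and takes essentially the same route as the paper: the paper's own one-line proof cites exactly \cref{cor:compsWithDirIms} and \cref{lem:vanishingTameInteriorAdj} (implicitly in conjunction with \cref{thm:equiv}), and you have simply unpacked that citation into the composite $h_{\gamma,0,t} \circ R i_! \cong h_{\gamma,t} \circ R i_*$ and the observation that the left side is a composition of equivalences. Your final remark on the tame restrictions via \cref{lem:replSubcatBeckChev} and \cref{lem:tameInteriorAdj} is the right bookkeeping and matches the development preceding the corollary.
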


\begin{proof}
  This follows in conjunction with
  \cref{cor:compsWithDirIms} and \cref{lem:vanishingTameInteriorAdj}.
\end{proof}

\subsection{Partial Faithfulness}
\label{sec:partFaithful}

To complete our proof of \cref{thm:equiv}
we still need to provide a proof of \cref{prp:dirSumProd}.
Now in order to prove \cref{prp:dirSumProd}
we will first show with \cref{prp:partFaithful} below that the functor
$h_{\gamma,0} \colon
D^+ (q_{\gamma}, \partial q) \rightarrow
\VectF^{\M^{\circ}}$
is at least partially full and partially faithful
in the sense that the family of homomorphisms
with codomain $(\iota \circ T^{-n})(t)$
is in one-to-one correspondence with corresponding
natural transformations with codomain
$(h_{\gamma} \circ \iota \circ T^{-n})(t)$
for any $n \in \Z$ and $t \in q \setminus \partial q$.
To this end,
let $t \in q \setminus \partial q$ and let $n \in \Z$.
For any $u \in T^{-n}(D)$ we define the following subsets of $q$:
\begin{align*}
  I(u) & :=
         q \cap \op{int} \left(\downarrow T^{n+1}(u)\right) =
         q \setminus (\rho_1 \circ T^n)(u)
         ,
  \\
  C(u) & :=
         q \setminus \left(\uparrow T^n(u)\right) =
         q \setminus (\rho_0 \circ T^n)(u)
         ,
  \\
  \text{and} \quad
  Z(u) & :=
         I(u) \setminus C(u) =
         (\rho_0 \circ T^n)(u) \setminus (\rho_1 \circ T^n)(u) 
         .
\end{align*}
Moreover, we define the functors
\begin{align*}
  \F_I [-n] & \colon T^{-n} (D) \rightarrow C^+ (q_{\gamma}), \,
              u \mapsto \F_{I(u)} [-n]
              ,
  \\
  \F_C [-n] & \colon T^{-n} (D) \rightarrow C^+ (q_{\gamma}), \,
              u \mapsto \F_{C(u)} [-n]
              ,
  \\
  \text{and} \quad
  \F_Z [-n] & \colon T^{-n} (D) \rightarrow C^+ (q_{\gamma}), \,
              u \mapsto \F_{Z(u)} [-n]
              ,
\end{align*}
where $C^+ (q_{\gamma}) := C^+ (\mathrm{Sh}(q_{\gamma}))$
is the category of bounded below cochain complexes of sheaves on $q_{\gamma}$.
We note that
\begin{equation}
  \label{eq:iotaRestr}
  \F_Z [-n] = \iota |_{T^{-n}(D)}
  .
\end{equation}
By \cite[Proposition 2.3.6.(v)]{Kashiwara1990}
we have the short exact sequence
\begin{equation*}
  0 \rightarrow
  \F_C [-n] \rightarrow
  \F_I [-n] \rightarrow
  \F_Z [-n] \rightarrow
  0
\end{equation*}
of functors $T^{-n}(D) \rightarrow C^+ (q_{\gamma})$.
Thus, we obtain the sequence
\begin{equation}
  \label{eq:pwTriangle}
  \F_C [-n] \rightarrow
  \F_I [-n] \rightarrow
  \F_Z [-n] \rightarrow
  \F_C [1-n]
\end{equation}
of functors ${T^{-n}(D) \rightarrow D^+ (q_{\gamma})}$,
which is point-wise a distinguished triangle in ${D^+ (q_{\gamma})}$,
see also \mbox{\cite[Equation (2.6.33)]{Kashiwara1990}}.
Now let $q_0$ be the unique point of intersection of $q$ and $l_0$.
Similarly let $q_1$ be the unique point of intersection of $q$ and $l_1$.
We note that $t \in q$ lies on the vertical line through $q_0$
iff $q_0 \preceq t$.
Similarly $t$ lies on the horizontal line through $q_1$
iff $q_1 \preceq t$.
For $u \in T^{-n}(D)$ we write $C_0(u) \subseteq C(u)$
for the connected component of $C(u)$ containing $q_0$ if it exists
and otherwise we set $C_0(u) = \emptyset$.
We define $C_1(u) \subseteq C(u)$ analogously as well as the functors
\begin{align*}
  \\
  \F_{C_0} [-n] & \colon T^{-n} (D) \rightarrow C^+ (q_{\gamma}), \,
                  u \mapsto \F_{C_0 (u)} [-n]
  \\
  \text{and} \quad
  \F_{C_1} [-n] & \colon T^{-n} (D) \rightarrow C^+ (q_{\gamma}), \,
                  u \mapsto \F_{C_1 (u)} [-n]
                  .
\end{align*}
Now let $m = n, n-1$ and let
\[P 
  \colon
  (T^{-n}(D))^{\circ} \rightarrow \vectF, \,
  u \mapsto
  \Hom_{D^+(q_{\gamma})}\left(\F_{I(u)} [-n], (\iota \circ T^{-n})(t)\right)
  .
\]
Then $P$ is isomorphic to the functor
\begin{equation*}
  P' \colon (T^{-n}(D))^{\circ} \rightarrow \mathrm{vect}_{\F},
  u \mapsto
  \begin{cases}
    \F & u \in \op{int} \left(\uparrow T^{-n-1}(t)\right)
    \\
    \{0\} & \text{otherwise},
  \end{cases}
\end{equation*}
whose internal maps
are identities whenever both domain and codomain are $\F$
and zero otherwise.
For any functor
$F \colon (T^{-n}(D))^{\circ} \rightarrow \VectF$
we have
\[\mathrm{Nat} (F, P') \cong
  \left(
    \varinjlim_{u \in \op{int} \left(\uparrow T^{-n-1}(t)\right)} F(u)
  \right)^*.
\]
As direct limits are exact as well as the dual space functor,
the functor ${\mathrm{Nat} (-, P') \cong \mathrm{Nat} (-, P)}$
is exact as well.
Now suppose $F$ is an object of $D^+ (q_{\gamma})$.
In the following we use 
$\Hom
(\F_{C_0} [-m], F)$
as a short hand for the functor
\begin{equation*}
  (T^{-n}(D))^{\circ} \rightarrow \VectF, \,
  u \mapsto
  \Hom_{D^+(q_{\gamma})}\left(\F_{C_0(u)} [-m], F\right)  
\end{equation*}
and similarly
for $\F_{C_1} [-m]$, $\F_{I} [-n]$, $\F_{C} [-n]$, or $\F_{Z} [-n]$
in place of $\F_{C_0} [-m]$.

\begin{lem}
  \label{lem:boundaryStalks}
  We have
  \begin{equation*}
    \mathrm{Nat}
    \left(
      \Hom
      (\F_{C_0} [-m], F),
      P
    \right)
    \cong
    \begin{cases}
      \displaystyle \varinjlim_{q_0 \in U} H^m(U; F)
      &
      q_0 \preceq t
      \\
      \{0\}
      &
      \text{otherwise.}
    \end{cases}
  \end{equation*}
  Similarly we have
  \begin{equation*}
    \mathrm{Nat}
    \left(
      \Hom
      (\F_{C_1} [-m], F),
      P
    \right)
    \cong
    \begin{cases}
      \displaystyle \varinjlim_{q_1 \in U} H^m(U; F)
      &
      q_1 \preceq t
      \\
      \{0\}
      &
      \text{otherwise.}
    \end{cases}
  \end{equation*}  
\end{lem}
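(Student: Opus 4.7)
The plan is to apply the given formula for $\mathrm{Nat}(-, P')$ together with the isomorphism $P \cong P'$, and then identify the resulting direct limit geometrically. Setting $S := \op{int}(\uparrow T^{-n-1}(t))$ and writing $H(u) := \Hom_{D^+(q_\gamma)}(\F_{C_0(u)}[-m], F)$, the formula yields
\[
\mathrm{Nat}(\Hom(\F_{C_0}[-m], F), P) \cong \left(\varinjlim_{u \in S} H(u)\right)^*.
\]
Since $C_0(u)$ is a connected component of the closed set $C(u) = q \setminus \uparrow T^n(u)$ in $q_\gamma$, it is itself closed, so by \cite[Equation (2.3.16)]{Kashiwara1990} the space $H(u)$ is naturally isomorphic to the local sheaf cohomology $H^m_{C_0(u)}(q_\gamma; F)$, expressing each term via cohomology with closed support.

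For the case $q_0 \not\preceq t$, by the characterization preceding the lemma, $t$ does not lie on the vertical line through $q_0$. Combined with the incidence relations defining $T$, a coordinate argument shows that for $u \in S$ sufficiently close to $T^{-n-1}(t)$ one has $T^n(u) \preceq q_0$, so $q_0 \in \uparrow T^n(u)$ and $q_0 \notin C(u)$. Thus $C_0(u) = \emptyset$ cofinally, the local cohomology vanishes cofinally, and both the colimit and its dual are zero, matching the claim.

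For the case $q_0 \preceq t$, I will construct for each $u \in S$ a connected open upset $U(u) \subseteq q_\gamma$ containing $q_0$ (for instance $U(u) := q \cap \op{int}(\downarrow T^{n+1}(u'))$ for appropriate $u' \preceq u$, possibly intersected with a further upset cutting out one side of $q$) chosen so that $C_0(u) \subseteq U(u)$ is closed and $H^\bullet(U(u) \setminus C_0(u); F)$ vanishes in degrees $m-1$ and $m$. Combining the excision isomorphism $H^m_{C_0(u)}(q_\gamma; F) \cong H^m_{C_0(u)}(U(u); F)$ from \cite[Proposition 2.3.9.(iii)]{Kashiwara1990} with the long exact sequence from \cite[Equation (2.6.32)]{Kashiwara1990}
\[
\cdots \to H^m_{C_0(u)}(U(u); F) \to H^m(U(u); F) \to H^m(U(u) \setminus C_0(u); F) \to \cdots
\]
then yields $H^m_{C_0(u)}(q_\gamma; F) \cong H^m(U(u); F)$. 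Arranging $\{U(u)\}_{u \in S}$ to be cofinal among open neighborhoods of $q_0$ in $q_\gamma$ and passing to the direct limit identifies the whole expression, after dualization, with $\varinjlim_{q_0 \in U} H^m(U; F)$.

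The main obstacle is the geometric construction of the open sets $U(u)$ with all the required properties simultaneously, namely the cohomological vanishing of $U(u) \setminus C_0(u)$ in the relevant degrees and cofinality of $\{U(u)\}_{u \in S}$ among open neighborhoods of $q_0$ in the $\gamma$-topology, together with the delicate reflexivity step that reconciles the outer dual in the formula with the direct limit on the right-hand side of the claim. The analogous statement for $C_1$ is proved by the same argument with $q_1$ in place of $q_0$ and horizontal lines replacing vertical ones.
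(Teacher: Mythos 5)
The central step of your argument rests on a false premise: $C(u) = q \setminus \uparrow T^n(u)$ is \emph{open} in $q_\gamma$, not closed. Upsets such as $\uparrow T^n(u)$ are closed in the $\gamma$-topology (their complements are open downsets), and indeed the short exact sequence $0 \to \F_C[-n] \to \F_I[-n] \to \F_Z[-n] \to 0$ obtained from \cite[Proposition 2.3.6.(v)]{Kashiwara1990} exhibits $C(u)$ as the \emph{open} complement of the closed subset $Z(u)$ of the open set $I(u)$. Consequently $\Hom_{D^+(q_\gamma)}(\F_{C_0(u)}[-m], F) \cong H^m(C_0(u); F)$ is ordinary sheaf cohomology of an open set, not local cohomology supported on a closed set. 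This misidentification is what forces you into the construction you yourself flag as the main obstacle, and that construction cannot succeed: you would need a nonempty open set $U(u) \setminus C_0(u)$ whose cohomology vanishes in degrees $m-1$ and $m$ for an \emph{arbitrary} object $F$ of $D^+(q_\gamma)$, and no choice of set achieves that. (Intersecting $U(u)$ with a further upset, as you suggest, would moreover destroy openness in $q_\gamma$.)

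With the correct identification the lemma is much closer than you think. Since $q_\gamma$ is locally connected (basic opens $q \cap \op{int}(\downarrow v)$ are intervals of the staircase $q$), each $C_0(u)$ is itself open, and the transition maps of $\Hom(\F_{C_0}[-m], F)$ are restrictions $H^m(C_0(u); F) \rightarrow H^m(C_0(u'); F)$ for $u' \preceq u$. Both cases then reduce to elementary geometry of $q$: one checks that $q_0 \preceq t$ holds iff $t$ lies on the vertical segment of $q$ through $q_0$; that $q_0 \not\preceq t$ forces $T^{-1}(t) \in \op{int}(\downarrow q_0)$ \emph{strictly} (this uses $t \notin l_1$, and the strictness is exactly what your sketch of the vanishing case asserts without proof --- without it there need not be a cofinal set of $u$ with $T^n(u) \preceq q_0$); and that when $q_0 \preceq t$ the sets $C_0(u)$, which cofinally equal $\{t' \in q : t'_2 < (T^n(u))_2\}$, form a cofinal family of basic open neighborhoods of $q_0$ in $q_\gamma$, so the colimit is $\varinjlim_{q_0 \in U} H^m(U;F)$ with no auxiliary sets or excision needed. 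Finally, note that the displayed formula for $\mathrm{Nat}(-,P')$ produces the \emph{dual} $\bigl(\varinjlim_{q_0 \in U} H^m(U;F)\bigr)^*$; the statement as printed omits this dual, which is immaterial for \cref{cor:complVanishing} since only the vanishing is used, so the ``reflexivity step'' you worry about is a wrinkle in the statement itself rather than something your argument can supply.
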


\begin{cor}
  \label{cor:complVanishing}
  For any object $F$ of $D^+ (q_{\gamma}, \partial q)$ we have
  $\mathrm{Nat}
  \left(
    \Hom
    (\F_{C} [-m], F),
    P
  \right)
  \cong
  \{0\}
  .$
\end{cor}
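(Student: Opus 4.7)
The plan is to deduce this corollary from \cref{lem:boundaryStalks} by splitting $\F_C$ according to the two boundary components. For every $u \in T^{-n}(D)$ the closed subset $C(u) = q \setminus (\rho_0 \circ T^n)(u)$ is the disjoint union of $C_0(u)$ and $C_1(u)$ (one or both of which may be empty, but they are always disjoint since the connected components of $\partial q$ sit in the opposite corners of $\M$). Consequently $\F_{C(u)} \cong \F_{C_0(u)} \oplus \F_{C_1(u)}$ in $\mathrm{Sh}(q_\gamma)$, naturally in $u$, and hence there is a natural isomorphism of functors $(T^{-n}(D))^\circ \to \VectF$
\begin{equation*}
  \Hom(\F_C[-m], F) \cong \Hom(\F_{C_0}[-m], F) \oplus \Hom(\F_{C_1}[-m], F).
\end{equation*}

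Next I would use that $\mathrm{Nat}(-, P)$ is an exact, and in particular additive, functor — this was already noted just before the statement of \cref{lem:boundaryStalks} using $P \cong P'$ and the identification with a dualized filtered colimit. Applying it to the splitting above yields
\begin{equation*}
  \mathrm{Nat}(\Hom(\F_C[-m], F), P) \cong
  \mathrm{Nat}(\Hom(\F_{C_0}[-m], F), P) \oplus
  \mathrm{Nat}(\Hom(\F_{C_1}[-m], F), P),
\end{equation*}
which, by \cref{lem:boundaryStalks}, reduces the corollary to showing that $\varinjlim_{q_k \in U} H^m(U; F) = 0$ whenever $q_k \preceq t$ (for $k = 0, 1$), under the hypothesis that $F$ lies in $D^+(q_\gamma, \partial q)$.

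For this final step, the point is that a filtered colimit over neighborhoods of $q_k$ commutes with cohomology, so that
\begin{equation*}
  \varinjlim_{q_k \in U} H^m(U; F)
  \cong
  H^m\!\left( \varinjlim_{q_k \in U} R\Gamma(U; F) \right)
  \cong
  (\mathcal{H}^m F)_{q_k},
\end{equation*}
the stalk of the $m$-th cohomology sheaf of $F$ at $q_k$. Since $F$ is in $D^+(q_\gamma, \partial q)$, by the very definition of this subcategory the cohomology sheaves $\mathcal{H}^m F$ vanish on $\partial q$, and in particular their stalks at $q_0, q_1 \in \partial q$ are zero. This forces both summands to vanish, giving the claim.

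The only genuinely delicate point is the identification $\varinjlim_{q_k \in U} H^m(U; F) \cong (\mathcal{H}^m F)_{q_k}$, which I expect to be the main (though minor) obstacle: one needs the filtered system of open neighborhoods of $q_k$ in $q_\gamma$ to be cofinal enough that the stalk of the cohomology presheaf agrees with the stalk of its sheafification, and one needs filtered colimits to commute with $H^m$. Both are standard, but writing them down carefully in the $\gamma$-topology on $q$ deserves an explicit reference, e.g.\ to \cite[Proposition 2.3.9 and Remark 2.6.10]{Kashiwara1990}, since $q_\gamma$ is not a metric space and the topology is coarser than the Euclidean one inherited from $\R^2$.
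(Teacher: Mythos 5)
Your proposal is correct and follows essentially the same route as the paper: decompose $\F_C[-m] \cong \F_{C_0}[-m] \oplus \F_{C_1}[-m]$, use additivity of the exact functor $\mathrm{Nat}(-,P)$, and invoke \cref{lem:boundaryStalks}. The only difference is that you spell out the last step — identifying $\varinjlim_{q_k \in U} H^m(U;F)$ with the stalk of the $m$-th cohomology sheaf at $q_k \in \partial q$, which vanishes since $F$ lies in $D^+(q_\gamma,\partial q)$ — whereas the paper leaves this implicit in its citation of \cref{lem:boundaryStalks}; your explicit treatment is exactly the intended mechanism.
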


\begin{proof}
  We have
  \begin{align*}
    \mathrm{Nat}
    \left(
      \Hom
      (\F_{C} [-m], F),
      P
    \right)
    & =
      \mathrm{Nat}
      \left(
      \Hom
      (\F_{C_0} [-m] \oplus \F_{C_1} [-m], F),
      P
      \right)
    \\
    & =
      \mathrm{Nat}
      \left(
      \Hom
      (\F_{C_0} [-m] , F)
      \oplus
      \Hom
      (\F_{C_1} [-m] , F),
      P
      \right)
    \\
    & =
      \mathrm{Nat}
      \left(
      \Hom
      (\F_{C_0} [-m] , F),
      P
      \right)
      \oplus
      \mathrm{Nat}
      \left(
      \Hom
      (\F_{C_1} [-m] , F),
      P
      \right)
    \\
    & = \{0\} \oplus \{0\}
  \end{align*}
  by \cref{lem:boundaryStalks}.
\end{proof}

\begin{lem}
  \label{lem:HomIso}
  The functor
  $\Hom (\F_I [-n], -)$ induces an isomorphism
  \begin{equation*}
    \Hom_{D^+ (q_{\gamma})} (F, (\iota \circ T^{-n})(t))
    \xrightarrow{~\cong~}
    \mathrm{Nat} (\Hom (\F_I [-n], F), P)
    .
  \end{equation*}
\end{lem}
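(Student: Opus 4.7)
The plan is to exploit the pointwise distinguished triangle
\[
\F_C[-n] \to \F_I[-n] \to \F_Z[-n] \to \F_C[1-n]
\]
of functors $T^{-n}(D) \to D^+(q_{\gamma})$ from \eqref{eq:pwTriangle} to reduce to a Yoneda-type calculation for $\F_Z[-n]$, using the identification $\F_Z[-n] = \iota|_{T^{-n}(D)}$ from \eqref{eq:iotaRestr}.

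First, since $(\iota \circ T^{-n})(t)$ lies in $D^+(q_\gamma, \partial q)$, the counit $\varepsilon_F \colon R\flat F \to F$ induces an isomorphism on the left-hand side; a parallel reduction is available on the right-hand side. Hence we may assume $F \in D^+(q_\gamma, \partial q)$.

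Next, applying $\Hom(-, F)$ to the pointwise distinguished triangle yields a long exact sequence of functors $(T^{-n}(D))^{\circ} \to \VectF$. The key move is to then apply the contravariant exact functor $\mathrm{Nat}(-, P)$, which is exact because (via the identification $P \cong P'$) it factors as the direct limit over $\op{int}(\uparrow T^{-n-1}(t))$ followed by dualization. By \cref{cor:complVanishing}, all terms of the form $\mathrm{Nat}(\Hom(\F_C[-m], F), P)$ vanish, which collapses the long exact sequence to the single isomorphism
\[
\mathrm{Nat}(\Hom(\F_I[-n], F), P) \cong \mathrm{Nat}(\Hom(\F_Z[-n], F), P) = \mathrm{Nat}(h_\gamma(F)|_{T^{-n}(D)}, P),
\]
where the last equality uses $\Hom(\iota(u), F) = h_\gamma(F)(u)$. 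The same computation, applied to $G := (\iota \circ T^{-n})(t)$ in place of $F$, shows that $\op{id}_P$ corresponds under this chain of isomorphisms to a distinguished natural transformation $\delta_G \colon h_\gamma(G)|_{T^{-n}(D)} \to P$, and by naturality of the long exact sequence the comparison map $\psi \mapsto \Hom(\F_I[-n], \psi)$ factors as $\psi \mapsto \delta_G \circ h_\gamma(\psi)|_{T^{-n}(D)}$.

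It then remains to identify $\Hom(F, (\iota \circ T^{-n})(t))$ with $\mathrm{Nat}(h_\gamma(F)|_{T^{-n}(D)}, P)$ via this composite. By \cref{cor:iota} we have $h_\gamma(G) \cong B_{T^{-n}(t)}$, and combining this with the explicit description of $P \cong P'$ on $\op{int}(\uparrow T^{-n-1}(t))$ and the Yoneda lemma \cref{lem:yoneda} will provide the final identification. The main obstacle is precisely this last step: one must carefully track the interaction between $\delta_G$, the Yoneda isomorphism, and the explicit form of $P$, and verify through a diagram chase that the resulting map $\psi \mapsto \delta_G \circ h_\gamma(\psi)|_{T^{-n}(D)}$ is indeed a bijection on $\Hom(F, G)$.
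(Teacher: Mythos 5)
Your proof takes a genuinely different route from the paper's, and it has two gaps, the second of which is fatal. First, the reduction to $F \in D^+(q_\gamma, \partial q)$ via the counit $\varepsilon_F \colon R\flat F \to F$ is not justified. The coreflection adjunction gives $\Hom(X, R\flat F) \cong \Hom(X, F)$ for $X \in D^+(q_\gamma, \partial q)$ — this is how \cref{lem:compatCorefl} works, with $X = \iota(u)$ — but what you need here is the opposite direction: $\Hom(F, G) \cong \Hom(R\flat F, G)$ for $G = (\iota \circ T^{-n})(t) \in D^+(q_\gamma, \partial q)$. Extending $\varepsilon_F$ to a distinguished triangle $R\flat F \to F \to C \to R\flat F[1]$, that reduction would require $\Hom(C, G) \cong \{0\} \cong \Hom(C[-1], G)$, and nothing in the coreflection provides this; $C$ is built from $j_*j^{-1}F$, whose stalks in the $\gamma$-topology spread well beyond $\partial q$. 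The reduction is not optional for you, since \cref{cor:complVanishing} requires $F \in D^+(q_\gamma, \partial q)$, whereas \cref{lem:HomIso} is asserted for arbitrary $F \in D^+(q_\gamma)$.

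Second, and more seriously, the ``final identification'' you flag as the main obstacle — that $\psi \mapsto \delta_G \circ h_\gamma(\psi)|_{T^{-n}(D)}$ is bijective on $\Hom(F,G)$ — is essentially the content of \cref{prp:partFaithful} modulo \cref{lem:univElem} and \cref{lem:restrIso}; and the paper's proof of \cref{prp:partFaithful} in turn invokes \cref{lem:HomIso}. Completing your argument along these lines would therefore be circular. The paper's actual proof of \cref{lem:HomIso} does not use the long exact sequence at all: it identifies both sides of the claimed isomorphism with $\bigl(\varinjlim_{t \in U} H^n(U; F)\bigr)^*$, using \cref{lem:mapDualStalk} for the left-hand vertical comparison, the cofinality of the neighbourhoods $I(u)$ (for $u \in T^{-n}(D)$ with $t \in I(u)$) among all open $U \ni t$ for the bottom horizontal comparison, and the vanishing $P(u) \cong \{0\}$ whenever $t \notin I(u)$ for the right-hand vertical comparison. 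The exact-sequence-plus-\cref{cor:complVanishing} argument you sketch is the engine of \cref{prp:partFaithful}, which is proved afterward, once \cref{lem:HomIso} is available.
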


\begin{proof}
  The set of open neighbourhoods of $t \in q_{\gamma}$ of the form $I(u)$
  for some $u \in T^{-n}(D)$ forms a neighbourhood basis for $t$.
  Thus, the induced map
  \begin{equation*}
    \varinjlim_{\substack{u \in T^{-n}(D) \\ t \in I(u)}} H^n (I(u); F)
    \xrightarrow{~\cong~}
    \varinjlim_{t \in U} H^n (U; F)
  \end{equation*}
  is an isomorphism.
  With this we consider the commutative diagram
  \begin{equation*}
    \begin{tikzcd}[column sep=16ex]
      \Hom_{D^+ (q_{\gamma})} (F, (\iota \circ T^{-n})(t))
      \arrow[r, "{\Hom (\F_I [-n], -)}"]
      \arrow[d]
      &
      \mathrm{Nat} (\Hom (\F_I [-n], F), P)
      \arrow[d]
      \\
      \left(
        \displaystyle
        \varinjlim_{t \in U} H^n (U; F)
      \right)^*
      \arrow[r, "\cong"]
      &
      \left(
        \displaystyle
        \varinjlim_{\substack{u \in T^{-n}(D) \\ t \in I(u)}} H^n (I(u); F)
      \right)^*
      .
    \end{tikzcd}
  \end{equation*}
  Here the vertical map on the left hand side
  takes any homomorphism
  $\psi \colon F \rightarrow (\iota \circ T^{-n})(t)$
  to the family of maps
  $\{H^n (U; \psi) \colon H^n(U; F) \rightarrow \F \mid t \in U\}$
  and then to the naturally induced map of type
  $\varinjlim_{t \in U} H^n (U; F) \rightarrow \F$.
  By \cref{lem:mapDualStalk} this map is an isomorphism.
  The vertical map on the right hand side
  takes any natural transformation
  $\eta \colon \Hom (\F_I [-n], F) \rightarrow P$
  to the family of maps
  $\{\eta_u \colon H^n(I(u); F) \rightarrow \F \mid t \in I(u)\}$
  and then to the naturally induced map of type
  \begin{equation*}
    \varinjlim_{\substack{u \in T^{-n}(D) \\ t \in I(u)}} H^n (I(u); F) \rightarrow \F
    .
  \end{equation*}
  As $P(u) \cong \{0\}$ for all $u \in T^{-n}(D)$ with $t \notin I(u)$,
  this vertical map on the right hand side is an isomorphism.
  As a result,
  we obtain that the horizontal map at the top
  is an isomorphism as well.
\end{proof}

Now let
\[B := \Hom(\F_Z [-n], (\iota \circ T^{-n})(t)) \colon
  (T^{-n}(D))^{\circ} \rightarrow \vectF,
\]
then we have
\[B =
  (h_{\gamma} \circ \iota \circ T^{-n})(t) |_{T^{-n}(D)} \cong
  B_{T^{-n}(t)} |_{T^{-n}(D)}
\]
by \eqref{eq:iotaRestr} and \cref{cor:iota}.
We consider
\[\mathrm{Nat} (-, P) =
  \mathrm{Nat} (-, \Hom(\F_I [-n], (\iota \circ T^{-n})(t)))\]
as a functor from the category of functors
$(T^{-n}(D))^{\circ} \rightarrow \VectF$
vanishing on $\partial \M \cap T^{-n}(D)$
to the category of vector spaces over $\F$.

\begin{lem}
  \label{lem:univElem}
  The natural transformation
  \[B = \Hom(\F_Z [-n], (\iota \circ T^{-n})(t)) \rightarrow
    \Hom(\F_I [-n], (\iota \circ T^{-n})(t)) = P\]
  is a universal element of the functor
  $\mathrm{Nat} (-, P)$.
  In other words,
  for any natural transformation
  $\eta \colon F \rightarrow P$
  with $F$ vanishing on $\partial \M \cap T^{-n}(D)$
  there is a unique natural transformation
  $\eta' \colon F \rightarrow B$ such that the diagram
  \begin{equation*}
    \begin{tikzcd}
      &
      B
      \arrow[d]
      \\
      F
      \arrow[ru, "\eta'"]
      \arrow[r , "\eta"']
      &
      P
    \end{tikzcd}
  \end{equation*}
  commutes.
\end{lem}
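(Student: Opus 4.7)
Plan: The strategy is to use the pointwise distinguished triangle (\ref{eq:pwTriangle}) to derive a four-term exact sequence of functors on $(T^{-n}(D))^{\circ}$, and then exploit support properties of the boundary terms together with the hypothesis that $F$ vanishes on $\partial \M \cap T^{-n}(D)$.

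First, apply $\Hom_{D^+(q_{\gamma})}(-, \iota T^{-n}(t))$ pointwise to the triangle $\F_C[-n] \to \F_I[-n] \to \F_Z[-n] \to \F_C[1-n]$ from (\ref{eq:pwTriangle}). Since $\Hom$ into a fixed object is a cohomological functor on $D^+(q_{\gamma})$, this yields a four-term exact sequence of functors $(T^{-n}(D))^{\circ} \to \VectF$,
\begin{equation*}
X_{-1} \xrightarrow{\alpha} B \xrightarrow{\beta} P \xrightarrow{\gamma} X_0,
\end{equation*}
where $X_m := \Hom(\F_C[m-n], \iota T^{-n}(t))$ for $m \in \{-1, 0\}$. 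I would then decompose this into the two short exact sequences $0 \to K \to B \to \mathrm{im}\,\beta \to 0$ and $0 \to \mathrm{im}\,\beta \to P \to N \to 0$, where $K := \ker\beta = \mathrm{im}\,\alpha$ is a quotient of $X_{-1}$ and $N := \mathrm{coker}\,\beta$ is a subobject of $X_0$.

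The crux is to show that both $K$ and $N$ are supported on $\partial \M \cap T^{-n}(D)$. Since $K$ is a quotient of $X_{-1}$ and $N$ is a subfunctor of $X_0$, it suffices to prove that $X_{-1}(u) = 0 = X_0(u)$ for all $u \in T^{-n}(D) \setminus \partial \M$. Using the decomposition $\F_{C(u)} = \F_{C_0(u)} \oplus \F_{C_1(u)}$, this reduces to analysing $\Hom_{D^+(q_\gamma)}(\F_{C_j(u)}[m-n], \iota T^{-n}(t))$ for $j \in \{0, 1\}$ separately. Following the pattern of the proof of \cref{lem:boundaryStalks}, each such space is expressible as a direct limit of (local) sheaf cohomology of $\iota T^{-n}(t)$ over shrinking neighbourhoods of $q_0$ or $q_1$. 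The indecomposable structure of $\iota T^{-n}(t)$, combined with the assumption that $t \notin \partial q$ (so that $t \neq q_0, q_1$), forces vanishing at all interior points $u$ of $T^{-n}(D)$.

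Once the support claim is established, the remainder is a formal diagram chase. For any $F$ vanishing on $\partial \M \cap T^{-n}(D)$, any natural transformation from $F$ into a functor supported on $\partial \M \cap T^{-n}(D)$ is zero: pointwise, either the codomain vanishes (at interior points) or the domain vanishes (at boundary points). Hence $\mathrm{Nat}(F, K) = 0 = \mathrm{Nat}(F, N)$. Applying the left exact functor $\mathrm{Nat}(F, -)$ to the two short exact sequences above then yields bijections $\mathrm{Nat}(F, B) \cong \mathrm{Nat}(F, \mathrm{im}\,\beta) \cong \mathrm{Nat}(F, P)$, whose composition is $\mathrm{Nat}(F, \beta)$. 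This is precisely the required universal property. The main obstacle is the rigorous verification of the support claim: \cref{lem:boundaryStalks} is phrased as a computation of natural transformations \emph{into} $P$ rather than as pointwise vanishing of the $X_m$, so one has to replay its proof in the present form, carefully tracking the geometric configuration of $q_0$, $q_1$, and $t$ relative to a generic interior point $u \in T^{-n}(D)$. This is essentially case analysis along the two-component decomposition of $C(u)$, and once in place the argument closes formally.
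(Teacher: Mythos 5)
There is a genuine gap: the support claim at the heart of your argument is false, on two levels. First, the reduction to pointwise vanishing of $X_{-1}$ and $X_0$ on the interior has a false premise. For $u$ in the interior of $T^{-n}(D)$ one has
\begin{equation*}
  X_0(u) \;=\; \Hom_{D^+(q_{\gamma})}\bigl(\F_{C(u)}[-n],\,(\iota\circ T^{-n})(t)\bigr)
  \;\cong\; \Hom_{\mathrm{Sh}(q_\gamma)}\bigl(\F_{C(u)},\F_{Z(t)}\bigr)
  \;\cong\; \Gamma\bigl(C(u);\F_{Z(t)}\bigr)
\end{equation*}
since $C(u)$ is open, and this is nonzero for every interior $u$ for which $Z(t)\cap C(u)$ contains a component closed in $C(u)$. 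For instance, in the setting of \cref{sec:derivedLevelSet} the set $Z(t)$ is the closed point $\{t\}$, so $X_0(u)\cong\F$ whenever $t\notin\uparrow T^n(u)$ --- which holds for most interior $u$. Note that \cref{lem:boundaryStalks} does not assert pointwise vanishing of the $X_m$; it computes the colimits $\mathrm{Nat}(\Hom(\F_{C_j}[-m],F),P)$, i.e.\ costalks of $X_m$ at the boundary points $q_0,q_1$, and a costalk can vanish while every individual value $X_m(u)$ is nonzero (this is exactly how \cref{cor:complVanishing} is used in \cref{prp:partFaithful}). Second, and fatally, the target of your reduction is itself false: $\op{coker}(B\to P)$ is \emph{not} supported on $\partial\M\cap T^{-n}(D)$. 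Using the explicit models $P\cong P'$ and $B\cong B_{T^{-n}(t)}|_{T^{-n}(D)}$, the map $B\to P$ is the inclusion of indicator functors, and its cokernel is the indicator functor of $\bigl(\op{int}(\uparrow T^{-n-1}(t))\setminus(\downarrow T^{-n}(t))\bigr)\cap T^{-n}(D)$, a region consisting largely of interior points. No case analysis along $C_0, C_1$ can rescue a claim that is false. (There is also a smaller logical slip: left exactness of $\mathrm{Nat}(F,-)$ applied to $0\to K\to B\to\op{im}\beta\to 0$ gives only injectivity of $\mathrm{Nat}(F,B)\to\mathrm{Nat}(F,\op{im}\beta)$, not the asserted bijection, even granting $\mathrm{Nat}(F,K)=0$.)

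The mechanism that actually proves the lemma is a cofreeness property rather than a support property, and it is most easily seen without the triangle \eqref{eq:pwTriangle}: since $P\cong P'$ and $B\cong B_{T^{-n}(t)}|_{T^{-n}(D)}$ are explicit indicator functors with identity internal maps, the map $B\to P$ is a monomorphism whose cokernel is the right Kan extension $\op{Ran}_{\partial\M}P|_{\partial\M}$ of the restriction of $P$ to $\partial\M\cap T^{-n}(D)$. The adjunction between restriction and right Kan extension then gives
$\mathrm{Nat}\bigl(F,\op{Ran}_{\partial\M}P|_{\partial\M}\bigr)\cong\mathrm{Nat}\bigl(F|_{\partial\M},P|_{\partial\M}\bigr)=0$
for every $F$ vanishing on $\partial\M\cap T^{-n}(D)$, even though the Kan extension is nonzero at many interior points: its values there are cofreely determined by boundary values, so any natural transformation into it from a boundary-vanishing functor dies. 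Combining this with left exactness of $\mathrm{Nat}(F,-)$ applied to the short exact sequence $0\to B\to P\to\op{Ran}_{\partial\M}P|_{\partial\M}\to 0$ yields the universal property. If you want to keep your triangle-based setup, you would still need to identify $\op{coker}(B\to P)$ (equivalently, the relevant subfunctor of $X_0$) as such a right Kan extension from the boundary; mere vanishing statements will not suffice.
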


\begin{proof}
  As $P \cong P'$ we have the short exact sequence
  \begin{equation*}
    0 \rightarrow
    B \rightarrow
    P \rightarrow
    \op{Ran}_{\partial \M} P |_{\partial \M} \rightarrow
    0,
  \end{equation*}
  where
  $\op{Ran}_{\partial \M} P |_{\partial \M}$
  is the right Kan extension of the restriction of $P$
  to $\partial \M \cap T^{-n}(D)$ along the inclusion
  $\partial \M \cap T^{-n}(D) \hookrightarrow T^{-n}(D)$.
  As $\op{Ran}_{\partial \M} F |_{\partial \M} \cong 0$
  for any functor $F \colon (T^{-n}(D))^{\circ} \rightarrow \VectF$
  vanishing on $\partial \M \cap T^{-n}(D)$
  the result follows.
\end{proof}

\begin{lem}
  \label{lem:restrIso}
  For any functor
  $F \colon \M^{\circ} \rightarrow \VectF$
  the restriction map
  \begin{equation*}
    \mathrm{Nat} (F, B_{T^{-n}(t)}) \cong
    \mathrm{Nat} (F, (h_{\gamma} \circ \iota \circ T^{-n})(t))
    \rightarrow
    \mathrm{Nat} (F |_{T^{-n}(D)}, B)
  \end{equation*}
  is an isomorphism.
\end{lem}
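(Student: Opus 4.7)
The plan is to verify that the restriction map is a bijection by a direct component-wise analysis, relying on a key geometric fact about the support of $B_{T^{-n}(t)}$. Set $v := T^{-n}(t)$ and let $R_v := (\downarrow v) \cap \op{int}(\uparrow T^{-1}(v))$ denote the support of $B_v := B_{T^{-n}(t)}$. Since $t \in q \setminus \partial q$ lies in the interior of the upper boundary $q$ of $D$, the corner $v$ lies on the upper diagonal $T^{-n}(q)$ of the tile $T^{-n}(D)$, while the excluded opposite corner $T^{-1}(v)$ lies on $T^{-n-1}(q)$. A direct check in coordinates, using that $T^{-n}(q)$ and $T^{-n-1}(q)$ are parallel diagonals bounding the tile, shows that $R_v$ sits entirely inside $T^{-n}(D)$ and meets $T^{-n}(q)$ only at $v$.

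Injectivity of the restriction map is then immediate: any natural transformation $\eta \colon F \to B_v$ is zero outside $R_v \subseteq T^{-n}(D)$, so it is determined by its restriction to $T^{-n}(D)$. For surjectivity, given $\eta' \colon F|_{T^{-n}(D)} \to B$, I extend it by zero to a candidate $\eta \colon F \to B_v$. Because $B_v$ already vanishes off $T^{-n}(D)$, the only nontrivial naturality squares to verify are those in which $u \in R_v$ and $u' \in \M \setminus T^{-n}(D)$ are comparable; in each one, we must show that $\eta_u \circ F(u \preceq u') = 0$ (the case $u' \preceq u$ being symmetric).

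The strategy is to find an auxiliary point $w'' \in T^{-n}(q) \setminus \{v\} \subseteq T^{-n}(D) \setminus R_v$ together with a suitable $w \in R_v$ satisfying $w \preceq w''$ and $w \preceq v$, so that the restriction-level vanishing $\eta_w \circ F(w \preceq w'') = 0$ combined with the compatibility $\eta_v = \eta_w \circ F(w \preceq v)$ propagates through the factorization $F(w \preceq u') = F(w \preceq w'') \circ F(w'' \preceq u')$ to yield $\eta_u \circ F(u \preceq u') = 0$. Generically one takes $w = u$ and picks $w''$ from the segment $[u, u'] \cap T^{-n}(q) \setminus \{v\}$, which is non-empty because the interval $[u, u']$ crosses the diagonal $T^{-n}(q)$ in a non-degenerate segment. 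The main obstacle is the degenerate case where $[u, u']$ is tangent to $T^{-n}(q)$ only at the corner $v$: here one perturbs $u$ slightly along the upper edge of $R_v$ to a nearby $w \in R_v \setminus \{v\}$, so that the enlarged rectangle $[w, u']$ meets $T^{-n}(q)$ transversally and an appropriate $w''$ can be chosen. A short case analysis keyed to the position of $u'$ relative to $v$ then completes the argument.
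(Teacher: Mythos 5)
Your overall plan is sound: injectivity is immediate once one knows the support $R_v$ of $B_{T^{-n}(t)}$ is contained in $T^{-n}(D)$, and surjectivity reduces to checking that the zero extension of $\eta'$ is natural, which comes down to the single family of squares with $u \in T^{-n}(D)$, $u' \notin T^{-n}(Q)$, $u \preceq u'$. The problem is the geometric input you feed into it. The claim that $R_v$ \enquote{meets $T^{-n}(q)$ only at $v$}, justified by \enquote{$T^{-n}(q)$ and $T^{-n-1}(q)$ are parallel diagonals}, is false at the level of generality the lemma needs. In this section $q$ is the boundary of an \emph{arbitrary} closed proper downset $Q$, so $T^{-n}(q)$ can contain vertical and horizontal segments, and if $v$ sits on such a segment then $R_v \cap T^{-n}(q)$ is an entire interval of the curve. (The \enquote{parallel diagonals} picture applies only to the specific choice $q = \op{Im}\blacktriangle$ from the subsection on derived level set persistence; the lemma is stated for a general $q_\gamma$.) Once that claim is dropped, the point $w'' \in [u, u'] \cap T^{-n}(q) \setminus \{v\}$ you pick may itself lie in $R_v$, in which case $B(w'') \ne 0$ and the naturality square you invoke yields nothing.

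The perturbation fallback does not close the gap either. A small perturbation of $u$ need not produce a usable $w''$: the point of $T^{-n}(D)$ inside $[w, u']$ that leaves $R_v$ can sit far from $v$ (in the worst configurations one has to push $w$ nearly all the way down to $T^{-1}(v)$), and even then one still has to rule out that $[w, u'] \cap T^{-n}(Q)$ coincides exactly with $[w, u' \wedge v]$. Ruling this out is the actual content of the lemma; it uses $T^{-1}(Q) \subsetneq Q$ and the fact that $T$ interchanges vertical and horizontal segments between $T^{-n}(q)$ and $T^{-n-1}(q)$, none of which your sketch invokes. (The paper omits a proof of this lemma, so there is nothing to compare against, but the geometric assertion you treat as \enquote{a direct check in coordinates} is both false in general and, once corrected, is precisely where the work lies.)
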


\begin{prp}
  \label{prp:partFaithful}
  For any object $F$ of $D^+ (q_{\gamma}, \partial q)$
  the natural map
  \begin{equation*}
    \Hom_{D^+ (q_{\gamma})} (F, (\iota \circ T^{-n})(t))
    \xrightarrow{~h_{\gamma}~}
    \mathrm{Nat}(h_{\gamma}(F), (h_{\gamma} \circ \iota \circ T^{-n})(t))
  \end{equation*}
  is an isomorphism.
\end{prp}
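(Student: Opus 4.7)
The plan is to build the desired isomorphism as a chain of natural bijections through auxiliary functors on $T^{-n}(D)$, using the pointwise distinguished triangle \eqref{eq:pwTriangle} as the bridge between $\F_I[-n]$ and $\F_Z[-n]$. On the one hand, Lemma \ref{lem:HomIso} identifies the left-hand side $\Hom_{D^+(q_{\gamma})}(F, (\iota \circ T^{-n})(t))$ with $\mathrm{Nat}(\Hom(\F_I[-n], F), P)$. On the other hand, the identification $\F_Z[-n] = \iota|_{T^{-n}(D)}$ from \eqref{eq:iotaRestr} gives $\Hom(\F_Z[-n], F) = h_{\gamma}(F)|_{T^{-n}(D)}$ and $\Hom(\F_Z[-n], (\iota \circ T^{-n})(t)) = B$; hence the right-hand side $\mathrm{Nat}(h_{\gamma}(F), (h_{\gamma} \circ \iota \circ T^{-n})(t))$ reduces, via Lemma \ref{lem:restrIso} (together with \cref{cor:iota}), to $\mathrm{Nat}(\Hom(\F_Z[-n], F), B)$.

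It then suffices to link these two intermediate Nat-groups. First I would apply the cohomological functor $\Hom_{D^+(q_{\gamma})}(-, F)$ pointwise to \eqref{eq:pwTriangle}, producing an exact sequence of functors $(T^{-n}(D))^{\circ} \to \VectF$:
\[
\Hom(\F_C[1-n], F) \to \Hom(\F_Z[-n], F) \to \Hom(\F_I[-n], F) \to \Hom(\F_C[-n], F).
\]
Applying the contravariant exact functor $\mathrm{Nat}(-, P)$ and invoking \cref{cor:complVanishing} (whose hypothesis $F \in D^+(q_{\gamma}, \partial q)$ is exactly what we have) to kill the two outer Nat-groups yields an isomorphism $\mathrm{Nat}(\Hom(\F_I[-n], F), P) \cong \mathrm{Nat}(\Hom(\F_Z[-n], F), P)$. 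Because $\iota(u) \cong 0$ for $u \in \partial \M$, the functor $\Hom(\F_Z[-n], F)$ vanishes on $\partial \M \cap T^{-n}(D)$, so \cref{lem:univElem} finally gives $\mathrm{Nat}(\Hom(\F_Z[-n], F), B) \cong \mathrm{Nat}(\Hom(\F_Z[-n], F), P)$.

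Composing these isomorphisms produces a bijection between the two sides of the statement. The remaining, mainly bookkeeping, task is to show that this composite agrees with the map induced by $h_{\gamma}$. This amounts to tracing a morphism $\psi \colon F \to (\iota \circ T^{-n})(t)$ through each step and observing that every identification is induced by postcomposition with $\psi$. The only place where this requires care is the middle step, where one must check that the universal element $B \to P$ of \cref{lem:univElem} is precisely the one induced by the middle arrow of \eqref{eq:pwTriangle} under $\Hom(-, (\iota \circ T^{-n})(t))$; I expect this to follow directly from unwinding the definitions, and anticipate no further obstacles.
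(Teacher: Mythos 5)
Your proposal is correct and follows essentially the same route as the paper: identify the source with $\mathrm{Nat}(\Hom(\F_I[-n],F),P)$ via \cref{lem:HomIso}, the target with $\mathrm{Nat}(\Hom(\F_Z[-n],F),B)$ via \cref{lem:restrIso}, bridge them using \cref{lem:univElem} together with the exact sequence obtained by applying $\Hom_{D^+(q_{\gamma})}(-,F)$ to \eqref{eq:pwTriangle} followed by the exact functor $\mathrm{Nat}(-,P)$, with \cref{cor:complVanishing} killing the outer terms. The only difference is presentational: the paper packages the compatibility with $h_{\gamma}$ as a commutative diagram asserted upfront, whereas you defer it to a final bookkeeping check, which is the same content.
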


\begin{proof}
  We consider the commutative diagram
  \begin{equation*}
    \begin{tikzcd}[row sep=7ex]
      \Hom_{D^+ (q_{\gamma})} (F, (\iota \circ T^{-n})(t))
      \arrow[ d, "h_{\gamma}"']
      \arrow[rd, "{\Hom(\F_I [-n], -)}"]
      \\
      \mathrm{Nat}(h_{\gamma}(F), (h_{\gamma} \circ \iota \circ T^{-n})(t))
      \arrow[d]
      &
      \mathrm{Nat}(\Hom(\F_I [-n], F), P)
      \arrow[d]
      \\
      \mathrm{Nat}(\Hom(\F_Z [-n], F), B)
      \arrow[r]
      &
      \mathrm{Nat}(\Hom(\F_I [-n], F), P)
      .
    \end{tikzcd}
  \end{equation*}
  By \cref{lem:restrIso}
  the vertical map on the lower left hand side
  is an isomorphism.
  Moreover,
  by \cref{lem:univElem}
  the horizontal map at the bottom is an isomorphism.
  Furthermore,
  the diagonal map on the upper right hand side
  induced by the functor ${\Hom(\F_I [-n], -)}$
  is an isomorphism by \cref{lem:HomIso}.
  Thus,
  it suffices to show that the vertical map on the right hand side
  \begin{equation*}
    \mathrm{Nat}(\Hom(\F_I [-n], F), P)
    \rightarrow
    \mathrm{Nat}(\Hom(\F_I [-n], F), P)
  \end{equation*}
  is an isomorphism.
  To this end,
  we post-compose the point-wise distinguished triangle \eqref{eq:pwTriangle}
  of functors
  with the cohomological functor
  $\Hom_{D^+(q_{\gamma})} (-, F)$
  to obtain the exact sequence
  \begin{equation*}
    \Hom(\F_C [1-n], F) \rightarrow
    \Hom(\F_Z [ -n], F) \rightarrow
    \Hom(\F_I [ -n], F) \rightarrow
    \Hom(\F_C [ -n], F)
  \end{equation*}
  of functors $(T^{-n}(D))^{\circ} \rightarrow \VectF$.
  Now we apply the exact functor $\mathrm{Nat} (-, P)$
  to this exact sequence to obtain the exact sequence
  \begin{equation}
    \label{eq:NatHomExact}
    \begin{tikzcd}
      \mathrm{Nat} (\Hom(\F_C [ -n], F), P)
      \arrow[d]
      \\
      \mathrm{Nat} (\Hom(\F_I [ -n], F), P)
      \arrow[d]
      \\
      \mathrm{Nat} (\Hom(\F_Z [ -n], F), P)
      \arrow[d]
      \\
      \mathrm{Nat} (\Hom(\F_C [1-n], F), P)
    \end{tikzcd}
  \end{equation}
  in turn.
  By \cref{cor:complVanishing} we have
  \begin{equation*}
    \mathrm{Nat} (\Hom(\F_C [ -n], F), P) \cong
    \{0\} \cong
    \mathrm{Nat} (\Hom(\F_C [1-n], F), P)
  \end{equation*}
  and thus the vertical map in the center of \eqref{eq:NatHomExact}
  has to be an isomorphism.
\end{proof}

\subsection{Products Vanishing on $\partial q$}
\label{sec:dirSumProd}

In the following we provide a proof of \cref{prp:dirSumProd}
by considering the induced maps on cohomology stalks
of the direct sum and the coreflection of the derived product
into $D^+ (q_{\gamma}, \partial q)$.
To this end,
let $t \in q \setminus \partial q$ and let $n \in \Z$
as in \cref{sec:partFaithful}.

\begin{lem}
  \label{lem:dirSumFlip}
  For a family of objects
  $\{F_i \mid i \in I\}$ in $D^+ (q_{\gamma}, \partial q)$
  the naturally induced map
  \begin{equation*}
    \mathrm{Nat} \left(
      h_{\gamma} \left(
        \bigoplus_{i \in I} F_i
      \right),
      (h_{\gamma} \circ \iota \circ T^{-n})(t)
    \right)
    \longrightarrow
    \mathrm{Nat} \left(
      \bigoplus_{i \in I} 
      h_{\gamma} \left(
        F_i
      \right),
      (h_{\gamma} \circ \iota \circ T^{-n})(t)
    \right)    
  \end{equation*}
  is an isomorphism.
\end{lem}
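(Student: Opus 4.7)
The plan is to reduce both sides to a common description by applying \cref{prp:partFaithful} twice, together with the universal property of coproducts.

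First I would apply \cref{prp:partFaithful} to $F := \bigoplus_{i \in I} F_i$—this remains an object of $D^+(q_\gamma, \partial q)$ because cohomology sheaves commute with arbitrary direct sums of sheaves, so vanishing on $\partial q$ is preserved—to obtain the isomorphism
\[
\mathrm{Nat}\Bigl(h_\gamma\Bigl(\bigoplus_{i \in I} F_i\Bigr), (h_\gamma \circ \iota \circ T^{-n})(t)\Bigr)
\;\cong\;
\Hom_{D^+(q_\gamma)}\Bigl(\bigoplus_{i \in I} F_i, (\iota \circ T^{-n})(t)\Bigr).
\]
Since the termwise direct sum of complexes is a coproduct in $D^+(q_\gamma)$ (using an injective resolution of the target), the right hand side identifies with the product $\prod_{i \in I} \Hom_{D^+(q_\gamma)}(F_i, (\iota \circ T^{-n})(t))$.

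Next I would apply \cref{prp:partFaithful} to each summand $F_i$ individually, obtaining
\[
\prod_{i \in I} \Hom_{D^+(q_\gamma)}(F_i, (\iota \circ T^{-n})(t))
\;\cong\;
\prod_{i \in I} \mathrm{Nat}\bigl(h_\gamma(F_i), (h_\gamma \circ \iota \circ T^{-n})(t)\bigr).
\]
Finally, by the universal property of coproducts in $\VectF^{\M^{\circ}}$ this product identifies with $\mathrm{Nat}(\bigoplus_{i \in I} h_\gamma(F_i), (h_\gamma \circ \iota \circ T^{-n})(t))$. Composing the three isomorphisms produces a bijection between the two vector spaces in the lemma.

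The one step that requires care, and is the real content of the argument, is verifying that this composite bijection is precisely the naturally induced map of the statement. That map is precomposition with the family $\{h_\gamma(\epsilon_i)\}_{i \in I}$, where $\epsilon_i \colon F_i \to \bigoplus_j F_j$ denotes the $i$-th coproduct injection. Unwinding the definitions along the three isomorphisms, a homomorphism $\psi \colon \bigoplus_i F_i \to (\iota \circ T^{-n})(t)$ is sent to the family $\bigl(h_\gamma(\psi \circ \epsilon_i)\bigr)_{i \in I}$ in both descriptions, because the isomorphism in \cref{prp:partFaithful} is natural in the source. This bookkeeping is the main obstacle, but it is purely formal and involves no further input.
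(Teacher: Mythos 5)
Your proposal is correct and is essentially the paper's argument: the paper organizes the same two applications of \cref{prp:partFaithful} together with the two universal-property isomorphisms into a single commutative square (with the naturality you verify at the end built in as the square's commutativity), and concludes by the two-out-of-three property for isomorphisms.
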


\begin{proof}
  We consider the commutative diagram
  \begin{equation*}
    \begin{tikzcd}[column sep=9ex]
      \Hom_{D^+(q_{\gamma})} (\bigoplus_{i \in I} F_i, (\iota \circ T^{-n})(t))
      \arrow[r, "h_{\gamma}"]
      \arrow[dd]
      &
      \mathrm{Nat} \left(
        h_{\gamma} \left(
          \bigoplus_{i \in I} F_i
        \right),
        (h_{\gamma} \circ \iota \circ T^{-n})(t)
      \right)
      \arrow[d]
      \\
      &
      \mathrm{Nat} \left(
        \bigoplus_{i \in I} 
        h_{\gamma} \left(
          F_i
        \right),
        (h_{\gamma} \circ \iota \circ T^{-n})(t)
      \right)
      \arrow[d]
      \\
      \prod_{i \in I}
      \Hom_{D^+(q_{\gamma})} (F_i, (\iota \circ T^{-n})(t))
      \arrow[r, "\prod_{i \in I} h_{\gamma}"']
      &
      \prod_{i \in I}
      \mathrm{Nat} \left(
        h_{\gamma} \left(
          F_i
        \right),
        (h_{\gamma} \circ \iota \circ T^{-n})(t)
      \right)
      .
    \end{tikzcd}
  \end{equation*}
  By \cref{prp:partFaithful} the two horizontal maps are isomorphisms.
  Moreover,
  the vertical map on the left hand side
  and the vertical map on the lower right hand side
  are both isomorphisms by the universal property of the direct sum.
  Thus,
  the vertical map on the upper right hand side is an isomorphism as well.
\end{proof}

Now let $p \colon I \rightarrow \op{int} \M$
be some map of sets
such that the assignment
\begin{equation*}
  \op{int} \M \rightarrow \N_0, \,
  u \mapsto \# p^{-1}(u)
\end{equation*}
is an admissible Betti function.
Again we write
$B_i := B_{p(i)} \colon \M^{\circ} \rightarrow \vectF$
for any $i \in I$.

\begin{proof}[Proof of \cref{prp:dirSumProd}]
  Let
  \begin{equation*}
    \kappa \colon
    \bigoplus_{i \in I} (\iota \circ p)(i) \rightarrow
    R \prod_{i \in I} (\iota \circ p)(i)
  \end{equation*}
  be the naturally induced homomorphism
  from the direct sum to the derived product.
  Then there is a unique homomorphism
  \begin{equation*}
    \kappa^{\#} \colon
    \bigoplus_{i \in I} (\iota \circ p)(i) \rightarrow
    R \flat R \prod_{i \in I} (\iota \circ p)(i)
  \end{equation*}
  such that the diagram
  \begin{equation}
    \label{eq:kappaSharp}
    \begin{tikzcd}[row sep=7ex]
      &
      R \flat R \prod_{i \in I} (\iota \circ p)(i)
      \arrow[d, "\varepsilon_{R \prod_{i \in I} (\iota \circ p)(i)}"]
      \\
      \bigoplus_{i \in I} (\iota \circ p)(i)
      \arrow[ur, "\kappa^{\#}"]
      \arrow[ r, "\kappa"']
      &
      R \prod_{i \in I} (\iota \circ p)(i)
    \end{tikzcd}
  \end{equation}
  commutes.
  We show that the induced map on cohomology stalks
  \begin{equation*}
    \varinjlim_{t \in U} H^n (U; \kappa^{\#}) \colon
    \varinjlim_{t \in U} H^n \left(
      U; \bigoplus_{i \in I} (\iota \circ p)(i)
    \right)
    \longrightarrow
    \varinjlim_{t \in U} H^n \left(
      U; R \flat R \prod_{i \in I} (\iota \circ p)(i)
    \right)    
  \end{equation*}
  is an isomorphism.
  By \cref{lem:mapDualStalk} we may as well show
  that the induced map
  \begin{equation*}
    \begin{tikzcd}[row sep=10ex]
      \Hom_{D^+(X)} \left(
        R \flat R \prod_{i \in I} (\iota \circ p)(i),
        (\iota \circ T^{-n})(t)
      \right)
      \arrow[d,
      "{\Hom_{D^+(X)} \left(\kappa^{\#}, (\iota \circ T^{-n})(t)\right)}"
      description]
      \\
      \Hom_{D^+(X)} \left(
        \bigoplus_{i \in I} (\iota \circ p)(i),
        (\iota \circ T^{-n})(t)
      \right)
    \end{tikzcd}
  \end{equation*}
  is an isomorphism.
  \cref{prp:partFaithful} in turn implies
  that
  it suffices to show that the induced map
  \begin{equation*}
    \begin{tikzcd}[row sep=10ex]
      \mathrm{Nat} \left(
        h_{\gamma} \left(
          R \flat R \prod_{i \in I} (\iota \circ p)(i)
        \right),
        (h_{\gamma} \circ \iota \circ T^{-n})(t)
      \right)
      \arrow[d,
      "{\mathrm{Nat} \left(
          h_{\gamma} \left(\kappa^{\#}\right),
          (h_{\gamma} \circ \iota \circ T^{-n})(t)
        \right)}"
      description]
      \\
      \mathrm{Nat} \left(
        h_{\gamma} \left(
          \bigoplus_{i \in I} (\iota \circ p)(i)
        \right),
        (h_{\gamma} \circ \iota \circ T^{-n})(t)
      \right)
    \end{tikzcd}    
  \end{equation*}
  is an isomorphism.
  To this end,
  we apply the functor
  \begin{equation*}
    \mathrm{Nat} (h_{\gamma} (-), (h_{\gamma} \circ \iota \circ T^{-n})(t))
    \colon
    D^+ (q_{\gamma}) \rightarrow \VectF
  \end{equation*}
  to the commutative triangle \eqref{eq:kappaSharp}
  to obtain the commutative triangle
  \begin{equation*}
    \begin{tikzcd}[row sep=10ex, column sep=-10ex]
      \mathrm{Nat} \left(
        h_{\gamma} \left(
          R \prod_{i \in I} (\iota \circ p)(i)
        \right), (h_{\gamma} \circ \iota \circ T^{-n})(t)
      \right)
      \arrow[ddr, bend right,
      "{\mathrm{Nat} \left(
          h_{\gamma} \left(
            \kappa
          \right), (h_{\gamma} \circ \iota \circ T^{-n})(t)
        \right)}"']
      \arrow[dr,
      "{\mathrm{Nat} \left(
          h_{\gamma} \left(
            \varepsilon_{R \prod_{i \in I} (\iota \circ p)(i)}
          \right), (h_{\gamma} \circ \iota \circ T^{-n})(t)
        \right)}"]
      \\
      &
      \mathrm{Nat} \left(
        h_{\gamma} \left(
          R \flat R \prod_{i \in I} (\iota \circ p)(i)
        \right), (h_{\gamma} \circ \iota \circ T^{-n})(t)
      \right)
      \arrow[d,
      "{\mathrm{Nat} \left(
          h_{\gamma} \left(
            \kappa^{\#}
          \right), (h_{\gamma} \circ \iota \circ T^{-n})(t)
        \right)}"]
      \\
      &
      \mathrm{Nat} \left(
        h_{\gamma} \left(
          \bigoplus_{i \in I} (\iota \circ p)(i)
        \right), (h_{\gamma} \circ \iota \circ T^{-n})(t)
      \right)
      .
    \end{tikzcd}
  \end{equation*}
  By \cref{lem:compatCorefl}
  the upper left diagonal map
  ${\mathrm{Nat} \left(
      h_{\gamma} \left(
        \varepsilon_{R \prod_{i \in I} (\iota \circ p)(i)}
      \right), (h_{\gamma} \circ \iota \circ T^{-n})(t)
    \right)}$
  is an isomorphism.
  Thus,
  it suffices to show that the curved map on the left hand side
  ${\mathrm{Nat} \left(
      h_{\gamma} \left(
        \kappa
      \right), (h_{\gamma} \circ \iota \circ T^{-n})(t)
    \right)}$
  is an isomorphism.
  To this end,
  we consider the commutative diagram
  \begin{equation*}
    \begin{tikzcd}[column sep=9ex, row sep=5ex]
      h_{\gamma} \left(
        \bigoplus_{i \in I} (\iota \circ p)(i)
      \right)
      \arrow[r, "h_{\gamma} (\kappa)"]
      &
      h_{\gamma} \left(
        R \prod_{i \in I} (\iota \circ p)(i)
      \right)
      \arrow[d]
      \\
      \bigoplus_{i \in I} 
      (h_{\gamma} \circ \iota \circ p)(i)
      \arrow[u]
      \arrow[r]
      &
      \prod_{i \in I} 
      (h_{\gamma} \circ \iota \circ p)(i)
      \\
      \bigoplus_{i \in I} 
      B_{p(i)}
      \arrow[u]
      \arrow[r, equal]
      &
      \prod_{i \in I} 
      B_{p(i)}
      .
      \arrow[u]
    \end{tikzcd}
  \end{equation*}
  Here the equality at the bottom follows from \cref{lem:sumInJ}.
  Moreover,
  the two vertical arrows in the second row are natural isomorphisms
  by \cref{cor:iota}.
  By the universal property of the derived product
  the vertical arrow on the upper right hand side
  is a natural isomorphism as well.
  Furthermore,
  if we apply the functor
  \begin{equation}
    \label{eq:NatFunc}
    \mathrm{Nat} (-, (h_{\gamma} \circ \iota \circ T^{-n})(t))
    \colon
    \VectF^{\M^{\circ}} \rightarrow \VectF
  \end{equation}
  to the vertical map on the upper left hand side,
  then we obtain an isomorphism by \cref{lem:dirSumFlip}.
  Thus applying the functor \eqref{eq:NatFunc}
  to the horizontal arrow at the top
  yields an isomorphism as well
  and this implies the result.
\end{proof}

\section{Abelianization of $D^+_t (q_{\gamma}, \partial q)$}
\label{sec:abelianizationSheaves}

Recall that $\mathcal{C}$ is the category of functors
$\M^{\circ} \rightarrow \VectF$
vanishing on $\partial \M$.

\begin{prp}
  \label{prp:abelianizationSheaves}
  The category
  of $\mathcal{J}$-presentable functors
  $\mathrm{pres}(\mathcal{J})$
  is an abelian subcategory of $\mathcal{C}$.
  Moreover,
  the cohomological functor
  \[{D^+_t (q_{\gamma}, \partial q) \xrightarrow{~h_{\gamma}~}
      \mathcal{J} \hookrightarrow \mathrm{pres}(\mathcal{J})}\]
  is the abelianization of $D^+_t (q_{\gamma}, \partial q)$.
  In other words,
  the precomposition functor
  $(-) \circ h_{\gamma,0,t}$
  from the category of exact functors
  $\mathrm{pres}(\mathcal{J}) \rightarrow \mathcal{A}$
  to the category of cohomological functors
  $D^+_t (q_{\gamma}, \partial q) \rightarrow \mathcal{A}$  
  is an equivalence of categories
  for any abelian category $\mathcal{A}$.
\end{prp}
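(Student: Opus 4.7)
The plan is to deduce this proposition from the abstract criterion \cref{prp:abelianization} (flagged in the introduction as a mild generalization of \cite[Section 4.2]{MR2355771}) together with the equivalence ${h_{\gamma,0,t} \colon D^+_t(q_{\gamma}, \partial q) \to \mathcal{J}}$ already established in \cref{thm:equiv}. Such a criterion should say, roughly, that whenever $\mathcal{A}$ is an abelian category with enough projectives, and its additive subcategory $\mathcal{P} \subseteq \mathcal{A}$ of projectives arises as the essential image of a triangulated category $\mathcal{T}$ under a cohomological functor ${\mathcal{T} \to \mathcal{A}}$ which induces an equivalence $\mathcal{T} \simeq \mathcal{P}$, and moreover every morphism in $\mathcal{P}$ is induced by an honest morphism in $\mathcal{T}$ that can be completed to a distinguished triangle, then $\mathcal{T} \to \mathcal{A}$ is the universal cohomological functor out of $\mathcal{T}$.

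First I would verify that $\mathrm{pres}(\mathcal{J})$ is an abelian subcategory of $\mathcal{C}$. Closure under cokernels is built into the definition. For closure under kernels, given a morphism ${\psi \colon G \to F}$ in $\mathrm{pres}(\mathcal{J})$, I would choose projective covers ${P \to G}$ and ${Q \to F}$ by objects of $\mathcal{J}$ via \cref{prp:projCover}, lift $\psi$ to a morphism ${\tilde{\psi} \colon P \to Q}$ using that $P$ is projective in $\mathcal{C}$ (\cite[Corollary 3.6]{2021arXiv210809298B}), transport $\tilde{\psi}$ through $h_{\gamma,0,t}$, and complete to a distinguished triangle in $D^+_t(q_{\gamma}, \partial q)$. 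Bringing the third vertex back to $\mathcal{J}$ gives a weak kernel of $\tilde{\psi}$ in $\mathcal{J}$, which, together with the kernels of the two covers, produces a presentation of $\ker(\psi)$. A diagram chase leveraging once more the projectivity of $\mathcal{J}$-objects in $\mathcal{C}$ identifies the resulting object with the kernel of $\psi$ taken in $\mathcal{C}$.

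Next I would verify the universal property. Given any cohomological functor ${H \colon D^+_t(q_{\gamma}, \partial q) \to \mathcal{A}}$, the equivalence $h_{\gamma,0,t}$ transports it to an additive functor ${\tilde{H} \colon \mathcal{J} \to \mathcal{A}}$ which sends every image of a distinguished triangle to a short exact sequence. For a presentable functor $F$ with presentation ${P_1 \xrightarrow{\partial} P_0 \to F \to 0}$ in $\mathcal{J}$ I would define ${\bar{H}(F) := \op{coker}(\tilde{H}(\partial))}$. Functoriality on morphisms follows because, by \cref{cor:JiffProj}, the $\mathcal{J}$-objects $P_0$ are projective in $\mathrm{pres}(\mathcal{J})$, so any morphism $F \to F'$ lifts to a chain map between chosen presentations, and any two lifts differ by a chain homotopy which becomes invisible after applying $\tilde{H}$ and passing to cokernels. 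Exactness of $\bar{H}$ reduces to showing that a short exact sequence in $\mathrm{pres}(\mathcal{J})$ can be covered termwise by a short exact sequence of presentations with entries in $\mathcal{J}$, and then unravelling the resulting configuration via an octahedron in $D^+_t(q_{\gamma}, \partial q)$.

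The main obstacle I anticipate is packaging these Freyd-type arguments cleanly enough that \cref{prp:abelianization} can be applied off the shelf rather than reproved by hand; in particular, the correct formulation of the abstract criterion must be strong enough to yield both the abelianness of $\mathrm{pres}(\mathcal{J})$ and the universal property in one stroke. Once that is in place, the only concrete inputs to supply are (i) the triangulated equivalence ${D^+_t(q_{\gamma}, \partial q) \simeq \mathcal{J}}$ from \cref{thm:equiv}, and (ii) the fact that $\mathcal{J}$ is precisely the subcategory of projectives in $\mathrm{pres}(\mathcal{J})$, with enough projectives available by \cref{prp:projCover} and \cref{cor:JiffProj}. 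The abelianness of $\mathrm{pres}(\mathcal{J})$ should then fall out as a consequence of the abstract machinery rather than requiring a separate construction of kernels in $\mathcal{C}$.
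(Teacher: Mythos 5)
Your proposal is correct and follows essentially the same route as the paper: the actual proof simply notes that $D^+_t(q_{\gamma},\partial q)$ is triangulated (\cref{cor:tameVanishingTriaSubcat}), that the composite into $\mathcal{C}$ is a fully faithful cohomological functor with projective values and essential image $\mathcal{J}$ (by \cref{thm:equiv} and \cite[Corollary 3.6]{2021arXiv210809298B}), and then invokes \cref{prp:abelianization}. The hand-constructions of kernels and of the induced exact functor that you sketch are exactly the content of \cref{lem:presAbelian} and the proof of \cref{prp:abelianization} in the appendix, so they need not be redone at this point.
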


\begin{proof}
  By \cref{cor:tameVanishingTriaSubcat}
  the category $D^+_t (q_{\gamma}, \partial q)$
  is indeed triangulated.
  Now let
  $F \colon D^+_t (q_{\gamma}, \partial q) \rightarrow \mathcal{C}$
  be the composition of the two functors
  $h_{\gamma,0,t} \colon D^+_t (q_{\gamma}, \partial q) \rightarrow \mathcal{J}$
  and the inclusion functor
  $\mathcal{J} \hookrightarrow \mathcal{C}$.
  Then $\mathcal{J}$ is the essential image of
  $F \colon D^+_t (q_{\gamma}, \partial q) \rightarrow \mathcal{C}$
  by \cref{thm:equiv}.
  With this the result follows directly from \cref{prp:abelianization}.
\end{proof}

\begin{cor}
  If $\partial q$ is closed in $q_{\gamma}$,
  then the composition of functors
  \begin{equation*}
    D^+_t (\dot{q}) \xrightarrow{R i_*}
    D^+_t (q_{\gamma}) \xrightarrow{h_{\gamma}}
    \mathcal{J} \hookrightarrow
    \mathrm{pres}(\mathcal{J})
  \end{equation*}
  is the abelianization of $D^+_t (\dot{q})$.
\end{cor}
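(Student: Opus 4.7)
The plan is to reduce the statement to Proposition \ref{prp:abelianizationSheaves} by identifying the functor $h_{\gamma,t} \circ R i_*$ with the composite $h_{\gamma,0,t} \circ R i_!$ (where $R i_! := R\flat \circ R i_*$), and then invoking the fact that $R i_!$ induces a triangulated equivalence between $D^+_t(\dot{q})$ and $D^+_t(q_{\gamma}, \partial q)$ when $\partial q$ is closed in $q_{\gamma}$.

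First, I would invoke Corollary \ref{cor:compsWithDirIms} (or, equivalently, combine Lemma \ref{lem:compatCorefl} with the identity \eqref{eq:dirImProperSuppAsFlat}) to produce a natural isomorphism
\[
h_{\gamma,t} \circ R i_* \;\cong\; h_{\gamma,0,t} \circ R i_!
\]
of functors $D^+_t(\dot{q}) \to \mathcal{J}$. The underlying reason is that each $\iota(u)$ already lies in $D^+(q_{\gamma}, \partial q)$, so the coreflector $R\flat$ is invisible to $h_{\gamma}$. Second, I would apply Lemma \ref{lem:vanishingTameInteriorAdj}: under the assumption that $\partial q$ is closed in $q_{\gamma}$, the adjunction $D^+(i^{-1}) \dashv R i_!$ restricts to an adjoint equivalence on tame subcategories, so in particular $R i_! \colon D^+_t(\dot{q}) \to D^+_t(q_{\gamma}, \partial q)$ is a triangulated equivalence.

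Finally, I would combine these two observations with Proposition \ref{prp:abelianizationSheaves}. Since that proposition identifies $h_{\gamma,0,t}$ followed by the inclusion $\mathcal{J} \hookrightarrow \mathrm{pres}(\mathcal{J})$ as the abelianization of $D^+_t(q_{\gamma}, \partial q)$, precomposition with the triangulated equivalence $R i_!$ automatically yields the abelianization of $D^+_t(\dot{q})$: for any abelian category $\mathcal{A}$, the equivalence $R i_!$ induces an equivalence between cohomological functors from $D^+_t(\dot{q})$ and from $D^+_t(q_{\gamma}, \partial q)$ to $\mathcal{A}$, and Proposition \ref{prp:abelianizationSheaves} then identifies the latter with exact functors $\mathrm{pres}(\mathcal{J}) \to \mathcal{A}$. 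The only somewhat delicate point I expect to have to verify carefully is that the chain of natural isomorphisms above is compatible with the triangulated structures, so that the universal property of abelianization transfers cleanly across them; this should however follow formally from the naturality of the coreflection counit $\varepsilon$ together with the fact that both $R i_!$ and $R\flat$ are triangulated functors.
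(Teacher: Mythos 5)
Your proof takes essentially the same route as the paper's, which cites precisely the three ingredients you invoke (namely \cref{cor:compsWithDirIms}, \cref{lem:vanishingTameInteriorAdj}, and the reduction to \cref{prp:abelianizationSheaves}), plus one more: \cref{cor:tameInteriorTriaSubcat}. That last corollary establishes that $D^+_t(\dot{q})$ is a triangulated subcategory of $D^+(\dot{q})$, which is needed up front so that ``the abelianization of $D^+_t(\dot{q})$'' is even well-posed; you gesture at compatibility of your natural isomorphisms with the triangulated structures but never explicitly secure the triangulation of $D^+_t(\dot{q})$ itself. Otherwise your reduction is correct: the coreflector $R\flat$ is transparent to $h_{\gamma}$ since each $\iota(u)$ already lies in $D^+(q_{\gamma},\partial q)$, and the adjoint equivalence $R i_! \colon D^+_t(\dot{q}) \to D^+_t(q_{\gamma},\partial q)$ transports the universal property proved in \cref{prp:abelianizationSheaves} along it.
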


\begin{proof}
  This follows in conjunction with
  \cref{cor:compsWithDirIms}, \cref{lem:vanishingTameInteriorAdj},
  and \cref{cor:tameInteriorTriaSubcat}.
\end{proof}

\begin{cor}
  \label{cor:abelianizationReals}
  The composition of functors
  \begin{equation*}
    D^+_t (\R) \xrightarrow{h_{\R}}
    \mathcal{J} \hookrightarrow
    \mathrm{pres}(\mathcal{J})
  \end{equation*}
  is the abelianization of $D^+_t (\R)$.
\end{cor}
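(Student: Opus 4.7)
The plan is to reduce this statement to the previous corollary, which establishes the analogous claim for $D^+_t(\dot{q})$, by invoking the embedding of $\R$ onto $\dot{q}$ induced by $\blacktriangle|_{\R}$ that was introduced just before \cref{prp:h0Iso}. Since the image of this embedding is exactly $\dot{q} \subset q_{\gamma}$, we may factor $\blacktriangle|_{\R} = i \circ \beta$, where $\beta \colon \R \to \dot{q}$ is a homeomorphism and $i \colon \dot{q} \hookrightarrow q_{\gamma}$ is the subspace inclusion. Taking derived direct images, we obtain
\begin{equation*}
  h_{\R} = h_{\gamma} \circ R(\blacktriangle|_{\R})_* = h_{\gamma} \circ R i_* \circ R \beta_*.
\end{equation*}

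First I would verify that $R\beta_*$ is an equivalence of triangulated categories $D^+(\R) \simeq D^+(\dot{q})$. Since $\beta$ is a homeomorphism, $\beta_*$ is already exact, so $R\beta_* = \beta_*$ is the standard equivalence of sheaf categories induced by a homeomorphism, extended degree-wise to bounded below derived categories. Next, since $\beta$ induces a bijection between open intervals of $\R$ and connected open subsets of $\dot{q}$ that preserves sheaf cohomology, this equivalence restricts to an equivalence of triangulated subcategories $R\beta_* \colon D^+_t(\R) \xrightarrow{\simeq} D^+_t(\dot{q})$, by the definitions of tameness on both sides (\cref{dfn:tameComplexOfSheavesOnIR} on $\R$ and \cref{dfn:tame} combined with \cref{lem:interiorCharTameness} on $\dot{q}$).

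Second, I would observe that in the present setting $\partial q$ consists of just the two points in which $q$ meets $l_0$ and $l_1$ respectively, so $\partial q$ is closed in $q_{\gamma}$. Thus the previous (unnamed) corollary applies and gives that
\begin{equation*}
  D^+_t(\dot{q}) \xrightarrow{R i_*} D^+_t(q_{\gamma}) \xrightarrow{h_{\gamma}} \mathcal{J} \hookrightarrow \mathrm{pres}(\mathcal{J})
\end{equation*}
is the abelianization of $D^+_t(\dot{q})$.

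Finally, I would invoke the general (and essentially formal) fact that the universal property defining abelianization is transported along equivalences of triangulated categories: precomposing a universal cohomological functor with a triangulated equivalence yields a universal cohomological functor again. Applying this to the factorization of $h_{\R}$ displayed above and the equivalence $R\beta_* \colon D^+_t(\R) \xrightarrow{\simeq} D^+_t(\dot{q})$ from the first step completes the proof. The only non-formal step is verifying that $R\beta_*$ restricts to the tame subcategories, and even this is straightforward because $\beta$ is a homeomorphism; there is no real obstacle, since all the substantive work has been carried out in the preceding corollary and in \cref{thm:equiv}.
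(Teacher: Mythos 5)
Your proposal is correct and takes essentially the same route the paper intends. The paper gives no explicit proof of \cref{cor:abelianizationReals}; it is meant to follow immediately from the preceding (unnamed) corollary once one notes that the specific choice of $Q = \downarrow \op{Im} \blacktriangle$ from \cref{sec:derivedLevelSet} makes $\partial q$ closed in $q_\gamma$, identifies $\dot{q}$ with $\R$ via the homeomorphism $\blacktriangle|_{\R}$, and observes that $h_\R = h_\gamma \circ R(\blacktriangle|_{\R})_* = h_\gamma \circ R i_* \circ R\beta_*$. You fill in exactly those details, plus the (correct) check that $R\beta_*$ restricts to an equivalence of the tame subcategories via \cref{lem:interiorCharTameness} and \cref{dfn:tameComplexOfSheavesOnIR}, and the formal transport of the universal property along the equivalence. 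One minor point: your justification that $\partial q$ is closed (``consists of just two points'') is slightly informal, since finite sets need not be closed in a non-Hausdorff space; the cleaner reason is that under the identification $q_\gamma \cong \overline{\R}$ the set $\partial q$ corresponds to $\{\pm\infty\}$, whose complement $\R$ is open.
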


\begin{proof}[Proof of \cref{thm:frobenius}]
  By \cref{prp:abelianizationSheaves} the category
  of $\mathcal{J}$-presentable functors
  $\mathrm{pres}(\mathcal{J})$
  is an abelian subcategory of $\mathcal{C}$,
  which is an abelian subcategory of
  $\VectF^{\M^{\circ}}$.
  Moreover, \cref{prp:abelianizationSheaves} further implies
  that
  $\mathrm{pres}(\mathcal{J})$
  is the abelianization
  of the triangulated category
  $D^+_t (q_{\gamma}, \partial q)$.
  As such, $\mathrm{pres}(\mathcal{J})$ is Frobenius
  by \mbox{\cite[Section 4.2]{MR2355771}}.
  Furthermore,
  a $\mathcal{J}$-presentable functor is projective
  in $\mathrm{pres}(\mathcal{J})$ iff it is in $\mathcal{J}$
  by \cref{cor:JiffProj}.
  Thus,
  $\mathcal{J}$ is the subcategory of projectives
  in $\mathrm{pres}(\mathcal{J})$.
\end{proof}

Moreover, \cref{thm:frobenius} has the following corollary.

\begin{cor}
  The triangulated category
  $D^+_t (q_{\gamma}, \partial q)$
  has split idempotents.
\end{cor}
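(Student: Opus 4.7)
The plan is to transport the idempotent-splitting problem across the equivalence of categories established earlier and then to solve it in an abelian-category setting where it is automatic. Concretely, by \cref{thm:equiv}, the restriction $h_{\gamma,0,t}\colon D^+_t(q_{\gamma},\partial q)\xrightarrow{\simeq}\mathcal{J}$ is an equivalence of $\F$-linear categories. Since having split idempotents is a property preserved and reflected by any equivalence of (ordinary) categories, it suffices to prove that the additive category $\mathcal{J}$ has split idempotents.

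Next, I would invoke \cref{thm:frobenius}: $\mathcal{J}$ is the full subcategory of projective-injectives in the abelian (Frobenius) category $\mathrm{pres}(\mathcal{J})$. The point is that every abelian category is idempotent-complete, so any idempotent $e\colon P\to P$ in $\mathrm{pres}(\mathcal{J})$ splits as $P\cong \op{im}(e)\oplus \op{im}(\op{id}-e)$ (equivalently $P\cong\ker(e)\oplus\ker(\op{id}-e)$); this splitting is realized inside $\mathrm{pres}(\mathcal{J})$, hence inside $\mathcal{C}$ and in fact inside $\VectF^{\M^{\circ}}$.

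It then remains to observe that the two resulting summands lie again in $\mathcal{J}$. This follows because the class of projective objects in any abelian category is closed under direct summands, and the same holds for injective objects; therefore a retract of a projective-injective is again projective-injective. Applied to $P\in\mathcal{J}$, this yields that both $\op{im}(e)$ and $\op{im}(\op{id}-e)$ belong to $\mathcal{J}$, so the splitting is a splitting in $\mathcal{J}$.

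Putting these pieces together: every idempotent in $D^+_t(q_{\gamma},\partial q)$ is mapped by $h_{\gamma,0,t}$ to an idempotent in $\mathcal{J}$, that idempotent splits in $\mathcal{J}$ by the argument just outlined, and the splitting pulls back through the equivalence to a splitting in $D^+_t(q_{\gamma},\partial q)$. I do not anticipate a serious obstacle here; the only mildly delicate point is the closure of $\mathcal{J}$ under retracts, which is handled by the projective-injective characterization from \cref{thm:frobenius} rather than by a direct verification within the pfd/cohomological/sequentially-continuous constraints defining $\mathcal{J}$.
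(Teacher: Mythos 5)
Your proof is correct and is essentially the same as the paper's: both arguments transport the problem across the equivalence $h_{\gamma,0,t}\colon D^+_t(q_{\gamma},\partial q)\xrightarrow{\simeq}\mathcal{J}$ and then observe that $\mathcal{J}$, being the subcategory of projectives in the abelian category $\mathrm{pres}(\mathcal{J})$, has split idempotents. The only difference is that the paper cites \cite[Section 4.3]{MR2355771} for the idempotent-completeness of the subcategory of projectives, whereas you unpack that citation into the elementary argument (abelian categories are idempotent-complete, and retracts of projective-injectives are projective-injective).
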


\begin{proof}
  By \mbox{\cite[Section 4.3]{MR2355771}}
  and \cref{prp:abelianizationSheaves}
  the subcategory of projectives in $\mathrm{pres}(\mathcal{J})$
  has split idempotents,
  which is $\mathcal{J}$ by \cref{thm:frobenius}.
  Furthermore,
  $\mathcal{J}$ and $D^+_t (q_{\gamma}, \partial q)$
  are equivalent as $\F$-linear categories
  by \cref{thm:equiv}.
\end{proof}

We end this section with a proof of \cref{prp:minProjRes}.

\begin{lem}
  \label{lem:bettiComparison}
  Let
  $\varphi \colon P \rightarrow F$
  be a projective cover in $\mathrm{pres}(\mathcal{J})$
  and let
  $\kappa \colon K \rightarrow P$
  be its kernel
  so we have the short exact sequence
  \begin{equation*}
    0 \rightarrow
    K \xrightarrow{\kappa}
    P \xrightarrow{\varphi}
    F \rightarrow
    0 .
  \end{equation*}
  Then $\beta^{n+1} (F) = \beta^n (K)$
  for all $n \in \N_0$.
\end{lem}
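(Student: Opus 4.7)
The plan is to apply the left-exact functor $\Hom_{\mathcal{C}}(-, S_u)$ to the given short exact sequence and analyze the resulting long exact sequence of Ext groups
\begin{equation*}
  \cdots \to \Ext_{\mathcal{C}}^{n}(F, S_u) \to \Ext_{\mathcal{C}}^{n}(P, S_u) \to \Ext_{\mathcal{C}}^{n}(K, S_u) \to \Ext_{\mathcal{C}}^{n+1}(F, S_u) \to \Ext_{\mathcal{C}}^{n+1}(P, S_u) \to \cdots
\end{equation*}
for each $u \in \op{int} \M$. Since $\varphi \colon P \to F$ is a projective cover in $\mathrm{pres}(\mathcal{J})$, the object $P$ is projective in $\mathrm{pres}(\mathcal{J})$; by \cref{cor:JiffProj} this places $P$ in $\mathcal{J}$, and by \mbox{\cite[Corollary 3.6]{2021arXiv210809298B}} any object of $\mathcal{J}$ is projective in $\mathcal{C}$. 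Consequently $\Ext_{\mathcal{C}}^{m}(P, S_u) = 0$ for all $m \geq 1$.

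For $n \geq 1$ the vanishing of both $\Ext_{\mathcal{C}}^{n}(P, S_u)$ and $\Ext_{\mathcal{C}}^{n+1}(P, S_u)$ reduces the long exact sequence to an isomorphism
\begin{equation*}
  \Ext_{\mathcal{C}}^{n}(K, S_u) \xrightarrow{\cong} \Ext_{\mathcal{C}}^{n+1}(F, S_u),
\end{equation*}
giving $\beta^{n}(K)(u) = \beta^{n+1}(F)(u)$. The remaining case $n = 0$ requires analyzing the four-term exact sequence
\begin{equation*}
  0 \to \Hom_{\mathcal{C}}(F, S_u) \xrightarrow{\,\varphi^*\,} \Hom_{\mathcal{C}}(P, S_u) \xrightarrow{\,\kappa^*\,} \Hom_{\mathcal{C}}(K, S_u) \to \Ext_{\mathcal{C}}^{1}(F, S_u) \to 0,
\end{equation*}
and I will reduce the claim $\beta^{0}(K) = \beta^{1}(F)$ to showing that $\varphi^*$ is surjective, equivalently that $\kappa^* = 0$, equivalently that $K \subseteq \ker \alpha$ for every natural transformation $\alpha \colon P \to S_u$.

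The main technical step is therefore the inclusion $K \subseteq \op{rad} P$, where $\op{rad} P$ denotes the point-wise intersection of all maximal subfunctors of $P$ as in \cref{sec:projCover}. This follows from the essentiality of the epimorphism $\varphi$: if some maximal subfunctor $M \hookrightarrow P$ did not contain $K$, then $M + K = P$ by maximality of $M$, and since $\varphi$ vanishes on $K$ we would have $\varphi(M) = \varphi(P) = F$, so $M \hookrightarrow P$ would already be an epi-source for $\varphi$, contradicting essentiality. Every nonzero $\alpha \colon P \to S_u$ has maximal kernel (because $S_u$ is simple), so $K \subseteq \op{rad} P \subseteq \ker \alpha$, whence $\alpha \circ \kappa = 0$. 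This yields $\kappa^* = 0$, collapses the four-term sequence to an isomorphism $\Hom_{\mathcal{C}}(K, S_u) \cong \Ext_{\mathcal{C}}^{1}(F, S_u)$, and completes the case $n = 0$.

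I expect the essentiality argument showing $K \subseteq \op{rad} P$ to be the only non-routine point; once in place, the long exact sequence does all the remaining work uniformly across degrees, and one simply takes $\F$-dimensions to conclude $\beta^{n+1}(F)(u) = \beta^{n}(K)(u)$ for every $u \in \op{int} \M$ and every $n \in \N_0$.
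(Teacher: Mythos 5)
Your proof is correct, and its skeleton — the long exact $\Ext_{\mathcal{C}}(-,S_u)$ sequence together with the vanishing of $\Ext^{m}_{\mathcal{C}}(P,S_u)$ for $m\geq 1$ because $P\in\mathcal{J}$ is projective in $\mathcal{C}$ — is exactly the paper's. The two arguments diverge only at $n=0$. The paper handles that case by citing \cref{prp:projCover}: since $\varphi$ is the projective cover constructed there, $\dim\mathrm{Nat}(F,S_u)=\beta^0(F)(u)=\beta^0(P)(u)=\dim\mathrm{Nat}(P,S_u)$, so the injection $\varphi^*\colon\mathrm{Nat}(F,S_u)\to\mathrm{Nat}(P,S_u)$ between finite-dimensional spaces of equal dimension is an isomorphism, which collapses the four-term sequence. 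You instead prove directly that $\kappa^*=0$ by showing $K=\ker\varphi\subseteq\op{rad}P\subseteq\ker\alpha$ for every nonzero $\alpha\colon P\to S_u$, using only the essentiality of $\varphi$ and the simplicity of $S_u$. Your route is slightly more self-contained: it does not invoke the multiplicity computation of \cref{prp:projCover} or pointwise finite-dimensionality, only the defining property of a projective cover (in the form used in the proof of \cref{cor:essential}: $G+\ker\varphi=P$ forces $G=P$). The paper's route is shorter on the page because the radical machinery has already been set up in \cref{sec:projCover}; yours re-derives the relevant consequence of it ad hoc. Both are sound, and both yield $\beta^{n}(K)=\beta^{n+1}(F)$ for all $n\in\N_0$ after taking dimensions.
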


\begin{proof}
  Let $u \in \op{int} \M$ be a point
  inside the interior of $\M$.
  As $\mathrm{Nat}(-, S_u)$ is left-exact
  as a functor on $\mathcal{C}$
  and since
  $P \colon \M^{\circ} \rightarrow \VectF$
  is projective in $\mathcal{C}$
  by \cref{thm:frobenius} and
  \cite[Corollary 3.6]{2021arXiv210809298B}
  we obtain the long exact sequence
  \begin{equation}
    \label{eq:bettiComparison}
    \!\!\!\!\!\!\!\!\!\!\!\!\!\!\!\!\!\!\!\!\!
    \begin{tikzcd}[row sep=6ex, column sep=2.5ex]
      &
      \Ext_{\mathcal{C}}^2(F, S_u)
      \arrow[r]
      &
      \{0\}
      \arrow[r]
      &
      \cdots
      .
      \\
      &
      \Ext_{\mathcal{C}}^1(F, S_u)
      \arrow[r]
      &
      \{0\}
      \arrow[r]
      &
      \Ext_{\mathcal{C}}^2(K, S_u)
      \arrow[ull, out=0, in=180, looseness=1.4, overlay]      
      \\
      0
      \arrow[r]
      &
      \mathrm{Nat} (F, S_u)
      \arrow[r]
      &
      \mathrm{Nat} (P, S_u)
      \arrow[r]
      &
      \mathrm{Nat} (K, S_u)
      \arrow[ull, out=0, in=180, looseness=1.4, overlay]
    \end{tikzcd}
  \end{equation}
  Moreover,
  by \cref{prp:projCover}
  we have
  \begin{equation*}
    \dim \mathrm{Nat} (F, S_u) =
    \beta^0(F)(u) =
    \beta^0(P)(u) =
    \dim \mathrm{Nat} (P, S_u)
    ,
  \end{equation*}
  hence the naturally induced map
  $\mathrm{Nat} (F, S_u) \rightarrow \mathrm{Nat} (P, S_u)$
  on the lower left hand side of \eqref{eq:bettiComparison}
  is an isomorphism.
  In conjunction with \eqref{eq:bettiComparison}
  we obtain natural isomorphisms
  \begin{equation*}
    \Ext_{\mathcal{C}}^n(K, S_u) \rightarrow \Ext_{\mathcal{C}}^{n+1}(F, S_u)
  \end{equation*}
  for all $n \in \N_0$.
\end{proof}

\begin{proof}[Proof of \cref{prp:minProjRes}]
  By \cref{prp:projCover} and \cref{thm:frobenius}
  any $\mathcal{J}$-presentable functor
  has a minimal projective resolution
  \begin{equation*}
    \dots \rightarrow
    P_n \rightarrow
    \dots \rightarrow
    P_2 \rightarrow
    P_1 \rightarrow
    P_0 \rightarrow
    F \rightarrow 0
  \end{equation*}
  by functors $P_n \colon \M^{\circ} \rightarrow \VectF$
  in $\mathcal{J}$.
  The statement about the multiplicities of indecomposables
  and the Betti functions
  follows by induction from
  \cref{lem:bettiComparison} and \cref{prp:projCover}.
\end{proof}

Moreover, \cref{prp:minProjRes} has the following corollary.

\begin{cor}
  \label{cor:admissibleBetti}
  Let
  $F \colon \M^{\circ} \rightarrow \VectF$
  be a $\mathcal{J}$-presentable functor.
  Then
  $\beta^n (F) \colon \op{int} \M \rightarrow \N_0$
  is an admissible Betti function
  for any $n \in \N_0$.
\end{cor}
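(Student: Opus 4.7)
The plan is to apply \cref{prp:minProjRes} to reduce the statement for $\beta^n(F)$ to the already-established admissibility of $\beta^0$ for a projective in $\mathcal{J}$, which is essentially the content of the proof of \cref{lem:admissibleBetti}.

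More concretely, I would first invoke \cref{prp:minProjRes} to obtain a minimal projective resolution
\begin{equation*}
  \dots \rightarrow P_n \rightarrow \dots \rightarrow P_1 \rightarrow P_0 \rightarrow F \rightarrow 0
\end{equation*}
with each $P_n \colon \M^{\circ} \rightarrow \VectF$ a functor in $\mathcal{J}$, and such that the multiplicity of $B_v$ in $P_n$ is exactly $\beta^n(F)(v)$ for every $v \in \op{int}\M$ and $n \in \N_0$. This reduces the question about $\beta^n(F)$ to a question about the multiplicities of the indecomposable summands of the single functor $P_n \in \mathcal{J}$.

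Next I would verify the two conditions of \cref{dfn:admissibleBetti} in turn. For bounded-above support, I would observe that whenever $B_v$ is a summand of $P_n$ we have $P_n(v) \supseteq B_v(v) = \F$, so the support of $\beta^n(F)$ is contained in the support of $P_n$; since $P_n \in \mathcal{J}$ has bounded above support, so does $\beta^n(F)$. For the finiteness condition, I would repeat the counting argument from the proof of \cref{lem:admissibleBetti}: for any $u \in \op{int}\M$ and any $v \in \op{int}\M$ we have
\begin{equation*}
  \dim_\F B_v(u) =
  \begin{cases}
    1 & v \in (\uparrow u) \cap \op{int}(\downarrow T(u)) \\
    0 & \text{otherwise,}
  \end{cases}
\end{equation*}
so that
\begin{equation*}
  \sum_{v \in (\uparrow u) \cap \op{int}(\downarrow T(u))} \beta^n(F)(v) = \dim_\F P_n(u) < \infty,
\end{equation*}
the last inequality holding because $P_n \in \mathcal{J}$ is pfd.

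There is no real obstacle here; the substantive work has already been done in the preceding sections, in particular \cref{prp:minProjRes} and the decomposition of functors in $\mathcal{J}$ as direct sums of contravariant blocks $B_v$ from \cite[Theorem 3.5]{2021arXiv210809298B} that underlies it. The only thing to double-check is that the indexing by $v \in \op{int}\M$ of the summands in the decomposition of $P_n$ agrees with the values $\beta^n(F)(v)$ in the sense made precise by \cref{prp:minProjRes}, so that writing $\beta^n(F)(v)$ for the multiplicity of $B_v$ in $P_n$ is justified.
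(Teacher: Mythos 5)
Your proposal is correct and follows the same route as the paper: both rest on \cref{prp:minProjRes} to identify $\beta^n(F)(v)$ with the multiplicity of $B_v$ in the $n$-th term $P_n$ of a minimal projective resolution, and then reduce to the admissibility statement for a functor in $\mathcal{J}$. The only cosmetic difference is that the paper records the identity $\beta^n(F) = \beta^0(P_n)$ and then cites \cref{lem:admissibleBetti} as a black box, whereas you unfold that lemma's counting argument (and spell out the bounded-above-support check) inline; the substance is identical.
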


\begin{proof}
  Let $n \in \N_0$ and let $u \in \op{int} \M$.
  Then
  $\beta^n (F)(u)$
  is the multiplicity of the indecomposable
  $B_u \colon \op{int} \M \rightarrow \VectF$
  in some projective
  $P \colon \M^{\circ} \rightarrow \VectF$
  by \cref{prp:minProjRes},
  which is the same as $\beta^0 (P)(u)$.
  Thus, $\beta^n (F) = \beta^0 (P)$,
  which is an admissible Betti function by \cref{lem:admissibleBetti}.
\end{proof}

\section{Categorification of Persistence Diagrams}
\label{sec:categorification}

We show that the sequence of Betti functions
is actually determined by the first four of these.
To this end,
the following notion will be useful.

\begin{dfn}
  \label{dfn:equivProjRes}
  Let
  \begin{equation}
    \label{eq:projRes}
    \dots \xrightarrow{\delta_{n+1}}
    P_n \xrightarrow{\delta_n}
    \dots \xrightarrow{\delta_3}
    P_2 \xrightarrow{\delta_2}
    P_1 \xrightarrow{\delta_1}
    P_0 \xrightarrow{\epsilon}
    F \rightarrow
    0
  \end{equation}
  be a projective resolution
  of $F \colon \M^{\circ} \rightarrow \VectF$
  in $\mathrm{pres}(\mathcal{J})$.
  We say that the projective resolution \eqref{eq:projRes}
  is \emph{equivariant},
  if
  \begin{equation*}
    P_{n+3} = P_n \circ T
    \quad \text{and} \quad
    \delta_{n+3} = - \delta_n \circ T
    \quad \text{for all $n \in \N_0$.}
  \end{equation*}
\end{dfn}

\begin{lem}
  \label{lem:equivProjRes}
  Any $\mathcal{J}$-presentable functor
  $F \colon \M^{\circ} \rightarrow \VectF$
  has an equivariant projective resolution.
\end{lem}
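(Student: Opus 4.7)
The plan is to construct the equivariant resolution directly from a distinguished triangle in $D^+_t(q_\gamma, \partial q)$, exploiting the abelianization picture of \cref{prp:abelianizationSheaves}. Since $F$ is $\mathcal{J}$-presentable, there is an exact sequence $G_1 \to G_0 \to F \to 0$ with $G_0, G_1 \in \mathcal{J}$. By the equivalence $h_{\gamma,0,t} \colon D^+_t(q_\gamma, \partial q) \to \mathcal{J}$ of \cref{thm:equiv}, we lift this map to a morphism $\psi \colon X \to Y$ in $D^+_t(q_\gamma, \partial q)$ with $F \cong \op{coker}(h_\gamma(\psi))$. Using that $D^+_t(q_\gamma, \partial q)$ is triangulated (\cref{cor:tameVanishingTriaSubcat}), I would complete $\psi$ to a distinguished triangle $X \xrightarrow{\psi} Y \xrightarrow{f} Z \xrightarrow{g} X[1]$.

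Then I would apply the cohomological functor $h_\gamma$ (\cref{lem:cohomological}) to the infinite sequence obtained by iteratively rotating this triangle to the left. Since $h_\gamma$ sends distinguished triangles to long exact sequences and each $h_\gamma(W[-k])$ lies in $\mathcal{J}$ (hence is projective in $\mathrm{pres}(\mathcal{J})$ by \cref{cor:JiffProj}), this produces a projective resolution
\begin{equation*}
  \cdots \to h_\gamma(Z[-2]) \to h_\gamma(X[-1]) \to h_\gamma(Y[-1]) \to h_\gamma(Z[-1]) \to h_\gamma(X) \to h_\gamma(Y) \to F \to 0,
\end{equation*}
so that I can set $P_{3k} := h_\gamma(Y[-k])$, $P_{3k+1} := h_\gamma(X[-k])$, and $P_{3k+2} := h_\gamma(Z[-k-1])$.

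The three-step periodicity $P_{n+3} = P_n \circ T$ is then a direct consequence of the identification $h_\gamma(W[-1]) \cong h_\gamma(W) \circ T$, which follows from strict stability $\iota \circ T = \Sigma \circ \iota$ as established in \cref{sec:happel}: for any $u \in \M$,
\begin{equation*}
  h_\gamma(W[-1])(u) = \Hom_{\Der(q_\gamma)}(\iota(u), W[-1]) \cong \Hom_{\Der(q_\gamma)}(\iota(T(u)), W) = (h_\gamma(W) \circ T)(u).
\end{equation*}
The differentials $\delta_n$ are obtained by applying $h_\gamma$ to the morphisms in the iteratively rotated triangles; under the identification above, each $\delta_{n+3}$ matches $\delta_n \circ T$ up to the sign introduced by triangulated rotation.

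The main obstacle will be bookkeeping the signs. By axiom (TR2), rotating $X \xrightarrow{\psi} Y \xrightarrow{f} Z \xrightarrow{g} X[1]$ once gives the triangle $Y \xrightarrow{f} Z \xrightarrow{g} X[1] \xrightarrow{-\psi[1]} Y[1]$, so every time the triple $(\psi, f, g)$ wraps around, a negative sign appears on the returning map. I would verify that this sign, combined with the identification of $[-1]$ and $(-) \circ T$, produces exactly the prescribed relation $\delta_{n+3} = -\delta_n \circ T$ for $n \geq 1$. Should the naive identifications yield an unwanted overall sign, it can be absorbed by rescaling selected $P_n$ by $-1$, which leaves exactness untouched and preserves the strict equality $P_{n+3} = P_n \circ T$.
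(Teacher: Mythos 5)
Your proof is correct and takes essentially the same route as the paper: lift the presentation along the equivalence of \cref{thm:equiv}, extend to a distinguished triangle in $D^+_t(q_\gamma, \partial q)$, and read off an equivariant resolution by rotating and applying $h_\gamma$, using strict stability $\iota \circ T = \Sigma \circ \iota$ for the identification $h_\gamma(W[-1]) \cong h_\gamma(W) \circ T$. The only cosmetic differences are that the paper packages the triangle via a homotopy pullback square (taking the fiber rather than the cone of $\delta$) and simply writes the $(-1)^n$ into the definition of the differentials, whereas you recover the sign from (TR2); your final hedge about absorbing a stray sign by "rescaling selected $P_n$" is unnecessary (and slightly ill-posed, since one rescales morphisms rather than objects), because the rotation signs already produce $\delta_{n+3} = -\delta_n \circ T$ on the nose.
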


\begin{proof}
  By \cref{dfn:presentable,thm:equiv} we may choose a presentation
  \begin{equation*}
    h_{\gamma} (Y) \xrightarrow{h_{\gamma}(\delta)}
    h_{\gamma} (X) \rightarrow F \rightarrow 0
  \end{equation*}
  with $X$ and $Y$ objects of $D^+_t (q_{\gamma}, \partial q)$.
  Then we may form homotopy pullback squares
  \begin{equation*}
    \begin{tikzcd}
      {X[-1]}
      \arrow[r, "\theta"]
      \arrow[d]
      &
      Z
      \arrow[r]
      \arrow[d, "\kappa"]
      &
      0
      \arrow[d]
      \\
      0
      \arrow[r]
      &
      Y
      \arrow[r, "\delta"']
      &
      X
      .
    \end{tikzcd}
  \end{equation*}
  As
  $h_{\gamma} (X[-1]) \cong h_{\gamma} (X) \circ T$
  we may set
  \begin{align*}
    P_{3n} & := h_{\gamma} (X) \circ T^n,
    &
    P_{3n+1} & := h_{\gamma} (Y) \circ T^n,
    &
    P_{3n+2} & := h_{\gamma} (Z) \circ T^n,
    \\
    \delta_{3n+1} & := (-1)^n h_{\gamma} (\delta) \circ T^n,
    &
    \delta_{3n+2} & := (-1)^n h_{\gamma} (\kappa) \circ T^n,
    &
    \text{and} ~\,
    \delta_{3n+3} & := (-1)^n h_{\gamma} (\theta) \circ T^n
  \end{align*}
  for all $n \in \N_0$.
  Moreover,
  as $h_{\gamma} \colon D^+(q_{\gamma}) \rightarrow \VectF^{\M^{\circ}}$
  is cohomological
  and as
  $\mathrm{pres}(\mathcal{J})$ is an abelian subcategory of $\VectF^{\M^{\circ}}$
  by \cref{thm:frobenius}
  (or by \cref{prp:abelianizationSheaves})
  the sequence
  \eqref{eq:projRes}
  is indeed exact.
  \cref{thm:frobenius} further implies
  that
  $P_n \colon \M^{\circ} \rightarrow \VectF$
  is projective for each $n \in \N_0$.
\end{proof}

Now let
$F \colon \M^{\circ} \rightarrow \VectF$
be a
$\mathcal{J}$-presentable functor.
We obtain the following corollary.

\begin{cor}
  \label{cor:bettiEquiv}
  We have
  $\beta^{n+3} (F) = \beta^n (F) \circ T$
  for all $n \geq 1$.
\end{cor}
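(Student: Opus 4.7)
The plan is to compute $\beta^{n+3}(F)(u) = \dim_{\F} \Ext^{n+3}_{\mathcal{C}}(F, S_u)$ directly from the equivariant projective resolution provided by \cref{lem:equivProjRes}. By \cref{thm:frobenius} and the discussion preceding \cref{dfn:betti}, every projective resolution $\dots \to P_m \xrightarrow{\delta_m} \dots \to P_0 \to F \to 0$ in $\mathrm{pres}(\mathcal{J})$ is simultaneously a projective resolution in the ambient category $\mathcal{C}$, so it computes $\Ext_{\mathcal{C}}^{\bullet}(F, S_v)$ as the cohomology of the cochain complex $C^{\bullet} := \mathrm{Nat}(P_{\bullet}, S_v)$ for any $v \in \op{int} \M$.

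First I would observe that $T$ restricts to an automorphism of $\M^{\circ}$, so precomposition with $T$ is an autoequivalence of $\VectF^{\M^{\circ}}$; moreover, inspection of \eqref{eq:dfnSimpleFunc} gives $S_v \circ T^{-1} = S_{T(v)}$. Consequently, for any functor $G \colon \M^{\circ} \to \VectF$ there is a natural isomorphism $\mathrm{Nat}(G \circ T, S_v) \cong \mathrm{Nat}(G, S_{T(v)})$. Fixing an equivariant projective resolution of $F$ and combining this isomorphism with the identity $P_{m+3} = P_m \circ T$, for every $n \geq 1$ I obtain isomorphisms
\[
C^{n+2} \cong \mathrm{Nat}(P_{n-1}, S_{T(u)}), \quad
C^{n+3} \cong \mathrm{Nat}(P_n, S_{T(u)}), \quad
C^{n+4} \cong \mathrm{Nat}(P_{n+1}, S_{T(u)}).
\]
The first identification requires $P_{n-1}$ to be available in the resolution, which is precisely where the hypothesis $n \geq 1$ enters.

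Next I would unwind the naturality of the above isomorphism to verify that, under these identifications, the differentials $C^{n+2} \to C^{n+3}$ and $C^{n+3} \to C^{n+4}$ of $C^{\bullet}$, which are precomposition with $\delta_{n+3}$ and $\delta_{n+4}$, correspond to $-1$ times the analogous differentials of the complex $\mathrm{Nat}(P_{\bullet}, S_{T(u)})$. This sign is produced by the identity $\delta_{m+3} = -\delta_m \circ T$ from \cref{dfn:equivProjRes}, valid in the relevant range $m \geq 1$. Taking cohomology at degree $n+3$ and observing that a uniform sign on the differentials does not affect the cohomology, I obtain a canonical isomorphism
\[
\Ext_{\mathcal{C}}^{n+3}(F, S_u) \cong \Ext_{\mathcal{C}}^{n}(F, S_{T(u)}),
\]
and the corollary follows by taking dimensions. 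The main subtlety, such as it is, lies in the boundary case $n = 0$ which is explicitly excluded: for $n = 0$ the required identification of $C^{2} = \mathrm{Nat}(P_2, S_u)$ would involve a nonexistent $P_{-1}$, reflecting the fact that the truncated tail $P_{\geq 3}$ of an equivariant resolution resolves a second syzygy of $F$ rather than $F$ itself.
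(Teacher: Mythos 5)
Your proof is correct and is exactly the argument the paper intends: the corollary is stated without proof immediately after \cref{lem:equivProjRes}, and the intended derivation is precisely to compute $\Ext_{\mathcal{C}}^{\bullet}(F,S_u)$ from an equivariant resolution, use $P_{m+3}=P_m\circ T$ together with $S_u\circ T^{-1}=S_{T(u)}$ to shift degrees by three, and note that the uniform sign from $\delta_{m+3}=-\delta_m\circ T$ leaves kernels and images unchanged. Your identification of where the hypothesis $n\geq 1$ enters (the term $\mathrm{Nat}(P_{n-1},S_{T(u)})$ must exist) is also the right explanation for the excluded boundary case.
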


As $F \colon \M^{\circ} \rightarrow \VectF$
has bounded above support
as a $\mathcal{J}$-presentable functor,
we obtain yet another corollary.

\begin{cor}
  \label{cor:bettiLocFinite}
  For any $u \in \op{int} \M$
  we have
  $\beta^n(F)(u) = 0$
  for almost all $n \in \N_0$.
\end{cor}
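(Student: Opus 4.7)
The approach combines \cref{cor:bettiEquiv} with the bounded-above-support hypothesis, reducing the corollary to a geometric fact about orbits of $T$. The plan has two ingredients: a uniform bound on the supports of the Betti functions, and an orbit-escape lemma for the action of $T$ on $\M$.

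For the uniform bound, since $F$ is $\mathcal{J}$-presentable, $\op{supp}(F) \subseteq T^N(Q)$ for some $N \in \Z$. I would show by induction on $n$ that $\op{supp}(\beta^n(F)) \subseteq T^N(Q)$. The case $n = 0$ holds because any natural transformation $F \to S_v$ is determined by its component at $v$ and so vanishes unless $F(v) \neq 0$. For the inductive step, let $P_\bullet \to F \to 0$ be the minimal projective resolution from \cref{prp:minProjRes}; then $P_n \cong \bigoplus_v B_v^{\beta^n(F)(v)}$ and $\op{supp}(B_v) \subseteq \downarrow v$, so the inductive hypothesis combined with the downset property of $T^N(Q)$ gives $\op{supp}(P_n) \subseteq T^N(Q)$. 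Letting $K_n$ denote the kernel of $P_n \to P_{n-1}$ (or of $P_0 \to F$ when $n = 0$), we obtain $\op{supp}(K_n) \subseteq \op{supp}(P_n) \subseteq T^N(Q)$, and \cref{lem:bettiComparison} gives $\beta^{n+1}(F) = \beta^0(K_n)$, whose support lies in $\op{supp}(K_n)$.

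For the orbit-escape lemma, I claim that for any proper downset $Q' \subseteq \M$ and any $u \in \M$ there is $K \in \N$ with $T^k(u) \notin Q'$ for every $k \geq K$. Pick $w \in \M \setminus Q'$, which is non-empty since $Q'$ is proper; as $\M \setminus Q'$ is an upset, $\uparrow w \subseteq \M \setminus Q'$. The incidence definition of $T$ in \cref{sec:happel} yields $T(a, b) = (c_0 - b, c_1 - a)$ where $l_i = \{x + y = c_i\}$, whence $T^2(a, b) = (a - d, b + d)$ with $d := c_1 - c_0 > 0$. Thus $T^{2k}(x_0, y_0) = (x_0 - kd, y_0 + kd) \succeq w$ once $k \geq \max\bigl((x_0 - w_1)/d,\, (w_2 - y_0)/d\bigr)$; applying the same bound to $T(u)$ in place of $u$ handles odd iterates.

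Combining: \cref{cor:bettiEquiv} iterates to $\beta^{m+3k}(F)(u) = \beta^m(F)(T^k(u))$ for $m \geq 1$ and $k \geq 0$. Writing any $n \geq 1$ as $n = m + 3k$ with $m \in \{1, 2, 3\}$ and applying the escape lemma to $Q' = T^N(Q)$, we get $T^k(u) \notin T^N(Q) \supseteq \op{supp}(\beta^m(F))$ for $k$ sufficiently large, hence $\beta^n(F)(u) = 0$ for all sufficiently large $n$. The only conceptual step is the escape lemma, but once the glide-reflection description of $T$ is made explicit it reduces to a direct coordinate computation, so I do not anticipate a serious obstacle.
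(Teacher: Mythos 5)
Your proof is correct, and it is an expansion of the argument the paper leaves implicit: the corollary is stated with only the remark that $F$ has bounded above support, immediately after \cref{cor:bettiEquiv}, and your two ingredients --- a support bound for the Betti functions and the orbit-escape property of $T$ --- are exactly what is needed to turn that hint into a proof. Your extraction of $T(a,b) = (c_0 - b, c_1 - a)$ from the incidence definition, giving $T^2(a,b) = (a-d, b+d)$ with $d = c_1 - c_0 > 0$, is a correct unpacking of the glide-reflection description, and the escape computation with a basepoint $w \in \M \setminus T^N(Q)$ is sound since $\uparrow w$ is disjoint from the downset $T^N(Q)$. One small shortcut worth noting: the induction establishing the \emph{uniform} bound $\op{supp}\beta^n(F) \subseteq T^N(Q)$ for all $n$ is more than strictly necessary. \cref{cor:admissibleBetti} (already available, and resting on the same \cref{prp:minProjRes} and \cref{lem:bettiComparison} you invoke) says each $\beta^m(F)$ is an admissible Betti function, hence has bounded above support; taking the maximum of the three bounds for $m \in \{1,2,3\}$ and then applying your escape lemma together with $\beta^{m+3k}(F)(u) = \beta^m(F)(T^k(u))$ already finishes the proof. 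Your induction essentially re-derives the bounded-above portion of \cref{cor:admissibleBetti} with a slightly sharper uniform conclusion, so there is no gap, only some duplicated effort.
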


With this the following is a sound definition.

\begin{dfn}
  \label{dfn:eulerFn}
  We define the \emph{Euler function} of
  ${F \colon \M^{\circ} \rightarrow \vectF}$
  to be
  \begin{equation*}
    \chi(F) := \sum_{n \in \N_0} (-1)^n \beta^n(F) \colon
    \op{int} \M \rightarrow \Z, \,
    u \mapsto \sum_{n \in \N_0} (-1)^n \dim \Ext_{\mathcal{C}}^n(F, S_u)
    .
  \end{equation*}
\end{dfn}

Now \cref{cor:bettiEquiv} further implies
that for any bounded region in $\M$
the restriction of $\beta^n(F) \colon \M \rightarrow \N_0$
to this region vanishes
for almost all $n \in \N_0$.
Thus, the point-wise absolute value
\begin{equation*}
  |\chi(F)| \colon \op{int} \M \rightarrow \N_0,\,
  u \mapsto |\chi(F)(u)|
\end{equation*}
is an admissible Betti function.

\begin{dfn}
  \label{dfn:admissibleEuler}
  We say that a function
  $\mu \colon \op{int} \M \rightarrow \Z$
  is an \emph{admissible Euler function}
  if the point-wise absolute value
  \[|\mu| \colon \op{int} \M \rightarrow \N_0, \,
    u \mapsto |\mu(u)|\]
  is an admissible Betti function.
  Moreover,
  we denote the abelian group of admissible Euler functions by $G(\mathbb{B})$.
  Then
  the inclusion
  $\mathbb{B} \hookrightarrow G(\mathbb{B})$
  satisfies the universal property of the Grothendieck group
  of the commutative monoid $\mathbb{B}$.
\end{dfn}

In particular
$\chi(F) \colon \op{int} \M \rightarrow \Z$
is an admissible Euler function.
Next we show that
$\chi \colon \mathrm{Ob}(\mathrm{pres}(\mathcal{J})) \rightarrow G(\mathbb{B})$
is an additive invariant of $\mathrm{pres}(\mathcal{J})$.
To this end,
we choose an equivariant projective resolution
\begin{equation*}
  \dots \xrightarrow{\delta_{n+1}}
  P_n \xrightarrow{\delta_n}
  \dots \xrightarrow{\delta_3}
  P_2 \xrightarrow{\delta_2}
  P_1 \xrightarrow{\delta_1}
  P_0 \xrightarrow{\epsilon}
  F \rightarrow
  0
  .
\end{equation*}
Then we obtain the following counterpart
to Euler's polyhedron formula.

\begin{lem}
  \label{lem:eulerFormula}
  We have
  $\chi(F) = \sum_{n \in \N_0} (-1)^n \beta^0 (P_n)$.
\end{lem}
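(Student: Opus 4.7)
The plan is to realize both sides of the asserted equation as two incarnations of the Euler characteristic of a single bounded cochain complex of finite-dimensional vector spaces, obtained by applying $\mathrm{Nat}(-, S_u)$ to the given equivariant projective resolution.

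First I would fix a point $u \in \op{int} \M$ and consider the cochain complex $C^{\bullet}$ with $C^n := \mathrm{Nat}(P_n, S_u)$ and differentials induced by $\delta_{n+1}$. Since each $P_n$ lies in $\mathcal{J}$, it is projective in $\mathcal{C}$ by \cite[Corollary 3.6]{2021arXiv210809298B}, so $P_{\bullet} \to F$ is a projective resolution in $\mathcal{C}$ and hence $H^n(C^{\bullet}) \cong \mathrm{Ext}_{\mathcal{C}}^n(F, S_u)$, whose dimension is $\beta^n(F)(u)$. On the other hand, applying \cref{cor:decompJ} to each $P_n$ yields a decomposition $P_n \cong \bigoplus_v B_v^{m_n(v)}$, and the Yoneda \cref{lem:yoneda} gives $\dim_{\F} \mathrm{Nat}(B_v, S_u) = \delta_{v, u}$, whence $\dim_{\F} C^n = m_n(u) = \beta^0(P_n)(u)$ by \cref{prp:projCover}.

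Second I would verify that $C^{\bullet}$ has only finitely many nonzero terms at the chosen $u$. An elementary computation from \cref{dfn:contraBlock} shows $B_v \circ T = B_{T^{-1}(v)}$, so the equivariance $P_{n+3} = P_n \circ T$ translates into $\beta^0(P_{n+3k})(u) = \beta^0(P_n)(T^k(u))$ for every $n \in \{0, 1, 2\}$ and $k \in \N_0$. Since each of $P_0, P_1, P_2$ is a functor in $\mathcal{J}$ and therefore has support bounded above in $\M$, and since iterating the glide reflection $T$ eventually carries any fixed $u$ out of any such bounded-above region, we conclude that $\beta^0(P_n)(u)$ vanishes for all but finitely many $n$. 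Combined with \cref{cor:bettiLocFinite}, this ensures that both the alternating sum defining $\chi(F)(u)$ and the alternating sum $\sum_n (-1)^n \beta^0(P_n)(u)$ are finite.

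Finally, invoking the standard identity that for a bounded cochain complex of finite-dimensional $\F$-vector spaces the alternating sum of dimensions of the terms equals the alternating sum of dimensions of the cohomology spaces, I obtain
\begin{equation*}
  \sum_{n \in \N_0} (-1)^n \beta^0(P_n)(u)
  = \sum_{n \in \N_0} (-1)^n \dim_{\F} C^n
  = \sum_{n \in \N_0} (-1)^n \dim_{\F} H^n(C^{\bullet})
  = \chi(F)(u).
\end{equation*}
The main obstacle is step two: turning the equivariance condition into the finiteness statement for $\beta^0(P_n)(u)$ via the identity $B_v \circ T = B_{T^{-1}(v)}$ together with the precise meaning of ``bounded above support'' for functors in $\mathcal{J}$. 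Once this is in place, the rest is a routine Euler characteristic argument.
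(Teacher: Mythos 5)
Your proposal is correct and follows the same route as the paper: both apply $\mathrm{Nat}(-, S_u)$ to the equivariant resolution, identify the cohomology with $\Ext^{\bullet}_{\mathcal{C}}(F, S_u)$ and the dimensions of the terms with $\beta^0(P_n)(u)$, and then invoke the standard Euler-characteristic identity for a finite complex. The only difference is that the paper asserts the finiteness of $\mathrm{Nat}(P_{\bullet}, S_u)$ with the bare remark that $P_n(u) \cong \{0\}$ for almost all $n$, while you spell out the justification via the identity $B_v \circ T = B_{T^{-1}(v)}$, the periodicity $\beta^0(P_{n+3k})(u) = \beta^0(P_n)(T^k u)$, and the bounded-above support of $P_0, P_1, P_2$; this is exactly the content the paper leaves implicit, and your argument for it is sound.
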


\begin{proof}
  Let $u \in \op{int} \M$.
  Then the cochain complex
  $\mathrm{Nat} (P_{\bullet}, S_u)$
  is finite
  as $P_n(u) \cong \{0\}$ for almost all $n \in \N_0$,
  and thus the Euler characteristic of
  $\mathrm{Nat} (P_{\bullet}, S_u)$ can be computed
  in terms of its cohomology
  $\Ext_{\mathcal{C}}^{\bullet} (F, S_u)$.
  As a result, we obtain
  \begin{align*}
    \sum_{n \in \N_0} (-1)^n \beta^0 (P_n)(u)
    & = \sum_{n \in \N_0} (-1)^n \dim \mathrm{Nat} (P_n, S_u)
    \\
    & = \sum_{n \in \N_0} (-1)^n \dim \Ext_{\mathcal{C}}^n(F, S_u)
    \\
    & = \chi(F)(u)
      .
      \qedhere
  \end{align*}
\end{proof}

Now in order to harness this lemma
to show the additivity of
$\chi \colon \mathrm{Ob}(\mathrm{pres}(\mathcal{J})) \rightarrow G(\mathbb{B})$,
we make the following observation about
$\beta^0 \colon \mathrm{Ob}(\mathrm{pres}(\mathcal{J})) \rightarrow \mathbb{B}$.

\begin{lem}
  \label{lem:bettiAdditive}
  The restriction of
  $\beta^0 \colon
  \mathrm{Ob}(\mathrm{pres}(\mathcal{J})) \rightarrow
  \mathbb{B}$
  to functors in $\mathcal{J}$
  is an additive invariant.
\end{lem}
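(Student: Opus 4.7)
The plan is to exploit the Frobenius structure of $\mathrm{pres}(\mathcal{J})$ established in \cref{thm:frobenius} to reduce the claim to a splitting observation. An additive invariant on (the restriction of $\beta^0$ to) $\mathcal{J}$ means that for every short exact sequence
\[
  0 \rightarrow A \rightarrow B \rightarrow C \rightarrow 0
\]
of $\mathrm{pres}(\mathcal{J})$ whose three terms all happen to lie in $\mathcal{J}$, one has $\beta^0(B) = \beta^0(A) + \beta^0(C)$ in $\mathbb{B}$. Because $\mathcal{J}$ is not itself an abelian subcategory, the correct ambient setting is the abelian category $\mathrm{pres}(\mathcal{J})$.

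First I would invoke \cref{cor:JiffProj} (equivalently, the projective-injective part of \cref{thm:frobenius}): every object of $\mathcal{J}$ is projective in $\mathrm{pres}(\mathcal{J})$. In particular $C$ is projective, so the epimorphism $B \twoheadrightarrow C$ admits a section and the short exact sequence splits. Hence $B \cong A \oplus C$ in $\mathrm{pres}(\mathcal{J})$.

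Second I would conclude by additivity of $\mathrm{Nat}(-, S_u)$ on direct sums. Unwinding \cref{dfn:betti} and using that $\mathrm{Ext}^0_{\mathcal{C}}(-, S_u) = \mathrm{Nat}(-, S_u)$, we have $\beta^0(F)(u) = \dim_{\F} \mathrm{Nat}(F, S_u)$, and
\[
  \mathrm{Nat}(A \oplus C, S_u) \cong \mathrm{Nat}(A, S_u) \oplus \mathrm{Nat}(C, S_u).
\]
Taking $\F$-dimensions pointwise on $\op{int} \M$ yields $\beta^0(B) = \beta^0(A \oplus C) = \beta^0(A) + \beta^0(C)$, which is precisely additivity.

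The only (very mild) obstacle is ensuring that the sum takes place in $\mathbb{B}$ and that the identifications used are legitimate; this is immediate since $\mathbb{B}$ is a commutative monoid under pointwise addition and the isomorphism $B \cong A \oplus C$ already lives in $\mathrm{pres}(\mathcal{J})$, so each step is an equality of pfd functors evaluated pointwise. No harder input than the Frobenius property of $\mathrm{pres}(\mathcal{J})$ (already in hand via \cref{thm:frobenius}) is required.
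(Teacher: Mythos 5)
Your proof is correct and is essentially identical to the paper's: both use that objects of $\mathcal{J}$ are projective in $\mathrm{pres}(\mathcal{J})$ (\cref{cor:JiffProj} / \cref{thm:frobenius}) to split every short exact sequence in $\mathcal{J}$, and then conclude from the additivity of $\mathrm{Nat}(-, S_u)$ on direct sums. You have simply spelled out the intermediate identifications $\beta^0(F)(u) = \dim_{\F}\mathrm{Nat}(F, S_u)$ a bit more explicitly.
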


\begin{proof}
  As all functors in $\mathcal{J}$ are projective
  by \cref{cor:JiffProj} or \cref{thm:frobenius},
  all short exact sequences in $\mathcal{J}$ split.
  Thus, the result follows from the additivity of the functor
  ${\mathrm{Nat}(-, S_u) \colon \mathrm{pres}(\mathcal{J}) \rightarrow \VectF}$
  for any ${u \in \op{int} \M}$.
\end{proof}

Now suppose
\begin{equation*}
  0 \rightarrow F \rightarrow G \rightarrow H \rightarrow 0
\end{equation*}
is a short exact sequence of $\mathcal{J}$-presentable functors.
Moreover, suppose we have equivariant projective resolutions
\begin{align*}
  \dots \rightarrow
  P_n \rightarrow
  \dots \rightarrow
  P_2 \rightarrow
  P_1 \rightarrow
  P_0 \rightarrow
  F \rightarrow 0 &
  \\
  \text{and} \quad
  \dots \rightarrow
  R_n \rightarrow
  \dots \rightarrow
  R_2 \rightarrow
  R_1 \rightarrow
  R_0 \rightarrow
  H \rightarrow 0 &.
\end{align*}

\begin{lem}[Horseshoe Lemma]
  \label{lem:horseshoe}
  There exist an equivariant projective resolution
  \begin{equation*}
    \dots \rightarrow
    Q_n \rightarrow
    \dots \rightarrow
    Q_2 \rightarrow
    Q_1 \rightarrow
    Q_0 \rightarrow
    G \rightarrow 0
  \end{equation*}
  and a short exact sequence of chain complexes
  \begin{equation*}
    0 \rightarrow
    P_{\bullet} \rightarrow
    Q_{\bullet} \rightarrow
    R_{\bullet} \rightarrow
    0
  \end{equation*}
  such that the diagram
  \begin{equation*}
    \begin{tikzcd}
      P_0
      \arrow[r]
      \arrow[d]
      &
      Q_0
      \arrow[r]
      \arrow[d]
      &
      R_0
      \arrow[d]
      \\
      F
      \arrow[r]
      &
      G
      \arrow[r]
      &
      H
    \end{tikzcd}
  \end{equation*}
  commutes.
\end{lem}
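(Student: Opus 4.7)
Plan: The approach adapts the classical Horseshoe construction to preserve equivariance modulo $T$. Set $Q_n := P_n \oplus R_n$ for all $n \geq 0$; each $Q_n$ lies in $\mathcal{J}$ as a finite direct sum of projective-injectives (\cref{cor:JiffProj}, \cref{thm:frobenius}), and equivariance $Q_{n+3} = Q_n \circ T$ is immediate from the equivariance of $P_\bullet$ and $R_\bullet$. The short exact sequence $0 \to P_n \to Q_n \to R_n \to 0$ is the canonical split sequence, and applying $- \circ T$ preserves it, so degree-wise equivariance of the short exact sequence of complexes is automatic.

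Next, I construct the augmentation $\epsilon^Q \colon Q_0 \to G$ and the first three differentials $\delta^Q_1, \delta^Q_2, \delta^Q_3$ via the classical Horseshoe Lemma. Since $R_0 \in \mathcal{J}$ is projective in $\mathcal{C}$, I can lift $\epsilon^R \colon R_0 \twoheadrightarrow H$ through $G \twoheadrightarrow H$ to $\tilde\epsilon \colon R_0 \to G$ and set $\epsilon^Q := (\iota \epsilon^P, \tilde\epsilon)$, which is surjective by a snake-lemma argument. Inductively for $n = 1, 2, 3$, the kernel $K_n^Q := \ker(\delta^Q_{n-1})$ (with $\delta^Q_0 := \epsilon^Q$) sits in a short exact sequence $0 \to K_n^P \to K_n^Q \to K_n^R \to 0$; lifting a surjection $R_n \twoheadrightarrow K_n^R$ to $R_n \to K_n^Q$ produces $\delta^Q_n$ of block form $\bigl(\begin{smallmatrix}\delta^P_n & \sigma_n \\ 0 & \delta^R_n\end{smallmatrix}\bigr)$. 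I then extend by equivariance: $\delta^Q_m := -\delta^Q_{m-3} \circ T$ for $m \geq 4$. The chain complex identities and exactness conditions at degree $m \geq 4$ reduce by functoriality of $- \circ T$ (together with the signs in the equivariance convention) to those at degree $m - 3$, hence to the verified conditions at degrees $\leq 3$.

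The main obstacle is verifying the seam exactness at $Q_3$, namely $\ker(\delta^Q_3) = \op{im}(-\delta^Q_1 \circ T)$. The left-hand side is $K_4^Q$, an extension of $K_4^R$ by $K_4^P$ produced by the Horseshoe construction. Using $\delta^P_4 = -\delta^P_1 \circ T$ together with exactness of $P_\bullet$, one computes $K_4^P = \ker(\delta^P_3) = \op{im}(\delta^P_4) = \op{im}(\delta^P_1) \circ T = \ker(\epsilon^P) \circ T$, so $K_4^P \cong F \circ T$ via the defining short exact sequence $0 \to \ker(\epsilon^P) \to P_0 \to F \to 0$; analogously $K_4^R \cong H \circ T$. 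The right-hand side $\op{im}(\delta^Q_1) \circ T = \ker(\epsilon^Q) \circ T$ is also an extension of $H \circ T$ by $F \circ T$, namely the input short exact sequence shifted by $T$. These two extensions have the same class in $\Ext^1(H \circ T, F \circ T)$ by a dimension-shift argument (the extension class is preserved by the Horseshoe iteration up to the Yoneda shift), so they are isomorphic, and a compatible choice of $\sigma_1, \sigma_2, \sigma_3$ identifies them as subfunctors of $Q_3$, establishing exactness at the seam and completing the proof.
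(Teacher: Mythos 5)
Your overall strategy --- set $Q_n := P_n \oplus R_n$, run the classical horseshoe construction in degrees $0$ through $3$, and extend the differentials by $\delta^Q_{m} := -\delta^Q_{m-3}\circ T$ for $m \geq 4$ --- is the same as the paper's, whose proof is the one-line observation that the ordinary horseshoe construction can be carried out equivariantly. You are also right that the only non-formal point is the seam between degrees $3$ and $4$ (for $m\geq 5$ the chain and exactness conditions really are $T$-shifts of earlier ones). However, your treatment of the seam is flawed. First, the identifications $K_4^P \cong F\circ T$ and $K_4^R\cong H\circ T$ are false: from $\delta^P_4=-\delta^P_1\circ T$ and exactness of $P_\bullet$ one gets $K^P_4=\ker\delta^P_3=\op{im}(\delta^P_1)\circ T=\ker(\epsilon^P)\circ T$, which is the $T$-shift of the \emph{first syzygy} of $F$, not of $F$ itself; the sequence $0\to\ker\epsilon^P\to P_0\to F\to 0$ exhibits $F$ as a quotient of $P_0$, not as $\ker\epsilon^P$. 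Likewise $\ker(\epsilon^Q)\circ T$ is an extension of $\ker(\epsilon^R)\circ T$ by $\ker(\epsilon^P)\circ T$, not ``the input short exact sequence shifted by $T$''.

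Second, and more seriously, exactness at $Q_3$ requires the \emph{equality} of $\ker(\delta^Q_3)$ and $\op{im}(\delta^Q_4)=\ker(\epsilon^Q)\circ T$ as subobjects of $Q_3=Q_0\circ T$; knowing that two extensions define the same class in $\Ext^1$ yields only an abstract isomorphism and says nothing about this equality. In block form the seam amounts to the single identity $\delta^P_3\circ(\sigma_1\circ T)+\sigma_3\circ(\delta^R_1\circ T)=0$, i.e.\ the chain condition $\delta^Q_3\circ\delta^Q_4=0$, which must hold \emph{simultaneously} with the horseshoe condition $\delta^P_2\circ\sigma_3+\sigma_2\circ\delta^R_3=0$ already imposed on $\sigma_3$. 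Your phrase ``a compatible choice of $\sigma_1,\sigma_2,\sigma_3$ identifies them as subfunctors'' asserts precisely the solvability of this simultaneous lifting problem without proving it; that is the actual content of the lemma beyond the classical horseshoe. Note also that once $Q_\bullet$ is known to be a complex and $0\to P_\bullet\to Q_\bullet\to R_\bullet\to 0$ is a short exact sequence of complexes over $0\to F\to G\to H\to 0$, exactness of $Q_\bullet$ in every degree (including at the seam) is automatic from the long exact homology sequence, so the entire problem reduces to constructing $\sigma_3$ satisfying the two displayed identities --- and that construction is what is missing from your argument.
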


\begin{proof}
  If we apply the construction
  from the proof of the ordinary horseshoe lemma,
  then we obtain an equivariant chain complex.
\end{proof}

\begin{prp}
  \label{prp:eulerAdditive}
  The function
  $\chi \colon
  \mathrm{Ob}(\mathrm{pres}(\mathcal{J})) \rightarrow
  G(\mathbb{B})$
  is an additive invariant of $\mathrm{pres}(\mathcal{J})$.
\end{prp}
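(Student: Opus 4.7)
The plan is to deduce additivity directly from the combination of the Horseshoe Lemma (\cref{lem:horseshoe}), the Euler formula (\cref{lem:eulerFormula}), and the additivity of $\beta^0$ on projectives (\cref{lem:bettiAdditive}). Let $0 \to F \to G \to H \to 0$ be a short exact sequence in $\mathrm{pres}(\mathcal{J})$. I would first invoke \cref{lem:equivProjRes} to pick equivariant projective resolutions $P_\bullet \to F$ and $R_\bullet \to H$. Then \cref{lem:horseshoe} yields an equivariant projective resolution $Q_\bullet \to G$ together with a short exact sequence of chain complexes $0 \to P_\bullet \to Q_\bullet \to R_\bullet \to 0$ compatible with the augmentations.

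Next, because each $R_n$ lies in $\mathcal{J}$ and is therefore projective in $\mathrm{pres}(\mathcal{J})$ by \cref{cor:JiffProj} (or \cref{thm:frobenius}), each of the short exact sequences
\begin{equation*}
  0 \rightarrow P_n \rightarrow Q_n \rightarrow R_n \rightarrow 0
\end{equation*}
splits, so $Q_n \cong P_n \oplus R_n$ in $\mathcal{J}$. By \cref{lem:bettiAdditive} this gives $\beta^0(Q_n) = \beta^0(P_n) + \beta^0(R_n)$ in $\mathbb{B}$ for every $n \in \N_0$.

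Now I would apply \cref{lem:eulerFormula} to each of $F$, $G$, and $H$. Evaluating pointwise at any $u \in \op{int} \M$, equivariance together with the bounded-above support of each $P_n$, $Q_n$, $R_n$ ensures that all three sums are finite at $u$, so they make sense termwise in $G(\mathbb{B})$. Combining the two ingredients I get
\begin{equation*}
  \chi(G)(u) = \sum_{n \in \N_0} (-1)^n \beta^0(Q_n)(u)
  = \sum_{n \in \N_0} (-1)^n \beta^0(P_n)(u) + \sum_{n \in \N_0} (-1)^n \beta^0(R_n)(u)
  = \chi(F)(u) + \chi(H)(u),
\end{equation*}
which proves $\chi(G) = \chi(F) + \chi(H)$ as admissible Euler functions. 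The only nontrivial step is ensuring that the horseshoe construction produces an \emph{equivariant} resolution (so that \cref{lem:eulerFormula} applies and the pointwise sums are finite); this is guaranteed by \cref{lem:horseshoe}, so the argument is essentially a bookkeeping consequence of the preceding lemmas and poses no real obstacle.
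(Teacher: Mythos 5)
Your proposal is correct and is exactly the paper's argument: the paper's own proof of \cref{prp:eulerAdditive} simply cites Lemmas \ref{lem:eulerFormula}, \ref{lem:horseshoe}, and \ref{lem:bettiAdditive}, and your write-up spells out precisely how those three lemmas combine (split the termwise short exact sequences using projectivity of $R_n$, apply $\beta^0$-additivity, and sum the alternating series via the Euler formula).
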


\begin{proof}
  This result follows directly from Lemmas
  \ref{lem:eulerFormula}, \ref{lem:horseshoe}, and \ref{lem:bettiAdditive}.
\end{proof}

In particular we obtain the group homomorphism
\begin{equation*}
  [\chi] \colon K_0(\mathrm{pres}(\mathcal{J})) \rightarrow G(\mathbb{B}), \,
  [F] \mapsto \chi(F)
  .
\end{equation*}
Next we show that $[\chi]$ is an isomorphism.
Now $\mathcal{J}$ being the subcategory of projectives
in $\mathrm{pres}(\mathcal{J})$ by \cref{thm:frobenius},
it is a Quillen exact category as well.
Thus, we may also consider the Grothendieck group
$K_0 (\mathcal{J})$ of $\mathcal{J}$.
Moreover,
the restriction of
${\beta^0 \colon \mathrm{Ob}(\mathrm{pres}(\mathcal{J})) \rightarrow \mathbb{B}}$
to functors in $\mathcal{J}$
is an additive invariant by \cref{lem:bettiAdditive},
hence we obtain the group homomorphism
\begin{equation*}
  [\beta^0] \colon K_0 (\mathcal{J}) \rightarrow G(\mathbb{B}), \,
  [P] \mapsto \beta^0(P)
  .
\end{equation*}
Furthermore,
since $\beta^n(P) = 0$ for any $n \geq 1$ and any projective $P$,
we obtain the commutative triangle
\begin{equation}
  \label{eq:diagGrothendieck}
  \begin{tikzcd}[row sep=6ex]
    K_0(\mathcal{J})
    \arrow[r]
    \arrow[rd, "{[\beta^0]}"']
    &
    K_0(\mathrm{pres}(\mathcal{J}))
    \arrow[d, "{[\chi]}"]
    \\
    &
    G(\mathbb{B})
  \end{tikzcd}
\end{equation}
of abelian groups.
Thus, in order to show that
${[\chi] \colon K_0(\mathrm{pres}(\mathcal{J})) \rightarrow G(\mathbb{B})}$
is an isomorphism it suffices to show that
${[\beta^0] \colon K_0 (\mathcal{J}) \rightarrow G(\mathbb{B})}$
is an isomorphism
and that the upper vertical map in \eqref{eq:diagGrothendieck}
induced by the subcategory inclusion
${\mathcal{J} \hookrightarrow \mathrm{pres}(\mathcal{J})}$
is an epimorphism.

\begin{lem}
  \label{lem:bettiIso}
  The group homomorphism
  $[\beta^0] \colon K_0 (\mathcal{J}) \rightarrow G(\mathbb{B})$
  is an isomorphism.
\end{lem}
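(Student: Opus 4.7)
The plan is to factor $[\beta^0]$ through a monoid isomorphism $\mathrm{Iso}(\mathcal{J}) \cong \mathbb{B}$. First I would observe that because $\mathcal{J}$ coincides with the full subcategory of projectives in $\mathrm{pres}(\mathcal{J})$ by \cref{cor:JiffProj} (equivalently \cref{thm:frobenius}), every short exact sequence $0 \to A \to B \to C \to 0$ in $\mathcal{J}$ splits, since the surjection onto the projective $C$ admits a section. Consequently, the Grothendieck group of $\mathcal{J}$ as a Quillen exact category is canonically identified with the Grothendieck group $G(\mathrm{Iso}(\mathcal{J}))$ of the commutative monoid $(\mathrm{Iso}(\mathcal{J}), \oplus)$ of isomorphism classes of objects of $\mathcal{J}$ under direct sum, so $K_0(\mathcal{J}) \cong G(\mathrm{Iso}(\mathcal{J}))$.

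Second, I would show that $\beta^0$ descends to a monoid isomorphism $\bar{\beta}^0 \colon \mathrm{Iso}(\mathcal{J}) \to \mathbb{B}$. Additivity $\beta^0(P \oplus Q) = \beta^0(P) + \beta^0(Q)$ is \cref{lem:bettiAdditive}. For surjectivity, given $b \in \mathbb{B}$, the functor $P_b := \bigoplus_{v \in \op{int}\M} B_v^{b(v)}$ lies in $\mathcal{J}$ by \cref{lem:sumInJ}, and the Yoneda \cref{lem:yoneda} yields $\mathrm{Nat}(P_b, S_u) \cong \F^{b(u)}$, hence $\beta^0(P_b) = b$. For injectivity, if $\beta^0(P) = \beta^0(Q) = b$, then \cref{cor:decompJ} applied to both $P$ and $Q$ produces natural isomorphisms $P \cong P_b \cong Q$, so $[P] = [Q]$ in $\mathrm{Iso}(\mathcal{J})$.

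Finally, applying the Grothendieck group functor to $\bar{\beta}^0$ gives an isomorphism of abelian groups $G(\mathrm{Iso}(\mathcal{J})) \cong G(\mathbb{B})$, and unwinding the construction along $K_0(\mathcal{J}) \cong G(\mathrm{Iso}(\mathcal{J}))$ shows that the composite agrees with $[\beta^0]$.

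The main point requiring a moment's care is not really an obstacle but worth noting: since every $P \in \mathcal{J}$ is pfd, the values $\beta^0(P)(v)$ are all finite natural numbers, so $\mathrm{Iso}(\mathcal{J})$ is cancellative and embeds into its Grothendieck group, ruling out any Eilenberg-swindle phenomena that could otherwise arise from the presence of infinite direct sums in $\mathcal{J}$. Beyond this, the argument is a straightforward assembly of the decomposition theorem \cref{cor:decompJ}, the admissibility criterion \cref{lem:sumInJ}, and the splitting of short exact sequences in $\mathcal{J}$.
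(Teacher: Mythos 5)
Your proof is correct and rests on the same ingredients as the paper's: surjectivity via \cref{lem:sumInJ}, and injectivity via the decomposition of objects of $\mathcal{J}$ into contravariant blocks. Your version is organized a bit more carefully. You explicitly identify $K_0(\mathcal{J})$ with the Grothendieck group of the commutative monoid $\mathrm{Iso}(\mathcal{J})$ (legitimate because every short exact sequence in $\mathcal{J}$ splits), and then verify that $\beta^0$ descends to a monoid isomorphism $\mathrm{Iso}(\mathcal{J}) \to \mathbb{B}$, invoking \cref{cor:decompJ} to get injectivity of that monoid map. The paper's injectivity step is terser: it records, via \cref{prp:minProjRes}, only that $\beta^0(P) = 0$ forces $P \cong 0$. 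That alone shows that no nonzero object class lies in the kernel, but it does not by itself settle injectivity of the group homomorphism on arbitrary formal differences $[P] - [Q]$; your factorization through the monoid isomorphism and the functoriality of the Grothendieck group construction makes that passage explicit. One small remark: the observation about cancellativity of $\mathrm{Iso}(\mathcal{J})$ is true, and it is a genuine sanity check against Eilenberg-swindle pathologies given that $\mathcal{J}$ admits infinite direct sums, but it is not actually needed once $\bar{\beta}^0$ is known to be a monoid isomorphism, since $G(\bar{\beta}^0)$ is then automatically a group isomorphism.
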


\begin{proof}
  By \cref{lem:sumInJ}
  the group homomorphism
  $[\beta^0] \colon K_0 (\mathcal{J}) \rightarrow G(\mathbb{B})$
  is surjective .
  Now suppose ${P \colon \M^{\circ} \rightarrow \VectF}$
  is a functor in $\mathcal{J}$
  such that
  ${\beta^0(P) = [\beta^0]([P]) = 0}$.
  Then the zero natural transformation
  ${0 \rightarrow P}$
  is a projective cover of $P$ by \cref{prp:minProjRes}
  and thus ${P \cong 0}$.
\end{proof}

\begin{lem}
  \label{lem:grothendieckIso}
  The group homomorphism
  ${K_0 (\mathcal{J}) \rightarrow K_0(\mathrm{pres}(\mathcal{J}))}$
  induced by the full subcategory inclusion
  ${\mathcal{J} \hookrightarrow \mathrm{pres}(\mathcal{J})}$
  is an isomorphism.
\end{lem}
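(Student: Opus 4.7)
The plan is to establish injectivity and surjectivity of the inclusion $K_0(\mathcal{J}) \to K_0(\mathrm{pres}(\mathcal{J}))$ separately. Injectivity is immediate from the commutative triangle \eqref{eq:diagGrothendieck}: the composition factors the map $[\beta^0]$, which is an isomorphism by Lemma \ref{lem:bettiIso}, so the inclusion must be injective.

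For surjectivity, the strategy is, given any $F \in \mathrm{pres}(\mathcal{J})$, to exhibit $[F]$ in $K_0(\mathrm{pres}(\mathcal{J}))$ as a difference $[Q^+] - [Q^-]$ of classes of two objects of $\mathcal{J}$ assembled from a minimal projective resolution of $F$. Concretely, I would take a minimal projective resolution $\cdots \to P_n \to \cdots \to P_0 \to F \to 0$ provided by Proposition \ref{prp:minProjRes} and set $Q^+ := \bigoplus_{n \text{ even}} P_n$ and $Q^- := \bigoplus_{n \text{ odd}} P_n$, together with the syzygies $Z_n := \ker(P_{n-1} \to P_{n-2})$ (with $Z_0 = F$) and the ``tails'' $Z^{\mathrm{odd}} := \bigoplus_{n \text{ odd}} Z_n$ and $Z^{\mathrm{even}+} := \bigoplus_{n \text{ even},\, n \geq 2} Z_n$. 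Taking the direct sum of the short exact sequences $0 \to Z_{n+1} \to P_n \to Z_n \to 0$ separately over even and odd $n$ (which is exact in the ambient functor category $\mathcal{C}$, since coproducts of vector spaces are exact) yields two short exact sequences
\begin{align*}
  0 \to Z^{\mathrm{odd}} &\to Q^+ \to F \oplus Z^{\mathrm{even}+} \to 0,\\
  0 \to Z^{\mathrm{even}+} &\to Q^- \to Z^{\mathrm{odd}} \to 0,
\end{align*}
and subtracting the resulting identities $[Q^+] = [Z^{\mathrm{odd}}] + [F] + [Z^{\mathrm{even}+}]$ and $[Q^-] = [Z^{\mathrm{even}+}] + [Z^{\mathrm{odd}}]$ in $K_0(\mathrm{pres}(\mathcal{J}))$ gives $[F] = [Q^+] - [Q^-]$, exhibiting $[F]$ in the image of $K_0(\mathcal{J})$.

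The main obstacle is justifying that the infinite direct sums $Q^{\pm}$ really lie in $\mathcal{J}$ and that $Z^{\mathrm{odd}}$, $Z^{\mathrm{even}+}$ are $\mathcal{J}$-presentable, so that the above sequences genuinely live in $\mathrm{pres}(\mathcal{J})$. By Lemma \ref{lem:sumInJ} this reduces to showing that $\sum_{n \text{ even}} \beta^n(F)$ and $\sum_{n \text{ odd}} \beta^n(F)$ are admissible Betti functions. Here the bounded above support of $F$ is crucial: combined with the equivariance $\beta^{n+3}(F) = \beta^n(F) \circ T$ of Corollary \ref{cor:bettiEquiv}, it implies that the support of $\beta^n(F)$ is contained in $T^{k_n}(Q)$ for a sequence $k_n \to -\infty$, so that every strip $(\uparrow v) \cap \op{int}(\downarrow T(v))$ is met by only finitely many $\beta^n(F)$. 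This gives both the strip-wise finiteness and the bounded above support required of the summed functions. The $\mathcal{J}$-presentability of $Z^{\mathrm{odd}}$ and $Z^{\mathrm{even}+}$ then follows by summing the two-term presentations $P_{n+1} \to P_n \to Z_n \to 0$ and invoking the same admissibility argument for source and target.
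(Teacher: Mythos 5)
Your proof is correct and follows essentially the same route as the paper: injectivity via the factorization of $[\beta^0]$ through the triangle \eqref{eq:diagGrothendieck}, and surjectivity by splitting a projective resolution into its even and odd parts and cancelling in $K_0$. The only differences are cosmetic — you organize the cancellation through syzygies and two summed short exact sequences where the paper uses kernel/cokernel identities for the maps $\varphi_1,\varphi_2,\varphi_3$, and you justify that the infinite sums land in $\mathcal{J}$ via the minimal resolution together with \cref{cor:bettiEquiv} rather than via the equivariant resolution of \cref{lem:equivProjRes} directly (in fact your admissibility argument spells out a point the paper's proof leaves implicit).
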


\begin{proof}
  By \cref{lem:bettiIso}
  the homomorphism
  ${[\beta^0] \colon K_0 (\mathcal{J}) \rightarrow G(\mathbb{B})}$
  is injective.
  In conjunction with the commutativity of \eqref{eq:diagGrothendieck}
  this implies the injectivity of
  ${K_0 (\mathcal{J}) \rightarrow K_0(\mathrm{pres}(\mathcal{J}))}$.
  Now let
  $F \colon \M^{\circ} \rightarrow \VectF$
  be a $\mathcal{J}$-presentable functor
  and let
  \begin{equation*}
    \dots \xrightarrow{\delta_{n+1}}
    P_n \xrightarrow{\delta_n}
    \dots \xrightarrow{\delta_3}
    P_2 \xrightarrow{\delta_2}
    P_1 \xrightarrow{\delta_1}
    P_0 \xrightarrow{\epsilon}
    F \rightarrow
    0
  \end{equation*}
  be an equivariant projective resolution of $F$.
  Then both
  \begin{equation*}
    \bigoplus_{n=0}^{\infty} P_{2n}
    \quad \text{and} \quad
    \bigoplus_{n=0}^{\infty} P_{2n+1}
  \end{equation*}
  are functors in $\mathcal{J}$
  and thus it suffices to show that
  \begin{equation*}
    \left[\bigoplus_{n=0}^{\infty} P_{2n}\right]
    =
    [F] +
    \left[\bigoplus_{n=0}^{\infty} P_{2n+1}\right]
  \end{equation*}
  as elements of $K_0(\mathrm{pres}(\mathcal{J}))$.
  To this end,
  we consider the exact sequence
  \begin{equation}
    \label{eq:seqProjSums}
    \begin{tikzcd}
      \displaystyle \bigoplus_{n=0}^{\infty} P_{2n+3}
      \arrow[r, "\varphi_3"]
      &
      \displaystyle \bigoplus_{n=0}^{\infty} P_{2n+2}
      \arrow[r, "\varphi_2"]
      &
      \displaystyle \bigoplus_{n=0}^{\infty} P_{2n+1}
      \arrow[r, "\varphi_1"]
      &
      \displaystyle \bigoplus_{n=0}^{\infty} P_{2n}
      ,
    \end{tikzcd}
  \end{equation}
  where
  \begin{equation*}
    \varphi_3 := \bigoplus_{n=0}^{\infty} \delta_{2n+3},
    \quad
    \varphi_2 := \bigoplus_{n=0}^{\infty} \delta_{2n+2},
    \quad \text{and} ~~
    \varphi_1 := \bigoplus_{n=0}^{\infty} \delta_{2n+1}.
  \end{equation*}
  First we note that
  \begin{equation}
    \label{eq:cokerPlusF}
    \op{coker} \varphi_1 \cong
    \op{coker} \delta_1 \oplus \op{coker} \varphi_3 \cong
    F \oplus \op{coker} \varphi_3
    .
  \end{equation}
  Moreover,
  by the exactness of \eqref{eq:seqProjSums}
  we have $\op{coker} \varphi_3 \cong \op{Im} \varphi_2 = \ker \varphi_1$.
  In conjunction with \eqref{eq:cokerPlusF} we obtain
  $\op{coker} \varphi_1 \cong F \oplus \ker \varphi_1$
  and hence
  \begin{equation*}
    [\op{coker} \varphi_1] = [F] + [\ker \varphi_1]
    .
  \end{equation*}
  From this equation in turn we obtain
  \begin{align*}
    \left[\bigoplus_{n=0}^{\infty} P_{2n}\right]
    & = [\op{Im} \varphi_1] + [\op{coker} \varphi_1]
    \\
    & = [\op{Im} \varphi_1] + [F] + [\ker \varphi_1]
    \\
    & = [F] + [\op{ker} \varphi_1] + [\op{Im} \varphi_1]
    \\
    & = [F] + \left[\bigoplus_{n=0}^{\infty} P_{2n+1}\right].
      \qedhere
  \end{align*}
\end{proof}

\begin{proof}[Proof of \cref{thm:eulerIso}]
  This follows directly from
  Lemmas \ref{lem:bettiIso} and \ref{lem:grothendieckIso}
  and the commutativity of the triangle \eqref{eq:diagGrothendieck}.
\end{proof}

\bibliographystyle{alpha}
\bibliography{
  bib/cohen-steiner-edelsbrunner-harer-2007.bib,
  bib/keller-scherotzke-2016.bib,
  bib/lesnick-wright-2019.bib,
  bib/happel-1988.bib,
  bib/hiraoka-2020.bib,
  bib/blanchette-bruestle-hanson-2021.bib,
  bib/botnan-oppermann-oudot-2021.bib,
  bib/carlsson-zomorodian-2005.bib,
  bib/carlsson-zomorodian-2009.bib,
  bib/quillen-1973.bib,
  bib/swan-1971.bib,
  bib/patel-2018.bib,
  bib/bauer-botnan-fluhr-2020.bib,
  bib/bauer-botnan-fluhr-2021.bib,
  bib/berkouk-ginot-oudot-2019.bib,
  bib/curry-2014.bib,
  bib/kashiwara-schapira-1990.bib,
  bib/kashiwara-schapira-2006.bib,
  bib/kashiwara-schapira-2018.bib,
  bib/tomDieck-2008.bib,
  bib/carlsson-2009.bib,
  bib/krause-2015.bib,
  bib/krause-2007.bib,
  bib/sella-2016.bib,
  bib/cohen-steiner-edelsbrunner-harer-2009.bib,
  bib/mazorchuk-2012.bib,
  bib/bendich-edelsbrunner-morozov-patel-2013.bib
}

\appendix

\section{General Facts on Derived Adjunctions}
\label{sec:homologicalAlg}

Let
\begin{equation*}
  \begin{tikzcd}
    \mathcal{B}
    \arrow[r, "F"'{name=F}, bend right]
    &
    \mathcal{C}
    \arrow[l, "G"'{name=G}, bend right]
    \arrow[phantom, from=F, to=G, "\dashv" rotate=90]
  \end{tikzcd}
\end{equation*}
be an adjunction of abelian categories.
Moreover, we assume that 
the left adjoint $F$ is exact (or equivalently left-exact)
and that $\mathcal{C}$ has enough injectives.

\begin{lem}
  \label{lem:coreflection}
  For any object $X \in \mathcal{B}$
  the derived unit
  \begin{equation*}
    \eta^{\Der (\mathcal{B})}_X \colon
    X \xrightarrow{\eta_X}
    (G \circ F)(X) \rightarrow (RG \circ F)(X)
  \end{equation*}
  is an isomorphism of the derived category
  $\Der (\mathcal{B})$
  iff
  the ordinary unit $\eta_X$ is an isomorphism and
  $F(X)$ is $G$-acyclic, i.e., $R^k \flat (G) = 0$ for all $k \neq 0$.
\end{lem}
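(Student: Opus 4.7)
The plan is to reduce the claim to a cohomology computation after choosing an explicit representative of $RG(F(X))$. Since $\mathcal{C}$ has enough injectives, I will fix an injective resolution $F(X) \hookrightarrow I^{\bullet}$ in $\mathcal{C}$, so that $RG(F(X))$ is represented by the complex $G(I^{\bullet})$ in $\Der(\mathcal{B})$. Under the factorization of the derived unit given in the statement, $\eta^{\Der(\mathcal{B})}_X$ is then represented by the morphism of complexes from $X$ (concentrated in degree $0$) to $G(I^{\bullet})$ whose degree-zero component is the composition of the ordinary unit $\eta_X \colon X \to G(F(X))$ with the augmentation $G(F(X)) \hookrightarrow G(I^0)$.

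With this representative fixed, the morphism $\eta^{\Der(\mathcal{B})}_X$ is an isomorphism in $\Der(\mathcal{B})$ precisely when the underlying chain map is a quasi-isomorphism, which I can check degree by degree. Since $G$ is left-exact, I have $H^0(G(I^{\bullet})) \cong G(F(X))$, and the map induced on $H^0$ by my representative is exactly the ordinary unit $\eta_X$. For $k \neq 0$, the source has no cohomology while $H^k(G(I^{\bullet})) = R^k G(F(X))$. So the quasi-isomorphism condition decomposes into two independent requirements: that $\eta_X$ be an isomorphism, and that $R^k G(F(X))$ vanish for all $k \neq 0$, that is, that $F(X)$ be $G$-acyclic.

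This immediately gives both directions. For the backward implication, assuming $\eta_X$ is an isomorphism and $F(X)$ is $G$-acyclic, both degree-wise conditions are satisfied, so $\eta^{\Der(\mathcal{B})}_X$ is a quasi-isomorphism and hence an isomorphism in $\Der(\mathcal{B})$. Conversely, if $\eta^{\Der(\mathcal{B})}_X$ is an isomorphism in $\Der(\mathcal{B})$, the quasi-isomorphism condition on the representative forces $\eta_X$ to be an isomorphism and $R^k G(F(X))$ to vanish for $k \neq 0$.

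The whole argument is essentially a bookkeeping exercise; no step is genuinely hard. The only mildly subtle point I will need to be careful about is identifying the map induced on $H^0$ with the ordinary unit $\eta_X$, which I expect to handle by invoking the naturality of the augmentation $G \Rightarrow RG$ together with the factorization of $\eta^{\Der(\mathcal{B})}_X$ through $\eta_X$ already built into the statement of the lemma.
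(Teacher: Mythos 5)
The paper states this as a general fact in the appendix and does not supply a proof, so there is nothing to compare your argument against; you are filling a gap the authors chose to leave open. Your proof is correct. Choosing an injective resolution $F(X)\hookrightarrow I^{\bullet}$, representing $RG(F(X))$ by $G(I^{\bullet})$, and reading off the quasi-isomorphism condition degree by degree is the standard and essentially only way to verify this. The two independent requirements you extract — that the induced map on $H^0$ be an isomorphism and that the higher cohomologies of $G(I^{\bullet})$ vanish — are precisely $\eta_X$ being an isomorphism and $R^kG(F(X))=0$ for $k\neq 0$, using left-exactness of $G$ (as a right adjoint) to identify $H^0(G(I^{\bullet}))$ with $G(F(X))$ and the induced map with $\eta_X$. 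One small remark: the statement as printed in the paper has a typo in the clause defining $G$-acyclicity ("$R^k\flat(G)=0$"), which should read something like $R^kG(F(X))=0$ for $k\neq 0$; you interpreted it correctly.
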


\begin{lem}
  \label{lem:compose}
  Let
  ${H \colon \mathcal{B} \rightarrow \mathcal{A}}$
  be a left exact functor
  that has a right derived functor
  ${RH \colon D^+(\mathcal{B}) \rightarrow D^+(\mathcal{A})}$.
  Then the derived functors $RH$ and $RG$ compose as
  ${R (H \circ G) \cong RH \circ RG.}$
\end{lem}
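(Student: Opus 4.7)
The plan is to reduce the statement to the standard composition theorem for right derived functors \cite[Proposition 1.8.7]{Kashiwara1990}, which asserts that if $G$ sends injective objects of $\mathcal{C}$ into the subcategory of $H$-acyclic objects of $\mathcal{B}$, then the canonical comparison morphism $R(H \circ G) \to RH \circ RG$ is an isomorphism of functors $D^+(\mathcal{C}) \to D^+(\mathcal{A})$. (Note that $H \circ G$ is left exact as a composition of left exact functors, so the existence of $R(H \circ G)$ follows from the assumption that $\mathcal{C}$ has enough injectives.) Thus everything reduces to checking this injective-to-acyclic hypothesis.

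First I would verify that $G$ preserves injective objects. Given an injective $I \in \mathcal{C}$, the adjunction provides a natural isomorphism
\[
  \Hom_{\mathcal{B}}(-, G(I)) \;\cong\; \Hom_{\mathcal{C}}(F(-), I),
\]
exhibiting $\Hom_{\mathcal{B}}(-, G(I))$ as the composition of the exact functor $F$ (by hypothesis on the adjunction) with the exact functor $\Hom_{\mathcal{C}}(-, I)$ (by injectivity of $I$). Hence $\Hom_{\mathcal{B}}(-, G(I))$ is exact, i.e.\ $G(I)$ is injective in $\mathcal{B}$.

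Second, I would observe that every injective object $J \in \mathcal{B}$ is $H$-acyclic: the one-term complex $J$ concentrated in degree $0$ is itself an injective resolution of $J$, so $R^k H(J) = 0$ for all $k > 0$. Combining these two observations, $G$ carries injectives of $\mathcal{C}$ into $H$-acyclic objects of $\mathcal{B}$, and the composition theorem then yields the desired natural isomorphism $R(H \circ G) \cong RH \circ RG$. There is no serious obstacle: the entire argument is a routine invocation of standard homological algebra, the only substantive input being the classical fact that a right adjoint to an exact functor preserves injectives.
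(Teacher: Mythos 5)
Your argument is correct and is precisely the standard one the paper leaves implicit (the lemma is stated without proof in the appendix): a right adjoint to an exact functor preserves injectives, injective objects are acyclic for any left exact functor, and the composition theorem for right derived functors \cite[Proposition 1.8.7]{Kashiwara1990} then gives $R(H \circ G) \cong RH \circ RG$. The only hypothesis worth flagging is that your step ``$J$ is its own injective resolution'' tacitly assumes $\mathcal{B}$ has enough injectives (or at least that its injectives sit inside an $H$-injective subcategory), which is not stated in the appendix's setup but holds in every application in the paper, since $\mathcal{B}$ is always a sheaf or functor category.
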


\begin{lem}
  \label{lem:cohomChar}
  If the adjunction $\Der (F) = LF \dashv RG$ is
  \href{https://ncatlab.org/nlab/show/coreflective+subcategory}{coreflective},
  then the essential image of $\Der (F)$ is the full subcategory
  $\Der_{F(\mathcal{B})} (\mathcal{C})$
  of complexes whose cohomology objects are in the essential image of $F$.
  In particular,
  $\Der_{F(\mathcal{B})} (\mathcal{C})$
  is a triangulated subcategory of $\Der (\mathcal{C})$.
\end{lem}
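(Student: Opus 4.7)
The plan is to establish the two inclusions between the essential image of $\Der(F)$ and $\Der_{F(\mathcal{B})}(\mathcal{C})$ separately; the ``in particular'' statement will then follow because the essential image of any fully faithful triangulated functor is automatically closed under shifts (by functoriality) and under cones (any morphism between two images lifts by fully faithfulness, and the cone of the lift is sent to the cone of the original morphism).

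The containment of the essential image in $\Der_{F(\mathcal{B})}(\mathcal{C})$ is immediate: since $F$ is exact, $\Der(F) = LF$ is computed termwise, so $H^n(\Der(F)(X)) \cong F(H^n(X)) \in F(\mathcal{B})$ for every $X \in \Der(\mathcal{B})$.

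For the reverse inclusion, let $Y \in \Der^+(\mathcal{C})$ satisfy $H^n(Y) \in F(\mathcal{B})$ for all $n$. Because the derived adjunction is coreflective, $\Der(F)$ is fully faithful, and a standard fact about such adjunctions says that $Y$ belongs to the essential image if and only if the counit $\epsilon_Y \colon \Der(F)(RG(Y)) \to Y$ is an isomorphism. I would verify this on cohomology via the hypercohomology spectral sequence
\begin{equation*}
  E_2^{p,q} = R^p G(H^q(Y)) \Longrightarrow H^{p+q}(RG(Y)),
\end{equation*}
which converges since $Y$ is bounded below. By \cref{lem:coreflection} applied to each $X' \in \mathcal{B}$ (coreflectivity forces every derived unit to be an iso), the ordinary unit $\eta_{X'}$ is an iso and $F(X')$ is $G$-acyclic. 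Writing $H^q(Y) \cong F(X^q)$, this gives $R^p G(H^q(Y)) = 0$ for $p > 0$ and $R^0 G(H^q(Y)) \cong X^q$ via $\eta_{X^q}$. The spectral sequence therefore degenerates at $E_2$ with $H^n(RG(Y)) \cong X^n$, and applying the exact functor $F$ yields $H^n(\Der(F)(RG(Y))) \cong F(X^n) \cong H^n(Y)$; by naturality this composite is $H^n(\epsilon_Y)$, so $\epsilon_Y$ is a quasi-isomorphism and $Y$ lies in the essential image.

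The main obstacle will be tracking the naturality of the spectral sequence identifications carefully enough to confirm that the edge map composed with the unit isomorphism genuinely realizes $H^n(\epsilon_Y)$, rather than some other iso. Beyond this, the argument reduces to standard facts about fully faithful adjunctions between triangulated categories and to the $G$-acyclicity provided by \cref{lem:coreflection}.
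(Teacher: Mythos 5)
Your argument is correct; the paper states this lemma without proof in the appendix, and what you give is the standard argument. The forward inclusion via termwise computation of $\Der(F)$ is right, and the reverse inclusion correctly reduces to showing that the counit $\epsilon_Y$ is a quasi-isomorphism, using \cref{lem:coreflection} (applied to objects of $\mathcal{B}$ concentrated in degree $0$) to obtain that every object of the essential image of $F$ is $G$-acyclic with invertible unit, so that the hypercohomology spectral sequence collapses. The naturality point you flag does resolve: representing $RG(Y)$ by $G(I)$ for an injective resolution $Y \xrightarrow{\sim} I$, the counit $\epsilon_Y$ is represented by the chain map $\epsilon_I \colon FG(I) \to I$, and since $F$ is exact and $G$ preserves kernels, the edge map $H^n(G(I)) \to G(H^n(I))$ intertwines $H^n(\epsilon_I)$ with $\epsilon_{H^n(I)}$ by naturality of the underived counit applied to $Z^n(I) \twoheadrightarrow H^n(I)$. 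An alternative route for the reverse inclusion, avoiding the spectral sequence, is dévissage on truncations (the one-cohomology-object case being immediate and the essential image being triangulated), but for complexes with infinitely many nonzero cohomology objects that route needs an extra limiting step, so your spectral-sequence argument is arguably the cleaner one. Your justification of the ``in particular'' clause via fully faithfulness and preservation of cones is also correct.
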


\section{The Beck--Chevalley Condition}
\label{sec:beckChev}

Suppose we have a square
\begin{equation}
  \label{eq:beckChevCand}
  \begin{tikzcd}
    \mathcal{C}_1
    \arrow[r, "G_1"]
    \arrow[d, "H_1"']
    &
    \mathcal{C}_2
    \arrow[d, "H_2"]
    \arrow[ld, "\zeta"', Rightarrow, shorten >=1.5ex, shorten <=1.5ex]
    \\
    \mathcal{C}_3
    \arrow[r, "G_2"']
    &
    \mathcal{C}_4
  \end{tikzcd}
\end{equation}
of categories and functors
that commutes up to a natural isomorphism
$\zeta \colon H_2 \circ G_1 \Rightarrow G_2 \circ H_1$.
Moreover,
suppose
$G_1 \colon \mathcal{C}_1 \rightarrow \mathcal{C}_2$
and
$G_2 \colon \mathcal{C}_3 \rightarrow \mathcal{C}_4$
have left adjoints
$F_1 \colon \mathcal{C}_2 \rightarrow \mathcal{C}_1$
and
$F_2 \colon \mathcal{C}_4 \rightarrow \mathcal{C}_3$,
respectively.
Then we also have the square diagram
\begin{equation*}
  \begin{tikzcd}
    \mathcal{C}_1
    \arrow[d, "H_1"']
    &
    \mathcal{C}_2
    \arrow[l, "F_1"']
    \arrow[d, "H_2"]
    \\
    \mathcal{C}_3
    &
    \mathcal{C}_4
    ,
    \arrow[l, "F_2"]
    \arrow[lu, "\xi"', Rightarrow, shorten >=1.5ex, shorten <=1.5ex]
  \end{tikzcd}
\end{equation*}
where $\xi \colon F_2 \circ H_2 \Rightarrow H_1 \circ F_1$,
defined as the composition
\begin{equation*}
  \begin{tikzcd}[row sep=5ex]
    F_2 \circ H_2
    \arrow[d, Rightarrow, "F_2 \circ H_2 \circ \eta^1"]
    \\
    F_2 \circ H_2 \circ G_1 \circ F_1
    \arrow[d, Rightarrow, "F_2 \circ \zeta \circ F_1"]
    \\
    F_2 \circ G_2 \circ H_1 \circ F_1
    \arrow[d, Rightarrow, "\varepsilon^2 \circ H_1 \circ F_1"]
    \\
    H_1 \circ F_1
    ,
  \end{tikzcd}
\end{equation*}
is the so called \emph{mate} of
$\zeta \colon H_2 \circ G_1 \Rightarrow G_2 \circ H_1$.

\begin{dfn}[Beck--Chevalley Condition]
  \label{dfn:BeckChev}
  We say that the square diagram
  \eqref{eq:beckChevCand}
  satisfies the \emph{Beck--Chevalley condition}
  if $\xi \colon F_2 \circ H_2 \Rightarrow H_1 \circ F_1$
  is a natural isomorphism.
\end{dfn}

Morally, a commutative square of categories and functors
satisfies the Beck--Chevalley condition,
if the horizontal arrows have left adjoints and the corresponding
square with left adjoints commutes as well,
which is not precisely the same as \cref{dfn:BeckChev},
but it is close enough for intuition.

There is a dual version of the Beck--Chevalley condition as well,
which involves right adjoints in place of left adjoints.
To this end,
we consider the square diagram
\begin{equation}
  \label{eq:beckChevCandDual}
  \begin{tikzcd}
    \mathcal{C}_1
    \arrow[r, "F_1"]
    \arrow[d, "H_1"']
    &
    \mathcal{C}_2
    \arrow[d, "H_2"]
    \\
    \mathcal{C}_3
    \arrow[r, "F_2"']
    \arrow[ru, "\zeta", Rightarrow, shorten >=1.5ex, shorten <=1.5ex]
    &
    \mathcal{C}_4
    ,
  \end{tikzcd}
\end{equation}
with $\zeta \colon F_2 \circ H_1 \Rightarrow H_2 \circ F_1$
a natural isomorphism.
Moreover,
suppose $F_1 \colon \mathcal{C}_1 \rightarrow \mathcal{C}_2$
and $F_2 \colon \mathcal{C}_3 \rightarrow \mathcal{C}_4$
have right adjoints
$G_1 \colon \mathcal{C}_2 \rightarrow \mathcal{C}_1$
and
$G_2 \colon \mathcal{C}_4 \rightarrow \mathcal{C}_3$,
respectively.
Then we have the square diagram
\begin{equation*}
  \begin{tikzcd}
    \mathcal{C}_1
    \arrow[d, "H_1"']
    \arrow[rd, "\xi", Rightarrow, shorten >=1.5ex, shorten <=1.5ex]
    &
    \mathcal{C}_2
    \arrow[l, "G_1"']
    \arrow[d, "H_2"]
    \\
    \mathcal{C}_3
    &
    \mathcal{C}_4
    \arrow[l, "G_2"]
  \end{tikzcd}
\end{equation*}
with $\xi \colon H_1 \circ G_1 \Rightarrow G_2 \circ H_2$
provided by the composition
\begin{equation*}
  \begin{tikzcd}[row sep=5ex]
    H_1 \circ G_1
    \arrow[d, Rightarrow, "\eta_2 \circ H_1 \circ G_1"]
    \\
    G_2 \circ F_2 \circ H_1 \circ G_1
    \arrow[d, Rightarrow, "G_2 \circ \zeta \circ G_1"]
    \\
    G_2 \circ H_2 \circ F_1 \circ G_1
    \arrow[d, Rightarrow, "G_2 \circ H_2 \circ \varepsilon_1"]
    \\
    G_2 \circ H_2
    .    
  \end{tikzcd}
\end{equation*}

\begin{dfn}[Dual Beck--Chevalley Condition]
  \label{dfn:BeckChevDual}
  We say that the square diagram
  \eqref{eq:beckChevCandDual}
  satisfies the \emph{dual Beck--Chevalley condition}
  if $\xi \colon H_1 \circ G_1 \Rightarrow G_2 \circ H_2$
  is a natural isomorphism.
\end{dfn}

\section{General Facts on Sheaves}

Let $X$ be a topological space
and let $\mathrm{Sh}(X)$ be the category of sheaves on $X$
with values in the category of abelian groups.
We write $D^+ (X) := D^+ (\mathrm{Sh}(X))$
for the bounded below derived category of $\mathrm{Sh}(X)$.

\begin{lem}
  \label{lem:kernelIso}
  Let $i \colon U \hookrightarrow X$
  and $j \colon A \hookrightarrow X$
  be inclusions with $U$ open and $A \cup U = X$.
  If $F$ is a sheaf on $X$,
  then the naturally induced map on kernels
  \begin{equation*}
    \begin{tikzcd}
      0
      \ar[r]
      &
      (\ker \circ \eta^j)_F
      \arrow[r]
      \arrow[d, dashed]
      &
      F
      \ar[d, "\eta^i_F"']
      \ar[r, "\eta^j_F"]
      &[7ex]
      j_* j^{-1} F
      \ar[d, "(j_* \circ j^{-1} \circ \eta^i)_F"]
      \\
      0
      \ar[r]
      &
      (\ker \circ \eta^j \circ i_* \circ i^{-1})_F
      \ar[r]
      &
      F
      \ar[r, "(\eta^j \circ i_* \circ i^{-1})_F"']
      &
      j_* j^{-1} i_* i^{-1} F
    \end{tikzcd}
  \end{equation*}
  is an isomorphism.
\end{lem}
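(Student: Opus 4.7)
The plan is to show the induced map $\kappa \colon (\ker \eta^j)_F \to (\ker \eta^j)_{i_* i^{-1} F}$ is an isomorphism of sheaves by verifying injectivity and surjectivity on sections over each open $V \subseteq X$. The first step is to give explicit set-theoretic descriptions of both kernels in terms of stalks. For the top row, since sections of $j^{-1}F$ over $V \cap A$ are detected by their stalks, and the stalks of $j^{-1}F$ at points of $A$ coincide with stalks of $F$, a section $s \in F(V)$ lies in $(\ker \eta^j_F)(V)$ if and only if $s_x = 0$ for every $x \in V \cap A$. For the bottom row, using $(i_*i^{-1}F)(V) = F(V \cap U)$, a section $t$ lies in $(\ker \eta^j_{i_*i^{-1}F})(V)$ if and only if $t_x = 0$ for every $x \in V \cap A \cap U$ and for every $x \in V \cap A \setminus U$ there is an open neighborhood $W_x \ni x$ with $t|_{W_x \cap U} = 0$; here one must use that the stalk $(i_*i^{-1}F)_x$ at $x \notin U$ is the colimit $\varinjlim_{W \ni x} F(W \cap U)$.

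With these descriptions in hand, injectivity is immediate from the hypothesis $A \cup U = X$. If $s \in (\ker \eta^j_F)(V)$ satisfies $\eta^i_F(s) = s|_{V \cap U} = 0$, then combining the two vanishing conditions covers every point of $V = (V \cap U) \cup (V \cap A)$, so $s_x = 0$ for all $x \in V$ and therefore $s = 0$.

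Surjectivity is where the covering hypothesis does its real work, via a gluing argument. Given $t \in F(V \cap U)$ satisfying the lower-kernel condition, I would define a candidate $s \in F(V)$ as follows: on $V \cap U$ take $s = t$; for each $x \in V \setminus U \subseteq V \cap A$, fix the open neighborhood $W_x \ni x$ supplied by the kernel condition and set $s = 0$ on $V \cap W_x$. These opens cover $V$, and compatibility on overlaps is routine — two zero sections agree, and on $V \cap U \cap W_x$ the two candidates are $t|_{V \cap U \cap W_x} = 0$ and $0$ by the choice of $W_x$. The glued section $s$ manifestly satisfies $s|_{V \cap U} = t$, and its stalk vanishes on every point of $V \cap A$: on $V \cap A \cap U$ because $t_x = 0$, on $V \setminus U$ because $s$ is literally zero on $V \cap W_x$. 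Thus $s \in (\ker \eta^j_F)(V)$ is a preimage of $t$.

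The main technical subtlety to watch out for — and essentially the only nontrivial point — is the correct description of the bottom kernel. Because $i_* i^{-1}$ modifies stalks at points outside $U$, vanishing of $\eta^j_{i_*i^{-1}F}(t)$ does not translate to stalkwise vanishing of $t$ at points of $V \cap A \setminus U$ in the naive sense; rather it furnishes a local vanishing neighborhood of the restriction to $U$, which is precisely what the gluing argument needs. Once this local characterization is identified, the proof reduces to unraveling definitions and gluing along the cover $V = (V \cap U) \cup \bigcup_{x \in V \setminus U}(V \cap W_x)$.
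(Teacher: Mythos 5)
The paper states Lemma~\ref{lem:kernelIso} without proof, so there is no paper argument to compare against; your argument is correct and is the natural direct one. The stalkwise characterizations of the two kernels are accurate: for the top kernel one uses that $(j^{-1}F)(V\cap A)$ injects into $\prod_{x\in V\cap A}F_x$, and for the bottom kernel one uses $(i_*i^{-1}F)(V)=F(V\cap U)$ together with the two distinct descriptions of the stalk $(i_*i^{-1}F)_x$ depending on whether $x\in U$ or $x\notin U$. Injectivity then falls out of the covering hypothesis $A\cup U=X$ since every point of $V$ lies in $V\cap U$ or $V\cap A$, and the surjectivity argument correctly extends $t\in F(V\cap U)$ by gluing it with the zero section on the opens $V\cap W_x$ for $x\in V\setminus U\subseteq V\cap A$; the overlap compatibility $t|_{V\cap U\cap W_x}=0$ is exactly what the stalk condition at $x\notin U$ supplies. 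One small point worth making explicit when writing this up: choose each $W_x$ inside $V$ (the opens $W\subseteq V$ with $x\in W$ are cofinal in the colimit defining the stalk), so that $t|_{W_x\cap U}$ is a well-defined restriction of $t\in F(V\cap U)$; with that said, the gluing and the verification that the glued section lies in $(\ker\eta^j)_F(V)$ and maps to $t$ under $\eta^i_F$ are all correct.
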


\begin{lem}
  \label{lem:adjEquivProperSupp}
  Let $U$ be an open subset of $X$,
  let ${C := X \setminus U}$,
  let ${i \colon U \hookrightarrow X}$ be the inclusion,
  and let $\mathrm{Sh}(X, C)$ be the full subcategory of sheaves
  vanishing on $C$.
  We write
  ${(-)_U \colon \mathrm{Sh}(X) \rightarrow \mathrm{Sh}(X, C),\,
    F \mapsto F_U}$
  for the corresponding functor defined in
  \mbox{\cite[Page 93]{Kashiwara1990}}.
  Then we have $i_! = (-)_U \circ i_*$
  and moreover,
  the composition of adjunctions
  \begin{equation*}
    \begin{tikzcd}
      \mathrm{Sh}(U, C)
      \arrow[r, ""{name=I}, hook, bend right]
      &
      \mathrm{Sh}(X)
      \arrow[l, "(-)_U"'{name=b}, bend right]
      \arrow[phantom, from=I, to=b, "\dashv" rotate=90]
      \arrow[r, "i^{-1}"'{name=la}, bend right]
      &
      \mathrm{Sh}(U)
      \arrow[l, "i_*"'{name=ra}, bend right]
      \arrow[phantom, from=la, to=ra, "\dashv" rotate=90]
    \end{tikzcd}  
  \end{equation*}
  yields an exact adjoint equivalence:
  \begin{equation*}
    \begin{tikzcd}
      \mathrm{Sh}(U, C)
      \arrow[r, "i^{-1}"'{name=la}, bend right]
      &
      \mathrm{Sh}(U)
      .
      \arrow[l, "i_!"'{name=ra}, bend right]
      \arrow[phantom, from=la, to=ra, "\dashv" rotate=90]
    \end{tikzcd}  
  \end{equation*}  
\end{lem}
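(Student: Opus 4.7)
The plan is to split the lemma into two essentially independent claims: the formula $i_! = (-)_U \circ i_*$, and the assertion that the composed adjunction is an exact adjoint equivalence. The entire argument rests on the identification $(-)_U = i_! \circ i^{-1}$, which is how \cite[Page 93]{Kashiwara1990} describes ``extension by zero of the restriction'' for an open $U$, and which is the direct analogue of the identification $\flat \cong i_! \circ i^{-1}$ established in the $q_{\gamma}$ setting by \cref{lem:flatUnitIso}.

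Once $(-)_U = i_! \circ i^{-1}$ is in hand, the formula $i_! = (-)_U \circ i_*$ reduces to the computation $i_! \circ i^{-1} \circ i_* \cong i_!$, using the standard isomorphism $i^{-1} \circ i_* \cong \op{id}$ for open immersions. For the equivalence, I would verify directly that $i^{-1}$ and $i_!$ restrict to quasi-inverse functors between $\mathrm{Sh}(X, C)$ and $\mathrm{Sh}(U)$ by stalkwise checks: for $F \in \mathrm{Sh}(X, C)$, the counit $i_! i^{-1} F \to F$ is a stalkwise isomorphism, agreeing with $F_x$ on $U$ and with $0 = F_x$ on $C$ (where $F$ vanishes by hypothesis); for $H \in \mathrm{Sh}(U)$, the unit $H \to i^{-1} i_! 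H$ is the standard isomorphism associated to an open immersion. The equivalence then yields the stated adjunction $i^{-1} \dashv i_!$ automatically, and exactness is immediate since $i^{-1}$ commutes with stalks and $i_!$ is a stalkwise extension by zero.

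The main subtlety is carefully matching Kashiwara--Schapira's convention for $(-)_U$ with the concrete form $i_! \circ i^{-1}$; once this identification is secured, everything else is routine sheaf-theoretic manipulation. A slicker alternative that bypasses explicit reference to \cite{Kashiwara1990} would observe that composing the two given adjunctions exhibits $(-)_U \circ i_*$ as right adjoint to $i^{-1}|_{\mathrm{Sh}(X,C)}$, while the equivalence argument independently shows that $i_!$ enjoys the same universal property, so uniqueness of adjoints forces $i_! \cong (-)_U \circ i_*$ without any appeal to the concrete stalkwise description.
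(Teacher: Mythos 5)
The paper does not actually supply a proof of \cref{lem:adjEquivProperSupp}; it is stated without argument in the appendix, evidently because it is a reformulation of Kashiwara--Schapira's characterization $F_U \cong i_! i^{-1} F$ for $U$ open. Your proof is correct and is exactly the natural way to fill this in: taking $(-)_U = i_! \circ i^{-1}$ as the starting point, the composition formula $(-)_U \circ i_* \cong i_!$ falls out of $i^{-1} i_* \cong \op{id}$, and the adjoint-equivalence claim reduces to the stalkwise observation that $i_! i^{-1}F \to F$ is an isomorphism when $F$ vanishes on $C$ and that $H \to i^{-1}i_! H$ is always an isomorphism for an open immersion; exactness of both functors is likewise checked on stalks. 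This matches the implicit reasoning the paper uses in the $q_\gamma$-specific setting (cf.\ \eqref{eq:dirImProperSuppAsFlat} and \cref{lem:flatUnitIso}). Your closing remark that the uniqueness-of-adjoints argument only yields a natural isomorphism $i_! \cong (-)_U \circ i_*$ rather than literal equality is a fair caveat; the stalkwise route is the one that gives the equality as stated (under KS's concrete definition of $F_U$), so it is the right one to lead with.
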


Now let $U, V \subseteq X$ be open subsets,
let $i \colon U \hookrightarrow X$
be the inclusion,
let $A \subseteq X$ be a closed subset,
let $Z := V \cap A$,
and suppose that $Z \subseteq U$.

\begin{lem}
  The commutative square
  \begin{equation*}
    \begin{tikzcd}
      \mathrm{Sh}(U)
      \arrow[r, "i_*"]
      \arrow[d, "\Gamma_Z(U; -)"']
      &
      \mathrm{Sh}(X)
      \arrow[d, "\Gamma_Z(X; -)"]
      \\
      \mathrm{Ab}
      \arrow[r, equal]
      &
      \mathrm{Ab}
    \end{tikzcd}
  \end{equation*}
  satisfies the Beck--Chevalley condition
  as defined in \cref{dfn:BeckChev},
  where $\mathrm{Ab}$ is the category of abelian groups.
\end{lem}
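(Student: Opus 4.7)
The plan is to unwind the mate of the commutative square and reduce the Beck--Chevalley condition to the standard fact that sections supported in a locally closed subset $Z$ only depend on an arbitrarily small open neighborhood of $Z$.

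First, I would recall that for any sheaf $G$ on $X$ one has
\[
  \Gamma_Z(X; G) = \ker\bigl(G(V) \to G(V \setminus A)\bigr),
\]
and that this subgroup of $G(V)$ is independent of the choice of open $V$ and closed $A$ with $Z = V \cap A$. Since $Z \subseteq U$, the pair $(V \cap U, A)$ also realizes $Z$, so the same recipe applied to a sheaf $F$ on $U$ yields
\[
  \Gamma_Z(U; F) = \ker\bigl(F(V \cap U) \to F(V \cap U \setminus A)\bigr).
\]
In particular, the natural isomorphism $\zeta_F \colon \Gamma_Z(X; i_* F) \xrightarrow{\cong} \Gamma_Z(U; F)$ needed to even state the square is literally the equality of these two kernels.

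Next, I would compute the mate $\xi_G \colon \Gamma_Z(X; G) \to \Gamma_Z(U; i^{-1} G)$ from \cref{dfn:BeckChev}. Since the vertical functors on the right of the square are identities, $\xi_G$ reduces to the composition
\[
  \Gamma_Z(X; G) \xrightarrow{\Gamma_Z(X; \eta^i_G)} \Gamma_Z(X; i_* i^{-1} G) \xrightarrow{\zeta_{i^{-1} G}} \Gamma_Z(U; i^{-1} G).
\]
The first map is the restriction $G(V) \to G(V \cap U)$ restricted to sections with support in $Z$, and the second is the identification above, so $\xi_G$ is simply this restriction map on sections supported in $Z$.

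Finally, I would verify that this restriction is bijective using the open cover $V = (V \cap U) \cup (V \setminus A)$, which holds because every point of $V$ either lies in $U$ or, if outside $U$, lies outside $Z = V \cap A$ (by $Z \subseteq U$) and hence outside $A$. Injectivity follows since a section $s \in G(V)$ with support in $Z$ that restricts to zero on $V \cap U$ also restricts to zero on $V \setminus A$ (as $\mathrm{supp}(s) \subseteq Z \subseteq A$), and thus $s = 0$ by the sheaf condition. For surjectivity, given $s' \in G(V \cap U)$ with $s'|_{V \cap U \setminus A} = 0$, the pair $(s', 0)$ on the cover $\{V \cap U,\, V \setminus A\}$ agrees on the overlap $V \cap U \setminus A$, and the sheaf condition glues them to a unique $s \in G(V)$ satisfying $s|_{V \setminus A} = 0$, i.e.\ $s \in \Gamma_Z(X; G)$, mapping to $s'$. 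The only real subtlety is the bookkeeping of the mate; the geometric core of the argument is just the cover $V = (V \cap U) \cup (V \setminus A)$.
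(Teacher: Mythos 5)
Your proposal is correct, and it proves the same statement the paper does, but it differs in emphasis. The paper's proof simply writes a chain of five isomorphisms $\Gamma_Z(X;F) \cong \Gamma_Z(V;F) \cong \Gamma_Z(U\cap V;F) = \Gamma_Z(U\cap V;F|_U) \cong \Gamma_Z(U\cap V; i^{-1}F) \cong \Gamma_Z(U; i^{-1}F)$, implicitly invoking the standard excision isomorphisms for local sections from \cite[Proposition 2.3.9]{Kashiwara1990}, and asserts (without unwinding) that this composition is the mate. You instead compute the mate explicitly from \cref{dfn:BeckChev} — correctly observing that since $F_2 = G_2 = \mathrm{id}$, it collapses to $\zeta_{i^{-1}G}\circ\Gamma_Z(X;\eta^i_G)$, which is the restriction $G(V)\to G(V\cap U)$ on sections supported in $Z$ — and then prove bijectivity from scratch using the sheaf gluing axiom for the cover $V = (V\cap U)\cup(V\setminus A)$. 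The geometric content (excision: sections supported in $Z\subseteq U$ extend uniquely across $V\setminus A$) is the same in both, but your version has the advantage of actually identifying which map the mate is, a step the paper's proof leaves to the reader, and of being self-contained rather than relying on external excision lemmas.

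One small point worth spelling out when you claim "the natural isomorphism $\zeta_F$ needed to even state the square is literally the equality of these two kernels": you should note that this identification is independent of the choice of $(V,A)$ realizing $Z$, which is indeed standard but is what makes $\zeta$ a well-defined natural transformation rather than merely a groupwise identification for one fixed choice. Your cover argument implicitly uses that $Z$ is closed in $V$ (so that $V\setminus A$ and $V\cap U$ really cover $V$), which you verify correctly from $Z\subseteq U$.
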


\begin{proof}
  Let $F$ be a sheaf on $X$.
  Then we have the composition of isomorphisms
  \begin{align*}
    \Gamma_Z (X; F)
    & \cong
      \Gamma_Z (V; F)
    \\
    & \cong
      \Gamma_Z (U \cap V; F)
    \\
    & =
      \Gamma_Z (U \cap V; F |_U)
    \\
    & \cong
      \Gamma_Z (U \cap V; i^{-1} F)
    \\
    & \cong
      \Gamma_Z (U; i^{-1} F)
      ,   
  \end{align*}
  which is the mate of the identity natural transformation.
\end{proof}

This lemma has the following corollary.

\begin{cor}
  \label{cor:BeckChevLocalCoho}
  The square diagram
  \begin{equation*}
    \begin{tikzcd}
      D^+ (U)
      \arrow[r, "R i_*"]
      \arrow[d, "H_Z (U; -)"'{name=HU}]
      &
      D^+ (X)
      \arrow[d, "H_Z (X; -)"{name=HX}]
      \\
      \mathrm{Ab}
      \arrow[r, equal]
      &
      \mathrm{Ab}
      \arrow[phantom, from=HU, to=HX, "\cong"]
    \end{tikzcd}
  \end{equation*}
  satisfies the Beck--Chevalley condition.
\end{cor}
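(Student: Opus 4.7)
The plan is to derive the underived Beck--Chevalley condition from the preceding lemma. That lemma furnishes, as the mate of the canonical iso $\Gamma_Z(X; -) \circ i_* \cong \Gamma_Z(U; -)$, a natural isomorphism
\[\zeta \colon \Gamma_Z(X; -) \xrightarrow{\cong} \Gamma_Z(U; -) \circ i^{-1}\]
of left-exact functors $\mathrm{Sh}(X) \to \mathrm{Ab}$. Invoking \cref{lem:compose} with the adjunction $i_! \dashv i^{-1}$ (whose left adjoint $i_!$ is exact, $i$ being an open inclusion), we obtain $R(\Gamma_Z(U; -) \circ i^{-1}) \cong R\Gamma_Z(U; -) \circ i^{-1}$; the left-hand side of $\zeta$ derives to $R\Gamma_Z(X; -)$ as usual. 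Thus $\zeta$ lifts to a natural isomorphism of derived functors $D^+(X) \to D^+(\mathrm{Ab})$, and after taking $n$th cohomology, to a natural isomorphism
\[H^n_Z(X; -) \xrightarrow{\cong} H^n_Z(U; -) \circ i^{-1}.\]

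To conclude the corollary I still need to identify this concretely constructed isomorphism with the abstract Beck--Chevalley mate $\xi$ of the derived square. Unwinding \cref{dfn:BeckChev} with $F_2 = \mathrm{id}$, we get $\xi_F = \zeta'_{i^{-1} F} \circ H^n_Z(X; \eta_F)$, where $\eta \colon \mathrm{id} \to R i_* \circ i^{-1}$ is the unit of the derived adjunction $i^{-1} \dashv Ri_*$, and $\zeta' \colon H^n_Z(X; -) \circ Ri_* \Rightarrow H^n_Z(U; -)$ is obtained from the underived natural iso $\Gamma_Z(X; -) \circ i_* \cong \Gamma_Z(U; -)$ via \cref{lem:compose} (valid because $i_*$ preserves injectives, as $i^{-1}$ is its exact left adjoint). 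The factor $\zeta'_{i^{-1} F}$ is then an iso by construction. The factor $H^n_Z(X; \eta_F)$ is an iso by derived excision: $\eta_F \colon F \to R i_* i^{-1} F$ restricts to an iso on $U$, and $R\Gamma_Z(X; -)$ only depends on the restriction to any open neighborhood of $Z$, as witnessed by the iso $R\Gamma_Z(X; -) \cong R\Gamma_Z(U; -) \circ i^{-1}$ just established. Hence $\xi$ is an iso.

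The main obstacle is the bookkeeping in the last step, namely checking that the abstract mate $\xi$ really coincides with the concrete iso produced by deriving. I expect this to reduce to the observation that the mate construction is compatible with right-deriving whenever the relevant right adjoints preserve injectives — which holds here both for $i_*$ (since $i^{-1}$ is exact) and for $i^{-1}$ (since $i_!$ is exact) — so that the derived unit and counit of $i^{-1} \dashv Ri_*$ are obtained from the underived ones by applying them to injective resolutions. With this compatibility in hand, the derived mate inherits the iso property from its underived counterpart.
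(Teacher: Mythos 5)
Your argument is correct and is exactly the (unstated) route the paper intends: the corollary is obtained from the preceding underived Beck--Chevalley lemma by right-deriving, using that both $i_*$ and $i^{-1}$ preserve injectives (having exact left adjoints $i^{-1}$ and $i_!$ respectively) so that the composition isomorphisms of derived functors apply. Your closing worry about matching the concretely derived isomorphism with the abstract mate is in fact unnecessary: your second paragraph already establishes that both factors of the derived mate $\xi_F = \zeta'_{i^{-1}F} \circ H^n_Z(X; \eta_F)$ are isomorphisms directly, and for the excision step any natural isomorphism $H^n_Z(X;-) \cong H^n_Z(U;-)\circ i^{-1}$ suffices, so no compatibility of mates with deriving needs to be verified.
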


Now let $\mathrm{Sh}(X)$ be the category of $\F$-linear sheaves on $X$
for some fixed field $\F$,
let $F$ be an object of $D^+ (X) := D^+(\mathrm{Sh}(X))$,
let $n \in \Z$,
let $x \in X$ be a point of $X$,
and let $S_x$ be the skyscraper sheaf at $x$.
We consider the map
\begin{equation}
  \label{eq:mapDualStalk}
  \Hom_{D^+(X)} (F, S_x [-n]) \rightarrow
  \left(\varinjlim_{x \in U} H^n (U; F)\right)^*
\end{equation}
which takes any homomorphism
${\psi \colon F \rightarrow S_x [-n]}$
to the family of maps
${\{H^n (U; \psi) \colon H^n(U; F) \rightarrow \F \mid x \in U\}}$
and then to the naturally induced map of type
${\varinjlim_{x \in U} H^n (U; F) \rightarrow \F}$.

\begin{lem}
  \label{lem:mapDualStalk}
  The map \eqref{eq:mapDualStalk} is an isomorphism.
\end{lem}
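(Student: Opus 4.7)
The plan is to identify both sides of the map with the $\F$-linear dual of the cohomology $H^n$ of the derived stalk $F_x$, via a standard derived adjunction, and then to verify that the resulting composite isomorphism coincides with the concrete map described in the statement.

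First, I would write $S_x = i_*\F$ where $i \colon \{x\} \hookrightarrow X$ is the inclusion. A direct check on stalks shows that $i_* \colon \mathrm{Vect}_\F \to \mathrm{Sh}(X)$ is exact: the stalk of $i_*V$ at $y$ is $V$ if $y \in \overline{\{x\}}$ and $0$ otherwise, so taking stalks preserves exactness either way. The inverse image $i^{-1}$ (stalk at $x$) is always exact. Hence both functors agree with their derived counterparts on $D^+$, and the ordinary adjunction $i^{-1} \dashv i_*$ descends to a derived adjunction, yielding
\begin{equation*}
  \Hom_{D^+(X)}(F, S_x[-n]) \cong \Hom_{D^+(\mathrm{Vect}_\F)}(i^{-1}F, \F[-n]).
\end{equation*}
Next, I would use that $D^+(\mathrm{Vect}_\F)$ is essentially the category of $\Z$-graded vector spaces: since $\F$ is a field, every complex $C^\bullet$ of $\F$-vector spaces splits (non-canonically) as $\bigoplus_k H^k(C^\bullet)[-k]$, and therefore $\Hom_{D^+(\mathrm{Vect}_\F)}(C^\bullet, \F[-n]) \cong (H^n(C^\bullet))^*$ naturally in $C^\bullet$. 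Applied to $C^\bullet = i^{-1}F$, this identifies the right side with $(H^n(F_x))^*$. Choosing a flabby resolution $F \to \mathcal{F}^\bullet$, the stalk is $\mathcal{F}^\bullet_x = \varinjlim_{x \in U} \mathcal{F}^\bullet(U)$, and since filtered colimits of $\F$-vector spaces are exact they commute with $H^n$, giving
\begin{equation*}
  H^n(F_x) \cong \varinjlim_{x \in U} H^n(\mathcal{F}^\bullet(U)) = \varinjlim_{x \in U} H^n(U; F).
\end{equation*}

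The remaining task, and the only place where actual work is required, is to verify that this abstract isomorphism coincides with the concrete map in the statement. To this end I would trace a representative $\psi \colon \mathcal{F}^\bullet \to i_*\F[-n]$ of $[\psi] \in \Hom_{D^+(X)}(F, S_x[-n])$ through each step: the derived adjunction sends $\psi$ to its stalk $\psi_x \colon F_x \to \F[-n]$; taking $H^n$ yields $H^n(\psi_x) \colon H^n(F_x) \to \F$; and under the comparison $H^n(F_x) \cong \varinjlim_{x \in U} H^n(U; F)$, the $U$-component of the universal cone is the map on $H^n$ induced by the section-to-stalk morphism $\mathcal{F}^\bullet(U) \to \mathcal{F}^\bullet_x$. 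By naturality, this $U$-component composed with $H^n(\psi_x)$ equals $H^n(U; \psi) \colon H^n(U; F) \to H^n(U; i_*\F[-n]) = \F$, which is exactly the family used in the definition of the map. The main obstacle is therefore really bookkeeping — pinning down a flabby (or injective) resolution so that each identification becomes strict on cochains, and then checking that the resulting square commutes on the nose — rather than any genuine mathematical difficulty.
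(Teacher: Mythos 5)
Your proof is correct and amounts to the same argument as the paper's terse one. The paper's appeal to injectivity of $S_x$ and to exactness of the dual space functor (or the UCT over a field) is precisely what you unpack via the derived adjunction $i^{-1} \dashv i_*$ together with the semisimplicity of $D^+(\mathrm{Vect}_{\F})$, and your flabby-resolution computation of the stalk cohomology supplies the same identification $\varinjlim_{x\in U} H^n(U;F)\cong H^n(F_x)$ that the paper leaves implicit.
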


\begin{proof}
  As $S_x$ is injective,
  as $H^n(U; S_x [-n]) \cong \{0\}$
  for all opens $U$ excluding $x$,
  and by the exactness of the dual space functor
  (or the UCT for cohomology)
  the map \eqref{eq:mapDualStalk} is an isomorphism.
\end{proof}

\subsection{Mayer--Vietoris Sequence for Local Sheaf Cohomology}
\label{sec:sheavesMVS}

In addition to the original Mayer--Vietoris sequence
computing the homology of a union of open subsets,
there is a generalization computing the relative homology
of a component-wise union of pairs of open subsets by
\cite[Theorem 10.7.7]{MR2456045}.
Here we provide a counterpart for local sheaf cohomology.
To this end,
let $X$ be a topological space
and let $(X_i, A_i)$ for $i = 1,2$ be pairs of open subsets of $X$
in the sense that $A_i \subseteq X_i$ for $i = 1,2$.
Moreover,
let
\begin{align*}
  X_0 & := X_1 \cap X_2,
  &
    A_0 & := A_1 \cap A_2,
  \\
  X_3 & := X_1 \cup X_2,
        \quad \text{and}
  &
    A_3 & := A_1 \cup A_2.
\end{align*}

\begin{lem}
  \label{lem:nineMVS}
  If $F$ is a flabby sheaf on $X$,
  then the commutative diagram
  \begin{equation*}
    \begin{tikzcd}
      &
      0
      \arrow[d]
      &
      0
      \arrow[d]
      &[1.5ex]
      0
      \arrow[d]
      \\
      0
      \arrow[r]
      &
      \Gamma_{\overline{A}_3} (X_3; F)
      \arrow[r]
      \arrow[d]
      &
      \Gamma_{\overline{A}_1} (X_1; F) \oplus \Gamma_{\overline{A}_2} (X_2; F)
      \arrow[r, "(1 ~ -1)"]
      \arrow[d]
      &
      \Gamma_{\overline{A}_0} (X_0; F)
      \arrow[r]
      \arrow[d]
      &
      0
      \\
      0
      \arrow[r]
      &
      \Gamma(X_3; F)
      \arrow[r]
      \arrow[d]
      &
      \Gamma(X_1; F) \oplus \Gamma(X_2; F)
      \arrow[r, "(1 ~ -1)"]
      \arrow[d]
      &
      \Gamma(X_0; F)
      \arrow[r]
      \arrow[d]
      &
      0
      \\
      0
      \arrow[r]
      &
      \Gamma(A_3; F)
      \arrow[r]
      \arrow[d]
      &
      \Gamma(A_1; F) \oplus \Gamma(A_2; F)
      \arrow[r, "(1 ~ -1)"]
      \arrow[d]
      &
      \Gamma(A_0; F)
      \arrow[r]
      \arrow[d]
      &
      0
      \\
      &
      0
      &
      0
      &
      0
    \end{tikzcd}
  \end{equation*}
  has exact rows and columns,
  where $\overline{A}_i$ is the complement $X \setminus A_i$
  of $A_i$ in $X$ for $i = 0,1,2,3$.
\end{lem}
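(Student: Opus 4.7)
The plan is to read the 3-by-3 diagram as a grid of short exact sequences in two orthogonal directions and then conclude the remaining exactness by the nine lemma. Concretely, I would verify exactness of the three columns and of the middle and bottom rows directly from standard properties of flabby sheaves, and then derive exactness of the top row as a formal consequence.

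First I would treat the three columns. For each $i \in \{0,1,2,3\}$, since $A_i \subseteq X_i$ is open and $X_i \setminus \overline{A}_i = A_i$, the very definition of sections with support gives the short exact sequence
\begin{equation*}
  0 \to \Gamma_{\overline{A}_i}(X_i; F) \to \Gamma(X_i; F) \to \Gamma(A_i; F) \to 0,
\end{equation*}
where surjectivity of the restriction on the right is exactly the flabbiness of $F$ (restricted to the open subset $X_i$, which preserves flabbiness). The middle column is the direct sum of the two such sequences for $i = 1, 2$ and is thus also short exact. Next I would treat the middle and bottom rows. Both are instances of the Mayer--Vietoris short exact sequence associated to the two-element open cover $X_3 = X_1 \cup X_2$, respectively $A_3 = A_1 \cup A_2$, of the form
\begin{equation*}
  0 \to \Gamma(X_3; F) \to \Gamma(X_1; F) \oplus \Gamma(X_2; F) \xrightarrow{(1\ {-}1)} \Gamma(X_0; F) \to 0 .
\end{equation*}
Exactness at the first two spots is the sheaf axiom, and surjectivity of the difference map is the vanishing of $\check{H}^1$ for the cover, which holds because $F$ is flabby and hence acyclic (for the bottom row one applies the same to the flabby restriction $F|_{A_3}$, noting again that restriction to an open subset preserves flabbiness).

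Finally, with all three columns and the middle and bottom rows exact, the exactness of the top row follows formally by the $3 \times 3$ (nine) lemma applied to the commutative diagram. The only genuinely sheaf-theoretic content is the surjectivity statements --- i.e.\ the right-hand surjectivity of the Mayer--Vietoris sequences and of the column sequences --- and both are direct consequences of $F$ being flabby, so no further obstacle arises. If desired, one could alternatively invoke \cite[Proposition 2.3.6.(v) and Proposition 2.5.8]{Kashiwara1990} to bundle the Mayer--Vietoris step into a single reference, but the diagrammatic argument above keeps the proof self-contained.
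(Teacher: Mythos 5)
Your proof is correct and follows essentially the same route as the paper: exactness of the columns from the definition of sections with support together with flabbiness, exactness of the middle and bottom rows from the sheaf condition plus flabbiness (for surjectivity of the difference map), and the nine lemma for the top row. Your write-up merely spells out the surjectivity arguments that the paper leaves implicit.
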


\begin{proof}
  The columns are exact
  as $F$ is a flabby sheaf and by the definition
  \cite[(2.3.14)]{Kashiwara1990}
  of
  $\Gamma_{\overline{A}_i} (X_i; F)$
  for $i = 0,1,2,3$.
  The two bottom rows are exact by
  the sheaf condition and as $F$ is a flabby sheaf.
  By the nine lemma the top row is exact as well.
\end{proof}

\begin{cor}
  \label{cor:sesFlabbyLocalSections}
  If $G$ is a complex of flabby sheaves on $X$,
  then we have the short exact sequence
  \begin{equation*}
    \begin{tikzcd}
      0
      \arrow[r]
      &
      \Gamma_{\overline{A}_3} (X_3; G)
      \arrow[r]
      &
      \Gamma_{\overline{A}_1} (X_1; G) \oplus \Gamma_{\overline{A}_2} (X_2; G)
      \arrow[r, "(1 ~ -1)"]
      &[1.5ex]
      \Gamma_{\overline{A}_0} (X_0; G)
      \arrow[r]
      &
      0
    \end{tikzcd}
  \end{equation*}
  of cochain complexes.
\end{cor}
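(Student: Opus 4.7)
The plan is to reduce the claim to \cref{lem:nineMVS} degree by degree. Since $G$ is a complex of flabby sheaves, for each integer $n$ the sheaf $G^n$ is flabby. Applying the top row of the diagram in \cref{lem:nineMVS} to $G^n$ yields a short exact sequence of abelian groups
\begin{equation*}
  0 \rightarrow
  \Gamma_{\overline{A}_3} (X_3; G^n) \rightarrow
  \Gamma_{\overline{A}_1} (X_1; G^n) \oplus \Gamma_{\overline{A}_2} (X_2; G^n)
  \xrightarrow{(1 ~ -1)}
  \Gamma_{\overline{A}_0} (X_0; G^n) \rightarrow
  0
\end{equation*}
for every $n \in \Z$.

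Next, I would observe that the maps in question, being induced by the canonical restriction homomorphisms along the open inclusions $X_0 \hookrightarrow X_i \hookrightarrow X_3$, are natural in the flabby sheaf argument. In particular, they commute with the differentials $d^n \colon G^n \to G^{n+1}$, so assembling the degree-wise short exact sequences across all $n$ yields a short exact sequence of cochain complexes as claimed.

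The only point that needs a sentence of justification is that taking local sections $\Gamma_{\overline{A}_i}(X_i; -)$ on a complex of flabby sheaves really does produce the cochain complex whose $n$-th term is $\Gamma_{\overline{A}_i}(X_i; G^n)$ with differential induced by $d^n$; this is built into the conventions recalled around \cite[(2.3.14)]{Kashiwara1990} and requires no further argument. There is no real obstacle here, since the heavy lifting (exactness including the vanishing of the kernel and surjectivity on the right) was already absorbed into the nine-lemma argument of \cref{lem:nineMVS}.
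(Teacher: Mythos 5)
Your proposal is correct and matches the (implicit) argument in the paper: the corollary is stated without a separate proof precisely because it follows from applying the top row of \cref{lem:nineMVS} degree by degree to each flabby $G^n$ and observing naturality in the sheaf argument, which is exactly what you spell out.
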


\begin{prp}[Mayer--Vietoris for Local Sheaf Cohomology]
  If $F$ is an object of $D^+(X)$,
  then there is a long exact sequence
  \begin{equation*}
    \!\!\!\!\!\!\!\!\!\!\!\!\!\!\!\!\!\!
    \begin{tikzcd}
      0
      \arrow[r]
      &
      H^n_{\overline{A}_3} (X_3; F)
      \arrow[r]
      &
      H^n_{\overline{A}_1} (X_1; F)
      \oplus
      H^n_{\overline{A}_2} (X_2; F)
      \arrow[r]
      &
      H^n_{\overline{A}_0} (X_0; F)
      \arrow[dll, out=0, in=180, looseness=1.4, overlay]
      \\
      &
      H^{n+1}_{\overline{A}_3} (X_3; F)
      \arrow[r]
      &
      H^{n+1}_{\overline{A}_1} (X_1; F)
      \oplus
      H^{n+1}_{\overline{A}_2} (X_2; F)
      \arrow[r]
      &
      H^{n+1}_{\overline{A}_0} (X_0; F)
      \arrow[dll, out=0, in=180, looseness=1.4, overlay]
      \\
      &
      H^{n+2}_{\overline{A}_3} (X_3; F)
      \arrow[r]
      &
      H^{n+2}_{\overline{A}_1} (X_1; F)
      \oplus
      H^{n+2}_{\overline{A}_2} (X_2; F)
      \arrow[r]
      &
      H^{n+2}_{\overline{A}_0} (X_0; F)
      \arrow[dll, out=0, in=180, looseness=1.4, overlay]
      \\
      &
      H^{n+3}_{\overline{A}_3} (X_3; F)
      \arrow[r]
      &
      \cdots
    \end{tikzcd}
  \end{equation*}
  for some $n \in \Z$.
\end{prp}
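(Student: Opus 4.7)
The plan is to reduce the statement to the zig-zag lemma applied to a short exact sequence of cochain complexes obtained from \cref{cor:sesFlabbyLocalSections}. First, since $F$ is an object of $D^+ (X)$, I would choose an injective resolution $G$ of $F$, so that $G$ is a bounded below complex of injective (hence flabby) sheaves quasi-isomorphic to $F$. By definition of local sheaf cohomology we then have natural isomorphisms
\begin{equation*}
  H^k_{\overline{A}_i} (X_i; F) \cong H^k \left(\Gamma_{\overline{A}_i} (X_i; G)\right)
\end{equation*}
for each $i = 0, 1, 2, 3$ and each $k \in \Z$, because flabby sheaves are $\Gamma_Z(U; -)$-acyclic for any locally closed $Z \subseteq U$.

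Next I would apply \cref{cor:sesFlabbyLocalSections} to $G$ with the pairs $(X_i, A_i)$ as above to obtain the short exact sequence of cochain complexes
\begin{equation*}
  0 \rightarrow
  \Gamma_{\overline{A}_3} (X_3; G) \rightarrow
  \Gamma_{\overline{A}_1} (X_1; G) \oplus \Gamma_{\overline{A}_2} (X_2; G) \rightarrow
  \Gamma_{\overline{A}_0} (X_0; G) \rightarrow
  0
  .
\end{equation*}
Applying the zig-zag lemma to this short exact sequence yields a long exact sequence in cohomology whose terms, under the isomorphisms of the previous paragraph, are precisely the groups $H^k_{\overline{A}_3}(X_3; F)$, $H^k_{\overline{A}_1}(X_1; F) \oplus H^k_{\overline{A}_2}(X_2; F)$, and $H^k_{\overline{A}_0}(X_0; F)$ assembled as in the statement.

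Finally, since $G$ is bounded below, there exists $n \in \Z$ with $G^k = 0$ for all $k < n$, so that $\Gamma_{\overline{A}_i} (X_i; G)^k = 0$ for $k < n$ and each $i$. In particular $H^k_{\overline{A}_i}(X_i; F) = 0$ for all $k < n$, and the long exact sequence truncates to begin with $0 \rightarrow H^n_{\overline{A}_3}(X_3; F) \rightarrow \cdots$ as required. The main potential obstacle is the acyclicity claim $R^k \Gamma_{\overline{A}_i}(X_i; -) = 0$ on flabby sheaves for $k > 0$; this is a standard fact (flabby sheaves are acyclic for sections with support in a locally closed subset, cf.\ \cite[Proposition 2.3.9 and Remark 2.6.10]{Kashiwara1990}), so no real obstruction remains.
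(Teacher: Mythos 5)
Your proof is correct and takes essentially the same route as the paper: replace $F$ by a bounded-below complex of injective (hence flabby) sheaves, apply \cref{cor:sesFlabbyLocalSections} to get a short exact sequence of cochain complexes, and invoke the zig-zag lemma. The paper states this in one line; you have simply spelled out the standard intermediate steps (the flabby replacement, the acyclicity for local sections, and the truncation to degree $n$), all of which are sound.
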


\begin{proof}
  This follows directly from
  \cref{cor:sesFlabbyLocalSections}
  and the zig-zag lemma.
\end{proof}

\section{Presentable Objects}
\label{sec:presentable}

Let $\mathcal{C}$ be an abelian category
and let $\mathcal{J}$ be a full replete additive subcategory of projectives
in $\mathcal{C}$.

\begin{dfn}
  \label{dfn:presentable}
  We say that an object 
  $X$ of $\mathcal{C}$
  is \emph{$\mathcal{J}$-presentable}
  if there is an exact sequence
  (called \emph{presentation})
  \begin{equation*}
    Q \rightarrow P \rightarrow X \rightarrow 0
  \end{equation*}
  with $P$ and $Q$ in $\mathcal{J}$.
  We write $\mathrm{pres}(\mathcal{J})$
  for the full subcategory of $\mathcal{J}$-presentable objects
  in $\mathcal{C}$.
\end{dfn}

Next we show that the category
of $\mathcal{J}$-presentable objects of $\mathcal{C}$
is closed under cokernels.
This \cref{lem:presCoker} and the post-ceding \cref{lem:presAbelian}
provide a slight generalization of the very first result
from \cite[Section 4.1]{MR2355771}
with essentially the same proof.
Another closely related result is \cite[Exercise 8.23]{MR2182076}.

\begin{lem}
  \label{lem:presCoker}
  The cokernel of a homomorphism between
  $\mathcal{J}$-presentable objects
  is again $\mathcal{J}$-presentable.
\end{lem}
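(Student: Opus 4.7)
The plan is to build a presentation of the cokernel of a morphism $f \colon X \to Y$ between $\mathcal{J}$-presentable objects directly from presentations
\begin{equation*}
  Q_X \xrightarrow{\alpha} P_X \xrightarrow{\pi_X} X \to 0
  \qquad \text{and} \qquad
  Q_Y \xrightarrow{\beta} P_Y \xrightarrow{\pi_Y} Y \to 0
\end{equation*}
of $X$ and $Y$ respectively, with $P_X, Q_X, P_Y, Q_Y \in \mathcal{J}$. Since $P_X$ is projective in $\mathcal{C}$ and $\pi_Y \colon P_Y \twoheadrightarrow Y$ is an epimorphism, I can lift $f \circ \pi_X \colon P_X \to Y$ across $\pi_Y$ to obtain a morphism $\tilde f \colon P_X \to P_Y$ with $\pi_Y \circ \tilde f = f \circ \pi_X$.

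Next I would take $P' := P_Y$ as the covering projective of $\op{coker}(f)$, equipped with the composite $P_Y \xrightarrow{\pi_Y} Y \twoheadrightarrow \op{coker}(f)$, which is an epimorphism. For the second step of the presentation I would set
\begin{equation*}
  Q' := P_X \oplus Q_Y,
\end{equation*}
which lies in $\mathcal{J}$ because $\mathcal{J}$ is a full additive subcategory, together with the morphism
\begin{equation*}
  \gamma := (\tilde f,\, \beta) \colon P_X \oplus Q_Y \longrightarrow P_Y.
\end{equation*}

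The only thing left to verify is that
\begin{equation*}
  \op{Im}(\gamma) \;=\; \ker\bigl(P_Y \twoheadrightarrow \op{coker}(f)\bigr).
\end{equation*}
The right-hand side is the preimage under $\pi_Y$ of $\op{Im}(f) \subseteq Y$; since $\pi_X$ is epic, $\op{Im}(f) = \op{Im}(\pi_Y \circ \tilde f)$, so this preimage equals $\op{Im}(\tilde f) + \ker(\pi_Y) = \op{Im}(\tilde f) + \op{Im}(\beta)$, which is exactly $\op{Im}(\gamma)$. This yields the desired exact sequence
\begin{equation*}
  Q' \xrightarrow{\gamma} P' \longrightarrow \op{coker}(f) \longrightarrow 0,
\end{equation*}
witnessing $\op{coker}(f)$ as $\mathcal{J}$-presentable. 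The proof is essentially a diagram chase; the only subtlety is the projective lift $\tilde f$, which uses that objects of $\mathcal{J}$ are projective in $\mathcal{C}$, and the closure of $\mathcal{J}$ under finite direct sums, both of which are part of the hypotheses on $\mathcal{J}$. There is no substantial obstacle.
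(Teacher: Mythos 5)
Your proof is correct and takes essentially the same approach as the paper: lift $f$ along the projectives, then present $\op{coker}(f)$ by $P_Y$ with relation object $P_X \oplus Q_Y$ mapping via $(\tilde f,\beta)$. The paper writes the relation object as ``$P_1 \oplus Q_1$'' (in its notation) but from the subsequent diagram chase it clearly intends $P_0 \oplus Q_1$, i.e.\ exactly your $P_X \oplus Q_Y$; the only real difference is that the paper verifies exactness by a universal-property argument with a test map $\xi$, whereas you do a direct image computation, which is equivalent and arguably cleaner.
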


\begin{proof}
  Let
  \begin{equation}
    \label{eq:cokSeq}
    X \xrightarrow{\varphi} Y \xrightarrow{\psi} Z \rightarrow 0
  \end{equation}
  be an exact sequence with
  $X$ and $Y$ being $\mathcal{J}$-presentable.
  We choose presentations
  \begin{align*}
    P_1 \rightarrow P_0 \rightarrow X \rightarrow 0 & \\
    \text{and} \quad
    Q_1 \xrightarrow{\delta} Q_0 \xrightarrow{\epsilon} Y \rightarrow 0 &
  \end{align*}
  as in \cref{dfn:presentable}.
  As $P_0$ is projective in $\mathcal{C}$
  there is a homomorphism
  $\tilde{\varphi} \colon P_0 \rightarrow Q_0$
  such that the square
  \begin{equation*}
    \begin{tikzcd}
      P_0
      \arrow[r, "\tilde{\varphi}", dashed]
      \arrow[d]
      &
      Q_0
      \arrow[d]
      \\
      X
      \arrow[r, "\varphi"']
      &
      Y
    \end{tikzcd}
  \end{equation*}
  commutes.
  We aim to show that the sequence
  \begin{equation*}
    P_1 \oplus Q_1 \xrightarrow{(\tilde{\varphi}, \delta)^{\flat}}
    Q_0 \xrightarrow{\psi \circ \epsilon}
    Z \rightarrow
    0
  \end{equation*}
  is exact.
  To this end,
  let $\xi \colon Q_0 \rightarrow W$
  be a homomorphism
  such that
  \begin{equation}
    \label{eq:cokVanishingComp}
    \xi \circ (\tilde{\varphi}, \delta)^{\flat} = 0.
  \end{equation}
  We consider the commutative diagram
  \begin{equation}
    \label{eq:cokDiagram}
    \begin{tikzcd}
      &
      Q_1
      \arrow[d, "\delta"]
      &
      P_1 \oplus Q_1
      \arrow[d, "{(\tilde{\varphi}, \delta)^{\flat}}"]
      \\
      P_0
      \arrow[r, "{\tilde{\varphi}}"]
      \arrow[d, two heads]
      &
      Q_0
      \arrow[r, equal]
      \arrow[d, "\epsilon", two heads]
      &
      Q_0
      \arrow[rdd, "\xi", bend left]
      \arrow[d, "\psi \circ \epsilon"]
      \\
      X
      \arrow[r, "\varphi"']
      \arrow[rrrd, "0"', dashed, bend right]
      &
      Y
      \arrow[r, "\psi"']
      \arrow[rrd, "\xi'", dashed, bend right]
      &
      Z
      \arrow[rd, "\xi''",dashed]
      \\
      & & &
      W
      .
    \end{tikzcd}
  \end{equation}
  Now \eqref{eq:cokVanishingComp} implies in particular that
  $\xi \circ \delta =0$,
  hence there is a unique natural transformation
  $\xi' \colon Y \rightarrow W$
  such that $\xi' \circ \epsilon = \xi$
  as indicated in \eqref{eq:cokDiagram}.
  Now
  \[\xi' \circ \epsilon \circ \tilde{\varphi} =
    \xi \circ \tilde{\varphi} =
    0
  \]
  by \eqref{eq:cokVanishingComp}.
  Moreover,
  as the vertical arrow on the left hand side of \eqref{eq:cokDiagram}
  is an epimorphism
  $\xi' \circ \varphi = 0$ as well.
  By the exactness of \eqref{eq:cokSeq}
  there is a unique natural transformation
  $\xi'' \colon Q_0 \rightarrow W$
  such that $\xi'' \circ \psi \circ \epsilon = \xi'$
  as indicated in \eqref{eq:cokDiagram}.
\end{proof}

\begin{cor}
  \label{cor:presEpi}
  A homomorphism of $\mathcal{J}$-presentable objects
  is an epimorphism in $\mathrm{pres}(\mathcal{J})$
  iff it is an epimorphism in $\mathcal{C}$.
\end{cor}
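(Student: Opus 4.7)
The plan is to prove the two directions separately, with one being essentially formal and the other relying on \cref{lem:presCoker}.

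For the \enquote{only if} direction, I would observe that $\mathrm{pres}(\mathcal{J})$ is a full subcategory of $\mathcal{C}$, so if $\varphi \colon X \to Y$ is an epimorphism in $\mathcal{C}$, then for any pair of parallel morphisms $\alpha, \beta \colon Y \to W$ with $W \in \mathrm{pres}(\mathcal{J})$ satisfying $\alpha \circ \varphi = \beta \circ \varphi$, the universal property of $\varphi$ as a $\mathcal{C}$-epimorphism forces $\alpha = \beta$. This gives that $\varphi$ is also an epimorphism in $\mathrm{pres}(\mathcal{J})$.

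For the \enquote{if} direction, suppose $\varphi \colon X \to Y$ is an epimorphism in $\mathrm{pres}(\mathcal{J})$. Form the cokernel $\psi \colon Y \to Z$ of $\varphi$ in the ambient abelian category $\mathcal{C}$. The crucial point is that, since $X$ and $Y$ are $\mathcal{J}$-presentable, the object $Z$ is $\mathcal{J}$-presentable as well by \cref{lem:presCoker}, so $\psi$ is a morphism inside $\mathrm{pres}(\mathcal{J})$. Now $\psi \circ \varphi = 0 = 0 \circ \varphi$, and since $\varphi$ is an epimorphism in $\mathrm{pres}(\mathcal{J})$ we conclude $\psi = 0$. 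But $\psi$ is the cokernel projection and hence itself an epimorphism in $\mathcal{C}$, so $Z \cong 0$. This means $\varphi$ is an epimorphism in $\mathcal{C}$.

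I do not anticipate any genuine obstacle here, since the whole content of the corollary is packaged into the preceding \cref{lem:presCoker}: once cokernels of maps between $\mathcal{J}$-presentable objects are known to remain $\mathcal{J}$-presentable, the characterization of epimorphisms is automatic via the standard cokernel argument. The only stylistic care needed is to make explicit that cokernels computed in $\mathrm{pres}(\mathcal{J})$ agree with those computed in $\mathcal{C}$, which is an immediate consequence of $\mathrm{pres}(\mathcal{J})$ being a full subcategory together with \cref{lem:presCoker}.
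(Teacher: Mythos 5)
Your argument is correct and is exactly the intended derivation: the paper gives no separate proof of this corollary, treating it as an immediate consequence of \cref{lem:presCoker} via the standard cokernel argument you spell out (the cokernel of $\varphi$ lands in $\mathrm{pres}(\mathcal{J})$, so an epimorphism in the full subcategory forces the cokernel to vanish). The only slip is cosmetic: your labels for the two directions are swapped (the \enquote{only if} direction of the statement is \enquote{epi in $\mathrm{pres}(\mathcal{J})$ implies epi in $\mathcal{C}$}), but both implications are argued correctly.
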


This corollary has yet another corollary.

\begin{cor}
  \label{cor:presProj}
  A $\mathcal{J}$-presentable object $P$,
  which is projective in $\mathcal{C}$,
  is also projective in $\mathrm{pres}(\mathcal{J})$.
\end{cor}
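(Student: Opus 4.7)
The plan is to deduce the corollary essentially as an immediate consequence of \cref{cor:presEpi}, together with the projectivity of $P$ in the ambient category $\mathcal{C}$. Since $\mathrm{pres}(\mathcal{J})$ is a full subcategory of $\mathcal{C}$, testing a lifting property amounts only to verifying the existence of a morphism in $\mathcal{C}$, because any such morphism automatically belongs to $\mathrm{pres}(\mathcal{J})$.

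More concretely, I would unfold the definition of projectivity in $\mathrm{pres}(\mathcal{J})$: given any epimorphism $\varphi \colon Y \twoheadrightarrow Z$ in $\mathrm{pres}(\mathcal{J})$ (with $Y, Z$ being $\mathcal{J}$-presentable) and any homomorphism $\psi \colon P \rightarrow Z$, I must produce a lift $\tilde{\psi} \colon P \rightarrow Y$ satisfying $\varphi \circ \tilde{\psi} = \psi$. First I would invoke \cref{cor:presEpi} to conclude that $\varphi$ is also an epimorphism when viewed in the ambient abelian category $\mathcal{C}$. Then, since $P$ is projective in $\mathcal{C}$ by hypothesis, the usual lifting property in $\mathcal{C}$ supplies the desired $\tilde{\psi} \colon P \rightarrow Y$. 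Fullness of the subcategory inclusion $\mathrm{pres}(\mathcal{J}) \hookrightarrow \mathcal{C}$ ensures that $\tilde{\psi}$ is automatically a morphism in $\mathrm{pres}(\mathcal{J})$, which completes the argument.

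There is really no hard step here: the substantive content already lives in \cref{lem:presCoker} and its corollary \cref{cor:presEpi}, which guarantee that the notion of epimorphism does not change when passing between $\mathcal{C}$ and $\mathrm{pres}(\mathcal{J})$. Once that compatibility is in hand, projectivity transfers across the inclusion for free, precisely because projectivity is defined by a lifting property against epimorphisms. The proof should therefore be just a few lines.
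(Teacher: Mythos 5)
Your argument is correct and is precisely the one the paper intends: the corollary is deduced directly from \cref{cor:presEpi}, since that result shows epimorphisms in $\mathrm{pres}(\mathcal{J})$ are exactly those of $\mathcal{C}$ between presentable objects, so the lifting property for $P$ transfers verbatim (with fullness of the inclusion supplying that the lift lives in the subcategory). The paper leaves this proof implicit, and your few-line unfolding of it is exactly what is meant.
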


\begin{cor}
  \label{cor:Jproj}
  If any $\mathcal{J}$-presentable object $X$ of $\mathcal{C}$
  admits a projective cover
  $P \rightarrow X$
  by an object $P$ in $\mathcal{J}$,
  then $\mathcal{J}$
  is the subcategory of projectives in $\mathrm{pres}(\mathcal{J})$.
\end{cor}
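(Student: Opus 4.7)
The plan is to prove the two inclusions $\mathcal{J} \subseteq \mathrm{Proj}(\mathrm{pres}(\mathcal{J}))$ and $\mathrm{Proj}(\mathrm{pres}(\mathcal{J})) \subseteq \mathcal{J}$ separately, both being short consequences of machinery already established.

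For the first inclusion, I would observe that any object $P \in \mathcal{J}$ is trivially $\mathcal{J}$-presentable via the presentation $0 \rightarrow P \xrightarrow{\mathrm{id}} P \rightarrow 0$, and it is projective in $\mathcal{C}$ by the standing assumption on $\mathcal{J}$. Applying \cref{cor:presProj} then yields that $P$ is projective in $\mathrm{pres}(\mathcal{J})$.

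For the reverse inclusion, let $X$ be projective in $\mathrm{pres}(\mathcal{J})$. By hypothesis of the corollary, there is a projective cover $\varphi \colon P \rightarrow X$ with $P \in \mathcal{J}$. Since $\varphi$ is an epimorphism in $\mathcal{C}$ and both $P$ and $X$ lie in $\mathrm{pres}(\mathcal{J})$, \cref{cor:presEpi} gives that $\varphi$ is also an epimorphism in $\mathrm{pres}(\mathcal{J})$. Using that $X$ is projective in $\mathrm{pres}(\mathcal{J})$, the identity $\mathrm{id}_X$ lifts along $\varphi$ to a section $s \colon X \rightarrow P$ with $\varphi \circ s = \mathrm{id}_X$.

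The only real step is then to upgrade this splitting to an isomorphism. The cleanest way is to note that $\mathrm{id}_X \colon X \rightarrow X$ is itself a projective cover of $X$ (since $X$ is projective in $\mathrm{pres}(\mathcal{J})$), and then invoke the uniqueness of projective covers from \cite[Corollary 3.5]{MR3431480} to conclude $\varphi$ is an isomorphism. Alternatively, one argues directly from essentiality: the splitting gives $P = \mathrm{Im}(s) + \ker(\varphi)$, whence essentiality of $\varphi$ forces $\mathrm{Im}(s) = P$, so $s$ is an isomorphism and hence so is $\varphi$. Either way, $X \cong P \in \mathcal{J}$ (using that $\mathcal{J}$ is replete), completing the proof. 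The main subtlety — such as it is — is to verify that the notions of epimorphism and of being a projective cover agree in $\mathcal{C}$ and in $\mathrm{pres}(\mathcal{J})$, and this is exactly what \cref{cor:presEpi,cor:presProj} take care of.
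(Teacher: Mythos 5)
Your proof is correct and follows essentially the same route as the paper: use \cref{cor:presProj} for the inclusion $\mathcal{J} \subseteq \mathrm{Proj}(\mathrm{pres}(\mathcal{J}))$, and for the converse compare the hypothesized projective cover $P \to X$ against the projective cover $\mathrm{id}_X$ and invoke uniqueness of projective covers from \cite[Corollary 3.5]{MR3431480}. One small redundancy: the section $s$ you construct by lifting $\mathrm{id}_X$ is only needed for your alternative essentiality argument; if you go via uniqueness of projective covers, you don't need $s$ at all (the paper skips this step).
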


\begin{proof}
  Suppose $Q$ is an object of $\mathcal{J}$.
  Then $Q$ is projective in $\mathcal{C}$,
  hence it is projective in $\mathrm{pres}(\mathcal{J})$
  by \cref{cor:presProj}.
  Now suppose $Q$ is projective in $\mathrm{pres}(\mathcal{J})$
  and let ${\varphi \colon P \rightarrow Q}$ be a projective cover
  with $P$ in $\mathcal{J}$.
  Then $P$ is projective in $\mathrm{pres}(\mathcal{J})$
  by \cref{cor:presProj},
  hence ${\varphi \colon P \rightarrow Q}$
  also is a projective cover in $\mathrm{pres}(\mathcal{J})$.
  Moreover, the identity ${\op{id}_Q \colon Q \rightarrow Q}$
  is a projective cover as well
  and thus ${P \cong Q}$ by the uniqueness of projective covers
  \mbox{\cite[Corollary 3.5]{MR3431480}}.
\end{proof}

\begin{lem}
  \label{lem:presAbelian}
  Suppose that for any homomorphism
  $\psi \colon Q \rightarrow P$
  with $Q$ and $P$ in $\mathcal{J}$
  there is an exact sequence
  \begin{equation*}
    R \rightarrow Q \xrightarrow{\psi} P
  \end{equation*}
  with $R$ in $\mathcal{J}$.
  Then the full subcategory $\mathrm{pres}(\mathcal{J})$
  of $\mathcal{C}$
  is an abelian subcategory.
\end{lem}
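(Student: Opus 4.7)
The plan is to verify that $\mathrm{pres}(\mathcal{J})$ is closed in $\mathcal{C}$ under finite direct sums, cokernels, and kernels; the abelian structure of $\mathcal{C}$ will then restrict to one on $\mathrm{pres}(\mathcal{J})$ and the inclusion will automatically be exact. Finite direct sums are handled by the direct sum of two presentations, and closure under cokernels is precisely \cref{lem:presCoker}. The entire content of the lemma therefore lies in establishing closure under kernels, and this is exactly where the syzygy hypothesis will be used.

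Given $\varphi \colon X \to Y$ between $\mathcal{J}$-presentable objects, I would choose presentations $P_1 \xrightarrow{\delta_X} P_0 \xrightarrow{\epsilon_X} X \to 0$ and $Q_1 \xrightarrow{\delta_Y} Q_0 \xrightarrow{\epsilon_Y} Y \to 0$ with all four terms in $\mathcal{J}$. Using projectivity of $P_0$ in $\mathcal{C}$, lift $\varphi \circ \epsilon_X$ to $\tilde{\varphi} \colon P_0 \to Q_0$. The hypothesis applied to $\psi := (\tilde{\varphi}, -\delta_Y) \colon P_0 \oplus Q_1 \to Q_0$ (a morphism in $\mathcal{J}$) produces $R \in \mathcal{J}$ with $\beta \colon R \to P_0 \oplus Q_1$ whose image equals $\ker \psi = P_0 \times_{Q_0} Q_1$. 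A short diagram chase verifies that $\alpha := \epsilon_X \circ \pi_{P_0} \circ \beta \colon R \to X$ factors through $\ker \varphi \hookrightarrow X$ and is an epimorphism onto $\ker \varphi$; this supplies the first step of the sought presentation.

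The main work is producing a $\mathcal{J}$-object surjecting onto $K := \ker \alpha$. Setting $\pi := \pi_{P_0} \circ \beta$ and $L := \ker \beta$, one checks that $\pi(R) = \tilde{\varphi}^{-1}(\op{Im} \delta_Y)$, that $\op{Im} \delta_X \subseteq \pi(R)$, and in fact $\pi(R) \cap \ker \epsilon_X = \op{Im} \delta_X$. Combined with an analysis of $\ker \pi$ via the definition of $\beta$, this yields two short exact sequences
\begin{equation*}
0 \to \ker \pi \to K \to \op{Im} \delta_X \to 0
\quad \text{and} \quad
0 \to L \to \ker \pi \to \ker \delta_Y \to 0.
\end{equation*}
Applying the hypothesis to $\delta_Y$ and to $\beta$ (both morphisms in $\mathcal{J}$) yields $\mathcal{J}$-objects $Q_2$ and $R'$ surjecting onto $\ker \delta_Y$ and $L$ respectively, while $P_1 \in \mathcal{J}$ already surjects onto $\op{Im} \delta_X$. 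Since $P_1$, $Q_2$ and $R'$ are all projective in $\mathcal{C}$, I would lift these epimorphisms through the two extensions above to obtain morphisms into $K$, whose sum is an epimorphism $P_1 \oplus Q_2 \oplus R' \twoheadrightarrow K$. Prepending this to $\alpha$ gives a presentation $P_1 \oplus Q_2 \oplus R' \to R \to \ker \varphi \to 0$ in $\mathcal{J}$.

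The hard part will be the identification and covering of $K = \ker \alpha$: the first syzygy $R \twoheadrightarrow \ker \varphi$ emerges from a single pullback-plus-hypothesis step, but $K$ is not obviously covered by a $\mathcal{J}$-object and must first be decomposed as an iterated extension of $\op{Im} \delta_X$, $\ker \delta_Y$, and $L$ before the hypothesis can be reapplied and the pieces reassembled via projectivity. Once closure under kernels is in hand, both kernels and cokernels of $\mathrm{pres}(\mathcal{J})$ are inherited from $\mathcal{C}$, so $\mathrm{pres}(\mathcal{J})$ is automatically abelian and the inclusion into $\mathcal{C}$ is exact, completing the argument.
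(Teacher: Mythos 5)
Your argument is correct but routes differently from the paper's. The reduction to closure under kernels, the lift $\tilde{\varphi}\colon P_0 \to Q_0$, and the first application of the hypothesis to $(\tilde{\varphi},-\delta_Y)\colon P_0\oplus Q_1 \to Q_0$ producing $\beta\colon R \to P_0\oplus Q_1$ all match the paper's proof up to relabeling; the two proofs diverge in how they cover $K=\ker\alpha$. You decompose $K$ as a double extension by $\op{Im}\delta_X$, $\ker\delta_Y$, and $L=\ker\beta$, cover each piece (re-using $P_1$, and applying the hypothesis twice more, to $\delta_Y$ and to $\beta$), then reassemble through the extensions using projectivity -- three applications of the hypothesis in total. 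The paper instead applies the hypothesis a second and final time to the single map $R\oplus P_1 \to P_0$ combining $\pi=\pi_{P_0}\beta$ and $\delta_X$ -- a map crucially built from the output $R$ of the first application -- and a diagram chase shows the resulting $\mathcal{J}$-object already surjects onto $K$, yielding a two-term presentation of $\ker\varphi$ with only two uses of the hypothesis and no explicit extension-splicing. Your route is more pedestrian but entirely valid: it trades the paper's ``feed $R$ back in'' step for a composition-series analysis of $K$. One small point: since $\op{Im}\delta_X=\ker\epsilon_X$ by exactness of the chosen presentation, your assertion $\pi(R)\cap\ker\epsilon_X=\op{Im}\delta_X$ is just a restatement of the inclusion $\op{Im}\delta_X\subseteq\pi(R)$ and need not be listed separately.
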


\begin{proof}
  By \cref{lem:presCoker}
  the cokernel of a natural transformation
  of $\mathcal{J}$-presentable objects
  is a again a $\mathcal{J}$-presentable object.
  Thus, it suffices to show that the kernel of a homomorphism
  of $\mathcal{J}$-presentable objects
  is again $\mathcal{J}$-presentable.
  To this end,
  let
  \begin{equation}
    \label{eq:kernelSeq}
    0 \rightarrow X \xrightarrow{\varphi} Y \xrightarrow{\psi} Z
  \end{equation}
  be an exact sequence 
  with $Y$ and $Z$ being $\mathcal{J}$-presentable.
  By \cref{dfn:presentable} we may choose presentations
  \begin{align*}
    Q_1 \xrightarrow{\alpha}
    Q_0 \rightarrow Y \rightarrow 0 &
    \\
    \text{and} \quad
    R_1 \xrightarrow{\delta}
    R_0 \rightarrow Z \rightarrow 0 &
  \end{align*}
  with $Q_0$, $Q_1$, $R_0$, and $R_1$
  objects of $\mathcal{J}$.
  Since $Q_0$
  is projective
  there is a homomorphism
  $\tilde{\psi} \colon Q_0 \rightarrow R_0$
  such that the square
  \begin{equation*}
    \begin{tikzcd}
      Q_0
      \arrow[r, "\tilde{\psi}", dashed]
      \arrow[d]
      &
      R_0
      \arrow[d]
      \\
      Y
      \arrow[r, "\psi"']
      &
      Z
    \end{tikzcd}
  \end{equation*}
  commutes.
  By our assumptions on $\mathcal{J}$
  there is an exact sequence
  \begin{equation}
    \label{eq:hofibExact1}
    P_0
    \xrightarrow{\kappa_0}
    Q_0 \oplus R_1
    \xrightarrow{(\tilde{\psi}, \delta)^{\flat}}
    R_0
    .
  \end{equation}
  Using this assumption on $\mathcal{J}$ once more
  we also obtain an exact sequence
  \begin{equation}
    \label{eq:hofibExact2}
    P_1
    \xrightarrow{\kappa_1}
    P_0 \oplus Q_1
    \xrightarrow{(\mathrm{pr}_1 \circ \kappa_0, \alpha)^{\flat}}
    Q_0
    ,
  \end{equation}
  where ${\mathrm{pr}_1 \colon Q_0 \oplus R_1 \rightarrow Q_0}$
  is the projection to the first summand $Q_0$.
  We consider the commutative diagram
  \begin{equation}
    \label{eq:kernelDiagram}
    \begin{tikzcd}[row sep=5ex, column sep=12ex]
      P_1
      \arrow[r, "-(\mathrm{pr}_2 \circ \kappa_1)"]
      \arrow[d, "\mathrm{pr}_1 \circ \kappa_1"']
      &
      Q_1
      \arrow[d, "\alpha"]
      \\
      P_0
      \arrow[r, "\mathrm{pr}_1 \circ \kappa_0"']
      \arrow[d, dashed]
      &
      Q_0
      \arrow[r, "\tilde{\psi}"']
      \arrow[d]
      &
      R_0
      \arrow[d]
      \\
      X
      \arrow[r, "\varphi"']
      \arrow[d]
      &
      Y
      \arrow[r, "\psi"']
      \arrow[d]
      &
      Z
      \arrow[d]
      \\
      0
      &
      0
      &
      0
      ,
    \end{tikzcd}
  \end{equation}
  where $\mathrm{pr}_i$ denotes the corresponding projection for $i = 1,2$.
  As the center column of \eqref{eq:kernelDiagram} is a complex
  and as \eqref{eq:kernelSeq} is exact,
  the vertical dashed arrow on the left hand side of \eqref{eq:kernelDiagram}
  exists as indicated.
  Moreover,
  the center and the right column of \eqref{eq:kernelDiagram} are exact.
  Using the exact sequences \eqref{eq:hofibExact1} and \eqref{eq:hofibExact2}
  it follows from a diagram chase in \eqref{eq:kernelDiagram}
  that the left column is exact as well,
  hence $X$
  is $\mathcal{J}$-presentable.
\end{proof}

\section{Abelianization of Triangulated Categories}
\label{sec:abelianization}

The following \cref{prp:abelianization}
is a slight generalization of the first lemma from
\mbox{\cite[Section 4.2]{MR2355771}}
with essentially the same proof.

\begin{prp}
  \label{prp:abelianization}
  Let $\mathcal{C}$ be an abelian category,
  let $\mathcal{T}$ be a triangulated category,
  and let $F \colon \mathcal{T} \rightarrow \mathcal{C}$
  be a fully faithful cohomological functor,
  such that $F(X)$ is projective for all objects $X$ of $\mathcal{T}$.
  Moreover,
  let $\mathcal{J}$ be the essential image of
  $F \colon \mathcal{T} \rightarrow \mathcal{C}$.
  Then the category $\mathrm{pres}(\mathcal{J})$
  of $\mathcal{J}$-presentable objects
  is an abelian subcategory of $\mathcal{C}$
  and the functor
  \begin{equation*}
    \mathcal{T} \rightarrow
    \mathrm{pres}(\mathcal{J}), \,
    \begin{cases}
      X \mapsto F(X) \\
      \varphi \mapsto F(\varphi)
    \end{cases}
  \end{equation*}
  is the abelianization of $\mathcal{T}$.
  In other words,
  the precomposition functor
  $(-) \circ F$
  from the category of exact functors
  $\mathrm{pres}(\mathcal{J}) \rightarrow \mathcal{A}$
  to the category of cohomological functors
  $\mathcal{T} \rightarrow \mathcal{A}$  
  is an equivalence of categories
  for any abelian category $\mathcal{A}$.
\end{prp}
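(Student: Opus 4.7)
The plan is to first establish that $\mathrm{pres}(\mathcal{J})$ is an abelian subcategory of $\mathcal{C}$ by verifying the hypothesis of \cref{lem:presAbelian}. Given any homomorphism $\psi \colon Q \to P$ between objects of $\mathcal{J}$, full faithfulness of $F$ lets us write $Q = F(Y)$, $P = F(X)$, and $\psi = F(\varphi)$ for a unique $\varphi \colon Y \to X$ in $\mathcal{T}$. Completing $\varphi$ to a distinguished triangle $Z \to Y \xrightarrow{\varphi} X \to Z[1]$ in $\mathcal{T}$ and applying the cohomological functor $F$ produces the required exact sequence $F(Z) \to F(Y) \xrightarrow{F(\varphi)} F(X)$ with $R := F(Z)$ in $\mathcal{J}$. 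Since each $F(X)$ is projective in $\mathcal{C}$ by hypothesis, \cref{cor:presProj} implies it remains projective in $\mathrm{pres}(\mathcal{J})$, so $\bar{F} \colon \mathcal{T} \to \mathrm{pres}(\mathcal{J})$ (the corestriction of $F$) lands in projectives and is still cohomological. In particular, it is a cohomological functor.

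Next, for any abelian category $\mathcal{A}$ and cohomological functor $G \colon \mathcal{T} \to \mathcal{A}$, I would construct an exact functor $\tilde{G} \colon \mathrm{pres}(\mathcal{J}) \to \mathcal{A}$ together with a natural isomorphism $\tilde{G} \circ \bar{F} \cong G$. For $M$ in $\mathrm{pres}(\mathcal{J})$ with presentation $F(Y) \xrightarrow{F(\varphi)} F(X) \to M \to 0$, set $\tilde{G}(M) := \op{coker}(G(\varphi) \colon G(Y) \to G(X))$. Well-definedness on objects (independence of the presentation) and the action on morphisms both reduce to the following lifting argument: given any morphism $M \to M'$ and presentations of $M$ and $M'$, projectivity of $F(X)$ in $\mathrm{pres}(\mathcal{J})$ lifts it to a chain map between the presentations; full faithfulness of $F$ reinterprets that chain map as a pair of morphisms in $\mathcal{T}$; applying $G$ gives a map of cokernels in $\mathcal{A}$; and any two such lifts differ by a chain homotopy that factors through $\mathcal{T}$, so they induce the same map after passing to cokernels. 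The natural isomorphism $\tilde{G} \circ \bar{F} \cong G$ is immediate from the trivial presentation $0 \to F(X) \xrightarrow{\op{id}} F(X) \to 0$.

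The exactness of $\tilde{G}$ is the main obstacle. Given a short exact sequence $0 \to M' \to M \to M'' \to 0$ in $\mathrm{pres}(\mathcal{J})$, I would first use a horseshoe-style argument (analogous to \cref{lem:horseshoe} but without the equivariance condition) to produce compatible presentations $F(Y') \to F(X') \to M'$, $F(Y) \to F(X) \to M$, $F(Y'') \to F(X'') \to M''$ with $X = X' \oplus X''$, $Y = Y' \oplus Y''$. The connecting data determines a morphism $F(Y'') \to F(X')$ which, via full faithfulness of $F$, comes from a unique morphism in $\mathcal{T}$; one then assembles these into a morphism of distinguished triangles in $\mathcal{T}$ by invoking the rotation and octahedral axioms. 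Applying the cohomological functor $G$ produces a long exact sequence in $\mathcal{A}$ whose appropriate truncation yields the short exact sequence $0 \to \tilde{G}(M') \to \tilde{G}(M) \to \tilde{G}(M'') \to 0$. The delicate point is ensuring that the cokernels fit together simultaneously, which is where the octahedral axiom and the cohomological property of $G$ must be carefully combined.

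Finally, to conclude that precomposition with $\bar{F}$ is an equivalence between exact functors $\mathrm{pres}(\mathcal{J}) \to \mathcal{A}$ and cohomological functors $\mathcal{T} \to \mathcal{A}$, I would argue essential surjectivity by the construction of $\tilde{G}$ above and fully faithfulness by the following uniqueness: any exact functor $H \colon \mathrm{pres}(\mathcal{J}) \to \mathcal{A}$ equipped with an isomorphism $H \circ \bar{F} \cong G$ must satisfy $H(M) \cong \op{coker}(G(\varphi))$ for every presentation $F(Y) \xrightarrow{F(\varphi)} F(X) \to M \to 0$, because exactness of $H$ forces it to preserve cokernels and $H$ is already specified on $\mathcal{J}$. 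This pins down $H$ on objects and morphisms uniquely up to unique natural isomorphism, so the precomposition functor is fully faithful as well. Together with essential surjectivity this establishes the equivalence and hence the universal property.
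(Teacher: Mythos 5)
Your opening argument---verifying the hypothesis of \cref{lem:presAbelian} via full faithfulness, triangle completion, and the cohomological property of $F$---coincides with the paper's, as does your construction of $\overline{G}$ via cokernels of presentations and the projectivity/lifting argument for well-definedness and functoriality.

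The genuine gap is in your exactness argument. You propose to lift the horseshoe presentations into $\mathcal{T}$, assemble them into a morphism of distinguished triangles via rotation and the octahedral axiom, apply $G$, and truncate the resulting long exact sequence to get $0 \to \overline{G}(M') \to \overline{G}(M) \to \overline{G}(M'') \to 0$. This step is not sound as stated: applying a cohomological functor to a \emph{morphism} of distinguished triangles produces two parallel long exact sequences connected by a commutative ladder, not one long exact sequence. There is no canonical triangle whose $G$-image simultaneously computes all three cokernels, and the octahedral axiom---a statement about compositions of two morphisms---does not obviously manufacture one. You flag this as the ``delicate point,'' but the proposed mechanism for closing it does not go through.

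The paper's argument avoids the octahedral axiom entirely and stays at the level of $\mathcal{C}$ and $\mathcal{A}$. After building the horseshoe diagram \eqref{eq:horseshoe}, right-exactness of $\overline{G}$ follows because the two projective rows are \emph{split}-exact, hence preserved by the merely additive functor $G$, after which the right-exactness of the cokernel functor on the arrow category of $\mathcal{A}$ gives $\overline{G}(X) \to \overline{G}(Y) \to \overline{G}(Z) \to 0$. For left-exactness the paper passes to a second, dual diagram \eqref{eq:horseshoeUpsideDown}, obtained from the same triangles in $\mathcal{T}$ using the cohomological long exact sequence; its two lower rows are again split-exact and hence preserved by $G$, and now the \emph{left}-exactness of the kernel functor yields $0 \to \overline{G}(X) \to \overline{G}(Y) \to \overline{G}(Z)$. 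Replacing your octahedral step with this two-horseshoe argument, which needs only additivity of $G$ plus left/right-exactness of $\ker$ and $\op{coker}$, closes the gap.
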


\begin{proof}
  Now suppose we have a homomorphism
  $\psi \colon Q \rightarrow P$
  in $\mathcal{C}$
  with $Q$ and $P$ in $\mathcal{J}$.
  Since $F \colon \mathcal{T} \rightarrow \mathcal{C}$
  is fully faithful there is a homomorphism
  $\tilde{\psi} \colon \tilde{Q} \rightarrow \tilde{P}$
  with $F\big(\tilde{\psi}\big) \cong \psi$.
  Moreover,
  there exists a distinguished triangle
  \begin{equation*}
    \tilde{R} \xrightarrow{\tilde{\varphi}}
    \tilde{Q} \xrightarrow{\tilde{\psi}}
    \tilde{P} \rightarrow
    \tilde{R}[1]
  \end{equation*}
  in $\mathcal{T}$
  by \emph{TR2} and \emph{TR3}.
  Writing $R := F\big(\tilde{R}\big)$ and
  $\varphi \colon R \xrightarrow{F(\tilde{\varphi})}
  F\big(\tilde{Q}\big) \xrightarrow{~\sim~} Q$
  we obtain the exact sequence
  \begin{equation*}
    R \xrightarrow{\varphi}
    Q \xrightarrow{\psi}
    P
    ,
  \end{equation*}
  since
  ${F \colon \mathcal{T} \rightarrow \mathcal{C}}$ is cohomological.
  Thus,
  $\mathrm{pres}(\mathcal{J})$
  is an abelian subcategory of $\mathcal{C}$
  by \cref{lem:presAbelian}.
  Now let $\mathcal{A}$ be another abelian category
  and let
  ${G \colon \mathcal{T} \rightarrow \mathcal{A}}$
  be a cohomological functor.
  We construct a preimage
  ${\overline{G} \colon \mathrm{pres}(\mathcal{J}) \rightarrow \mathcal{A}}$
  of
  $G$ under ${(-) \circ F}$ as follows.
  Suppose
  $X$
  is $\mathcal{J}$-presentable.
  As ${F \colon \mathcal{T} \rightarrow \mathcal{C}}$
  is fully faithful
  we may choose a presentation
  \begin{equation*}
    F (P_1) \xrightarrow{F(\delta)}
    F (P_0) \rightarrow X \rightarrow 0
  \end{equation*}
  with $P_0$ and $P_1$ objects of $\mathcal{T}$.
  With this we define
  $\overline{G}(X)$
  to be the cokernel of
  $G(\delta) \colon
  G(P_1) \rightarrow G(P_0)$.
  It remains to show that
  $\overline{G} \colon
  \mathrm{pres}(\mathcal{J}) \rightarrow \mathcal{A}$
  is exact.
  To this end,
  let
  \begin{equation*}
    0 \rightarrow X \rightarrow Y \rightarrow Z \rightarrow 0
  \end{equation*}
  be a short exact sequence
  of $\mathcal{J}$-presentable objects in $\mathcal{C}$.
  Again,
  we may choose presentations
  \begin{align}
    \nonumber
    F (P_1) \rightarrow
    F (P_0) \rightarrow X \rightarrow 0 &
    \\
    \label{eq:presentationZ}
    \text{and} \quad
    F (Q_1) \xrightarrow{\delta}
    F (Q_0) \xrightarrow{\epsilon} Z \rightarrow 0 &
  \end{align}
  since ${F \colon \mathcal{T} \rightarrow \mathcal{C}}$ is fully faithful.
  As
  $F (Q_0)$
  is projective in $\mathcal{C}$
  there is a homomorphism
  $\sigma \colon F (Q_0) \rightarrow Y$
  such that the diagram
  \begin{equation*}
    \begin{tikzcd}
      &
      F (Q_0)
      \arrow[ld, "\sigma"', dashed]
      \arrow[d, "\epsilon"]
      \\
      Y
      \arrow[r, two heads]
      &
      Z
      \arrow[r]
      &
      0
    \end{tikzcd}
  \end{equation*}
  commutes.
  In conjunction with the nine lemma
  this yields a commutative diagram
  \begin{equation}
    \label{eq:nineLemma}
    \begin{tikzcd}
      &
      0
      \arrow[d]
      &
      0
      \arrow[d]
      &
      0
      \arrow[d]
      \\
      0
      \arrow[r]
      &
      U
      \arrow[r]
      \arrow[d]
      &
      V
      \arrow[r]
      \arrow[d]
      &
      W
      \arrow[r]
      \arrow[d]
      &
      0
      \\
      0
      \arrow[r]
      &
      F(P_0)
      \arrow[r]
      \arrow[d]
      &
      F (P_0 \oplus Q_0)
      \arrow[r]
      \arrow[d]
      &
      F (Q_0)
      \arrow[r]
      \arrow[d, "\epsilon"]
      &
      0
      \\
      0
      \arrow[r]
      &
      X
      \arrow[r]
      \arrow[d]
      &
      Y
      \arrow[r]
      \arrow[d]
      &
      Z
      \arrow[r]
      \arrow[d]
      &
      0
      \\
      &
      0
      &
      0
      &
      0
    \end{tikzcd}
  \end{equation}
  with exact rows and columns.
  By the exactness of \eqref{eq:presentationZ} at
  $F(Q_0)$
  there is a homomorphism
  $\xi \colon F(Q_1) \rightarrow W$
  such that the diagram
  \begin{equation*}
    \begin{tikzcd}
      &
      0
      \arrow[d]
      \\
      &
      W
      \arrow[d, tail]
      \\
      F(Q_1)
      \arrow[ru, "\xi", dashed]
      \arrow[r, "\delta"']
      &
      F(Q_0)
    \end{tikzcd}
  \end{equation*}
  commutes.
  Moreover,
  as
  $F (Q_1)$
  is projective
  there is a homomorphism
  $\sigma' \colon F (Q_0) \rightarrow V$
  such that the diagram
  \begin{equation*}
    \begin{tikzcd}
      &
      F (Q_1)
      \arrow[ld, "\sigma'"', dashed]
      \arrow[d, "\xi"]
      \\
      V
      \arrow[r, two heads]
      &
      W
      \arrow[r]
      &
      0
    \end{tikzcd}
  \end{equation*}
  commutes.
  In conjunction with
  \eqref{eq:nineLemma}
  we obtain the commutative diagram
  \begin{equation}
    \label{eq:horseshoe}
    \begin{tikzcd}
      0
      \arrow[r]
      &
      F (P_1)
      \arrow[r]
      \arrow[d]
      &
      F (P_1 \oplus Q_1)
      \arrow[r]
      \arrow[d]
      &
      F (Q_1)
      \arrow[r]
      \arrow[d, "\delta"]
      &
      0
      \\
      0
      \arrow[r]
      &
      F (P_0)
      \arrow[r]
      \arrow[d]
      &
      F (P_0 \oplus Q_0)
      \arrow[r]
      \arrow[d]
      &
      F (Q_0)
      \arrow[r]
      \arrow[d, "\epsilon"]
      &
      0
      \\
      0
      \arrow[r]
      &
      X
      \arrow[r]
      \arrow[d]
      &
      Y
      \arrow[r]
      \arrow[d]
      &
      Z
      \arrow[r]
      \arrow[d]
      &
      0
      \\
      &
      0
      &
      0
      &
      0
    \end{tikzcd}    
  \end{equation}
  with exact rows and columns
  and the upper two rows split-exact.
  As
  ${G \colon \mathcal{T} \rightarrow \mathcal{A}}$
  is cohomological it is an additive functor in particular.
  As a result the functor
  $\overline{G} \colon
  \mathrm{pres}(\mathcal{J}) \rightarrow \mathcal{A}$
  maps the upper two rows of \eqref{eq:horseshoe}
  to short exact sequences in $\mathcal{A}$.
  Now the cokernel seen as a functor
  from the category of homomorphisms in $\mathcal{A}$ to $\mathcal{A}$
  is right-exact,
  hence the sequence
  \begin{equation*}
    \overline{G}(X) \rightarrow
    \overline{G}(Y) \rightarrow
    \overline{G}(Z) \rightarrow 0
  \end{equation*}
  is exact.
  Finally we show the exactness of
  \begin{equation}
    \label{eq:extLeftExact}
    0 \rightarrow
    \overline{G}(X) \rightarrow
    \overline{G}(Y) \rightarrow
    \overline{G}(Z)    
  \end{equation}
  as well.
  As
  ${F \colon \mathcal{T} \rightarrow \mathcal{C}}$
  is cohomological we may use the diagram \eqref{eq:horseshoe}
  to obtain the commutative diagram
  \begin{equation}
    \label{eq:horseshoeUpsideDown}
    \begin{tikzcd}
      &
      0
      \arrow[d]
      &
      0
      \arrow[d]
      &
      0
      \arrow[d]
      \\
      0
      \arrow[r]
      &
      X
      \arrow[r]
      \arrow[d]
      &
      Y
      \arrow[r]
      \arrow[d]
      &
      Z
      \arrow[r]
      \arrow[d]
      &
      0
      \\
      0
      \arrow[r]
      &
      {F (P_1[1])}
      \arrow[r]
      \arrow[d]
      &
      {F (P_1[1] \oplus Q_1[1])}
      \arrow[r]
      \arrow[d]
      &
      {F (Q_1[1])}
      \arrow[r]
      \arrow[d, "{-\delta[1]}"']
      &
      0
      \\
      0
      \arrow[r]
      &
      {F (P_0[1])}
      \arrow[r]
      &
      {F (P_0[1] \oplus Q_0[1])}
      \arrow[r]
      &
      {F (Q_0[1])}
      \arrow[r]
      &
      0
    \end{tikzcd}    
  \end{equation}
  with exact rows and columns
  and the lower two rows split-exact.
  As
  ${G \colon \mathcal{T} \rightarrow \mathcal{A}}$
  additive
  the functor
  $\overline{G} \colon
  \mathrm{pres}(\mathcal{J}) \rightarrow \mathcal{A}$
  maps the lower two rows of \eqref{eq:horseshoeUpsideDown}
  to short exact sequences in $\mathcal{A}$.
  Moreover, as the kernel is left-exact
  as a functor from the category of homomorphisms in $\mathcal{A}$
  to $\mathcal{A}$
  and as
  ${G \colon \mathcal{T} \rightarrow \mathcal{A}}$
  is cohomological
  the sequence \eqref{eq:extLeftExact} is exact.
\end{proof}

\todos

\end{document}